\newcommand{\R}{{\mathbb R}}
\newcommand{\N}{{\mathbb N}}
\newcommand{\C}{{\mathbb C}}
\newcommand{\Z}{{\mathbb Z}}
\newcommand{\proofend}{\hfill$\square$\medskip\smallskip}
\newcommand{\bigO}{\mathcal{O}}
\newcommand{\dx}{\mathrm{d}}
\newcommand{\e}{\mathrm{e}}
\newcommand{\eps}{\varepsilon}
\newcommand{\overbar}[1]{\mkern 1.5mu\overline{\mkern-1.5mu#1\mkern-1.5mu}\mkern 1.5mu}
\newcommand{\op}{\mathrm{od}}
\newcommand{\res}{\text{\upshape Res}}
\newcommand{\re}{\text{\upshape Re\,}}
\newcommand{\im}{\text{\upshape Im\,}}
\newtheorem{theorem}{Theorem}[section]
\newtheorem*{theorem*}{Theorem}
\newtheorem*{lemma*}{Lemma}
\newtheorem{proposition}[theorem]{Proposition}
\newtheorem{lemma}[theorem]{Lemma}
\newtheorem{corollary}[theorem]{Corollary}
\theoremstyle{definition}
\newtheorem*{definition*}{Definition}
\newtheorem{remark}[theorem]{Remark}
\newtheorem*{remark*}{Remark}
\numberwithin{equation}{section}
\title[DNLS with step-like oscillatory initial data]
{The defocusing nonlinear Schr\"odinger equation with step-like oscillatory initial data}
\author[S.\@ Fromm]{Samuel Fromm$^1$}
\author[J.\@ Lenells]{Jonatan Lenells$^1$}  
\author[R.\@ Quirchmayr]{Ronald Quirchmayr$^2$}
\email{\href{mailto: samfro@kth.se}{samfro@kth.se}}
\address{$^1$Department of Mathematics, KTH Royal Institute of Technology,
	100~44~Stockholm, Sweden}
\email{\href{mailto: jlenells@kth.se}{jlenells@kth.se}}
\address{$^2$Faculty of Mathematics, University of Vienna,  
	1090 Vienna, Austria}
\email{\href{mailto: ronald.quirchmayr@univie.ac.at}{ronald.quirchmayr@univie.ac.at}}
\begin{document}
\UseRawInputEncoding

\begin{abstract}
\noindent
We study the Cauchy problem for the defocusing nonlinear Schr\"odinger (NLS) equation under the assumption that the solution vanishes as $x \to + \infty$ and approaches an oscillatory plane wave as $x \to -\infty$. We first develop an inverse scattering transform formalism for solutions satisfying such step-like boundary conditions. Using this formalism, we prove that there exists a global solution of the corresponding Cauchy problem and establish a representation for this solution in terms of the solution of a Riemann--Hilbert problem. 
By performing a steepest descent analysis of this Riemann--Hilbert problem, we identify three asymptotic sectors in the half-plane $t \geq 0$ of the $xt$-plane and derive asymptotic formulas for the solution in each of these sectors. Finally, by restricting the constructed solutions to the half-line $x \geq 0$, we find a class of solutions with asymptotically time-periodic boundary values previously sought for in the context of the NLS half-line problem.
\end{abstract}

\maketitle
\thispagestyle{empty}

\noindent
{\small{\sc AMS Subject Classification (2020)}: 35Q55, 41A60, 35Q15.}

\noindent
{\small{\sc Keywords}: Nonlinear Schr\"odinger equation, asymptotics, step-like initial data, Riemann--Hilbert problem, nonlinear steepest descent, initial-boundary value problem.}


\section{Introduction}
\noindent
We study the Cauchy problem for the defocusing nonlinear Schr\"odinger (NLS) equation 
\begin{subequations}\label{dNLS_IVP}
\begin{alignat}{2}
& i u_t+u_{xx}-2|u|^2u=0,&\qquad&x\in\R,\quad t> 0, \label{dNLS} \\
&u(x,0)=u_0(x),&&x\in\R, \label{IC}
\end{alignat}
\end{subequations}
under the assumption that the solution $u(x,t)$ vanishes as $x \to + \infty$ and approaches a plane wave as $x \to -\infty$, i.e.,
\begin{equation} \label{boundaryconditions}
u(x,t)\sim 
\begin{cases}
\alpha \e^{2 i\beta x+ i \omega t}, & x\to -\infty, \\
0, & x\to +\infty,
\end{cases}
\end{equation}
where $\alpha>0$ and $\beta\in\R$ are two parameters. The constant $\omega \coloneqq -4\beta^2-2\alpha^2<0$ in \eqref{boundaryconditions} is determined by the requirement that $\alpha \e^{2 i\beta x+ i \omega t}$ should be a solution of \eqref{dNLS}, which is necessary for consistency.
The phase invariance of \eqref{dNLS} implies that there is no loss of generality in assuming that $\alpha$ is positive. 
In order for the boundary conditions \eqref{boundaryconditions} to be consistent with \eqref{dNLS_IVP}, the initial data must satisfy
\begin{equation} \label{u0limits}
u_0(x)\sim 
\begin{cases}
\alpha \e^{2 i \beta x},&x\to -\infty,\\
0,&x\to +\infty.
\end{cases}
\end{equation}
The exact rates of convergence in \eqref{boundaryconditions} and \eqref{u0limits} will be specified in the main theorems. 

The NLS equation with non-vanishing boundary conditions at infinity has received plenty of attention in the literature, see e.g. \cite{BM2019, B2018, BLM2021, BLS2021, BP1982, BM2017, BKS2011, BV2007, FT1986, GK2012, DPMV2013, IU1986, IU1991, J2015, KI1978, ZS1973}. 
In this paper, we use Riemann--Hilbert (RH) methods to study the Cauchy problem \eqref{dNLS_IVP} with the step-like boundary conditions \eqref{boundaryconditions}. In particular, we prove that there exists a global solution of the problem \eqref{dNLS_IVP}--\eqref{boundaryconditions} and establish a representation for this solution in terms of the solution of a RH problem. The formulation of the RH problem involves a reflection coefficient which is defined in terms of the initial data. We also obtain detailed asymptotic formulas for the long-time behavior of the solution by performing a Deift--Zhou steepest descent analysis \cite{DZ1993} of the RH problem. It turns out that there are three main asymptotic sectors in the half-plane $t \geq 0$ of the $xt$-plane separated by the two lines $x/t = 4\beta - 2\alpha$ and $x/t = 4\beta + 4\alpha$; we refer to these three sectors as the left, middle, and right sectors, see Figure \ref{sectorsfig}. In the left sector (i.e., the sector adjacent to the negative $x$-axis), the leading term in the asymptotics is given by the plane wave $\alpha \e^{2 i\beta x + i\omega t}$ multiplied by a slowly varying factor which tends to $1$ as $x/t \to -\infty$, in consistency with the boundary conditions \eqref{boundaryconditions}. In the right sector (i.e., the sector adjacent to the positive $x$-axis), the leading term is of order $t^{-1/2}$ and has a coefficient which vanishes as $x/t \to +\infty$, again in consistency with \eqref{boundaryconditions}. 
In the middle sector, which lies between the left and right sectors, the asymptotics has a different character. We will compute both the leading and the subleading terms in this sector. The leading term is generally of order $\bigO(1)$ but it vanishes as $x/t$ approaches $4\beta + 4\alpha$, thus matching the leading asymptotics in the right sector. As $x/t$ approaches $4\beta - 2\alpha$, it tends, as expected, to the leading term in the left sector. The subleading term in the middle sector is of order $\bigO(t^{-1})$. 

\begin{figure}[h!] \centering
	\vspace{1em}
	\begin{overpic}[width=.63\textwidth]{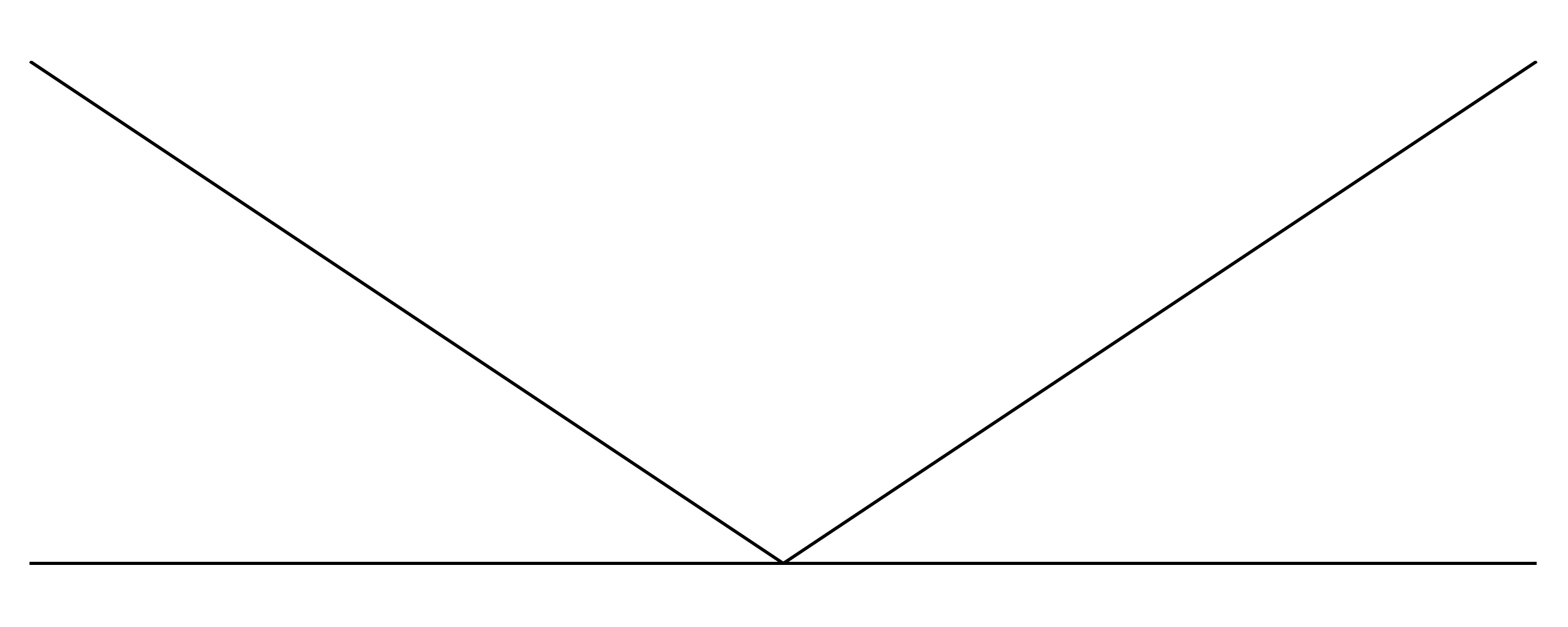}
		\put(49.2,1){\footnotesize{$0$}}     
		\put(100,3.4){\footnotesize{$x$}}  
		\put(8,13){\footnotesize{left sector}} 
		\put(40,24){\footnotesize{middle sector}} 
		\put(75,13){\footnotesize{right sector}}
		\put(-10,37.5){\footnotesize{$x/t =4\beta - 2\alpha$}}
		\put(92,37.5){\footnotesize{$x/t =4\beta+4\alpha$}}
	\end{overpic}
	\vspace{0em}
	\caption{The left, middle, and right asymptotic sectors in the $xt$-plane.} 
	\label{sectorsfig}
\end{figure}

The main results of the paper are presented in the form of six theorems, which are all stated in Section \ref{mainresultssec}.

\begin{enumerate}[-]
\item Theorem \ref{thm_direct} treats the direct problem, i.e., the construction of the scattering data from the given initial data. 

\item Theorem \ref{thm_inverse} proves global existence for the Cauchy problem \eqref{dNLS_IVP}--\eqref{boundaryconditions} and establishes a representation for the solution in terms of the solution of a RH problem. This is achieved by combining the result of Theorem \ref{thm_direct} with a solution of the inverse problem. 

\item Theorem \ref{thm_sec_M}, Theorem \ref{thm_sec_L}, and Theorem \ref{thm_sec_R} present asymptotic formulas for the solution of \eqref{dNLS_IVP}--\eqref{u0limits} in the middle, left, and right asymptotic sectors, respectively.

\item Theorem \ref{thm_ibvp} provides a class of solutions of the NLS on the half-line $x \geq 0$ with asymptotically time-periodic boundary values that was sought for in \cite{Len15}.
\end{enumerate}

\subsection{Relation to earlier work}
First results on the implementation of an Inverse Scattering Transform (IST) to the defocusing NLS with nonzero boundary conditions were obtained in \cite{ZS1973} where solutions satisfying $|u(x,t)| \to c$ as $x \to \pm \infty$ for some constant $c > 0$ were considered. The asymmetric case where $u(x,t)$ approaches plane waves of different amplitudes as $x \to + \infty$ and $x \to -\infty$ was considered in \cite{BP1982}. 
We refer to \cite{BM2019, BLM2021, BLS2021, FT1986, GK2012, DPMV2013, IU1991, J2015, KI1978} and the references therein for further works on the IST for NLS. 
The implementation of the IST presented here can be compared to the implementation for the focusing NLS in \cite{BLS2021}.

Studies of the long-time asymptotics for NLS with nonzero boundary conditions include \cite{B2018, BLS2021, BM2017, BKS2011, BV2007, IU1986, J2015, BTVZ2007}. Most of these studies concern the focusing NLS. Of particular interest to us is the work \cite{J2015}, where Jenkins derived detailed formulas for the long-time asymptotics for the defocusing NLS equation with step-like initial data of the form
\begin{align}\label{u0Jenkins}
u_0(x) = \begin{cases} 1, & x < 0, \\
A \e^{-2i\mu x}, & x > 0,
\end{cases}
\end{align}
for any choice of the constants $A > 0$ and $\mu \in \R$. 
If $u(x,t)$ solves \eqref{dNLS}, then the function $\tilde{u}(x,t)$ defined by
\begin{equation*}
	\tilde{u}(x,t)\coloneqq a u(a(x+4bt),a^2t) \e^{-2i b(x+2bt)},
\end{equation*}
also solves \eqref{dNLS} for any $a>0$ and $b \in \R$. Moreover, if $u_0$ satisfies \eqref{u0Jenkins}, then $\tilde{u}_0(x) = \tilde{u}(x,0)$ satisfies
\begin{align}\label{utilde0}
\tilde{u}_0(x) = \begin{cases}
A_1 \e^{-2i B_1 x},  &x < 0,\\
A_2 \e^{-2iB_2 x},  & x > 0,
\end{cases}
\end{align}
where $A_1 = a$, $B_1 = b$, $A_2 = aA$, and $B_2 = \mu a + b$.
Hence, whenever the initial data satisfy \eqref{utilde0} with $A_1 > 0, A_2 > 0$, and $B_1, B_2 \in \R$, the asymptotics can be deduced from \cite{J2015}. However, the nonzero boundary conditions \eqref{boundaryconditions} considered in this paper (for which $A_2 = 0$) fall outside the scope of \cite{J2015}. It is therefore not surprising that the qualitative structure of the asymptotic picture we find is different from the one obtained in \cite{J2015}.

A main difference between \cite{J2015} and the present paper is that whereas \cite{J2015} assumes that the initial data is {\it equal} to $1$ and $A \e^{-2i\mu x}$ for $x < 0$ and $x > 0$, respectively, we only impose the boundary conditions \eqref{boundaryconditions} {\it asymptotically} as $x \to \pm \infty$. In fact, one of the main motivations behind this work was to understand how asymptotically imposed nonzero boundary conditions can be handled for an integrable PDE. The fact that the boundary conditions are only asymptotically imposed has rather far-reaching implications for the proof, because it means that instead of having an exact expression for the reflection coefficient $r(k)$ at our disposal, we need to deduce the pertinent properties of $r(k)$ from its definition in terms of solutions of Volterra equations and construct analytic approximations (or dbar extensions) for the asymptotic analysis. The construction of analytic approximations is challenging especially near the branch cut associated with the nonzero boundary conditions. We overcome this challenge by generalizing a construction in \cite{Fromm19}. 

We will show that there are no solitonic structures present in the long-time asymptotics for \eqref{dNLS} in the case of the boundary conditions \eqref{boundaryconditions}. In particular, the discrete spectrum is always empty. In the case of nonzero backgrounds at both $+\infty$ and $-\infty$ of the same amplitudes, the discrete spectrum is generally non-empty, corresponding to the fact that the defocusing NLS supports dark solitons \cite{ZS1973} (see also \cite{DPMV2013}). On the other hand, the discrete spectrum is always empty in the case of zero boundary conditions at both $\pm \infty$, corresponding to the fact that the equation admits no bright solitons, see \cite{A2011}. It is interesting that vanishing boundary conditions at $+\infty$ alone imply the absence of asymptotic solitonic structures.

The long-time asymptotics for the {\it focusing} NLS in the case when the left background is zero, i.e.,
\begin{align}
u_0(x) = \begin{cases} 0, & x < 0, \\
A \e^{-2iB x}, & x > 0,
\end{cases}
\end{align}
where $A > 0$ and $B \in \R$ are constants, was analyzed in \cite{BKS2011}. It was found that there are three asymptotic sectors in this case: a slow decay sector adjacent to the negative $x$-axis, a modulated plane wave sector adjacent to the positive $x$-axis, and a modulated elliptic wave (genus $1$) sector in between. This means that the asymptotic picture observed in \cite{BKS2011} bears similarities to the one we find here for the defocusing NLS with the boundary conditions \eqref{boundaryconditions}: in both cases there are three asymptotic sectors, and in both cases there is a slow decay sector and a modulated plane wave sector adjacent to the $x$-axis, as mandated by the boundary conditions. However, in the focusing case studied in \cite{BKS2011} the middle sector is a genus $1$ sector (i.e., the leading asymptotic term is expressed in terms of theta functions attached to a genus $1$ Riemann surface), whereas in the defocusing case considered here the middle sector is a genus $0$ sector (i.e., the underlying Riemann surface is of genus $0$).
As in \cite{J2015}, it is assumed in \cite{BKS2011} that the initial data is a pure step-like profile (meaning that $u_0(x)$ is exactly equal to $0$ for $x < 0$ and to $A e^{-2iB x}$ for $x > 0$). 
We expect that the techniques developed in this paper will be useful to handle the case of boundary conditions that are only asymptotically imposed also for the focusing NLS as well as other integrable PDEs.

\subsection{Application to the half-line problem}
One motivation for studying step-like boundary conditions of the form \eqref{boundaryconditions} is that the restriction to the quarter-plane $\{x \geq 0, \, t \geq 0\}$ of any solution of the Cauchy problem \eqref{dNLS_IVP}--\eqref{boundaryconditions} provides a solution of the half-line problem for \eqref{dNLS} with decay as $x \to \infty$. This initial-boundary value problem for NLS has been studied extensively (see e.g. \cite{BF2008, BSZ2018, BKS2009, F1997, F2005, FHM2017, FIS2005, Fromm19, HMY2019, H2005, Len15, LFunifiedII, LFtperiodicI, LFtperiodicII, LQ21}). Of particular interest is the case of (asymptotically) time-periodic boundary conditions \cite{BF2008}. The main difficulty when studying such boundary conditions is the construction of the Dirichlet-to-Neumann map \cite{LFtperiodicI}. If both the Dirichlet and the Neumann boundary values are known, then an effective solution representation can be obtained \cite{F1997, FIS2005}. However, for a well-posed problem only one of these boundary values can be independently prescribed, and the construction of the unknown boundary value (i.e., the construction of the generalized Dirichlet-to-Neumann map) is in general a highly nonlinear problem \cite{F2005}. In the case of asymptotically time-periodic boundary conditions, progress can be made by considering the Dirichlet-to-Neumann map in the large $t$ limit (which is enough for the calculation of long-time asymptotics for the solution for any $x \geq 0$) \cite{BKS2009, LFtperiodicI}. For example, assuming that the Dirichlet and Neumann values asymptote to periodic functions, the large $t$-behavior of the Neumann value can be effectively computed from the Dirichlet data to all orders in perturbation theory \cite{LFtperiodicII}. 
It is particularly interesting to consider the situation when the boundary values asymptote to single exponentials. It was shown for the focusing NLS in \cite{BIK2009,BKS2009} that there exists a solution $u$ satisfying $u(0,t) \sim \alpha \e^{i\omega t}$ and $u_x(0,t) \sim c\e^{i\omega t}$ as $t \to \infty$ if and only if the triplet of constants $(\alpha,\omega,c)$ belongs to one of two explicitly given families, thereby characterizing the Dirichlet-to-Neumann map in this regime. A similar result was obtained for the defocusing NLS in \cite{Len15}. However, in \cite{Len15} {\it five} families were found instead of {\it two}, and the fact that $(\alpha,\omega,c)$ belongs to one of these families was (due to the lack of a vanishing lemma for the defocusing NLS on the half-line) only shown to be a {\it necessary} condition for the existence of a solution $u$ with $u(0,t) \sim \alpha \e^{i\omega t}$ and $u_x(0,t) \sim c\e^{i\omega t}$. Thus, to characterize the Dirichlet-to-Neumann map in this regime in the defocusing case, one needs to determine whether there really are solutions of the initial-boundary value problem on the half-line corresponding to each of the five families found in \cite{Len15}. One of these five families is (see \cite[Eq. (2.3c)]{Len15})
\begin{align}\label{familyc}
\Big\{\big(\alpha,\omega,c =  i \alpha \sqrt{-2\alpha^2-\omega}\big) \colon \alpha>0, \omega < -3\alpha^2 \Big\}.
\end{align}
We show in Theorem \ref{thm_ibvp} that for each choice of $(\alpha,\omega,c)$ in the family \eqref{familyc}, 
there is indeed a solution $u$ of the defocusing NLS such that $u(0,t) \sim \alpha \e^{i\omega t}$ and $u_x(0,t) \sim c\e^{i\omega t}$ as $t \to \infty$. This answers affirmatively the question left open in \cite{Len15} for this family. Moreover, Theorem \ref{thm_ibvp} provides a natural interpretation of the family \eqref{familyc}: boundary values corresponding to this family arise by restricting solutions of the Cauchy problem \eqref{dNLS_IVP}--\eqref{boundaryconditions} to the half-line $x \geq 0$ whenever the $t$-axis belongs to the left asymptotic sector, i.e., whenever $\alpha > 0$ and $\beta \in \R$ are such that $4\beta - 2 \alpha > 0$.

\subsection{Organization of the paper}
The six main theorems are stated in Section \ref{mainresultssec}. Their proofs can be found in Sections \ref{directsec}-\ref{ibvpsec}; the proof of the first theorem is given in Section \ref{directsec}, the proof of the second theorem in Section \ref{inversesec}, and so on.
The appendix gives an asymptotic expansion for a Cauchy type integral that appears in the steepest descent analysis.

\subsection{Notation}
We will make use of the following notation.
The three Pauli matrices will be denoted by $\sigma_1$, $\sigma_2$, and $\sigma_3$. 
For $A\in \C^{2\times 2}$, we let $\e^{\hat\sigma_3}A \coloneqq \e^{\sigma_3}A \e^{-\sigma_3}$. 
We denote by $[A]_1$ the first column of $A$ and by $[A]_2$ its second column.
We let $\C_+ \coloneqq \{\im k > 0\}$ and $\C_- \coloneqq \{\im k < 0\}$ denote the open upper and lower half-planes. Similarly, we let $\R_+ \coloneqq (0,+\infty)$ and $\R_- \coloneqq (-\infty, 0)$.

\section{Main results}\label{mainresultssec}
\noindent
Before stating the main results, we need to introduce some ingredients relevant for the direct and inverse scattering problems.

\subsection{Eigenfunctions and reflection coefficient}\label{RHproblem}
Equation \eqref{dNLS} admits the Lax pair
\begin{align}\label{lax}
\begin{cases}
\phi_x +  i k\sigma_3 \phi = \mathsf{U}\phi,
\\ 
\phi_t + 2 i k^2 \sigma_3 \phi = \mathsf{V} \phi,	
\end{cases}
\end{align}
where $k \in \C$  is the spectral parameter and 
\begin{align}\label{UVdef}
\mathsf{U} \coloneqq \begin{pmatrix} 0 & u \\
\bar{u} & 0 \end{pmatrix}, \qquad
\mathsf{V} \coloneqq \begin{pmatrix} - i |u|^2 & 2ku +  i u_x \\
2k \bar{u} -  i \bar{u}_x &  i |u|^2 \end{pmatrix}.
\end{align}

Suppose $\alpha>0$ and $\beta\in\R$. Let $\omega \coloneqq -4\beta^2-2\alpha^2<0$ and consider the following plane wave solution of \eqref{dNLS}:
\begin{equation}\label{backgroundsolution}
u^b\colon \R\times [0,\infty)\to \C, \qquad u^b(x,t) \coloneqq \alpha \e^{2 i\beta x+ i \omega t}.
\end{equation}
The corresponding ``background'' Lax pair is given by 
\begin{equation}\label{blax}
\begin{cases}
\phi^b_x +  i k\sigma_3 \phi^b = \mathsf{U}^b \phi^b,
\\ 
\phi^b_t + 2 i k^2 \sigma_3 \phi^b = \mathsf{V}^b \phi^b,	
\end{cases}
\end{equation}
with
\begin{align*}
& \mathsf{U}^b(x,t) = \begin{pmatrix} 0 & \alpha \e^{2 i\beta x+ i \omega t} \\
\alpha \e^{-2 i\beta x- i \omega t} & 0 \end{pmatrix}, 
	\\
& \mathsf{V}^b(x,t) = \begin{pmatrix} - i \alpha^2 & 2\alpha(k-\beta) \e^{2 i\beta x+  i\omega t} \\
2\alpha(k-\beta) \e^{-2 i\beta x-  i\omega t} &  i \alpha^2 \end{pmatrix}.
\end{align*}

\subsubsection{Background eigenfunction}\label{sec_background_ef}
Let $E_1\coloneqq -\beta-\alpha$ and $E_2\coloneqq -\beta + \alpha$. We define the functions $X,\Omega \colon \C\setminus (E_1,E_2)\to \C$ by
\begin{equation*}
	X(k)=\sqrt{(k-E_1)(k-E_2)}, \qquad \Omega(k)=2\left(k-\beta\right)X(k),
\end{equation*}
where the branch of the square root is chosen such that
\begin{equation*}
	X(k)=\sqrt{\left(k+\beta\right)^2-\alpha^2}=k+\beta+ \bigO(k^{-1})\qquad \text{as} \quad k\to\infty;
\end{equation*}
it follows that
$\Omega(k)=2 k^2 + \frac{\omega}2+ \bigO(k^{-1})$ as $k\to\infty$.
We further consider the function
\begin{equation}\label{Deltadef}
\Delta \colon \C\setminus [E_1,E_2]\to \C, \qquad	\Delta(k) = \bigg(\frac{k-E_2}{k-E_1}\bigg)^{1/4},
\end{equation}
where the branch of the fourth root is such that $\Delta(k)=1+ \bigO(k^{-1})$ as $k\to\infty$. 
 
The background Lax pair \eqref{blax} admits the explicit background eigenfunction
\begin{align}\label{backgroundefunction}
	\phi_1^b \colon \R\times [0,\infty)\times \big(\C\setminus [E_1,E_2]\big) \to \C^{2\times 2}, \quad 
\phi^b_1(x,t,k) = \e^{ i\left({\beta} x +\frac{\omega}{2} t\right) \sigma_3} \, s^b(k) \, \e^{- i( X(k) x+\Omega(k)t)\sigma_3},
\end{align}
where $s^b\colon \C\setminus [E_1,E_2]\to \C^{2\times 2}$ is given by
\begin{align}\label{def_s^b}
s^b \coloneqq \frac{1}{2}\begin{pmatrix} \Delta+ \Delta^{-1} &  i(\Delta - \Delta^{-1}) \\
- i(\Delta - \Delta^{-1}) &  \Delta+ \Delta^{-1} \end{pmatrix},
\end{align}
and satisfies $\det s^b(k) =1$ for $k\in\C\setminus[E_1,E_2]$.

\subsubsection{Reflection coefficient}
Let $u^b_0(x) = \alpha \e^{2i\beta x}$ and let $u_0\colon\R \to \C$ be such that 
\begin{equation*}
	(u_0 - u^b_0)|_{\R_-} \in  L^1(\R_-) \quad \text{and} \quad  u_0|_{\R_+} \in  L^1(\R_+).
\end{equation*}
Let $\mathsf{U}_0(x) = \mathsf{U}(x,0)$ and $\mathsf{U}_0^b(x) = \mathsf{U}^b(x,0)$. 
Define the eigenfunctions $\{\phi_j(x,k)\}^2_{j=1}$ as the unique solutions of the Volterra integral equations
\begin{subequations}\label{phi_Volterra}
\begin{align}
\phi_1(x,k)&=\phi_1^b(x,0,k)+\int_{-\infty}^x\phi_1^b(x,0,k)(\phi_1^b)^{-1}(x',0,k)\left[(\mathsf{U}_0-\mathsf{U}_0^b)(x')\right]\phi_1(x',k) \, \dx x',
\label{phi1_Volterra}\\
\phi_2(x,k)&=\e^{- i k x\sigma_3} - \int^\infty_x \e^{ i k(x'-x)\sigma_3} \mathsf{U}_0(x') \phi_2(x',k) \, \dx x'. \label{phi2_Volterra}
\end{align}
\end{subequations}
For each $x \in \R$, the map $\phi_1(x,\cdot) \colon (\C_+, \C_-) \to \C^{2\times2}$ is analytic and has a continuous extension to $(\overbar{\C_+},\overbar{\C_-}) \setminus \{E_1, E_2\}$, which is also denoted by $\phi_1$. This means that the first column of $\phi_1$ is continuous on $\overbar{\C_+}\setminus \{E_1, E_2\}$ while the second column is continuous on $\overbar{\C_-}\setminus \{E_1, E_2\}$.
For each $x \in \R$, the map $\phi_2(x,\cdot)\colon (\overbar{\C_-},\overbar{\C_+}) \to \C^{2\times2}$ is continuous and its restriction to $(\C_-,\C_+)$ is analytic. 
Both $\phi_1$ and $\phi_2$ solve the $x$-part of the Lax pair \eqref{lax} for $k\in\R\setminus\{E_1,E_2\}$ and $(x,t)\in\R\times[0,\infty)$. Hence there exists a unique matrix-valued function $s(k)$ such that
\begin{equation}\label{def_s}
	s(k) =\phi^{-1}_2(x,k)\,\phi_1(x,k), \qquad x \in \R, ~ k \in \R\setminus\{E_1,E_2\}.
\end{equation}
The symmetries of the Lax pair imply that $s$ has the form
\begin{equation} \label{def_s_entries}
	s(k)=\begin{pmatrix} a(k) & -b(k)\\ -\overbar{b(k)} & \overbar{a(k)}\end{pmatrix}, \qquad k \in \R\setminus\{E_1,E_2\}.
\end{equation}
We define the \emph{reflection coefficient} $r\colon \R\setminus\{E_1,E_2\}\to\C$ by
\begin{equation}\label{def_r}
	r(k)\coloneqq 
	\begin{cases} \frac{\overbar{b(k)}}{a(k)}, &k\in\R\setminus[E_1,E_2],  \vspace{.5em} \\
		-\frac{\overbar{a(k)}}{a(k)}, & k \in (E_1, E_2).
	\end{cases}
\end{equation}

\subsection{The direct problem} \label{sec_main_thm_direct}
Our first main result is concerned with the direct problem, i.e., the construction of the reflection coefficient $r(k)$ and the associated eigenfunctions from the initial data $u_0(x)$.

\begin{theorem}[Solution of the direct problem]\label{thm_direct}
Let $\alpha, \beta\in\R$ with $\alpha>0$ and let $N_1 \geq 2$ be an integer. Suppose $u_0\colon\R \to \C$ satisfies
\begin{equation}\label{u0assumptions}
x^n(u_0- u^b_0) \big|_{\R_-} \in L^1(\R_-),\quad  x^n u_0\big|_{\R_+} \in L^1 (\R_+),	\qquad n = 0, \dots,  N_1.
\end{equation}

\begin{enumerate}[\upshape (a)]
\item \label{thm_direct_a}
The reflection coefficient $r$ extends continuously to an element in $\mathcal C^{N_1}(\R\setminus \{E_1,E_2\})\cap \mathcal C(\R)$, again denoted by $r$, and satisfies the following properties.
\begin{enumerate}[\upshape (i)]
	\item\label{prop_r_i}
	$|r(k)| = 1$ for $k \in [E_1,E_2]$, and $|r(k)|<1$ for $k \in \R\setminus[E_1,E_2]$.
	\item \label{prop_r_iii}
	Near the branch points $E_j$, $j=1,2$, it holds that
	\begin{equation}\label{ratbranchpoints}
		r(k) =
		\begin{cases}
			\sum^{N_1-1}_{l=0} q_{2,l} (k-E_2)^{l/2} + o\big((k-E_2)^{\frac{N_1-1}{2}}\big)   &\text{as} \quad k\searrow E_2, \\
			\sum^{N_1-1}_{l=0}  i^l q_{2,l} (E_2-k)^{l/2} + o\big((E_2 -k)^{\frac{N_1-1}{2}}\big)   &\text{as}\quad k\nearrow E_2, \\
			\sum^{N_1-1}_{l=0}  i^l q_{1,l} (k-E_1)^{l/2} + o\big((k-E_1)^{\frac{N_1-1}{2}}\big)  &\text{as}\quad k\searrow E_1, \\
			\sum^{N_1-1}_{l=0} (-1)^l q_{1,l} (E_1-k)^{l/2} + o\big((E_1-k)^{\frac{N_1-1}{2}}\big)   &\text{as}\quad k\nearrow E_1, \\
		\end{cases}
	\end{equation}
	for some coefficients $q_{j,l}\in\C$ such that
	\begin{align} \label{coeffqjl}
	|q_{j,0}| =1, \quad q_{j,1}\neq 0, \quad \sum_{l=0}^n  i^{n-l} (- i)^l q_{j,n-l} \overbar{q_{j,l}} =0,   \qquad j=1,2, ~ n = 1, \dots, N_1-1.
\end{align}
\end{enumerate} 

\item \label{thm_direct_b}
If $u_0$ in addition to \eqref{u0assumptions} also satisfies \begin{align}\label{u0assumptions2}
u_0\in\mathcal C^{N_2}(\R) \quad \text{and} \quad \partial^n_x u_0 \in L^\infty(\R) \quad \text{for} \quad n = 0, \dots, N_2, 
\end{align}
for some integer $0\leq N_2\leq 4$, then 
\begin{equation*}
	 \partial^n_k r(k) = \bigO(k^{-N_2-1}) \qquad\text{as} \quad k\to\pm\infty, \quad n = 0, \dots,  N_1.
\end{equation*}

\item \label{thm_direct_c}
If $u_0$ satisfies \eqref{u0assumptions} and \eqref{u0assumptions2} for some integers $N_1 \geq 2$ and $N_2 \geq 0$, then the function $m_0(x,\cdot)\colon \C\setminus \R\to GL(2,\C)$, $x\in\R$, given by
\begin{align}
	\begin{aligned} \label{def_m_0}
	m_0(x,k) \coloneqq \begin{cases}
		\begin{pmatrix} \frac{[\phi_1(x,k)]_1}{a(k)} & [\phi_2(x,k)]_2 \end{pmatrix} \e^{ i k x\sigma_3}, & k \in \C_+, 
		\vspace{.1cm}	 \\ 
		\begin{pmatrix} [\phi_2(x,k)]_1 & \frac{[\phi_1(x,k)]_2}{\overbar{a(\bar{k})}} \end{pmatrix} \e^{ i k x\sigma_3}, & k \in \C_-,
	\end{cases}
\end{aligned}	
\end{align}
satisfies the following RH problem:
\begin{itemize}
\item $m_0(x,k)$ is analytic for $k \in \C\setminus \R$;

\item $m_0(x,\cdot)$ has continuous boundary values on $\R$ from the upper and lower half-planes denoted by $m_{0+}$ and $m_{0-}$, respectively, which obey the jump relation  
\begin{align}\label{RHm0}
m_{0+}(x,k) = m_{0-}(x,k) v_0(x,k) \quad \text{for} \; k \in \R,
\end{align}
where the jump matrix $v_0$ is given by
\begin{align} \label{def_v_0}
	v_0(x,k) &= \begin{pmatrix}1 - |r(k)|^2 & \overbar{r(k)} \e^{-2 i k x} \\
		-r(k) \e^{2 i k x} & 1 \end{pmatrix};
\end{align}

\item $m_0(x,k)=I+ \bigO(k^{-1})$ as $k\to \infty$.

\end{itemize}

\end{enumerate}
\end{theorem}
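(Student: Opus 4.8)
\textbf{Proof proposal for Theorem \ref{thm_direct}.}

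The plan is to analyze the Volterra equations \eqref{phi_Volterra} directly, since we are not given closed-form scattering data but only decay/regularity of $u_0$. First I would establish existence, uniqueness, and analyticity of the eigenfunctions $\phi_1, \phi_2$: rewriting \eqref{phi1_Volterra} in terms of the bounded modified eigenfunction $\mu_1 \coloneqq \phi_1 (\phi_1^b)^{-1}$ (or rather the conjugated version stripping the $\e^{-iXx\sigma_3}$ oscillation), one gets a Volterra equation with kernel controlled by $\|(u_0-u^b_0)|_{\R_-}\|_{L^1}$; the standard Neumann series converges and the usual argument gives analyticity of the appropriate columns in $\C_\pm$. The subtlety is the behavior near $E_1, E_2$: here $X(k)$ vanishes, $\Delta(k)$ has a fourth-root singularity, and $s^b(k)$ degenerates, so I would work with the local variable $(k-E_j)^{1/2}$ and track how $s^b$ and $\phi_1^b$ depend on it, showing $\phi_1(x,\cdot)$ extends continuously to $\overbar{\C_\pm}\setminus\{E_1,E_2\}$ with at worst a controlled singularity at $E_j$. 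Then $a(k) = \det(\ [\phi_1]_1\ ,\ [\phi_2]_2\ )$ (up to normalization) inherits the same regularity, and the definitions \eqref{def_r} give $r \in \mathcal C^{N_1}(\R\setminus\{E_1,E_2\})$. Part (a)(i): for $k\notin[E_1,E_2]$ unimodularity of $s$ (from \eqref{def_s_entries}) gives $|a|^2 - |b|^2 = \det s = 1$, hence $|r| = |b|/|a| < 1$ and $a$ has no real zeros there; for $k\in(E_1,E_2)$ the definition $r = -\overbar a/a$ gives $|r| = 1$ immediately, provided one shows $a$ has no zeros on $(E_1,E_2)$ either (this uses the structure of the problem — ultimately the absence of discrete spectrum, which should follow from a determinant/positivity argument analogous to the one mentioned in the introduction for vanishing boundary conditions).

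For the expansion \eqref{ratbranchpoints} near $E_j$, the idea is that $a(k)$, built from eigenfunctions solving Volterra equations whose inhomogeneous terms involve $s^b(k)$ and $\Delta(k)$, admits a convergent expansion in powers of $(k-E_j)^{1/2}$ up to order $N_1-1$ with a $o(\cdot)$ remainder, because the $N_1$ moment conditions \eqref{u0assumptions} give exactly that many terms of Taylor-type expansion of the Volterra kernel. The monodromy relations around the branch point (i.e.\ how $(k-E_j)^{1/2}$ picks up a factor of $i$ or $-1$ when passing from one side of $E_j$ to the other) then force the four cases in \eqref{ratbranchpoints} to be related by the substitutions $(k-E_j)^{1/2}\mapsto \pm i (E_j-k)^{1/2}$ etc. The constraints \eqref{coeffqjl} come from: $|r|=1$ on $[E_1,E_2]$ forces $r(k)\overbar{r(k)} = 1$ as an identity in $(E_j-k)^{1/2}$, and matching coefficients order by order yields $\sum_l i^{n-l}(-i)^l q_{j,n-l}\overbar{q_{j,l}} = 0$ for $n\geq 1$ and $|q_{j,0}| = 1$ for $n=0$; the condition $q_{j,1}\neq 0$ is a genuine nondegeneracy statement reflecting that the branch point is "generic" — I expect this to come from the fact that $\Delta(k)-\Delta^{-1}(k)$ vanishes precisely to order $(k-E_j)^{1/4}$ there, so the coefficient of the first nontrivial power in $b$ or $a$ cannot vanish.

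Part (b) is a standard integration-by-parts argument: under the extra hypothesis \eqref{u0assumptions2}, repeatedly integrating by parts in the Volterra integrals (moving $k$-derivatives off the exponential $\e^{2ik(\cdot)}$ onto $\mathsf U_0$ and its derivatives, which are now $L^\infty$ and have the right number of derivatives) gains powers of $k^{-1}$, yielding $\partial_k^n r = \bigO(k^{-N_2-1})$ as $k\to\pm\infty$; the $k$-differentiation of $r$ up to order $N_1$ is controlled by the already-established $\mathcal C^{N_1}$ regularity. Part (c) is then essentially bookkeeping: define $m_0$ by \eqref{def_m_0}, and verify the RH problem. Analyticity in $\C\setminus\R$ is immediate from that of $\phi_1,\phi_2$ together with the fact (from (a)(i)) that $a$ has no zeros in $\C_+$ (and $\overbar{a(\bar k)}$ none in $\C_-$) — this nonvanishing in the open half-plane is where absence of discrete spectrum enters again. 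The jump relation \eqref{RHm0} with $v_0$ as in \eqref{def_v_0} follows by substituting \eqref{def_s}, \eqref{def_s_entries} into \eqref{def_m_0} and using the definition of $r$; the normalization $m_0 = I + \bigO(k^{-1})$ as $k\to\infty$ comes from the large-$k$ asymptotics of the Volterra solutions (which in turn reflects $s^b(k)\to I$, $\Delta(k)\to 1$).

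The main obstacle will be the local analysis at the branch points $E_1, E_2$: establishing the precise fractional-power expansion \eqref{ratbranchpoints} with the stated number of terms and the nondegeneracy $q_{j,1}\neq 0$, and keeping track of the branch-cut monodromy consistently across the four one-sided limits. This is exactly the point flagged in the introduction as requiring a generalization of the construction in \cite{Fromm19}, so I would expect the bulk of the work (and the only genuinely delicate estimates) to be concentrated there; the rest of the proof follows the well-trodden path of direct-scattering theory for Volterra-type eigenfunctions.
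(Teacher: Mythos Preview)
Your overall strategy matches the paper's: Volterra equations for $\mu_1,\mu_2$, analyticity of the relevant columns, local analysis near $E_j$ in the variable $(k-E_j)^{1/2}$, and then assembly of $r$ and $m_0$. Two points, however, are genuine gaps rather than details.

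First, your argument for $q_{j,1}\neq 0$ does not work. Saying that $\Delta-\Delta^{-1}$ vanishes to order $(k-E_j)^{1/4}$ only tells you that certain half-integer powers \emph{can} appear in $a,b$; it does not prevent the coefficient $q_{j,1}$ from vanishing by cancellation for particular $u_0$. The paper's mechanism is different and indirect: one first shows that $a(k)=\hat a_j(k-E_j)^{-1/4}+\bigO((k-E_j)^{1/4})$ with $\hat a_j\neq 0$ (nonvanishing of $\hat a_j$ follows because $|a|\geq 1$ on $\R\setminus[E_1,E_2]$). Then $1-|r|^2=|a|^{-2}\geq c\sqrt{k-E_j}$ as $k\searrow E_j$, and expanding $|r|^2$ using \eqref{ratbranchpoints} shows this forces $q_{j,1}\neq 0$. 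The nondegeneracy is thus a consequence of the \emph{genuine} fourth-root blowup of $a$, not of the structure of $s^b$ alone.

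Second, your treatment of the jump across $(E_1,E_2)$ and of $a\neq 0$ there is too quick. On the cut the definition of $r$ switches to $-\bar a/a$, and the jump relation \eqref{RHm0} with the \emph{same} matrix $v_0$ does not follow from \eqref{def_s}--\eqref{def_s_entries} by substitution alone. The missing ingredient is the identity $[\phi_1(x,k)]_1=[\phi_1(x,k)]_2$ for $k\in(E_1,E_2)$, which comes from the branch-cut symmetry $s^b_+=s^b_-\sigma_1\sigma_3$, $X_+=-X_-$ of the background eigenfunction. This identity gives $a=-b$ on the cut, from which both (i) the jump $m_{0+}=m_{0-}v_0$ on $(E_1,E_2)$ and (ii) the nonvanishing of $a$ there (since $a(\kappa)=0$ would force $b(\kappa)=0$ and hence $\phi_1\equiv 0$) follow. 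Without isolating this symmetry you cannot close either argument. The paper also separates the singular factor by writing $[\mu_1]_1=\Lambda\,[s^b]_1$ with $\Lambda$ solving a Volterra equation whose kernel satisfies $|E(x,x',k)|\leq C(1+|x-x'|)$ uniformly near $E_j$; this is what converts the moment conditions \eqref{u0assumptions} into $C^{N_1-1}$ regularity of $\Lambda$ in $\sqrt{k-E_j}$, and is worth making explicit rather than leaving as ``track how $s^b$ and $\phi_1^b$ depend on it''.
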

\begin{proof}
See Section  \ref{directsec}.
\end{proof}

The behavior \eqref{ratbranchpoints} of $r(k)$ at the two branch points $E_1$ and $E_2$ can be understood as follows. Suppose there is a $C > 0$ such that $u_0(x) = 0$  for $x > C$ and $u_0(x) = u_0^b(x)$  for $x < -C$. Then the reflection coefficient is defined and analytic for all $\C \setminus [E_1, E_2]$, and the function $r(k)$ defined for $k \in \R \setminus \{E_1, E_2\}$ in \eqref{def_r} denotes the boundary values of this analytic function from the upper half-plane. In this case, the relations between the coefficients in \eqref{ratbranchpoints} relevant for $k \searrow E_j$ and $k \nearrow E_j$ follow from the existence of the analytic continuation. In fact, viewing the reflection coefficient as a function on the two-sheeted Riemann surface defined by $X$ with a branch cut along $[E_1,E_2]$, we see that it is natural that the expansion at $E_j$ is in powers of the local analytic coordinate $\sqrt{k-E_j}$. 

For general initial data, $r(k)$ does not have an analytic continuation into the upper half-plane, but the structure of $r$ near the branch points remains unchanged. This will be crucial later in the steepest descent analysis, where we will use \eqref{ratbranchpoints} to construct an analytic approximation of $r$.

\begin{remark*}
The restriction $N_2 \leq 4$ in Theorem \ref{thm_direct}\eqref{thm_direct_b} is made for convenience and is not essential.
\end{remark*}

\subsection{Solution of the Cauchy problem}
Our second main result establishes existence of a global solution to the Cauchy problem \eqref{dNLS_IVP} with step-like initial data, and provides an expression for this solution in terms of the solution of a RH problem. 

\begin{definition*}
Suppose  $u_0\colon\R \to \C$ is step-like in the sense that \eqref{u0assumptions} holds for $N_1 = 2$. We say that $u:\R \times [0, \infty) \to \C$ is a {\it global solution of the Cauchy problem \eqref{dNLS_IVP} with initial data $u_0$} if the following hold:
\begin{enumerate}[\upshape (i)]
	\item $u(x,\cdot)\in\mathcal C^1\big((0,\infty)\big) \cap \mathcal C\big([0,\infty)\big)$ for each $x\in \R$
	and $u(\cdot,t)\in\mathcal C^2(\R)$ for each $t\in [0,\infty)$;
	
	\item $u$ satisfies the defocusing NLS equation \eqref{dNLS} for all $(x,t)\in\R \times(0,\infty)$;
	
	\item $u(x,0) = u_0(x)$ for all $x\in\R$;
	
	\item For each $t \geq 0$, $u$ satisfies the step-like boundary conditions
	\begin{align}\label{uboundaryconditions}
u(x, t) = \begin{cases}
\alpha \e^{2 i\beta x+ i \omega t} + o(1), & x\to -\infty, \\
o(1), & x\to +\infty.
\end{cases}
\end{align}
\end{enumerate}
\end{definition*}

\begin{theorem}[Solution of the Cauchy problem with step-like initial data] \label{thm_inverse}
Let $\alpha >0$ and $\beta\in\R$. Suppose $u_0\colon \R \to \C$ satisfies \eqref{u0assumptions} for $N_1 = 2$ and \eqref{u0assumptions2} for $N_2 = 3$. Then the Cauchy problem \eqref{dNLS_IVP} with initial data $u_0$ has a global solution and this solution can be constructed as follows.

Let $r\colon \R\setminus\{E_1,E_2\}\to\C$ be the reflection coefficient corresponding to $u_0$ according to \eqref{def_r}, and define  $v(x,t,\cdot)\colon \R \to SL(2,\C)$ by
\begin{align} \label{def_v}
	v(x,t,k) &\coloneqq \begin{pmatrix}1 - |r(k)|^2 & \overbar{r(k)} \e^{-2 i (kx + 2k^2 t)} \\
		-r(k) \e^{2 i (kx + 2k^2 t)} & 1 \end{pmatrix}.
\end{align}
For each $(x,t)\in \R\times [0,\infty)$, the RH problem
\begin{itemize}
\item $m(x,t,k)$ is analytic for $k \in \C\setminus \R$,

\item $m(x,t,\cdot)$ has continuous boundary values on $\R$ from the upper and lower half-planes, denoted by $m_+$ and $m_-$, respectively, which satisfy the jump relation  
\begin{align}\label{RHm}
m_+(x,t,k) = m_-(x,t,k) v(x,t,k) \quad \text{for} \; k \in \R,
\end{align}

\item $m(x,t,k)=I+ \bigO(k^{-1})$ as $k\to \infty$,

\end{itemize}
has a unique solution $m(x,t,\cdot)$, and 
\begin{equation} \label{def_u}
u(x,t) \coloneqq 2 i	\lim_{k\to\infty}  k m_{12}(x,t,k), \qquad (x,t)\in \R\times [0,\infty),
\end{equation}
where the limit is taken along any ray $\{k \colon \arg k = \varphi\}$ with $\varphi \in (0,\pi) \cup (\pi, 2\pi)$,
is a global solution of the Cauchy problem \eqref{dNLS_IVP} with initial value $u_0$. 
\end{theorem}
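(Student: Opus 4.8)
The plan is to follow the standard inverse-scattering strategy: first establish that the RH problem stated in the theorem is uniquely solvable for every $(x,t) \in \R \times [0,\infty)$, then show that the reconstruction formula \eqref{def_u} produces a function $u$ with the required regularity and limits, and finally verify that this $u$ coincides at $t=0$ with the RH solution associated to $u_0$ (via Theorem \ref{thm_direct}\eqref{thm_direct_c}), so that $u(x,0)=u_0(x)$, and that it satisfies \eqref{dNLS}. The key structural inputs are: (i) the jump matrix $v(x,t,k)$ admits the usual factorizations, is continuous on $\R$ by Theorem \ref{thm_direct}\eqref{thm_direct_a}, and on $(E_1,E_2)$ the entry $1-|r(k)|^2$ vanishes; (ii) by Theorem \ref{thm_direct}\eqref{thm_direct_b} we have $\partial_k^n r(k) = \bigO(k^{-N_2-1})$, i.e.\ decay with $N_2 = 3$ spare derivatives, which is what powers the differentiability of $m$ in the parameters $(x,t)$ and hence of $u$.

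First I would address \emph{existence and uniqueness} of $m(x,t,\cdot)$. Uniqueness is immediate: since $v$ is unimodular (indeed $\det v = 1$ because $|r|^2 = 1$ on $[E_1,E_2]$ makes $\det v = 1-|r|^2 + |r|^2$... more precisely $\det v = 1$ everywhere) and Hermitian-positive in the relevant sense — one has the symmetry $v(x,t,k) = v(x,t,k)^\dagger$ up to the standard conjugation, and on $\R\setminus[E_1,E_2]$ the $(1,1)$-entry $1-|r|^2>0$ while on $[E_1,E_2]$ it is $0$ but the matrix is still positive semidefinite — a vanishing-lemma argument (Zhou's, adapted to allow the rank drop on the closed segment $[E_1,E_2]$, which has measure zero) forces any solution of the homogeneous problem to vanish. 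For existence, I would first solve the problem with $r$ replaced by a compactly supported smooth approximation (or invoke the general theory for $L^2$ jump matrices satisfying the symmetry and the Schwartz-class decay, converting the RH problem to a singular integral equation $(I - \mathcal C_{v})\mu = I$ on $\R$ and checking the operator is Fredholm of index zero), then pass to the limit using the decay estimates on $r$ near the branch points from \eqref{ratbranchpoints}–\eqref{coeffqjl} to control the approximation uniformly; alternatively, since at $t=0$ the problem is exactly the one solved in Theorem \ref{thm_direct}\eqref{thm_direct_c} and the $t$-dependence enters only through the oscillatory factor $\e^{\pm 2i(kx+2k^2t)}$, a deformation/continuity argument in $t$ transfers solvability from $t=0$ to all $t>0$.

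Next I would prove \emph{regularity of $u$ and the reconstruction identities}. Differentiating the singular integral equation with respect to $x$ and $t$ — justified by dominated convergence using $\partial_k^n r = \bigO(k^{-4})$, so that $x^j t^\ell$ times the kernel remains integrable for the needed number of derivatives — shows $m(x,t,k)$ is $C^1$ in $t$ and $C^2$ in $x$ with derivatives that are again bounded analytic functions of $k$ away from $\R$, uniformly on compacts, admitting asymptotic expansions $m = I + m_1(x,t)/k + m_2(x,t)/k^2 + \dots$ as $k\to\infty$. The dressing/Lax argument then applies verbatim: since $v(x,t,k) = \e^{-i(kx+2k^2t)\hat\sigma_3} v_0'$ for a $(x,t)$-independent matrix, the functions $\phi := m\,\e^{-i(kx+2k^2t)\hat\sigma_3}$ (built columnwise) have an $(x,t)$-independent jump, so $\phi_x\phi^{-1}$ and $\phi_t\phi^{-1}$ are entire in $k$, and matching the large-$k$ expansion identifies them with $-ik\sigma_3 + \mathsf U$ and $-2ik^2\sigma_3 + \mathsf V$ respectively, where $\mathsf U,\mathsf V$ are the Lax-pair matrices built from $u := 2i\lim_k k\, m_{12} = 2i (m_1)_{12}$ and $\bar u$; the compatibility of this Lax pair is exactly \eqref{dNLS}. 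For the boundary conditions \eqref{uboundaryconditions}: as $x\to +\infty$, the oscillatory factors $\e^{2i(kx+\dots)}$ force $m \to I$ by Riemann–Lebesgue-type estimates on the singular integral (using the continuity and decay of $r$), whence $u(x,t)=o(1)$; as $x\to -\infty$ one must instead factor out the background — write the jump using the matrix $s^b$ and the background eigenfunction $\phi_1^b$ from \eqref{backgroundefunction}, so that $m$ tends to the background RH solution, which reconstructs the plane wave $\alpha\e^{2i\beta x + i\omega t}$; this step requires the explicit structure \eqref{def_r} of $r$ on $[E_1,E_2]$ together with $|r|=1$ there.

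The main obstacle I expect is the $x\to-\infty$ asymptotics together with the behavior at the branch points $E_1, E_2$: the jump matrix does not tend to the identity there, the $(1,1)$-entry of $v$ degenerates on the whole segment $[E_1,E_2]$, and the reflection coefficient is only $C^{N_1}$ up to those points with the half-integer-power structure \eqref{ratbranchpoints} rather than analytic. Making the vanishing lemma and the singular-integral-equation solvability robust against this rank drop, and showing that after conjugating by the background eigenfunction the residual RH problem genuinely decays to $I$ as $x\to-\infty$ (so that the leading term of $u$ is precisely the plane wave, with the stated $o(1)$ error for each fixed $t$), is the technically delicate core; everything else is the standard dressing-method machinery. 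The consistency $u(x,0)=u_0(x)$ is comparatively routine once uniqueness of the RH solution is in hand, since $m(x,0,\cdot)$ then equals the function $m_0(x,\cdot)$ of Theorem \ref{thm_direct}\eqref{thm_direct_c}, for which $2i\lim_k k (m_0)_{12} = u_0$ follows by expanding \eqref{def_m_0} at $k=\infty$ using the Volterra equations \eqref{phi_Volterra}.
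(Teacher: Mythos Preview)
Your proposal is essentially correct and aligns with the paper's approach on the core points: the vanishing lemma (positive semidefiniteness of $\tfrac12(v+\bar v^{\intercal})$, positive definiteness off $[E_1,E_2]$) for existence and uniqueness, the dressing/Lax-pair argument for showing $u$ solves \eqref{dNLS}, and the identification $m(x,0,\cdot)=m_0(x,\cdot)$ via uniqueness to recover $u(x,0)=u_0(x)$.

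The one substantive difference is how the step-like boundary conditions \eqref{uboundaryconditions} are verified. You propose direct arguments --- Riemann--Lebesgue for $x\to+\infty$, and for $x\to-\infty$ a conjugation by the background eigenfunction so that a residual RH problem tends to $I$. The paper does neither: it simply defers to Theorems~\ref{thm_sec_L} and~\ref{thm_sec_R}, whose Deift--Zhou steepest descent analysis yields (as a by-product of the uniform estimates in $\xi=x/t$) that $u(x,t)\to\alpha\e^{2i\beta x+i\omega t}$ as $x\to-\infty$ and $u(x,t)\to 0$ as $x\to+\infty$ for each fixed $t>0$. Your $x\to+\infty$ sketch is plausible, but your $x\to-\infty$ plan --- ``conjugate by the background and show the residual problem decays'' --- is, once made rigorous with the branch-point behavior \eqref{ratbranchpoints} and the degeneracy $1-|r|^2=0$ on $[E_1,E_2]$, essentially a steepest descent analysis in disguise (introduction of a $g$-function, analytic approximations near $E_1,E_2$, global parametrix). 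You correctly flag this as the delicate core; the paper's resolution is not to find a shortcut but to absorb it into the later asymptotic theorems.
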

\begin{proof}
See Section  \ref{inversesec}; the asymptotic property in \eqref{uboundaryconditions} is treated in Sections \ref{leftsec}\&\ref{rightsec}. 
\end{proof}

\subsection{Long-time asymptotics} \label{sec_asymptotics}
Our next three theorems deal with the long-time behavior of the solution $u(x,t)$ found in Theorem \ref{thm_inverse}.

\subsubsection{Asymptotic sectors} \label{sec_asymptotic_sectors}
Let $\mathcal H$ denote the closed half-plane
\begin{equation*}
	\mathcal H \coloneqq \{(x,t)\colon x\in\R, t\geq 0\}.
\end{equation*}
Let $\alpha>0$, $\beta\in\R$, and $\xi\coloneqq x/t$. 
We consider the asymptotic sectors 
\begin{align*}
	\mathcal L &\coloneqq \{(x,t)\in\mathcal H\colon \xi\in I_{\mathcal L}\}, 
	&& \,\, I_{\mathcal L}\coloneqq (-\infty, 4\beta - 2\alpha-\delta],   \\
	\mathcal M &\coloneqq \{(x,t)\in\mathcal H\colon \xi\in I_{\mathcal M}\}, 
	&& I_{\mathcal M}\coloneqq [4\beta - 2\alpha+\delta, 4\beta +4\alpha-\delta], \\
	\mathcal R &\coloneqq   \{(x,t)\in\mathcal H\colon \xi\in I_{\mathcal R}\}, 
	&&\,\, I_{\mathcal R}\coloneqq [4\beta +4\alpha+\delta,+\infty),
\end{align*}
where $\delta>0$ is a small positive constant, see Figures \ref{fig:sectors}--\ref{fig:sectors3}.

\begin{figure}[h!] \centering
	\vspace{.2em}
	\begin{overpic}[width=.63\textwidth]{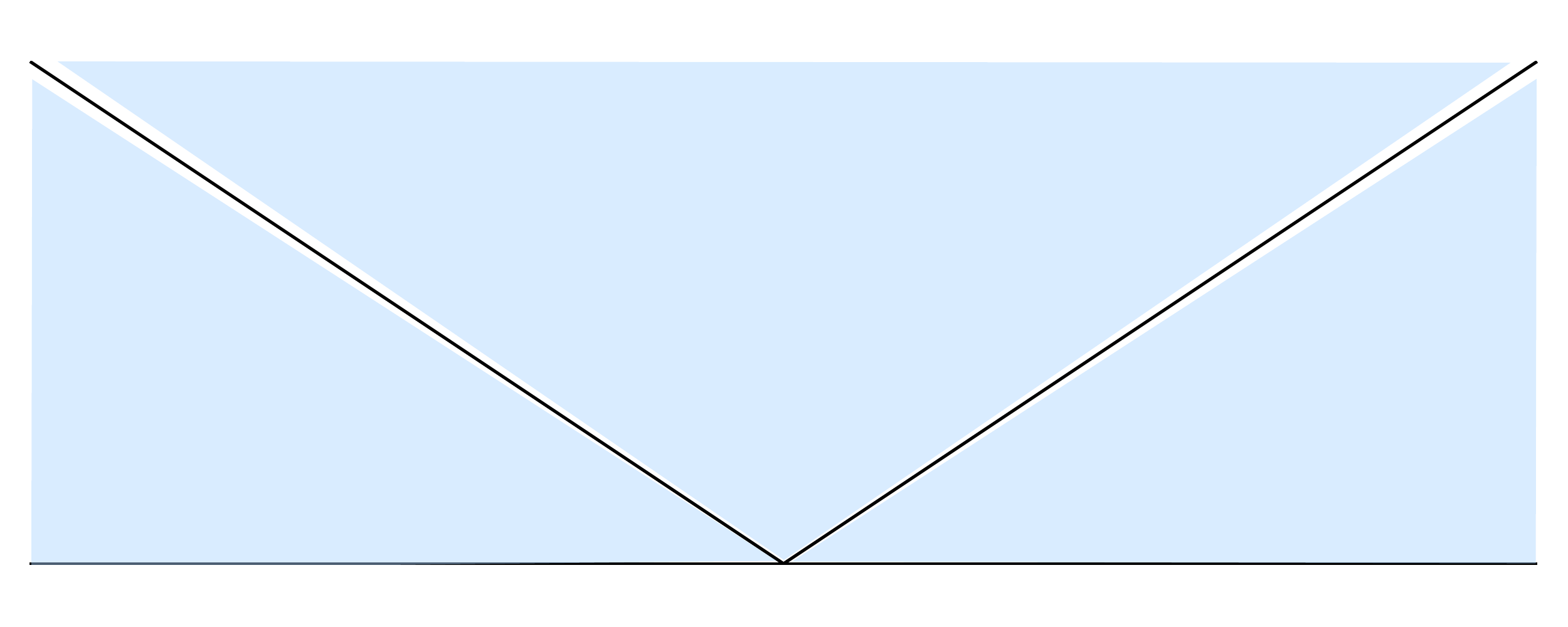}
		\put(49.2,1){\footnotesize{$0$}}     
		\put(100,3.4){\footnotesize{$x$}}  
		\put(14,13){\footnotesize{$\mathcal L$}} 
		\put(48,26){\footnotesize{$\mathcal M$}} 
		\put(80,13){\footnotesize{$\mathcal R$}}
		\put(-10,37.5){\footnotesize{$\xi=4\beta - 2\alpha$}}
		\put(92,37.5){\footnotesize{$\xi=4\beta+4\alpha$}}
	\end{overpic}
	\caption{Illustration of the asymptotic sectors $\mathcal L$, $\mathcal M$, and $\mathcal R$ in the case when $4\beta - 2\alpha < 0 < 4\beta+4\alpha$.} 
	\label{fig:sectors}
\end{figure}

\subsubsection{Asymptotics in $\mathcal{M}$} \label{sec_asymp_M}
Our first asymptotic theorem provides the asymptotics in the middle sector.
Recall that $E_1 =  -\beta-\alpha$.
For $\xi\in I_{\mathcal M}$, let
\begin{align*} 
	k_0(\xi) &\coloneqq \frac{\beta+\alpha - \xi}{3}, \\
	\mathcal{X}(\xi,k) &\coloneqq \sqrt{(k - E_1)(k-k_0)},  \qquad k\in \C \setminus (E_1,k_0),
\end{align*} 
where the branch is such that $\mathcal{X} \sim k$ as $k\to\infty$.
Furthermore, consider the $\xi$-dependent quantities $C_{k_0}, g_\infty, \mathcal{D}^{-2}_\infty$ given by
\begin{flalign}
	&\quad\;\;
	\begin{aligned} \label{C_k0}
		C_{k_0} \coloneqq \frac{\sqrt{k_0 - E_1}}{2 \pi} \Bigg[ & i \int^{E_1}_{-\infty} \frac{\log(1-|r(s)|^2)}{\mathcal{X}(s)(s-k_0)} \, \dx s \\
		&\quad+ \int^{k_0}_{E_1} \frac{\frac{\log(r(s))}{\sqrt{s - E_1}} - \frac{\log(r(k_0))}{\sqrt{k_0 - E_1}}}{s-k_0} \frac{\dx s}{\sqrt{k_0-s}}
		+ \frac{2 \log(r(k_0))}{k_0-E_1} \Bigg],
	\end{aligned}\\
	&\;\; g_\infty(\xi) \coloneqq \frac{1}{12} \Big(\xi ^2 - 4 E_1 \xi -8 E_1^2 \Big), \label{def_g_infty_sec_M} \\
	&\mathcal{D}^{-2}_\infty (\xi) \coloneqq \exp
	\Bigg[\frac{1}{\pi  i} \Bigg( \int^{E_1}_{-\infty} \frac{\log(1-|r(s)|^2)}{\mathcal{X}(\xi,s)} \, \dx s + 
	\int^{k_0}_{E_1} \frac{\log r(s)}{\mathcal{X}_+(\xi,s)} \, \dx s  \Bigg)\Bigg],
	\label{Dinfty^-2}
\end{flalign}
where the branch of the function $\log{r(s)}$ is such that it is continuous for $s \in [E_1, k_0]$ and the principal branch is used for $\log{r(E_1)}$.

\begin{theorem}[Asymptotics in the middle sector] \label{thm_sec_M}
Suppose $u_0$ satisfies the assumptions of Theorem \ref{thm_direct} with $N_1 = 8$ and $N_2=3$. 
Let $u$ be the solution of the Cauchy problem \eqref{dNLS_IVP} with initial data $u_0$ constructed in Theorem \ref{thm_inverse}. As $t\to \infty$, the restriction $u|_{\mathcal M}$ of $u$ to $\mathcal M$ satisfies
	\begin{align} \label{lim_km_12}
			u|_{\mathcal M}(x,t)
			= - \mathcal{D}^{-2}_\infty(\xi) \,\e^{2 i t g_\infty (\xi)}   \bigg[\frac{4(\alpha + \beta)-\xi}{6} +   \frac{12 C_{k_0}^2  - \frac{24}{\sqrt{k_0 - E_1}} C_{k_0} +\frac{7}{k_0 - E_1} }{144 i t}  \bigg]
			+  \bigO (t^{-2})
	\end{align}
	 uniformly for $\xi\in I_{\mathcal M}$.
\end{theorem}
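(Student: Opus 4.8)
The plan is to perform a Deift--Zhou steepest descent analysis of the RH problem for $m(x,t,k)$ from Theorem \ref{thm_inverse}, specialized to $\xi = x/t \in I_{\mathcal M}$. The controlling phase is $\Phi(\xi,k) = 2(kx + 2k^2t)/t = 2(k\xi + 2k^2)$, with $\partial_k\Phi = 2\xi + 8k$, so the exponentials $\e^{\pm 2i(kx+2k^2t)}$ in $v(x,t,k)$ have a stationary point at $k_* = -\xi/4$. First I would introduce the $g$-function: because the jump on $[E_1,E_2]$ (where $|r|=1$) does not decay, the standard factorization fails there, and one must conjugate the RH problem by $\e^{itg(\xi,k)\sigma_3}$, where $g$ is chosen so that the jump is regularized on the band $[E_1,k_0]$ and exponentially decaying elsewhere on $\R$. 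The band endpoint is $k_0(\xi) = (\beta+\alpha-\xi)/3$; it is pinned at $E_1$ on the left and moves to the right as $\xi$ decreases, and the genus-$0$ structure is governed by the square root $\mathcal{X}(\xi,k) = \sqrt{(k-E_1)(k-k_0)}$. I would construct $g$ explicitly via a scalar RH problem on $[E_1,k_0]$ with the correct behavior, determine the endpoint $k_0$ from a vanishing/normalization condition (this is exactly what produces the cubic relation giving $k_0 = (\beta+\alpha-\xi)/3$), and read off $g_\infty(\xi) = \lim_{k\to\infty}(g(\xi,k) - \text{polynomial part})$, yielding \eqref{def_g_infty_sec_M}.

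Next I would carry out the usual sequence of transformations. After the $g$-function conjugation, perform a factorization of the jump matrix on $\R$ (upper/lower triangular splitting away from the band, the "opening of lenses" around the band $[E_1,k_0]$), which requires replacing $r(k)$ by an analytic approximation --- here one invokes the structure of $r$ near the branch point $E_1$ established in Theorem \ref{thm_direct}(a)(iii), together with a $\bar\partial$-extension or rational approximation in the bulk; the hypothesis $N_1 = 8$ is what supplies enough smoothness/decay of $r$ for the error terms to be $\bigO(t^{-2})$. The resulting RH problem localizes to: (i) a global parametrix built from the square root $\mathcal{X}$ and a scalar Szegő-type function $\mathcal{D}(\xi,k)$ solving a scalar jump problem with data $\log(1-|r|^2)$ on $(-\infty,E_1)$ and $\log r$ on $(E_1,k_0)$ --- whose value at infinity is precisely $\mathcal{D}_\infty^{-2}(\xi)$ in \eqref{Dinfty^-2}; and (ii) local parametrices near the two "hard" endpoints $E_1$ and $k_0$. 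Since at $k_0$ the band meets the stationary-phase regime, the local model there is a parabolic-cylinder (Weber) parametrix contributing at order $t^{-1/2}$; at $E_1$ the local model is built from Bessel functions. A key point specific to the middle sector is that the two endpoints $E_1$ and $k_0$ can collide (as $\xi \to 4\beta - 2\alpha$, i.e.\ $k_0 \to E_1$) or $k_0$ can approach the other branch point behavior (as $\xi \to 4\beta+4\alpha$); the $\delta$-cutoff in $I_{\mathcal M}$ keeps them separated so the parametrices are valid, and the uniformity in $\xi$ follows from tracking the dependence of all constants on $\xi$.

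Then I would assemble the asymptotics: write $m = m^{\mathrm{err}} \cdot (\text{parametrices})$, where $m^{\mathrm{err}} = I + \bigO(t^{-1})$ solves a small-norm RH problem whose jump is supported on circles around $E_1$ and $k_0$ and on the lens boundaries. Expanding $m^{\mathrm{err}}(x,t,k) = I + m_1^{\mathrm{err}}(x,t)/k + \bigO(k^{-2})$ and combining with the large-$k$ expansion of the global parametrix, $u(x,t) = 2i\lim_{k\to\infty} k m_{12}$ picks up (a) the leading $\bigO(1)$ term from the global parametrix, which produces $-\mathcal{D}_\infty^{-2}(\xi)\e^{2itg_\infty(\xi)}\cdot\frac{4(\alpha+\beta)-\xi}{6}$ (the prefactor $\frac{4(\alpha+\beta)-\xi}{6}$ being essentially $\frac{2}{3}(k_0 - E_1)$ up to constants, coming from the off-diagonal term of the square-root parametrix at infinity), and (b) the $\bigO(t^{-1})$ correction, which comes from the $t^{-1/2}$ Weber-parametrix contribution near $k_0$ entering quadratically (two factors from the jump of the error problem) together with the subleading term of the square-root parametrix --- this is where the constant $C_{k_0}$ from \eqref{C_k0} enters, and the specific combination $12C_{k_0}^2 - \frac{24}{\sqrt{k_0-E_1}}C_{k_0} + \frac{7}{k_0-E_1}$ emerges from carefully collecting these terms. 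The main obstacle I anticipate is twofold: first, constructing the analytic approximation of $r$ that is simultaneously valid near the branch point $E_1$ (where $r$ has the $\sqrt{k-E_1}$-type expansion from Theorem \ref{thm_direct}) and good enough in the bulk to yield $\bigO(t^{-2})$ errors --- the paper itself flags this as requiring a generalization of the construction in \cite{Fromm19}; and second, the bookkeeping of the order-$t^{-1}$ term, which demands the full subleading expansion of the local parametrix at $k_0$ and its precise matching with the global parametrix, all tracked uniformly in $\xi$ over $I_{\mathcal M}$.
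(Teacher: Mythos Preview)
Your overall architecture is right --- $g$-function conjugation, $\mathcal{D}$-conjugation, lens opening with analytic approximations of $r$ and $r_2$, global parametrix on $[E_1,k_0]$, local parametrix at the band edge, small-norm error problem --- and matches the paper's Section~\ref{middlesec}. But there is one genuine error that would derail the computation of the subleading term.

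The local model at $k_0$ is \emph{Airy}, not parabolic cylinder. After the $g$-function conjugation the relevant phase near $k_0$ is $g(\xi,k) = 2(k-k_0)\mathcal{X}(\xi,k) = 2\sqrt{k_0-E_1}\,(k-k_0)^{3/2}\bigl(1+\bigO(k-k_0)\bigr)$, i.e.\ it vanishes like $(k-k_0)^{3/2}$ at a soft band edge, not like $(k-k_0)^2$ at a regular saddle. The correct rescaling is $z = t^{2/3} f(\xi,k)$ with $f \sim -(k-k_0)$, and the model RH problem is the classical Airy problem (see the paper's Section~\ref{sec_Airy_mp}). Because the Airy solution satisfies $m^{\mathrm{Ai}}(z) = z^{-\sigma_3/4}N\bigl(I + m_1^{\mathrm{Ai}}z^{-3/2} + \cdots\bigr)$, the mismatch $m^{k_0}(m^{[E_1,k_0]})^{-1} - I$ on $\partial D_\varepsilon(k_0)$ is $\bigO(t^{-1})$, and this contributes \emph{directly} at order $t^{-1}$ to $\hat m$ and hence to $u$. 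Your mechanism --- a Weber parametrix giving $t^{-1/2}$, then squared to $t^{-1}$ --- is not what happens; a parabolic-cylinder model would put a genuine $t^{-1/2}$ term into the error RH problem and hence into $u$, contradicting the form of \eqref{lim_km_12}. The specific coefficient $12C_{k_0}^2 - \tfrac{24}{\sqrt{k_0-E_1}}C_{k_0} + \tfrac{7}{k_0-E_1}$ is exactly the $(1,2)$-entry of $\mathrm{Res}_{k=k_0}A_1$, where $A_1$ is built from $m_1^{\mathrm{Ai}}$ and the matching factor $E$ (paper's Lemma~\ref{lem_m^k_0}); the constant $C_{k_0}$ enters through the expansion of $\mathcal{D}$ at $k_0$ (Lemma~\ref{lem_D}\eqref{lem_D_k0}).

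A second, smaller point: the paper does \emph{not} build a Bessel parametrix at $E_1$. Instead, the behavior of $\mathcal{D}$ at $E_1$ (Lemma~\ref{lem_D}\eqref{lem_D_iii}) and the $|k-E_1|^{-1/2}$ behavior of the analytic approximation $r_{2,a}$ are arranged so that, after conjugation by the global parametrix $m^{[E_1,k_0]}$, the error function $\hat m$ stays bounded at $E_1$ and the jump there is already $\bigO(t^{-2})$ (Lemma~\ref{lem_an_appr_r2}\eqref{lem_an_appr_r2_iv}). So no local parametrix at $E_1$ is needed, and the only contribution to the $t^{-1}$ term comes from $\partial D_\varepsilon(k_0)$.
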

\begin{proof}
See Section  \ref{middlesec}.
\end{proof}

\subsubsection{Asymptotics in $\mathcal{L}$} \label{sec_asymp_L}
Let $\xi\in I_{\mathcal L}$. Recall that $X(k)=\sqrt{(k-E_1)(k-E_2)}$, where $E_1 = -\beta-\alpha$ and $E_2 = -\beta + \alpha$. In order to explicitly state the leading and subleading terms of the asymptotics of $u$ in $\mathcal{L}$ in terms of $\alpha$, $\beta$, $\xi$, and the reflection coefficient $r$, we introduce the following quantities:
\begin{align}
	\begin{aligned}\label{leftsectorquantities}
	k_0(\xi) &\coloneqq  -\frac{4\beta + \xi}{8} + \sqrt{\frac{\alpha^2}{2} + \Big(\frac{4\beta - \xi}{8}\Big)^2}, \qquad \nu \coloneqq -\frac1{2\pi}\log(1-|r(k_0)|^2), 
		\\
	 g(k) & \coloneqq (2k -2\beta+\xi)X(k), \qquad \psi(k_0) \coloneqq \frac{2\sqrt{2}\big({\frac{\alpha^2}{2} + \big(\frac{4\beta - \xi}{8}\big)^2}\big)^{1/4}}{\sqrt{X(k_0)}},
		\\
	D(\xi, k) & \coloneqq \e^{\frac{X(k)}{2\pi  i}\big\{\big(\int_{-\infty}^{E_1} + \int_{E_2}^{k_0}\big) \frac{\log(1 - |r(s)|^2)}{X(s)(s - k)}  \, \dx s
		+ \int_{E_1}^{E_2} \frac{ \log r(s)}{X_+(s)(s - k)}  \, \dx s \big\}}, \qquad
	D_\infty(\xi) \coloneqq \lim_{k\to\infty}D(\xi, k), 
		\\
	D_b(k_0)&\coloneqq \lim_{\R\ni k\searrow k_0}\left[(k-k_0)^{-i\nu} D(\xi, k)\right],  \qquad  \beta_{k_0}^{\mathsf X} \coloneqq \sqrt{\nu} \e^{ i\left(\frac{3\pi}{4} - \arg (-r(k_0)) + \arg \Gamma( i\nu )\right)}, 
	\end{aligned}
\end{align}
where $\Gamma$ denotes the Gamma function and the branch of the function $\log{r(s)}$ is such that it is continuous for $s \in [E_1, E_2]$ and the principal branch is used for $\log{r(E_1)}$.
Furthermore, recalling the definition \eqref{Deltadef} of $\Delta(k)$ and that $\omega =  -4\beta^2-2\alpha^2$, we define $u_a \colon \mathcal L \to \C$ by
\begin{align} \label{def_u_a}
	\begin{aligned}
	u_a(x,t)
	\coloneqq -D^{-2}_\infty(\xi) \,\e^{2 i\beta x + i\omega t}
	&\Bigg(\frac{ i t^{- i\nu}{\beta^{\mathsf X}_{k_0}}(\Delta(k_0)^2+1)^2}{2\e^{2 i tg(k_0)}\Delta(k_0)^2\psi(k_0)^{1+2 i \nu}D_b(k_0)^{-2}}
	\\  
	&\quad +\frac{ i t^{ i\nu}\overbar{\beta^{\mathsf X}_{k_0}}(\Delta(k_0)^2-1)^2}{2\e^{-2 i tg(k_0)}\Delta(k_0)^2\psi(k_0)^{1-2 i\nu}D_b(k_0)^2}\Bigg).		
	\end{aligned}
\end{align}

\begin{theorem}[Asymptotics in the left sector]\label{thm_sec_L}
	Suppose $u_0$ satisfies the assumptions of Theorem \ref{thm_direct} with $N_1 = 8$ and $N_2=4$. 
Let $u$ be the solution of the Cauchy problem \eqref{dNLS_IVP} with initial data $u_0$ constructed in Theorem \ref{thm_inverse}. As $t\to \infty$, the restriction $u|_{\mathcal L}$ of $u$ to $\mathcal L$ satisfies
	\begin{align}\label{u_asymp_L}
			u|_{\mathcal L}(x,t) &= -D^{-2}_\infty(\xi)\, \alpha \e^{2 i\beta x + i\omega t}+\frac{u_a(x,t)}{\sqrt{t}}+\bigO\!\left(\frac{\log t}{t}\right)
	\end{align}
	uniformly for $\xi\in I_{\mathcal L}$, where 
	\begin{equation} \label{lim_D^{-2}_infty}
			\lim_{\xi\to-\infty} D^{-2}_\infty(\xi) = -1.
	\end{equation}
	Moreover, the error term in \eqref{u_asymp_L} can be replaced by $\bigO\big(|x|^{-1}\max(1,\log t)\big)$ as $x \to -\infty$, i.e., 	
\begin{align}\label{u_asymp_Lx}
		u|_{\mathcal L}(x,t)
		=  -D^{-2}_\infty(\xi)\, \alpha \e^{2 i\beta x + i\omega t} + \frac{u_a(x,t)}{\sqrt{t}} + \bigO\bigg(\frac{\max(1,\log t)}{|x|}\bigg), \qquad x \to -\infty,
	\end{align}
uniformly for $\xi \in I_{\mathcal L}$. In particular, for each fixed $t \geq 0$, $u(x,t) = \alpha \e^{2 i\beta x+ i \omega t} + o(1)$ as $x \to -\infty$.
Finally, the derivative $u_x$ satisfies, uniformly for $\xi\in I_{\mathcal L}$,
	\begin{align}\label{ux_asymp_L}
			u_x|_{\mathcal L} &= -D^{-2}_\infty(\xi)\,2i\beta \alpha \e^{2 i\beta x + i\omega t}+\bigO(t^{-1/2}), \qquad t \to \infty.
	\end{align}
\end{theorem}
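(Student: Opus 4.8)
\textbf{Proof plan for Theorem \ref{thm_sec_L}.} The plan is to carry out a Deift--Zhou steepest descent analysis of the RH problem for $m(x,t,k)$ from Theorem \ref{thm_inverse} in the regime $\xi = x/t \in I_{\mathcal L}$, following the standard scheme but paying attention to the two extra difficulties created by (i) the branch cut $[E_1,E_2]$ inherited from the nonzero background and (ii) the fact that $r$ is not analytic, so that analytic approximations (or $\bar\partial$ extensions) have to be built. First I would introduce the phase $\theta(k) = 2(kx + 2k^2 t)$ appearing in $v(x,t,k)$ and locate the stationary phase point; because of the background, the relevant phase after the $g$-function transformation is $g(k) = (2k - 2\beta + \xi)X(k)$, whose critical point on $(-\infty, E_1)$ is exactly the $k_0(\xi)$ in \eqref{leftsectorquantities}. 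One checks that for $\xi \in I_{\mathcal L}$ one has $k_0 < E_1$, so $k_0$ sits on the real line to the left of the branch cut, and that the signature table of $\im g$ has the shape needed to deform contours. The function $D(\xi,k)$ in \eqref{leftsectorquantities} is precisely the scalar RH solution that removes the jump on $(-\infty, k_0) \cup [E_1,E_2]$ and conjugates the jump on $(k_0, E_1)$ into upper/lower triangular form, analogous to the classical $\delta$-function; $D_\infty$, $D_b(k_0)$, $\nu$, $\psi(k_0)$, and $\beta^{\mathsf X}_{k_0}$ are the associated constants.

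The key steps, in order, are: (1) perform the $g$-function transformation $m \mapsto m \e^{-it g(k)\hat\sigma_3}$ together with the scalar conjugation by $D(\xi,k)$, turning the jump on the real line into a jump supported, up to exponentially small errors, near $k_0$ and on $[E_1,E_2]$; (2) replace $r$ on the relevant contours by an analytic approximation (in the style of \cite{Fromm19} as referenced in the introduction), with rational/entire approximants constructed so that the error is controlled by $e^{-ct}$ away from $k_0$ and by the $\bar\partial$-contribution near $k_0$, the delicate point being the behavior of $r$ at the branch points $E_1,E_2$ described by \eqref{ratbranchpoints}; (3) open lenses along the steepest descent directions through $k_0$, so that the jump on the deformed contour is $I + $ (exponentially small) away from a shrinking neighborhood $\{|k - k_0| < \varepsilon\}$; (4) solve the local parametrix near $k_0$ in terms of parabolic cylinder functions, which is where the factor $t^{\pm i\nu}$, the Gamma function in $\beta^{\mathsf X}_{k_0}$, and the coefficient structure of $u_a$ come from; (5) near the branch cut $[E_1,E_2]$ build the outer (background) parametrix out of $\Delta(k)$ and $X(k)$ — this is the genus-zero model problem for the plane wave, and it contributes the leading term $-D_\infty^{-2}\alpha \e^{2i\beta x + i\omega t}$; (6) patch the global parametrix together, estimate the small-norm RH problem for the error $E(x,t,k) = m(\text{parametrix})^{-1}$, obtaining $E = I + O(t^{-1/2})$ with the $t^{-1/2}$ term coming from the $k_0$-parametrix and an $O(t^{-1}\log t)$ contribution from the $\bar\partial$-part and from the interaction of the two parametrices; (7) extract $u(x,t)$ via \eqref{def_u} by expanding the parametrix and $E$ as $k \to \infty$, collecting the $O(1)$ term from the background parametrix and the $O(t^{-1/2})$ term from the $k_0$-parametrix to obtain \eqref{u_asymp_L}, and reading off \eqref{lim_D^{-2}_infty} from $k_0(\xi) \to E_1$ as $\xi \to -\infty$ (so that $D(\xi,k)$ degenerates and $D_\infty^{-2} \to -1$, consistently with $s^b$ and the sign convention in \eqref{def_u}).

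For the sharper estimate \eqref{u_asymp_Lx} as $x \to -\infty$ I would track the $|x|$-dependence through the size of the region between $k_0$ and $E_1$: as $\xi \to -\infty$ with $t$ fixed, $k_0 \to -\infty$ and $|g'|$ grows, so the scale of the local variable near $k_0$ improves and the error in the small-norm estimate gains a factor $|x|^{-1/2}$ (the $\log t$ is the usual loss from the non-analyticity / $\bar\partial$ step near the endpoints of the lens). For \eqref{ux_asymp_L} one differentiates the representation: $u_x$ is obtained from the $k \to \infty$ expansion of $m$ to one more order together with $\partial_x$ of the transformations; since $\partial_x$ of the leading background parametrix just brings down a factor $2i\beta$ from the $\e^{2i\beta x}$, the leading term of $u_x$ is $-D_\infty^{-2} \cdot 2i\beta\alpha\e^{2i\beta x + i\omega t}$, and the subleading $u_a/\sqrt t$ contributes only $O(t^{-1/2})$ after differentiation in $x$ because $\partial_x$ acting on $\e^{\pm 2itg(k_0)}$ produces $O(1)$ factors while the prefactor is $O(t^{-1/2})$. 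Finally, the pointwise claim "$u(x,t) = \alpha\e^{2i\beta x + i\omega t} + o(1)$ as $x \to -\infty$ for fixed $t$" follows from \eqref{u_asymp_Lx} combined with \eqref{lim_D^{-2}_infty}, since $\xi \to -\infty$ forces $D_\infty^{-2}(\xi) \to -1$ and the remaining terms are $o(1)$.

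The main obstacle I expect is Step (2)--(4): constructing the analytic approximation of $r$ that is simultaneously accurate enough near the stationary point $k_0$ to feed the parabolic-cylinder parametrix and compatible with the singular expansion \eqref{ratbranchpoints} at the branch points when $k_0$ is close to $E_1$ (i.e. near the boundary $\xi = 4\beta - 2\alpha$ of the sector), together with the bookkeeping needed to get the uniformity in $\xi$ over the whole unbounded interval $I_{\mathcal L}$ and the matching of the $k_0$-parametrix with the background parametrix near $E_1$. This is exactly the place where the "only asymptotically imposed" boundary conditions make the argument genuinely harder than in \cite{J2015}, and where the generalization of the construction in \cite{Fromm19} advertised in the introduction is needed.
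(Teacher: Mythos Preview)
Your overall steepest descent outline matches what the paper does (largely by reference to \cite{Fromm19}), but there are two concrete problems.

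First, the geometry is backwards. From \eqref{leftsectorquantities} one computes that $k_0(\xi) = E_2$ at $\xi = 4\beta - 2\alpha$ and $k_0(\xi) \sim -\xi/4 \to +\infty$ as $\xi \to -\infty$; in particular $k_0 > E_2$ throughout $I_{\mathcal L}$, so the stationary point sits to the \emph{right} of the branch cut, not to the left. This is also visible from the integration range $\int_{E_2}^{k_0}$ in the definition of $D$. Your Step (1) asserts $k_0 < E_1$, Step (7) asserts $k_0 \to E_1$, and the paragraph on \eqref{u_asymp_Lx} asserts $k_0 \to -\infty$; all three are false. The steepest descent scheme still goes through, but the lens opening, the roles of $(-\infty,E_1)$ versus $(E_2,k_0)$, and the matching near the branch points have to be set up accordingly.

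Second, and more seriously, your argument for \eqref{lim_D^{-2}_infty} is a genuine gap. Since $k_0 \to +\infty$ as $\xi \to -\infty$, $D_\infty(\xi)$ tends to $\exp\big(-\tfrac{1}{2\pi i}J\big)$ with
\[
J = \bigg(\int_{-\infty}^{E_1} + \int_{E_2}^{+\infty}\bigg) \frac{\log(1-|r(s)|^2)}{X(s)}\,\dx s + \int_{E_1}^{E_2} \frac{i\arg r(s)}{X_+(s)}\,\dx s,
\]
and the claim $D_\infty^{-2} \to -1$ is equivalent to $J \in \pi^2(1+2\Z)$. This does not follow from any ``degeneration'' of $D$; it requires the specific structure of $r$ in terms of the spectral function $a$. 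The paper proves it by writing $\log(1-|r|^2) = -2\re\log a$ on $\R\setminus[E_1,E_2]$ and $i\arg r \in \overline{\log a} - \log a + \pi i + 2\pi i\Z$ on $(E_1,E_2)$, then using that $a$ is analytic and zero-free in $\C_+$ with $a(k) = 1 + \bigO(k^{-1})$ to deform $\int_\R \tfrac{\log a}{X_+}\,\dx k$ to zero, together with the elementary identity $\int_{E_1}^{E_2} \tfrac{\dx k}{X_+(k)} = -\pi i$. Without this argument \eqref{lim_D^{-2}_infty}, and hence the verification that $u(x,t) \to \alpha \e^{2i\beta x + i\omega t}$ as $x \to -\infty$, is unproved.
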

\begin{proof}
See Section  \ref{leftsec}.
\end{proof}

\begin{figure}[h!] \centering
	\begin{overpic}[width=.63\textwidth]{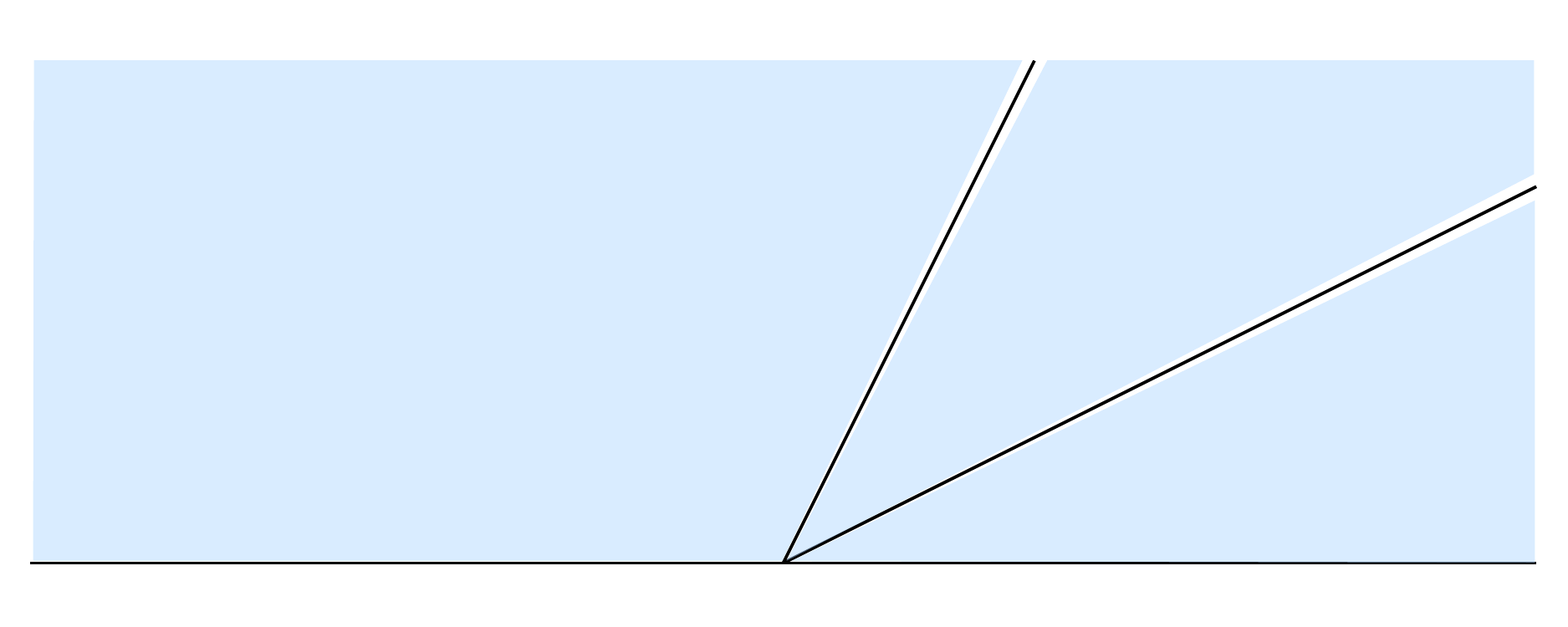}
		\put(49.2,1){\footnotesize{$0$}}     
		\put(100,3.4){\footnotesize{$x$}}  
		\put(25,18){\footnotesize{$\mathcal L$}} 
		\put(72,26){\footnotesize{$\mathcal M$}} 
		\put(80,10){\footnotesize{$\mathcal R$}}
		\put(66,37.5){\footnotesize{$\xi=4\beta - 2\alpha$}}
		\put(99,27.5){\footnotesize{$\xi=4\beta+4\alpha$}}
	\end{overpic}
	\caption{Illustration of the asymptotic sectors $\mathcal L$, $\mathcal M$, and $\mathcal R$ in the case when $0 < 4\beta - 2\alpha < 4\beta+4\alpha$.} 
	\label{fig:sectors2}
\end{figure}

\begin{remark*}
The asymptotic formulas \eqref{u_asymp_L} and \eqref{u_asymp_Lx} are equivalent for any fixed $\xi < 0$. The formula \eqref{u_asymp_Lx} is useful as $\xi \to - \infty$ (i.e., in the region close to the negative $x$-axis), while \eqref{u_asymp_L} has a better error term than \eqref{u_asymp_Lx} in the part of $\mathcal{L}$ near and to the right of the $t$-axis which is present if $4\beta - 2\alpha > 0$, see Figure \ref{fig:sectors2}.
\end{remark*}

\begin{remark*}[Matching]
The asymptotic formulas for $u|_{\mathcal{M}}$ and $u|_{\mathcal{L}}$ match formally to leading order on the line $\xi = 4\beta - 2\alpha$ where the middle and left sectors meet. Indeed, by \eqref{lim_km_12}, the leading term of $u|_{\mathcal{M}}$ evaluated at $\xi = 4\beta - 2\alpha$ is
\begin{equation*}
	- \mathcal D^{-2}_\infty(\xi) \,\e^{2 i t g_\infty (\xi)}  \frac{4(\alpha + \beta)-\xi}{6}\Big|_{\xi = 4\beta - 2\alpha}
	= -D^{-2}_\infty(\xi) \, \e^{2 i\beta x + i\omega t} \alpha \big|_{\xi = 4\beta - 2\alpha}
\end{equation*} 
which coincides with the leading term in \eqref{u_asymp_L} evaluated at $\xi = 4\beta - 2\alpha$.
\end{remark*}

\begin{remark*}
The choice of branch of $\log{r(s)}$ in \eqref{C_k0}, \eqref{Dinfty^-2}, and \eqref{leftsectorquantities} does in fact not matter as long as $\log{r(s)}$ remains continuous. This is a consequence of the identities 
$$\int^{k_0}_{E_1} \frac{\frac{1}{\sqrt{s - E_1}} - \frac{1}{\sqrt{k_0 - E_1}}}{s-k_0} \frac{\dx s}{\sqrt{k_0-s}} + \frac{2}{k_0-E_1} = 0, \quad \int^{k_0}_{E_1} \frac{\dx s}{\mathcal{X}_+(\xi,s)} = -\pi i,$$
and
$$ \quad \int^{E_2}_{E_1} \frac{\dx s}{X_+(\xi,s)} = -\pi i, \quad \frac{X(\xi, k_0)}{2\pi i} \int_{E_1}^{E_2} \frac{\dx s}{X_+(\xi, s)(s-k_0)} = \frac{1}{2}.$$
\end{remark*}

\subsubsection{Asymptotics in $\mathcal{R}$} \label{sec_asymp_R}

For $\xi\in I_{\mathcal R}$ let
\begin{align*}
	k_0(\xi) &\coloneqq -\frac{\xi}{4}\\
	\varphi(\xi,t)&\coloneqq \frac{3\pi+\xi^2 t}{4} 
	- \arg[-r(k_0)] + \arg\Gamma\bigg( \frac{\log\big[1-|r(k_0)|^2\big]}{2\pi i} \bigg) \\
	&\qquad +\frac{1}{2\pi}\log\big[1-|r(k_0)|^2\big] \log[8t]
	+\frac{1}{\pi} \int^{k_0}_{-\infty} \log[k_0-s]  \, \dx  \log(1-|r(s)|^2),
\end{align*}
where $\Gamma$ denotes the Gamma function.

\begin{theorem}[Asymptotics in the right sector] \label{thm_sec_R}
Suppose $u_0$ satisfies the assumptions of Theorem \ref{thm_direct} with $N_1 = 8$ and $N_2=4$. 
Let $u$ be the solution of the Cauchy problem \eqref{dNLS_IVP} with initial data $u_0$ constructed in Theorem \ref{thm_inverse}. As $t\to \infty$, the restriction $u|_{\mathcal R}$ of $u$ to $\mathcal R$ satisfies
	\begin{align} \label{u_asymp_R}
			u|_{\mathcal R}(x,t)
			= -\frac{2 i}{\sqrt{8t}}\sqrt{-\frac{1}{2\pi}\log(1-|r(k_0)|^2)} \, \e^{ i\varphi(\xi,t)}
			+ \bigO\bigg(\frac{\log t}{t}\bigg)
	\end{align}
uniformly for $\xi \in I_{\mathcal R}$. Moreover, the error term in \eqref{u_asymp_R} can be replaced by $\bigO\big(x^{-1}\max(1,\log t)\big)$ as $x \to +\infty$, i.e., 
	\begin{align}\label{u_asymp_Rx}
		u|_{\mathcal R}(x,t)
		= -\frac{2 i}{\sqrt{8t}}\sqrt{-\frac{1}{2\pi}\log(1-|r(k_0)|^2)} \, \e^{ i\varphi(\xi,t)}
		+ \bigO\bigg(\frac{\max(1,\log t)}{x}\bigg), \qquad x \to +\infty,
	\end{align}
uniformly for $\xi \in I_{\mathcal R}$. In particular, for each fixed $t \geq 0$, $u(x,t) = o(1)$ as $x \to +\infty$.
\end{theorem}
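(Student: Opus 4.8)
The plan is to carry out a Deift--Zhou nonlinear steepest descent analysis of the RH problem for $m(x,t,\cdot)$ of Theorem~\ref{thm_inverse} in the regime $\xi = x/t \in I_{\mathcal R}$. Writing the oscillatory exponent in \eqref{def_v} as $\e^{2i(kx+2k^2t)} = \e^{2it\Phi(\xi,k)}$ with $\Phi(\xi,k) = 2k^2 + \xi k$, the unique stationary point is $k_0(\xi) = -\xi/4$, and one checks $\Phi(\xi,k) - \Phi(\xi,k_0) = 2(k-k_0)^2$ with $\Phi(\xi,k_0) = -\xi^2/8$. For $\xi \in I_{\mathcal R}$ the point $k_0$ lies at distance at least $\delta/4$ to the left of the branch cut, $k_0 \le E_1 - \delta/4$, so $|r(k_0)| < 1$ by Theorem~\ref{thm_direct}, the branch-point structure \eqref{ratbranchpoints} plays no dominant role, and we are in the classical dispersive (``Zakharov--Manakov'') regime. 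First I would introduce the scalar function $\delta(\xi,k) = \exp\big\{\tfrac{1}{2\pi i}\int_{-\infty}^{k_0}\tfrac{\log(1-|r(s)|^2)}{s-k}\,\dx s\big\}$, which solves the scalar RH problem with jump $1-|r|^2$ on $(-\infty,k_0)$ and $1$ on $(k_0,\infty)$, and conjugate $m$ by $\delta^{\sigma_3}$; this turns $v$ into a jump admitting the triangular factorizations dictated by the sign table of $\re(i\Phi)$ (an $LU$-factorization on $(-\infty,k_0)$ and a $UL$-factorization on $(k_0,\infty)$, the latter valid across $[E_1,E_2]$ where $1-|r|^2=0$). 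The phase $\varphi(\xi,t)$ is assembled at this stage: the term $\xi^2 t/4$ from $-2t\Phi(\xi,k_0)$, the terms $\tfrac1{2\pi}\log(1-|r(k_0)|^2)\log(8t)$ and $\tfrac1\pi\int_{-\infty}^{k_0}\log(k_0-s)\,\dx\log(1-|r(s)|^2)$ from the local behaviour $(k_0-k)^{i\nu}$ of $\delta$ at $k_0$ with $\nu = -\tfrac1{2\pi}\log(1-|r(k_0)|^2) > 0$, and the remaining $3\pi/4$, $\arg[-r(k_0)]$ and $\arg\Gamma(i\nu)$ from the model problem below.

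Next I would open lenses. Since $r \in \mathcal C^{N_1}(\R\setminus\{E_1,E_2\})$ with $N_1=8$ and $\partial_k^n r = \bigO(k^{-N_2-1})$ with $N_2=3$ by Theorem~\ref{thm_direct}\eqref{thm_direct_b}, I would split $r = r_a + r_r$ in a neighbourhood of $k_0$, with $r_a$ extending analytically into the lens and controlled there by $\e^{-ct|\re(i(\Phi - \Phi(k_0)))|}$-type bounds (i.e.\ $\e^{-2ct|k-k_0|^2}$), and $r_r$ small in $L^1 \cap L^\infty$ and absorbed via a $\bar\partial$ (dbar) argument. Near the fixed branch points $E_1,E_2$ the continuation of $r$ is built from the known structure \eqref{ratbranchpoints} as in \cite{Fromm19}, but since $k_0$ stays a fixed distance from $[E_1,E_2]$ the resulting jumps there are exponentially small in $t$ and contribute only to the error. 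After opening the lenses on either side of $k_0$ (avoiding small fixed neighbourhoods of the branch points), the only non-negligible jump is the local one at $k_0$; the rest is $I + \bigO(\e^{-ct})$, uniformly for $\xi$ in compact subsets of $I_{\mathcal R}$ and with an additional decaying factor in $\xi$ when $\xi$ is large.

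Then I would install the local parametrix: near $k_0$ set $z = \sqrt{8t}\,(k-k_0)$, so that $\e^{2it\Phi} = \e^{2it\Phi(k_0)}\e^{iz^2/2}$, and match the jump to the standard parabolic-cylinder model RH problem, whose solution contributes the leading $\bigO(t^{-1/2})$ correction with constant governed by $\nu$, $\arg[-r(k_0)]$ and $\arg\Gamma(i\nu)$; elsewhere the parametrix is $I$. The quotient of $m$ by the global parametrix then solves a small-norm RH problem with jump $I + \bigO(t^{-1/2})$ supported near $k_0$ and $I + \bigO(t^{-N})$ away from it, hence equals $I + \bigO(t^{-1/2})$; reading off $u = 2i\lim_{k\to\infty} k\,m_{12}$ from the $1/k$-coefficient (after undoing the $\delta$-conjugation and lens transformations) yields the leading term of \eqref{u_asymp_R}, and the $\log t$ in the error is produced by the $\log t$ carried by $\delta$ together with the $t^{\pm i\nu}$ prefactors, so the correction is $\bigO(t^{-1}\log t)$, uniformly for $\xi \in I_{\mathcal R}$. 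For the refinement \eqref{u_asymp_Rx} one tracks the $\xi$-dependence of all constants: for large $k_0 = -\xi/4$ one has $|r(k_0)| = \bigO(\xi^{-N_2-1})$, hence $\nu = \bigO(\xi^{-2N_2-2})$ and the lens exponents gain positive powers of $\xi$, so the error constants decay like a negative power of $\xi$ and the error becomes $\bigO((tx)^{-1/2}\log t) = \bigO(t^{-1}\xi^{-1/2}\log t)$. The concluding statement that $u(x,t) = o(1)$ as $x \to +\infty$ for each fixed $t > 0$ is already part of the conclusion of Theorem~\ref{thm_inverse} (property (iv) of being a global solution).

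I expect the main obstacle to be not the local analysis at $k_0$, which is textbook here, but the \emph{uniform-in-$\xi$} bookkeeping over the unbounded interval $I_{\mathcal R} = [4\beta + 4\alpha + \delta, +\infty)$: one must keep explicit track of how every constant in the analytic/$\bar\partial$ approximation of $r$, in the lens estimates, in the parametrix error, and in the small-norm bound depends on the location of $k_0$, since this is exactly what upgrades the pointwise asymptotics to the uniform statement \eqref{u_asymp_R} and, via the decay rates of Theorem~\ref{thm_direct}\eqref{thm_direct_b}, to the sharpened estimate \eqref{u_asymp_Rx}. As stressed in the introduction, handling the finitely-smooth (rather than exactly step-like) reflection coefficient is the source of the technical work; the branch-cut complication that dominates the middle and left sectors is comparatively mild here because $k_0$ never approaches it.
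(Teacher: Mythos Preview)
Your proposal is correct and follows the same approach as the paper, which in Section~\ref{rightsec} is itself only a short paragraph observing that for $\xi \in I_{\mathcal R}$ the critical point $k_0 = -\xi/4$ lies strictly to the left of $E_1$, so no $g$-function or global parametrix is needed and the analysis reduces to the standard Deift--Zhou--Its parabolic cylinder computation of \cite{DIZ1993, I1982}; your outline is in fact considerably more detailed than what the paper provides.

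One small logical point: in your last paragraph you say the concluding statement $u(x,t) = o(1)$ as $x \to +\infty$ ``is already part of the conclusion of Theorem~\ref{thm_inverse}''. In the paper's architecture that property of Theorem~\ref{thm_inverse} is \emph{established by} Theorem~\ref{thm_sec_R} (see the proof in Section~\ref{inversesec}, which explicitly defers the verification of the boundary conditions~\eqref{uboundaryconditions} to Theorems~\ref{thm_sec_L} and~\ref{thm_sec_R}), so invoking Theorem~\ref{thm_inverse} here would be circular. The intended argument is that \eqref{u_asymp_Rx} itself, combined with $r(k_0) \to 0$ as $\xi \to +\infty$ from Theorem~\ref{thm_direct}\eqref{thm_direct_b}, gives the $o(1)$ conclusion directly.
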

\begin{proof}
See Section  \ref{rightsec}.
\end{proof}

\begin{figure}[h!] \centering
	\begin{overpic}[width=.63\textwidth]{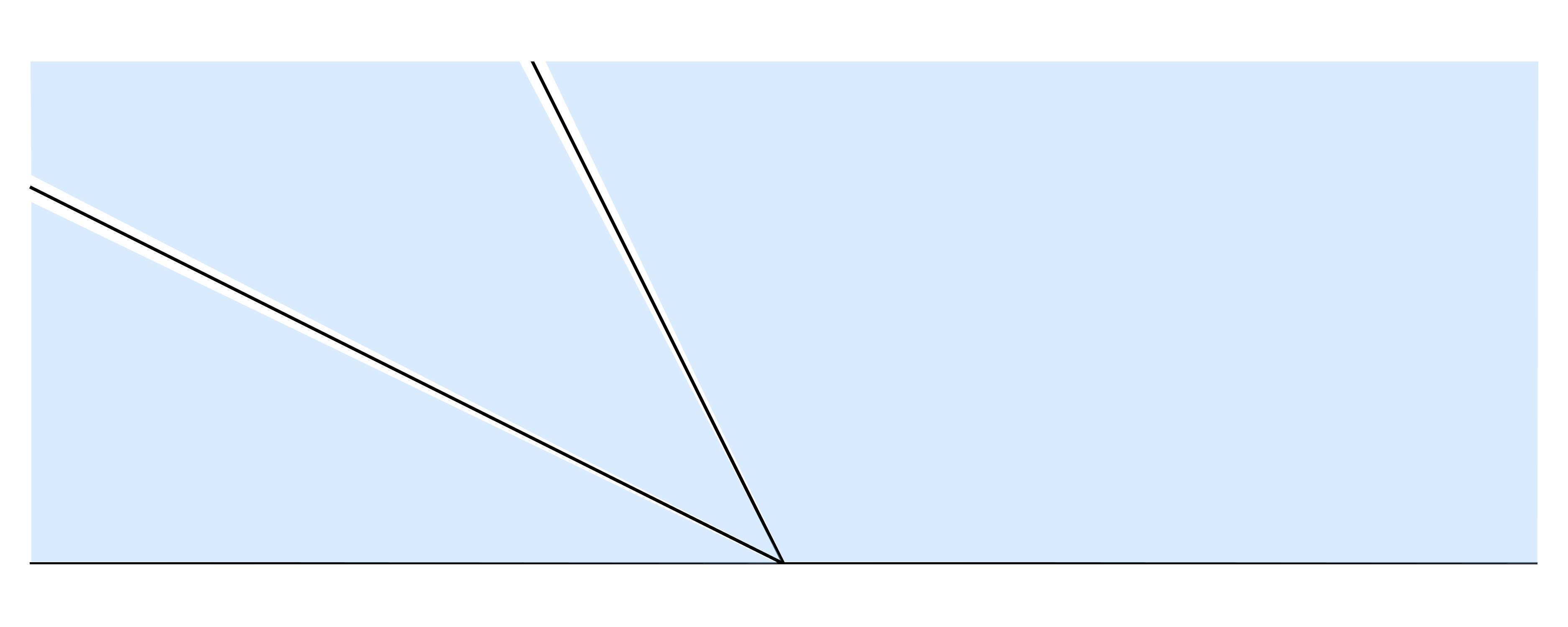}
		\put(49.2,1){\footnotesize{$0$}}     
		\put(100,3.4){\footnotesize{$x$}}  
		\put(14,11){\footnotesize{$\mathcal L$}} 
		\put(25,24){\footnotesize{$\mathcal M$}} 
		\put(70,20){\footnotesize{$\mathcal R$}}
		\put(-16,27.5){\footnotesize{$4\beta - 2\alpha=\xi$}}
		\put(32,37.5){\footnotesize{$\xi=4\beta+4\alpha$}}
	\end{overpic}
	\caption{Illustration of the asymptotic sectors $\mathcal L$, $\mathcal M$, and $\mathcal R$ in the case when $4\beta - 2\alpha < 4\beta+4\alpha < 0$.} 
	\label{fig:sectors3}
\end{figure}

\begin{remark*}
The asymptotic formulas \eqref{u_asymp_R} and \eqref{u_asymp_Rx} are equivalent for any fixed $\xi > 0$. The formula \eqref{u_asymp_Rx} is useful as $\xi \to + \infty$ (i.e., in the region close to the positive $x$-axis), while \eqref{u_asymp_R} has a better error term than \eqref{u_asymp_Rx} in the part of $\mathcal{R}$ near and to the left of the $t$-axis which is present if $4\beta+4\alpha < 0$, see Figure \ref{fig:sectors3}. 
\end{remark*}

\begin{remark*}[Matching]
The leading term in the asymptotics \eqref{lim_km_12} for $u_{\mathcal{M}}$ clearly vanishes on the line $\xi = 4\beta + 4\alpha$ where the middle and right sectors meet. This is consistent with the slow decay of $u|_{\mathcal{R}}$ found in \eqref{u_asymp_R}.
\end{remark*}

\begin{remark*}[Transition regions]
The asymptotic formulas derived in Theorems \ref{thm_sec_M}--\ref{thm_sec_R} are valid uniformly for $x/t$ in any compact subsets of $(-\infty, 4\beta - 2\alpha)$, $(4\beta - 2\alpha, 4\beta + 4\alpha)$, and $(4\beta + 4\alpha, +\infty)$, respectively, and they match formally to leading order where the different sectors meet. However, a separate analysis---which we will not pursue here---is required to obtain uniform asymptotics in the two narrow transition regions containing the lines $x/t = 4\beta - 2\alpha$ and $x/t = 4\beta + 4\alpha$. 
\end{remark*}

\subsection{Application to the half-line problem} \label{sec_ibvp}
If $4\beta-2\alpha>0$, the asymptotic formula \eqref{u_asymp_L} evaluated at $\xi = 0$ yields
\begin{align*}
u(0,t) &= -D^{-2}_\infty(0)\, \alpha \e^{i\omega t}+ \mathcal{O}(t^{-1/2}), \qquad t \to \infty,
\end{align*}
showing that the restriction of $u$ to the $t$-axis asymptotes to a single exponential for large $t$. 
This suggests the following theorem.

\begin{theorem}[Solutions of the half-line problem with asymptotically $t$-periodic boundary values] \label{thm_ibvp}
Let $(\alpha,\omega,c) \in (0,\infty) \times \R \times \C$ belong to the family \eqref{familyc}. Then there is a solution $u$ of the defocusing NLS equation on the half-line $x \geq 0$ such that 
\begin{align}\label{uhalflinea}
u(x,t) \to 0 \quad \text{as} \quad  x \to +\infty,\quad t\geq 0,
\end{align}
and
\begin{align}\label{uhalflineb}
u(0,t) = \alpha \e^{i\omega t} + \bigO(t^{-1/2}) \quad \text{and} \quad u_x(0,t) = c \e^{i\omega t} + \bigO(t^{-1/2}) \quad \text{as} \quad t \to \infty.
\end{align}
In fact, let $\beta \coloneqq c/(2i\alpha) > 0$ and suppose $u_0$ satisfies the assumptions of Theorem \ref{thm_direct} corresponding to $\alpha$ and $\beta$ with $N_1=8$ and $N_2=4$. Let $u$ be the solution of the Cauchy problem \eqref{dNLS_IVP} with initial data $u_0$ constructed in Theorem \ref{thm_inverse}. Then $\tilde{u}(x,t) \coloneqq -D^{2}_\infty(0)u(x,t)$ satisfies \eqref{uhalflinea} and \eqref{uhalflineb}, so (since $|D^{2}_\infty(0)| = 1$) the restriction of $\tilde{u}$ to the quarter-plane $\{x \geq 0, t \geq 0\}$ is a solution of the half-line problem with the desired properties.
\end{theorem}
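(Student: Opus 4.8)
The plan is to derive Theorem \ref{thm_ibvp} as an essentially immediate corollary of Theorems \ref{thm_inverse} and \ref{thm_sec_L}, with the only real work being a bookkeeping check that the phase-invariant rescaling $\tilde u = -D_\infty^2(0)u$ produces exactly the triplet $(\alpha,\omega,c)$ in the family \eqref{familyc}. First I would fix $(\alpha,\omega,c)$ in \eqref{familyc}, so that $\omega < -3\alpha^2$ and $c = i\alpha\sqrt{-2\alpha^2-\omega}$, and set $\beta \coloneqq c/(2i\alpha) = \tfrac12\sqrt{-2\alpha^2-\omega}$. Then $\beta > 0$ and $4\beta^2 = -2\alpha^2-\omega$, i.e. $\omega = -4\beta^2-2\alpha^2$, which is precisely the dispersion relation \eqref{boundaryconditions} tying $\omega$ to $(\alpha,\beta)$; moreover $\omega < -3\alpha^2$ is equivalent to $4\beta^2 > \alpha^2$, i.e. $2\beta > \alpha$, i.e. $4\beta - 2\alpha > 0$. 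Hence the $t$-axis $\xi = x/t = 0$ lies strictly inside the interior of the left sector interval $I_{\mathcal L} = (-\infty, 4\beta-2\alpha-\delta]$ once $\delta$ is chosen small enough, so Theorem \ref{thm_sec_L} applies at $\xi = 0$.

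Next I would invoke Theorem \ref{thm_inverse}: choosing any $u_0$ satisfying its hypotheses (with $N_1 = 8$, $N_2 = 3$, corresponding to the fixed $\alpha,\beta$), we obtain a global solution $u$ of the Cauchy problem \eqref{dNLS_IVP}--\eqref{boundaryconditions}. Its restriction to $\{x \geq 0, t \geq 0\}$ automatically satisfies \eqref{uhalflinea} because, by the boundary condition \eqref{uboundaryconditions} (or the sharper $x\to+\infty$ statement in Theorem \ref{thm_sec_R}), $u(x,t) \to 0$ as $x \to +\infty$ for each $t \geq 0$. For the boundary values at $x = 0$, I evaluate the left-sector asymptotics at $\xi = 0$: equation \eqref{u_asymp_L} gives $u(0,t) = -D_\infty^{-2}(0)\,\alpha\e^{i\omega t} + \bigO(t^{-1/2})$, since at $x = 0$ the plane wave $\alpha\e^{2i\beta x + i\omega t}$ reduces to $\alpha\e^{i\omega t}$ and the $u_a/\sqrt t$ term is $\bigO(t^{-1/2})$; and equation \eqref{ux_asymp_L} gives $u_x(0,t) = -D_\infty^{-2}(0)\,2i\beta\alpha\e^{i\omega t} + \bigO(t^{-1/2})$. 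Now define $\tilde u(x,t) \coloneqq -D_\infty^{2}(0)u(x,t)$; since the NLS equation \eqref{dNLS} is invariant under multiplication by a unimodular constant and $|D_\infty^2(0)| = 1$ (which follows from $|D_\infty^{-2}(0)| = 1$, itself a consequence of the fact that the integrand in $D(\xi,k)$ is real on $\R\setminus[E_1,E_2]$ and purely off the branch cut contributes $\log r$ with $|r| = 1$ there — or more simply from the identities in the third Remark after Theorem \ref{thm_sec_L}), $\tilde u$ is again a solution and still satisfies \eqref{uhalflinea}. Multiplying the two displayed asymptotics by $-D_\infty^2(0)$ and using $D_\infty^2(0)\cdot D_\infty^{-2}(0) = 1$ yields $\tilde u(0,t) = \alpha\e^{i\omega t} + \bigO(t^{-1/2})$ and $\tilde u_x(0,t) = 2i\beta\alpha\e^{i\omega t} + \bigO(t^{-1/2})$.

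Finally I would identify the constant $c$ in \eqref{uhalflineb}: we read off $c = 2i\beta\alpha$, and by the defining relation $\beta = c/(2i\alpha)$ this is consistent; to see it matches \eqref{familyc} note $c = 2i\beta\alpha = i\alpha\cdot 2\beta = i\alpha\sqrt{4\beta^2} = i\alpha\sqrt{-2\alpha^2-\omega}$, using $4\beta^2 = -2\alpha^2 - \omega$. Thus $(\alpha,\omega,c)$ is exactly a member of the family \eqref{familyc}, and conversely every member arises this way, which also furnishes the promised interpretation of \eqref{familyc} as the boundary trace of Cauchy solutions whose $t$-axis sits in the left sector. I expect no serious obstacle; the only point requiring a line of care is the verification $|D_\infty(0)| = 1$ so that the rescaling is admissible, and this is handled either by the symmetry $\overline{r(k)} = $ (conjugate reflection data relation) making $\log(1-|r|^2)$ real on $\R\setminus[E_1,E_2]$ together with the Schwarz-reflection structure of $X$, or directly by the integral identities recorded in the Remark following Theorem \ref{thm_sec_L}. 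Everything else is substitution of $\xi = 0$ into already-proven uniform asymptotic formulas.
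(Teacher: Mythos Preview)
Your proposal is correct and follows essentially the same approach as the paper: verify from $(\alpha,\omega,c)\in\eqref{familyc}$ that $\beta=c/(2i\alpha)>0$ satisfies $\omega=-4\beta^2-2\alpha^2$ and $4\beta-2\alpha>0$, so the $t$-axis lies in $\mathcal{L}$; then evaluate \eqref{u_asymp_L} and \eqref{ux_asymp_L} at $x=0$, rescale by the unimodular constant $-D_\infty^2(0)$, and read off $c=2i\alpha\beta$. The paper handles $|D_\infty(0)|=1$ by pointing directly to the explicit formula \eqref{Dinftyxi}, which is cleaner than the route you sketch, and it splits the $x\to+\infty$ decay into $t>0$ (via Theorem~\ref{thm_sec_R}) and $t=0$ (via the decay assumption on $u_0$), but otherwise the arguments coincide.
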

\begin{proof}
	See Section \ref{ibvpsec}.
\end{proof}

\section{Proof of Theorem \ref{thm_direct}}\label{directsec}
\noindent
Let $\alpha > 0$ and $\beta\in\R$. Suppose $u_0$ satisfies \eqref{u0assumptions} for some $N_1 \geq 2$ and let $\phi_1^b$ be the solution of the background Lax pair defined in \eqref{backgroundefunction}. 

Define $\mu_1(\cdot,k)\colon \R\to\C^{2\times 2}$, $k\in (\overbar{\C_+},\overbar{\C_-})\setminus [E_1,E_2]$, as the unique solution of the Volterra equation
\begin{align} 
\begin{aligned}\label{defmu10}
	\mu_1(x,k) =  \e^{ i \beta x \hat \sigma_3} s^b(k)+\int_{-\infty}^x\phi_1^b(x,0,k)&(\phi_1^b)^{-1}(x',0,k)\left[(\mathsf{U}_0 -\mathsf{U}_0^b)(x')\right]
	\\
	&\times
	\mu_1(x',k) \e^{ i (X(k)-\beta) (x-x')\sigma_3}\, \dx x',
\end{aligned}
\end{align}
where $k\in (\overbar{\C_+},\overbar{\C_-})\setminus [E_1,E_2]$ indicates that $[\mu_1]_1$ and $[\mu_1]_2$ are defined for $k\in \overbar{\C_+} \setminus [E_1,E_2]$ and $k\in \overbar{\C_-} \setminus [E_1,E_2]$, respectively.
For $k\in(\overbar{\C_-},\overbar{\C_+})$, we define the function $\mu_2 (\cdot,k)\colon \R\to\C$ as the unique solution of
\begin{align} \label{def_mu_2}
	\mu_2(x,k) =  I- \int_x^\infty \e^{ i k(x'-x) \hat{\sigma}_3} (\mathsf{U}_0 \mu_2)(x',k) \,\dx x'.
\end{align}
Some basic properties of $\mu_1$ and $\mu_2$ are collected in the following proposition whose proof is standard.

\begin{proposition}\label{prop_mu}
The functions $\mu_1$ and $\mu_2$ have the following properties:
\begin{enumerate}[\upshape (i)]
		\item \label{prop_mu_i}
		For each $x \in \R$, the map $\mu_1(x,\cdot) \colon (\C_+, \C_-) \to \C^{2\times2}$ is analytic and has a continuous extension to $(\overbar{\C_+},\overbar{\C_-}) \setminus \{E_1, E_2\}$, also denoted by $\mu_1$. Moreover, the restriction of $\mu_1$ to $\R\setminus\{E_1,E_2\}$ lies in $\mathcal C^{N_1}(\R\setminus\{E_1,E_2\},\C^{2\times2})$.
		
		\item \label{prop_mu_ii}
		For each $x \in \R$, the map $\mu_2(x,\cdot)\colon (\overbar{\C_-},\overbar{\C_+}) \to \C^{2\times2}$ is continuous and its restriction to $(\C_-,\C_+)$ is analytic. Moreover, its
		the restriction to $\R$ lies in $\mathcal C^{N_1}(\R,\C^{2\times2})$.
		
		\item \label{prop_mu_iii}
		It holds that
		\begin{align*}
			\lim_{x\to-\infty} [\mu_1(x,k)-\e^{i\beta x \hat\sigma_3} s^b(k)]&=0,\quad k\in(\overbar{\C_+},\overbar{\C_-})\setminus\{E_1,E_2\},
			\\
			\lim_{x\to+\infty}\mu_2(x,0,k)&=I,\quad k\in(\overbar{\C_-},\overbar{\C_+}).
		\end{align*}
		
		\item \label{prop_mu_iv}
		For each $x \in\R$, $\mu_1$ and $\mu_2$ obey the following symmetries:
		\begin{align}
			&\sigma_1\overbar{\mu_1(x,\bar{k})}\sigma_1=\mu_1 (x,k), \qquad  k\in (\overbar{\C_+},\overbar{\C_-})\setminus \{E_1, E_2\}, \label{symmetry_mu_1} \\  
			&\sigma_1\overbar{\mu_2(x,\bar{k})}\sigma_1=\mu_2 (x,k), \qquad  k\in (\overbar{\C_-},\overbar{\C_+}) \label{symmetry_mu_2}.
		\end{align}
		
		\item \label{prop_mu_v}
		For each $x \in\R$,
		\begin{align}
			&\det \mu_1(x,k)=1, \qquad k\in  \R\setminus{[E_1,E_2]},    \\
			&\det \mu_2(x,k)=1, \qquad k\in  \R. 
		\end{align}
	\end{enumerate}
\end{proposition}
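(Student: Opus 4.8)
The plan is to obtain all five items from the classical fixed-point theory for Volterra equations: $\mu_2$ is handled by the standard Zakharov--Shabat argument, and $\mu_1$ by a mild variant that respects the branch-cut structure of $X$.

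\emph{Step 1 (existence, analyticity, uniform bounds).} I would rewrite each equation column by column and control the exponential factors in the kernels. In \eqref{def_mu_2}, since $\mathsf U_0$ is off-diagonal, the operator $\e^{ik(x'-x)\hat\sigma_3}$ contributes the scalar $\e^{-2ik(x'-x)}$ in the first column and $\e^{2ik(x'-x)}$ in the second; as $x'\ge x$ on the integration range, the former is bounded for $\im k\le 0$ and the latter for $\im k\ge 0$. Hence on bounded continuous $\C^2$-valued functions the Volterra operator has factorially decaying iterates (a Gronwall/Neumann-series estimate gives $|\mu_2(x,k)|\le\exp(C\|u_0\|_{L^1(\R_+)})$), each iterate is entire in $k$, and locally uniform convergence shows that $[\mu_2]_1$ is analytic in $\C_-$ and continuous on $\overbar{\C_-}$, while $[\mu_2]_2$ is analytic in $\C_+$ and continuous on $\overbar{\C_+}$; since the kernel is entire in $k$ there is no branch point to exclude. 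For $\mu_1$, using $\phi_1^b(x,0,k)(\phi_1^b)^{-1}(x',0,k)=\e^{i\beta x\sigma_3}s^b(k)\e^{-iX(k)(x-x')\sigma_3}s^b(k)^{-1}\e^{-i\beta x'\sigma_3}$, one observes that the scalar $\e^{i(X(k)-\beta)(x-x')}$ attached to the first column in \eqref{defmu10} combines with $\e^{-iX(k)(x-x')\sigma_3}$ into $\diag(\e^{-i\beta(x-x')},\e^{i(2X(k)-\beta)(x-x')})$, which is bounded for $x\ge x'$ whenever $\im X(k)\ge 0$; because the chosen branch of $X$ has $\im X>0$ on $\C_+$ (and $\im X<0$ on $\C_-$, which handles the second column), the same scheme applies and produces $\mu_1$ analytic on $\C_\pm$ with a continuous extension to $\overbar{\C_\pm}\setminus\{E_1,E_2\}$---the branch points must be removed because $s^b$, through $\Delta$, has an algebraic singularity there. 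This yields the analyticity and continuity assertions in \eqref{prop_mu_i}--\eqref{prop_mu_ii}.

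\emph{Step 2 (regularity in $k$).} To upgrade to $\mathcal C^{N_1}$ I would differentiate the Volterra equations up to $N_1$ times in $k$. Each $\partial_k$ hitting an exponential kernel produces a factor $x-x'$ (and, for $\mu_1$, a factor $\partial_k X$, which is smooth and bounded on compact subsets of $\R\setminus\{E_1,E_2\}$ but only $\bigO((k-E_j)^{-1/2})$ near $E_j$---this is precisely why no regularity is claimed at the branch points). Expanding by Leibniz, $\partial_k^n\mu_j$ satisfies a Volterra equation of the same type with forcing built from lower-order derivatives and a kernel that is integrable thanks to the weighted hypotheses $x^n(u_0-u^b_0)|_{\R_-}\in L^1$ and $x^n u_0|_{\R_+}\in L^1$ in \eqref{u0assumptions}; an induction on $n$ then bounds all $\partial_k^n\mu_j$, $n\le N_1$, giving the $\mathcal C^{N_1}$ statements.

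\emph{Step 3 (limits, symmetries, determinants).} Item \eqref{prop_mu_iii} follows from dominated convergence: in \eqref{defmu10} the integrand is dominated by $C(k)\,|(u_0-u^b_0)(x')|$ uniformly in $x$ (via the bound from Step 1), and $\int_{-\infty}^x|u_0-u^b_0|\to 0$ as $x\to-\infty$; likewise $\int_x^\infty|u_0|\to 0$ as $x\to+\infty$ in \eqref{def_mu_2}. For \eqref{prop_mu_iv} I would check that $\mathsf U_0$, $\mathsf U^b_0$, $\phi_1^b$ and $s^b$ are each invariant under $B\mapsto\sigma_1\overline{B(x,\bar k)}\sigma_1$---for $s^b$ this uses $\overline{\Delta(\bar k)}=\Delta(k)$, which is Schwarz reflection since $E_1,E_2$ are real and $\Delta\to 1$ at infinity---so that $\sigma_1\overline{\mu_j(x,\bar k)}\sigma_1$ solves the same Volterra equation as $\mu_j$, whence uniqueness yields \eqref{symmetry_mu_1}--\eqref{symmetry_mu_2}. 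Finally, for \eqref{prop_mu_v} note that the eigenfunction $\phi_1$ of \eqref{phi1_Volterra}, related to $\mu_1$ by $\phi_1(x,k)=\mu_1(x,k)\e^{-i(X(k)-\beta)x\sigma_3}$, solves the trace-free linear ODE $\phi_{1,x}=(\mathsf U_0-ik\sigma_3)\phi_1$, so by Abel's identity $\det\mu_1=\det\phi_1$ is independent of $x$; letting $x\to-\infty$ and using the limit from Step 3 together with $\det s^b\equiv 1$ gives $\det\mu_1\equiv 1$ for $k\in\R\setminus[E_1,E_2]$, and the same argument with $\mu_2\to I$ as $x\to+\infty$ gives $\det\mu_2\equiv 1$ on $\R$. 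The only non-routine point in the whole argument is the exponential bookkeeping of Step 1: one must verify, column by column, that after the cancellations every exponential appearing in the iterated Volterra kernels stays bounded on $\overbar{\C_\pm}$, which rests on the sign of $\im X(k)$ off the cut and on the algebraic (rather than essential) nature of the singularity of $s^b$ at $E_1$ and $E_2$; everything else is routine Neumann-series / Gronwall / dominated-convergence analysis, which is why the proposition may fairly be called standard.
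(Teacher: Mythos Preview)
Your proposal is correct and is precisely the standard Neumann-series/Gronwall argument the paper alludes to when it declares the proof of this proposition ``standard'' without giving details. The column-by-column exponential bookkeeping you carry out (bounded kernels on the relevant half-planes via the sign of $\im X$), the inductive differentiation in $k$ using the weighted $L^1$ hypotheses \eqref{u0assumptions}, the dominated-convergence limits, the uniqueness-based symmetry, and Abel's identity for the determinants are exactly the ingredients one would expect, and nothing more is needed.
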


Note that our notation is such that if $k \in (E_1, E_2)$ lies on the branch cut, then $[\mu_1(x,k)]_1$ denotes the boundary value of $[\mu_1(x,\kappa)]_1$ as $\kappa$ approaches $k$ from the upper half-plane, whereas $[\mu_1(x,k)]_2$ denotes the boundary value of $[\mu_1(x,\kappa)]_2$ as $\kappa$ approaches $k$ from the lower half-plane; an analogous remark applies to $[\phi_1]_1$ and $[\phi_1]_2$. 

The functions $\mu_1$ and $\mu_2$ are related to the eigenfunctions $\phi_1$ and $\phi_2$ defined in \eqref{phi_Volterra} by
\begin{align}
\begin{aligned}\label{defphi120}
 & \phi_1(x,k) = \mu_1(x,k) \e^{- i (X(k)-\beta)x \sigma_3},  
 &&k\in (\overbar{\C_+},\overbar{\C_-})\setminus \{E_1,E_2\},
  	\\ 
 &  \phi_2(x,k) = \mu_2(x,k) \e^{- i kx\sigma_3},  
 &&k\in (\overbar{\C_-},\overbar{\C_+}).
\end{aligned}
\end{align}
The functions $\phi_1$ and $\phi_2$ satisfy the $x$-part of the Lax pair \eqref{lax} at $t = 0$, $\mu_2$ satisfies
\begin{equation} \label{Lax_mu2xpart}
  \mu_{2x} +  i k [\sigma_3,\mu_2] = \mathsf{U}_0 \mu_2
\end{equation}
and $\mu_1$ satisfies
\begin{equation}
  \mu_{1x} +  i k \sigma_3 \mu_1 -  i(X(k)-\beta) \mu_1\sigma_3 = \mathsf{U}_0 \mu_1.
\end{equation}

\begin{lemma} \label{lem_symm_phi_1_(E_1,E_2)}
	It holds that
	\begin{equation}\label{phi_identity_(E_1,E_2)}
		[\phi_{1}(x,k)]_{1} =  [\phi_{1}(x,k)]_{2}, \qquad  (x,k)\in \R \times(E_1,E_2).
	\end{equation}
\end{lemma}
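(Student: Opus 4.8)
The plan is to exploit the symmetry structure encoded in the background eigenfunction $\phi_1^b$ on the branch cut $(E_1,E_2)$ and propagate it through the Volterra equation \eqref{phi1_Volterra}. The starting observation is that on the cut, $X_+(k) = -X_-(k)$ is purely imaginary (since $(k-E_1)(k-E_2) < 0$ for $k \in (E_1,E_2)$), so that the quantity $X(k)-\beta$ appearing in the exponential $\e^{-i(X(k)-\beta)x\sigma_3}$ relating $\mu_1$ and $\phi_1$ has a nontrivial reality structure. More importantly, the matrix $s^b(k)$ in \eqref{def_s^b} is built from $\Delta(k) = ((k-E_2)/(k-E_1))^{1/4}$, and on the cut the two boundary values $\Delta_+$ and $\Delta_-$ are related in such a way that the first column of $\phi_1^b$ approached from $\C_+$ equals the second column of $\phi_1^b$ approached from $\C_-$. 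Concretely, I would first verify the claimed identity at the level of the background: show that $[\phi_1^b(x,0,k)]_1 = [\phi_1^b(x,0,k)]_2$ for $k \in (E_1,E_2)$, where the first column is the $\overbar{\C_+}$-limit and the second the $\overbar{\C_-}$-limit, by direct computation with $\Delta_\pm$ and $X_\pm$.

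Next I would set up the column-wise Volterra equations implied by \eqref{phi1_Volterra}: the first column $[\phi_1(x,k)]_1$ solves the integral equation driven by $[\phi_1^b]_1$ with kernel involving $\phi_1^b(\phi_1^b)^{-1}$, understood via the $\C_+$-boundary value, while $[\phi_1(x,k)]_2$ solves the analogous equation via the $\C_-$-boundary value. The key algebraic point is that the \emph{full} matrix $\phi_1^b(x,0,k)(\phi_1^b)^{-1}(x',0,k)$ has the same boundary value from $\C_+$ and from $\C_-$ when $k \in (E_1,E_2)$ — this should follow because this product can be rewritten in terms of $\mathsf{U}_0^b$ and the transfer structure of the background system, or more directly because $\det s^b = 1$ together with the column-swap symmetry of $s^b_\pm$ forces the combination $s^b_\pm(k)\,\e^{-i(X_\pm(k)x + \cdots)\sigma_3}\,\e^{i(X_\pm(k)x'+\cdots)\sigma_3}(s^b_\pm(k))^{-1}$ to be continuous across the cut. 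Once the inhomogeneous term and the kernel of the Volterra equation for $[\phi_1]_1$ (from above) coincide with those for $[\phi_1]_2$ (from below), uniqueness of the Volterra solution (Proposition \ref{prop_mu}, or the standard iteration bound) gives $[\phi_1(x,k)]_1 = [\phi_1(x,k)]_2$ on $(E_1,E_2)$.

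I expect the main obstacle to be bookkeeping the boundary values correctly: $[\phi_1]_1$ is a priori defined as a limit from $\overbar{\C_+}$ and $[\phi_1]_2$ as a limit from $\overbar{\C_-}$, so the identity is really a statement that two \emph{different} limiting procedures produce the same vector, and one must be careful that the kernel $\phi_1^b(\phi_1^b)^{-1}$, which individually has a jump across $(E_1,E_2)$, nevertheless has equal $\pm$ boundary values \emph{after} the column in question is inserted — i.e., the jump of the kernel acts trivially on the relevant column. A clean way to organize this is to work with $\mu_1$ instead of $\phi_1$: by \eqref{defphi120} and the explicit form \eqref{backgroundefunction}, the asserted identity \eqref{phi_identity_(E_1,E_2)} is equivalent to a corresponding identity for the columns of $\mu_1$ on the cut, and the factor $\e^{-i(X(k)-\beta)x\sigma_3}$ — whose $\pm$ boundary values differ — must be tracked to see that it is compensated exactly by the column-swap in $s^b_\pm$. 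Alternatively, one can phrase everything in terms of the second symmetry $\sigma_1\overline{\mu_1(x,\bar k)}\sigma_1 = \mu_1(x,k)$ from Proposition \ref{prop_mu}\eqref{prop_mu_iv} combined with a reflection $k \mapsto$ (its mirror across the cut); for real $k \in (E_1,E_2)$ this relates $[\mu_1]_1$ from above to (the conjugate of) $[\mu_1]_2$ from below, and the $|\Delta_\pm|=1$ structure of $s^b$ on the cut closes the argument. I would present whichever of these two routes yields the shortest verification once the boundary-value identity for $\phi_1^b$ is in hand.
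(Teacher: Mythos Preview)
Your proposal is correct and follows essentially the same route as the paper: establish the column identity for the background eigenfunction on the cut, show that the kernel $\phi_1^b(x,0,k)(\phi_1^b)^{-1}(x',0,k)$ has no jump across $(E_1,E_2)$, and conclude by uniqueness of the Volterra solution. The paper's only streamlining over your sketch is the justification for continuity of the kernel: rather than computing with $s^b_\pm$ and $\det s^b$, the paper observes that $\phi_1^b(x,0,\cdot)(\phi_1^b)^{-1}(x',0,\cdot)$ is the solution of the background $x$-ODE normalized to $I$ at the finite point $x=x'$, hence is entire in $k$ --- so there is nothing to check across the cut.
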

\begin{proof}
	The function $\phi_{1}^b(x,0,\cdot)(\phi_{1}^b)^{-1}(x',0,\cdot)\colon \C\to\C$ is entire for each $(x,x')\in \R\times\R$, because as a function of $x$, for fixed $x'$, it satisfies the same linear ODE as $\phi_1^b$ but is normalized to equal $I$ at the finite point $x = x'$. In particular, $\phi_{1}^b(x,0,\cdot)(\phi_{1}^b)^{-1}(x',0,\cdot)$ has no jump across $(E_1,E_2)$. 
	Moreover, using that $s_+^b = s_-^b \sigma_1\sigma_3$, $X_+ = -X_-$, and $\Omega_+=-\Omega_-$ on $(E_1,E_2)$, we infer from \eqref{backgroundefunction} that 
	\begin{equation*}
		[\phi^b_{1+}(x,0,k)]_1 = [\phi^b_{1-}(x,0,k)]_2, \qquad (x,k)\in \R \times(E_1,E_2).
	\end{equation*}
	It follows from \eqref{phi1_Volterra} that the two Volterra equations for the columns $[\phi_{1}(\cdot,k)]_1$ and $[\phi_{1}(\cdot,k)]_2$ are identical for $k \in (E_1,E_2)$.
	This proves the lemma. 
\end{proof}

The next proposition establishes several properties of the spectral functions $a(k)$ and $b(k)$. In particular, it shows that the discrete spectrum is always empty in the case of the boundary conditions \eqref{boundaryconditions} in the sense that $a(k)$ never has zeros.

\begin{proposition}\label{cor_prop_mu}
The spectral functions $a,b$ defined by \eqref{def_s_entries} satisfy
\begin{equation*}
	\begin{cases}
	a(k)  =   \phi_{1,11} \phi_{2,22}  -  \phi_{1,21}  \phi_{2,12}, \\
	\overbar{a(k)}  =  \phi_{1,22}  \phi_{2,11}  -  \phi_{1,12} \phi_{2,21},   \\
	b(k)  =  \phi_{1,22} \phi_{2,12} - \phi_{1,12} \phi_{2,22},   \\
	\overbar{b(k)}  =  \phi_{1,11} \phi_{2,21} -  \phi_{1,21} \phi_{2,11},	
	\end{cases}
	\qquad k\in\R\setminus\{E_1,E_2\},\; x \in\R.
\end{equation*}
In particular, $a,b \in \mathcal{C}^{N_1}(\R\setminus\{E_1,E_2\})$ and $a(k)$ admits an extension to a function in $\mathcal C^{N_1}(\overbar{\C_+}\setminus\{E_1,E_2\})$, again denoted by $a$, which is analytic in $\C_+$, nonzero in $\overbar{\C_+}\setminus\{E_1,E_2\}$, and given by
\begin{equation*}
	a(k) = \det \big([\phi_1(x,k)]_1,[\phi_2(x,k)]_2\big),\qquad k\in\overbar{\C_+}\setminus\{E_1,E_2\},\; x \in\R.
\end{equation*} 
Furthermore, $\det s=|a|^2-|b|^2=1$ for all $k\in\R\setminus[E_1,E_2]$; in particular, $|a|\geq 1$ and $|b|/|a|<1$ for all $k\in\R\setminus[E_1,E_2]$.
\end{proposition}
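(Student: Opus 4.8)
The plan is as follows. \emph{Explicit formulas and regularity.} Since $\phi_2=\mu_2\e^{-ikx\sigma_3}$ and $\det\mu_2\equiv1$ by Proposition~\ref{prop_mu}\eqref{prop_mu_v}, we have $\det\phi_2\equiv1$, so $\phi_2^{-1}$ coincides with the adjugate of $\phi_2$. Substituting this into the defining relation $s=\phi_2^{-1}\phi_1$ from \eqref{def_s}, multiplying out, and matching the $(1,1),(2,2),(1,2),(2,1)$ entries with the form \eqref{def_s_entries}, one reads off the four displayed expressions for $a,\overline a,b,\overline b$; in particular each right-hand side is independent of $x$ because $s(k)$ is. The $\mathcal C^{N_1}$-regularity of $a,b$ on $\R\setminus\{E_1,E_2\}$ is then inherited from that of $\mu_1,\mu_2$ (Proposition~\ref{prop_mu}\eqref{prop_mu_i}--\eqref{prop_mu_ii}) via \eqref{defphi120} and the smoothness of $X$ away from $[E_1,E_2]$.

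\emph{Analytic continuation of $a$ and the determinant identity.} The formula for $a$ reads $a=\det\big([\phi_1]_1,[\phi_2]_2\big)$ and involves only $[\phi_1]_1$ (analytic in $\C_+$, continuous on $\overbar{\C_+}\setminus\{E_1,E_2\}$) and $[\phi_2]_2$ (analytic in $\C_+$, continuous on $\overbar{\C_+}$), which carry $\mathcal C^{N_1}$-regularity up to $\R\setminus\{E_1,E_2\}$ by Proposition~\ref{prop_mu}; hence this right-hand side defines an extension of $a$ to $\overbar{\C_+}\setminus\{E_1,E_2\}$ with the asserted properties. The Wronskian determinant $\det\big([\phi_1]_1,[\phi_2]_2\big)$ is $x$-independent because both columns solve the trace-free linear system $\psi_x=(\mathsf U_0-ik\sigma_3)\psi$. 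For the determinant relation, $\det\phi_1=\det\mu_1\equiv1$ on $\R\setminus[E_1,E_2]$ and $\det\phi_2\equiv1$ give $\det s=(\det\phi_2)^{-1}\det\phi_1=1$ there, and expanding $\det s=|a|^2-|b|^2$ from \eqref{def_s_entries} yields $|a|^2-|b|^2=1$; consequently $|a|\geq1$, $|b|/|a|=\sqrt{1-|a|^{-2}}<1$, and in particular $a\neq0$ on $\R\setminus[E_1,E_2]$.

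\emph{Non-vanishing of $a$ on the rest of $\overbar{\C_+}\setminus\{E_1,E_2\}$.} This is the main obstacle. Suppose first $a(k_0)=0$ with $k_0\in\C_+$. Since $[\phi_2(\cdot,k_0)]_2\not\equiv0$ (by Proposition~\ref{prop_mu}\eqref{prop_mu_iii}) and $a(k_0)=\det\big([\phi_1(\cdot,k_0)]_1,[\phi_2(\cdot,k_0)]_2\big)$ vanishes identically in $x$, uniqueness for the $x$-system gives $[\phi_1(\cdot,k_0)]_1=c\,[\phi_2(\cdot,k_0)]_2$ with $c\neq0$; call the common value $\chi=(\chi_1,\chi_2)^\top$, which solves $\chi_x=(\mathsf U_0-ik_0\sigma_3)\chi$. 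Using the normalizations in Proposition~\ref{prop_mu}\eqref{prop_mu_iii}, the explicit background data \eqref{backgroundefunction}--\eqref{def_s^b}, and the fact that $\im X(k)>0$ throughout $\C_+$ --- which holds because $X(k)^2=(k-E_1)(k-E_2)$ is never a nonnegative real for $k\in\C_+$ while $X(k)\sim k$ at infinity --- one checks that $\chi$ decays exponentially as $x\to\pm\infty$, so $\chi\in L^2(\R)\setminus\{0\}$. From the system for $\chi$ one obtains the pointwise identity $\partial_x(|\chi_1|^2-|\chi_2|^2)=2(\im k_0)(|\chi_1|^2+|\chi_2|^2)$, and integrating over $\R$ (boundary terms vanish) forces $(\im k_0)\|\chi\|_{L^2(\R)}^2=0$, i.e.\ $\im k_0=0$, a contradiction; equivalently, the defocusing Zakharov--Shabat problem has no non-real eigenvalues. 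Suppose instead $a(k_0)=0$ with $k_0\in(E_1,E_2)$. Again $[\phi_1(\cdot,k_0)]_1=c\,[\phi_2(\cdot,k_0)]_2$, and $c\neq0$ because $[\phi_1(\cdot,k_0)]_1\not\equiv0$ (by its $x\to-\infty$ behaviour) and $\det\phi_2\equiv1$. By Lemma~\ref{lem_symm_phi_1_(E_1,E_2)}, $[\phi_1(\cdot,k_0)]_2=[\phi_1(\cdot,k_0)]_1$ on $(E_1,E_2)$, so the formula for $\overline a$ gives $\overline{a(k_0)}=\det\big([\phi_2(\cdot,k_0)]_1,[\phi_1(\cdot,k_0)]_2\big)=c\,\det\phi_2(\cdot,k_0)=c\neq0$, contradicting $a(k_0)=0$. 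Hence $a\neq0$ on all of $\overbar{\C_+}\setminus\{E_1,E_2\}$. The delicate point throughout is this last step: showing that the putative eigenfunction $\chi$ genuinely lies in $L^2(\R)$ in the presence of the nonzero background and the branch point (this is where the sign of $\im X$ on $\C_+$ enters), and then excluding non-real eigenvalues via the conserved quantity above.
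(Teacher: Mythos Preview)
Your proof is correct and follows essentially the same approach as the paper: the self-adjointness argument for $k_0\in\C_+$ is the same (the paper phrases it via the symmetric operator $L=i\begin{psmallmatrix}\partial_x & -u_0\\ \bar u_0 & -\partial_x\end{psmallmatrix}$, you via the equivalent pointwise identity $\partial_x(|\chi_1|^2-|\chi_2|^2)=2(\im k_0)|\chi|^2$), and for $k_0\in(E_1,E_2)$ both arguments rest on Lemma~\ref{lem_symm_phi_1_(E_1,E_2)}. Your endgame on the cut is marginally slicker---you read off $\overline{a(k_0)}=c\neq0$ directly, whereas the paper first derives $a=-b$ on $(E_1,E_2)$ and then argues that $a(\kappa)=b(\kappa)=0$ would force $\phi_1(\cdot,\kappa)\equiv0$, contradicting the normalization of $\mu_1$.
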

\begin{proof}
We prove that $a(k)\neq0$ for all $k\in\overbar{\C_+}\setminus\{E_1,E_2\}$; all remaining assertions follow immediately from Proposition \ref{prop_mu} in combination with \eqref{def_s}, \eqref{def_s_entries} and \eqref{defphi120}. 	
Since $|a|\geq 1$ for all $k\in\R\setminus[E_1,E_2]$ it remains to show that $a(k)\neq0$ in $k\in\C_+\cup(E_1,E_2)$.
We treat the cases $k\in\C_+$ and $k\in (E_1,E_2)$ separately; an \emph{argumentum ad absurdum} will be employed for both situations.

First we assume that there exists $\kappa \in \C_+$ such that $a(\kappa) = 0$. Let $L^2(\R, \C^2)$ denote the Hilbert space of vector valued functions $f = (f_1, f_2)$ equipped with the inner product
\begin{equation*}
	\langle f, g\rangle = \int_\R (\bar{f}_1 g_1 + \bar{f}_2 g_2) \,\dx x.
\end{equation*}
Then the operator $L \coloneqq i\begin{pmatrix} \partial_x & -u_0 \\ \bar{u}_0 & -\partial_x \end{pmatrix}$ satisfies
\begin{equation*}
	\langle Lf, g \rangle = \langle f, Lg\rangle \quad \text{whenever} \quad f,g \in H^1(\R, \C^2) \subseteq L^2(\R, \C^2),
\end{equation*}
where we have used the fact that $\lim_{x \to \pm \infty} \phi(x) = 0$ for all functions $\phi$ in the Sobolev space $H^1(\R, \C^2)$ to integrate by parts. 

Define $h\colon \R \to \C^2$ by
\begin{equation*}
	h(x) = [\phi_2(x,\kappa)]_2 = [\mu_2(x,\kappa)]_2 \e^{i\kappa x}.
\end{equation*}
In the following we show that $Lh = \kappa h$ in $L^2(\R, \C^2)$.
Since $h$ solves the $x$-part of the Lax pair \eqref{lax} at $t=0$ with $k=\kappa$, it suffices to show that $h\in H^1(\R, \C^2)$.

Proposition \ref{prop_mu}\eqref{prop_mu_iii} implies that $h$ has exponential decay as $x \to + \infty$.
Furthermore, by employing that
\begin{equation*}
		a(k) = \det \big([\phi_1(x,k)]_1,[\phi_2(x,k)]_2\big),\qquad k\in\overbar{\C_+}\setminus\{E_1,E_2\},\; x \in\R,
\end{equation*} 
the assumption $a(\kappa) = 0$ implies that there is a constant $c \in \C$ such that
\begin{equation*}
	h(x) = c [\phi_1(x,\kappa)]_1 = c[\mu_1(x,\kappa)]_1 \e^{-i(X(\kappa) - \beta)x}, \qquad x \in \R.
\end{equation*}
Hence, using $\im X(\kappa)>0$ and employing Proposition \ref{prop_mu}\eqref{prop_mu_iii} yields exponential decay of $h$ as $x \to -\infty$.
	
This shows that $h \in H^1(\R, \C^2)$ and we conclude that $\kappa$ is an eigenvalue of $L$ with eigenfunction $h$ in $L^2(\R, \C^2)$. Since $L$ is self-adjoint, this yields that $\kappa$ is real:
\begin{equation*}
	\bar{\kappa} \langle h, h \rangle = \langle Lh, h \rangle =\langle h, Lh \rangle = \kappa \langle h, h \rangle,
\end{equation*}
which contradicts the initial assumption that $\kappa\in\C_+$.

It only remains to prove that $a(k) \neq 0$ for $k \in (E_1, E_2)$. 
Employing \eqref{phi_identity_(E_1,E_2)} yields that
\begin{align}\label{aequalsminusb}
	a(k)  =   \phi_{1,11} \phi_{2,22}  -  \phi_{1,21}  \phi_{2,12}
	= \phi_{1,12} \phi_{2,22}  -  \phi_{1,22}  \phi_{2,12}
	= -b(k), \qquad k \in (E_1, E_2).
\end{align}
Let us suppose that $a(\kappa) = 0$ for some $\kappa \in (E_1, E_2)$. Then $b(\kappa) = 0$ by \eqref{aequalsminusb}. 
Since $\phi_2(x,\kappa)$ has unit determinant for all $x \in \R$, \eqref{def_s_entries} implies that $\phi_1(x,\kappa)$ vanishes for all $x \in \R$. 
But then
\begin{equation*}
	\mu_1(x, \kappa) = \phi_1(x,\kappa) \e^{i(X_+(\kappa) - \beta)x\sigma_3}
\end{equation*}
also vanishes for all $x \in \R$, which contradicts Proposition \ref{prop_mu}\eqref{prop_mu_iii},
because $s^b(k) \neq 0$ for all $k \in (E_1, E_2)$. 
Consequently, $a(k) \neq 0$ for $k \in (E_1, E_2)$. 
\end{proof}

\subsection{Asymptotics of $r$ as $k\to E_j$}
We begin with introducing a useful representation for $\mu_1$.
Let $E\colon \R\times \R\times \C\setminus (E_1,E_2)\to \C^{2\times2}$ be defined by
\begin{equation} \label{def_E}
E(x,x',k) \coloneqq  \e^{ i (X(k)-\beta) (x-x')} \phi_1^b(x,0,k)(\phi_1^b)^{-1}(x',0,k),	
\end{equation}
which is analytic in $\C\setminus [E_1,E_2]$ with continuous boundary values on $(E_1,E_2)$. 
For $x\in\R$, we  set
\begin{equation*}
	H(x) \coloneqq \begin{pmatrix}
		1 & 0 \\ 0 & \e^{-2 i\beta x} \end{pmatrix}.
\end{equation*} 
The Volterra equation for $[\mu_1]_1$ reads, for $(x,k)\in \R \times \big(\overbar{\C_+}\setminus\{E_1,E_2\}\big)$,
\begin{equation*}
[\mu_1(x,k)]_1 = H(x) [s^b(k)]_1 + \int_{-\infty}^x E(x,x',k) (\mathsf{U}_0 - \mathsf{U}_0^b)(x') [\mu_1(x',k)]_1 \,\dx x',	
\end{equation*}
where it is understood that $s^b = s_+^b$ and $E = E_+$ if $k\in(E_1,E_2)$. Thus, defining $\Lambda \colon \R\times ( \overbar{\C_+}\setminus\{E_1,E_2\}) \to\C^{2\times2}$ by  
\begin{align}\label{LambdaVolterra}
	\Lambda(x,k) = H(x) + \int_{-\infty}^x E(x,x',k) (\mathsf{U}_0 - \mathsf{U}_0^b)(x') \Lambda(x',k) \,\dx x',
\end{align}
where $E = E_+$ if $k\in(E_1,E_2)$, we see that
\begin{equation} \label{Lambda}
[\mu_1]_1 = \Lambda [s^b]_1, \qquad x \in \R, ~ k \in  \overbar{\C_+}\setminus\{E_1,E_2\},
\end{equation}
where $s^b = s^b_+$ if $k \in (E_1, E_2)$.

\begin{lemma} \label{lem_Lambda}
For every $x\in\R$, $\Lambda(x,\cdot)$ has a continuous extension $\Lambda(x,\cdot)\colon \overbar{\C_+}\to \C^{2\times2}$, which 
is analytic in $\C_+$, and its restriction to $\R$ lies in $\mathcal C^{N_1}(\R\setminus\{E_1,E_2\},\C^{2\times2})\cap \mathcal C(\R,\C^{2\times2})$. Additionally, 
\begin{equation} \label{Lambda_at_E_j}
	\Lambda(x,k) = \Lambda(x,E_j) + \sum^{N_1-1}_{l=1} \Lambda^{(l/2)}_j(x) (k-E_j)^{l/2} 
	+ o\big((k-E_j)^{\frac{N_1 -1}{2}}\big)
\end{equation}
as $\overbar{\C_+} \ni k \to E_j$ for coefficients $\Lambda^{(l/2)}_j(x)\in\C$, $j=1,2$, $l=1,\dots,N_1-1$. 
\end{lemma}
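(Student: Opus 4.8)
The plan is as follows. First, the analyticity of $\Lambda(x,\cdot)$ in $\C_+$, the existence of continuous boundary values on $\R$, and the membership $\Lambda(x,\cdot)|_{\R\setminus\{E_1,E_2\}}\in\mathcal C^{N_1}(\R\setminus\{E_1,E_2\},\C^{2\times2})$ follow from the standard analysis of Volterra equations of the form \eqref{LambdaVolterra}, exactly as in the (standard) proof of Proposition \ref{prop_mu}: away from $E_1,E_2$ the Neumann series converges because the kernel $E(x,x',k)(\mathsf U_0-\mathsf U_0^b)(x')$ is bounded in $x'$ and $L^1$ by \eqref{u0assumptions}, and differentiating up to $N_1$ times under the integral sign is justified because each $k$-derivative of $E$ costs at most one power of $(x-x')$, which is absorbed by the moments in \eqref{u0assumptions}. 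The substantive new statements are the continuity of $\Lambda(x,\cdot)$ up to the branch points and the expansion \eqref{Lambda_at_E_j}; I would obtain both from a structural fact about the kernel $E$.

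Next, fix $j\in\{1,2\}$, write $E_{3-j}$ for the other branch point, and introduce the local coordinate $\zeta$ by $k=E_j+\zeta^2$, with the branch of $\zeta$ chosen so that $\zeta\ge0$ corresponds to $k\ge E_j$ and is compatible with the branch of $X$; then a $k$-neighbourhood of $E_j$ in $\overbar{\C_+}$ corresponds to $\zeta$ in a quarter-disk at $0$. The key claim is twofold: \emph{(1)} for each fixed $x,x'$, the kernel $E(x,x',E_j+\zeta^2)$ extends to an \emph{analytic} function of $\zeta$ on the fixed disk $D\coloneqq\{|\zeta|<\sqrt{2\alpha}\}$; and \emph{(2)} $|E(x,x',E_j+\zeta^2)|\le C(1+|x'|)$ uniformly for $\zeta$ in compact subsets of $D$, for $x$ in compact subsets of $\R$, and for $x'\le x$. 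For \emph{(1)}, write $E(x,x',k)=\e^{-i\beta(x-x')}\e^{iX(k)(x-x')}M(x,x',k)$ with $M(x,x',k)\coloneqq\phi_1^b(x,0,k)(\phi_1^b)^{-1}(x',0,k)$; as recorded in the proof of Lemma \ref{lem_symm_phi_1_(E_1,E_2)}, $M(x,x',\cdot)$ is entire in $k$, so $M(x,x',E_j+\zeta^2)$ is analytic in $\zeta$, while $X(E_j+\zeta^2)=\zeta\sqrt{\zeta^2+E_j-E_{3-j}}$ is analytic in $\zeta$ precisely on $D$ (its only branch points lying at $\zeta^2=-(E_j-E_{3-j})\ne0$, i.e.\ at $|\zeta|=\sqrt{2\alpha}$), so $\e^{iX(k)(x-x')}$ is analytic in $\zeta$ on $D$ as well; in this way the $(k-E_j)^{-1/4}$ singularities carried individually by $s^b$ and $\Delta$ cancel. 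For \emph{(2)}, a direct computation with the explicit form \eqref{def_s^b} of $s^b$ exhibits the entries of $E$ as unimodular multiples of sums of functions bounded on $D$ with terms $q^2(1-\e^{2iX(x-x')})$ and $pq(1-\e^{2iX(x-x')})$, where $p\coloneqq\tfrac12(\Delta+\Delta^{-1})$ and $q\coloneqq\tfrac12(\Delta-\Delta^{-1})$; since $\re\big(2iX(k)(x-x')\big)=-2(\im X(k))(x-x')\le0$ for $k\in\overbar{\C_+}$, $x'\le x$, the elementary bound $|1-\e^{w}|\le|w|$ on $\{\re w\le0\}$ gives $|q^2(1-\e^{2iX(x-x')})|\le 2|q^2X||x-x'|$ and likewise with $pq$, and the identities $\Delta(k)^{2}X(k)=k-E_{2}$, $\Delta(k)^{-2}X(k)=k-E_{1}$ (whence $q^2X=\tfrac12(k+\beta-X)$ and $pqX=-\tfrac\alpha2$, both bounded on $D$) turn this into the bound in \emph{(2)}.

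Then, granting \emph{(1)}--\emph{(2)}, I would set $\widetilde\Lambda(x,\zeta)\coloneqq\Lambda(x,E_j+\zeta^2)$, so that \eqref{LambdaVolterra} becomes, for $\zeta$ in the relevant quarter-disk,
\begin{equation*}
\widetilde\Lambda(x,\zeta)=H(x)+\int_{-\infty}^xE(x,x',E_j+\zeta^2)(\mathsf U_0-\mathsf U_0^b)(x')\,\widetilde\Lambda(x',\zeta)\,\dx x'.
\end{equation*}
By \emph{(2)} and the first moment bound in \eqref{u0assumptions}, the associated Neumann series converges uniformly for $\zeta$ in compact subsets of $D$; since by \emph{(1)} each term is analytic in $\zeta$ on $D$, so is $\widetilde\Lambda(x,\cdot)$ (in particular, the Volterra solution extends analytically in $\zeta$ from the quarter-disk to all of $D$). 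Hence $\widetilde\Lambda(x,\cdot)$ is continuous at $\zeta=0$, which gives the continuity of $\Lambda(x,\cdot)$ up to $E_j$ --- and thereby the asserted continuity of $\Lambda(x,\cdot)$ on $\overbar{\C_+}$ and the membership of its restriction in $\mathcal C(\R,\C^{2\times2})$ --- and, truncating the convergent power series of $\widetilde\Lambda(x,\cdot)$ at $\zeta=0$ at order $N_1-1$, yields \eqref{Lambda_at_E_j} with $\Lambda^{(l/2)}_j(x)=\tfrac1{l!}\partial_\zeta^l\widetilde\Lambda(x,0)$ and remainder $o\big((k-E_j)^{(N_1-1)/2}\big)$ --- indeed even $\bigO\big((k-E_j)^{N_1/2}\big)$, since $\widetilde\Lambda(x,\cdot)$ is analytic in $\zeta$.

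The step I expect to be the main obstacle is item \emph{(2)}: at face value $E$ need not even be bounded near $E_j$, since both $s^b(k)$ and $(s^b)^{-1}(k)$ blow up like $(k-E_j)^{-1/4}$ there and the central factor $\e^{-iX(k)(x-x')\sigma_3}$ grows as $x'\to-\infty$, so one must extract the exact cancellations --- the entirety of $M$, the identities $\Delta^{\pm2}X=k-E_{2/1}$, and $|1-\e^{w}|\le|w|$ on $\{\re w\le0\}$ --- that remove the $\zeta^{-1}$-type singularities and leave only polynomial growth in $x'$. Everything downstream of \emph{(1)}--\emph{(2)} is the standard Volterra/Neumann-series bootstrap.
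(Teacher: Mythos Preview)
Your approach is the same as the paper's in outline --- introduce $\zeta=\sqrt{k-E_j}$, show that $E(x,x',E_j+\zeta^2)$ is regular in $\zeta$ at $0$ and bounded by $C(1+|x-x'|)$, and push the Neumann series through --- but you aim for more: you claim $\widetilde\Lambda(x,\cdot)$ is \emph{analytic} in $\zeta$ on the full disk $D$, so that \eqref{Lambda_at_E_j} follows from a single Taylor truncation using only the first moment in \eqref{u0assumptions}, with the stronger remainder $\bigO((k-E_j)^{N_1/2})$.

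The gap is in claim \emph{(2)}. You assert the bound $|E(x,x',E_j+\zeta^2)|\le C(1+|x'|)$ uniformly for $\zeta$ in compact subsets of $D$, but your justification invokes $|1-\e^{w}|\le|w|$ on $\{\re w\le0\}$, which needs $\im X(k)\ge0$, i.e.\ $k\in\overbar{\C_+}$. That restricts $\zeta$ to the closed \emph{quarter} of $D$, not to all of $D$. Outside the quarter, $\im X(E_j+\zeta^2)$ changes sign (e.g.\ at $j=1$, $\zeta=-\epsilon$ gives $X=-i\epsilon\sqrt{2\alpha-\epsilon^2}$), the factor $\e^{2iX(x-x')}$ grows like $\e^{c|\zeta||x-x'|}$, and your algebraic identities $q^2X=\tfrac12(k+\beta-X)$, $pqX=-\alpha/2$ --- which correctly remove the $\zeta^{-1}$ singularity --- do nothing against this exponential. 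The Neumann series therefore need not converge on $D$ under the polynomial-moment hypothesis \eqref{u0assumptions}; it would require exponential decay of $u_0-u_0^b$. Analyticity on the open quarter alone does not give a power series at the corner $\zeta=0$ (think of $\zeta\log\zeta$), and knowing that each individual $\Lambda_l$ extends analytically to $D$ does not help without uniform bounds there.

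What remains of your argument is precisely what the paper does: on the closed quarter one has $|\partial_\zeta^n E|\le C(1+|x-x'|)^{n+1}$, each additional $\zeta$-derivative costing one more power of $|x-x'|$, absorbed by the successive moments in \eqref{u0assumptions}; this yields $\widetilde\Lambda(x,\cdot)\in\mathcal C^{N_1-1}$ on the closed quarter and hence a Taylor expansion at $\zeta=0$ with remainder $o(|\zeta|^{N_1-1})$ --- exactly \eqref{Lambda_at_E_j}, with the $o$ rather than the $\bigO$ you claimed, and genuinely using all $N_1$ moments rather than just the first.
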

\begin{proof}
It follows from Proposition \ref{prop_mu}\eqref{prop_mu_i} in combination with \eqref{Lambda} and the definition of $s^b$ that $\Lambda(x,\cdot)$ is analytic in $\C_+$ and that the restriction $\Lambda(x,\cdot)|_{\R\setminus\{E_1,E_2\}}$ lies in $\mathcal C^{N_1}(\R\setminus\{E_1,E_2\},\C^{2\times2})$ for each $x\in\R$.

Next we show that $\Lambda(x,\cdot)$ has a continuous extension to $E_j$ for each $x\in\R$, $j=1,2$, and that \eqref{Lambda_at_E_j} holds.
Let $F\colon \R\times \R\times (\C\setminus (E_1,E_2))\to \C^{2\times2}$ be defined by
\begin{equation*}
	F(x,x',k)
	\coloneqq  \begin{pmatrix}
		\frac{(E_1+X(k)-k)^2  \e^{2  i X(k) (x-x')} -(k+X(k)-E_1)^2}{4  (E_1-k)X(k)}
		& \frac{ i (\e^{2  i X(k) (x-x')}-1) ((E_1-k)^2-X(k)^2)}{4  (E_1-k)X(k)} \\
		\frac{ i (\e^{2  i X(k) (x-x')}-1) ((E_1-k)^2-X(k)^2)}{4 (E_1-k)X(k)} 
		& \frac{(E_1+X(k)-k)^2 - (k+X(k)-E_1)^2 \e^{2  i X(k)(x-x')} }{4  (E_1-k) X(k)} 
	\end{pmatrix}.
\end{equation*} 
A direct calculation shows that
\begin{align*}
	\begin{aligned}
		E(x,x',k)  & =  \e^{ i X(k) (x-x')}\e^{- i\beta (x-x')} \e^{ i \beta x \sigma_3} s^b(k) \e^{- i X(k) (x-x')\sigma_3} s^b(k)^{-1} \e^{- i \beta x' \sigma_3}	\\
		&= H(x) s^b(k) 
		\begin{pmatrix}1 & 0 \\ 0 &  \e^{2 i X(k) (x-x')} \end{pmatrix}
		s^b(k)^{-1} H^{-1}(x')	\\
		&= H(x) F(x,x',k) H^{-1}(x').
	\end{aligned}
\end{align*}
Fix $j\in\{1,2\}$, let $0<\epsilon<(E_2-E_1)/2$ and consider the region
\begin{equation*}
	U_\epsilon \coloneqq B_{\sqrt\epsilon}(0) \cap Q_1 \subseteq \C,
\end{equation*}
where $Q_1 \coloneqq \{z\in\C\colon \re z > 0, \im z > 0\}$ denotes the first quadrant in the complex pane. Consider the change of variables 
\begin{equation*}
	\lambda\colon \overbar{B_\epsilon(E_j)\cap\C_+} \to \overbar{U_\epsilon}, \qquad k\mapsto\lambda(k)\coloneqq \sqrt{k - E_j}.
\end{equation*}
For $x,x'\in\R$, $E(x, x', \cdot)$ is a smooth function of $\lambda \in \overbar{U_\epsilon}$. 
Moreover, 
\begin{equation} \label{E_estimate}
	|E(x,x',k)| \leq C(1 + |x-x'|), \qquad -\infty < x' \leq x, \; k \in \lambda^{-1}( \overbar{U_\epsilon}).
\end{equation}
To see this, we define 
\begin{equation*}
	f\colon \R\times\lambda^{-1}( \overbar{U_\epsilon})\to\C^{2\times2}, \quad f(x,k)\coloneqq F(x,0,k),	
\end{equation*}
so that $F(x,x',k)=f(x-x',k)$.
The estimate in \eqref{E_estimate} will follow if we can find a constant $C>0$ such that
\begin{equation}\label{f_estimate}
	|f(x,k)|\leq C(1+x), \qquad x\geq0, \; k\in \lambda^{-1}( \overbar{U_\epsilon}).
\end{equation}
Since 
\begin{align}\label{f_x}
	f_x(x,k) = -\frac{\e^{2 i X(k)x}}{2 i}
	\begin{pmatrix}
		2( X(k)- k) +E_1 + E_2   &  i(E_1-E_2)  \\
		 i(E_1-E_2) &   2 (X(k)+k) - E_1 - E_2 \\
	\end{pmatrix},
\end{align}
we see that $|f_x(x,k)|$ is uniformly bounded for all $x\geq 0$ and $k \in\lambda^{-1}( \overbar{U_\epsilon})$.
Since $f(x,k) = I + \int_0^x f_x(x',k) \,\dx x'$, this shows \eqref{f_estimate} and hence also \eqref{E_estimate}. 

With the help of \eqref{E_estimate} and the decay assumption \eqref{u0assumptions} on $u_0-u^b_0$ we can define 
\begin{align}\label{Lambdal}
	\Lambda_l(x,k)= \int\limits_{-\infty < x_1 \leq \cdots \leq x_l \leq x_{l+1} = x}\prod_{j=1}^l E(x_{j+1},x_{j},k) (\mathsf{U}_0-\mathsf{U}^b_0)(x_j) H(x_1) \,\dx x_1\cdots \dx x_l.
\end{align}
for integers $l\geq1$,
and estimate  that
\begin{align*}
	|\Lambda_l(x,k)| 
	&\leq \int\limits_{-\infty < x_1 \leq \cdots \leq x_l \leq x_{l+1} = x}\prod_{j=1}^l |E(x_{j+1},x_{j},k)| |(\mathsf{U}_0-\mathsf{U}_0^b)(x_j)|\,\dx x_1\cdots \dx x_l \\
	&\leq C \int\limits_{-\infty < x_1 \leq \cdots \leq x_l \leq x_{l+1} = x}\prod_{j=1}^l (1+ |x -x_{j}|) |(u_0-u^b_0)(x_j)|\,\dx x_1\cdots \dx x_l \\
	&\leq \frac{C \left(\int^x_{-\infty}(1+|x-x'|)|(u_0-u^b_0)(x')| \,\dx x' \right)^l}{l!} 
\end{align*}
for all $x\in \R$ and $k \in\lambda^{-1}( \overbar{U_\epsilon})$.
Consequently, by the assumption \eqref{u0assumptions} on $u_0$, the Neumann series 
\begin{equation*}
	\Lambda(x,k) = \sum_{l=0}^\infty \Lambda_l(x,k)
\end{equation*}
with $\Lambda_0(x)\coloneqq H(x)$ converges absolutely and uniformly for all $k \in\lambda^{-1}( \overbar{U_\epsilon})$:
\begin{align*}
	|\Lambda(x,k)| & \leq \sum_{l=0}^\infty |\Lambda_l(x,k)|
	\leq C \exp\!\bigg(\int^x_{-\infty}(1+|x-x'|)|(u_0-u^b_0)(x')| \,\dx x'\bigg).
\end{align*}
It follows that $\Lambda(x,\cdot)$ possesses a continuous extension to $\overbar{\C_+}$ for each $x\in\R$.

By taking the $\lambda$-derivative of \eqref{f_x}, we observe that there exists a $C>0$ such that 
\begin{equation*}
	|f_{x\lambda}(x,k)| \leq C(1+x), \qquad x \geq 0, \;k\in \lambda^{-1}( \overbar{U_\epsilon}).
\end{equation*}
Using that $f_\lambda(x,k) = \int_0^x f_{x\lambda}(x',k) \,\dx x'$, we obtain that
\begin{equation*}
	|f_\lambda(x,k)| \leq C(1+x)^2, \qquad x \geq 0, \;  k\in \lambda^{-1}( \overbar{U_\epsilon}).	
\end{equation*}
and hence, for each fixed $x\in\R$ and all $-\infty < x' \leq x$,
\begin{align*}
	|\partial_\lambda E(x, x', k)
	& \leq C (1 + |x - x'|)^2, \qquad k\in \lambda^{-1}( \overbar{U_\epsilon}).
\end{align*}
This estimate together with the assumption that  $u_0$ satisfies \eqref{u0assumptions} for some $N_1 \geq 2$ enables us to interchange the order of integration and differentiation when differentiating $\Lambda_l$ in \eqref{Lambdal} with respect to $\lambda$. 
We obtain that each $\Lambda_l$, $l\geq1$, is a $\mathcal{C}^1$-function of $\lambda\in\overbar{U_\epsilon}$. 
By induction, we infer that each $\Lambda_l$ is $N_1-1$ times continuously differentiable on $\lambda\in\overbar{U_\epsilon}$. 
Indeed, since $x^n (u_0-u^b_0)(x)$ is in $L^1(-\infty,0)$ for each $n = 0, \dots, N_1$, and
\begin{align*}
	|\partial^n_\lambda E(x, x', k) | \leq C (1 + |x - x'|)^{n+1}, \qquad k\in \lambda^{-1}( \overbar{U_\epsilon}), 
\end{align*}
we find for all $1\leq n \leq N_1-1$ suitable integrable majorants and obtain series expansions for the $n$-th derivative $\partial^n_\lambda \Lambda_l(x,\cdot)\colon\overbar{U_\epsilon}\cap\R\to\C$, $l\geq1$, $x\in\R$.
These estimates also show that the differentiated series $\sum_{l=0}^\infty \partial^n_\lambda \Lambda_l(x,\cdot)$ converges absolutely and uniformly in $\overbar{U_\epsilon}$ for each $1\leq n \leq N_1-1$, showing that $\Lambda(x,\cdot)$  is $N_1-1$ times continuously differentiable on $\overbar{U_\epsilon}$. 
Taylor expanding at $\lambda = 0$ up to order $N_1-1$ and applying the transformation $\lambda \mapsto k$, we obtain \eqref{Lambda_at_E_j} with the stated error term.	
\end{proof}

\begin{corollary} \label{cor_lem_Lambda}
At $E_j$, $j=1,2$, the eigenfunction $\mu_1(x,\cdot)$, $x\in\R$, and spectral function $a$ satisfy
\begin{align}
	\mu_1(x,k) &= \hat{\mu}_{1,j}(x) (k-E_j)^{-1/4}+ \bigO((k-E_j)^{1/4}), \qquad (\overbar{\C_+},\overbar{\C_-})\setminus \{E_1,E_2\}\ni k \to E_j, \label{mu_1_at_E_j} \\	
	a(k) &= \hat{a}_j (k-E_j)^{-1/4}  + \bigO((k-E_j)^{1/4}), \qquad\qquad \overbar{\C_+}\setminus \{E_1,E_2\}\ni k \to E_j, \label{a_at_E_j}
\end{align}
for coefficients $\hat{\mu}_{1,j}(x)\in \C^{2\times2}$ and $\hat{a}_j \in\C\setminus \{0\}$.
\end{corollary}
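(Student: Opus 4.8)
The strategy is to first determine the branch behavior of the background matrix $s^b$ at $E_j$, transport it through the factorization $[\mu_1]_1=\Lambda[s^b]_1$ of \eqref{Lambda} (together with the symmetry \eqref{symmetry_mu_1} for the second column) using the continuity and half-integer expansion of $\Lambda$ supplied by Lemma~\ref{lem_Lambda}, and finally read off the behavior of $a$ from the determinant representation $a(k)=\det\big([\phi_1(x,k)]_1,[\phi_2(x,k)]_2\big)$ of Proposition~\ref{cor_prop_mu}. The nonvanishing of $\hat a_j$ will then be forced by the bound $|a|\ge1$ on $\R\setminus[E_1,E_2]$.

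First I would analyze $\Delta$, hence $s^b$, near $E_j$. Writing $\Delta(k)=(k-E_2)^{1/4}(k-E_1)^{-1/4}$ with $(k-E_2)^{1/4}$ analytic and nonvanishing near $E_1$, one sees that near $E_1$ the function $\Delta$ has a convergent expansion in the powers $(k-E_1)^{n-1/4}$, $n\ge0$, with nonzero leading coefficient, while $\Delta^{-1}=O((k-E_1)^{1/4})$; near $E_2$ the roles of $\Delta$ and $\Delta^{-1}$ are exchanged. Substituting into \eqref{def_s^b} gives, for $j=1,2$ and $i=1,2$, nonzero constant vectors $\hat s_j^{(i)}\in\C^2$ with $[s^b(k)]_i=\hat s_j^{(i)}(k-E_j)^{-1/4}+O((k-E_j)^{1/4})$ as $k\to E_j$, the error being $O((k-E_j)^{1/4})$ because all subleading contributions sit at the powers $(k-E_j)^{1/4},(k-E_j)^{3/4},\dots$. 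Multiplying by $\Lambda(x,k)=\Lambda(x,E_j)+O((k-E_j)^{1/2})$ from Lemma~\ref{lem_Lambda} (so that the cross term is $O((k-E_j)^{1/2})O((k-E_j)^{-1/4})=O((k-E_j)^{1/4})$) and invoking \eqref{Lambda}, one obtains the leading singularity $\Lambda(x,E_j)\hat s_j^{(1)}(k-E_j)^{-1/4}$ of $[\mu_1(x,k)]_1$; applying \eqref{symmetry_mu_1} to obtain the second column produces \eqref{mu_1_at_E_j}, with the branch of $(k-E_j)^{-1/4}$ on the two half-planes inherited from $\Delta$ and its Schwarz reflection, respectively, and $\hat\mu_{1,j}(x)$ the resulting coefficient matrix.

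For the spectral function I would use $[\phi_1(x,k)]_1=[\mu_1(x,k)]_1\e^{-i(X(k)-\beta)x}$ from \eqref{defphi120}: since $X(E_j)=0$ and $X(k)=O((k-E_j)^{1/2})$, the scalar prefactor equals $\e^{i\beta x}\big(1+O((k-E_j)^{1/2})\big)$, so $[\phi_1(x,k)]_1$ inherits the leading $(k-E_j)^{-1/4}$ behavior of $[\mu_1(x,k)]_1$ up to the factor $\e^{i\beta x}$. On the other hand $E_j$ is a regular point of the Volterra equation \eqref{def_mu_2} (its kernel is entire in $k$), so $[\phi_2(x,\cdot)]_2=[\mu_2(x,\cdot)]_2\e^{ikx}$ extends to an at least Lipschitz function of $k$ up to $E_j$, i.e.\ $[\phi_2(x,k)]_2=[\phi_2(x,E_j)]_2+O(k-E_j)$. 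Expanding the determinant and retaining only the product of the two leading pieces (the remaining terms are $O((k-E_j)^{1/4})$) gives \eqref{a_at_E_j} with $\hat a_j=\e^{i\beta x}\det\big(\Lambda(x,E_j)\hat s_j^{(1)},[\phi_2(x,E_j)]_2\big)$, which is independent of $x$ since $a$ is. Lastly, were $\hat a_j=0$, then \eqref{a_at_E_j} would force $a(k)\to0$ as $k\to E_j$ within $\overline{\C_+}\setminus\{E_1,E_2\}$, in particular along $\R\setminus[E_1,E_2]$, contradicting $|a|\ge1$ on that set; hence $\hat a_j\neq0$.

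I expect the only delicate points to be the mirror-image branch bookkeeping at the two endpoints---the leading singularity of $[s^b]_i$ comes from $\Delta$ at $E_1$ but from $\Delta^{-1}$ at $E_2$, with the Schwarz-reflected branch governing the second column---and verifying that the various half-integer error contributions collapse to $O((k-E_j)^{1/4})$ after the multiplications. The nonvanishing of $\hat a_j$ presents no real obstacle once the expansion \eqref{a_at_E_j} is in place, since it follows directly from the already-established inequality $|a|\ge1$.
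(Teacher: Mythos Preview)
Your proposal is correct and follows essentially the same route as the paper: expand $s^b$ at $E_j$ to extract the $(k-E_j)^{-1/4}$ singularity, push it through $[\mu_1]_1=\Lambda[s^b]_1$ using the half-integer expansion of $\Lambda$ from Lemma~\ref{lem_Lambda}, invoke the symmetry \eqref{symmetry_mu_1} for the second column, then read off the behavior of $a$ from the regularity of $\mu_2$ and conclude $\hat a_j\neq0$ from $|a|\ge1$. The only cosmetic difference is that the paper passes from $\mu_1$ to $a$ via $s(k)=\mu_2^{-1}(0,k)\mu_1(0,k)$ at $x=0$, whereas you use the determinant formula for $a$ at a general $x$ together with the relation $\phi_1=\mu_1\e^{-i(X-\beta)x\sigma_3}$; both amount to multiplying the singular expansion of $\mu_1$ by something regular at $E_j$.
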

\begin{proof}
It holds that $s^b(k) = \hat{s}_j^b(k-E_j)^{-1/4} + \bigO((k-E_j)^{1/4})$ as $k \to E_j$ for some constant matrices $\hat{s}_j^b$, $j = 1,2$.
Employing \eqref{Lambda} and \eqref{Lambda_at_E_j} yields that for $x\in\R$,
\begin{equation*}
	[\mu_1(x,k)]_1 = [\hat{\mu}_{1,j}(x)]_1 (k-E_j)^{-1/4} + \bigO\big((k-E_j)^{1/4}\big), \qquad \overbar{\C_+}\setminus\{E_1,E_2\} \ni k \to E_j, ~ j = 1,2,
\end{equation*}
with $[\hat{\mu}_{1,j}(x)]_1\in\C^2$. The symmetry \eqref{symmetry_mu_1} implies that $[\mu_1(x,k)]_2$ has a similar expansion as $k$ approaches $E_j$ from $\overbar{\C_-}\setminus\{E_1,E_2\}$, $j = 1,2$. This shows \eqref{mu_1_at_E_j}. 

Since $s(k) = \mu^{-1}_2(0,k)\mu_1(0,k)$ for $k\in\R\setminus\{E_1,E_2\}$ and $\mu^{-1}_2(0,k)\in \mathcal C^{N_1}(\R,\C^{2\times2})$ by Proposition \ref{prop_mu}, we infer that there exist coefficients $\hat{a}_1, \hat{a}_2 \in \C$ such that 
\begin{equation*}
	a(k) = \hat{a}_j (k-E_j)^{-1/4} + \bigO\big((k-E_j)^{1/4}\big) \qquad \overbar{\C_+}\setminus\{E_1,E_2\} \ni k \to E_j, ~ j = 1,2.
\end{equation*}
Since $|a(k)| \geq 1$ for $k\in\R\setminus[E_1,E_2]$, it follows that $\hat{a}_j \neq 0$, $j = 1,2$, which confirms \eqref{a_at_E_j}. 
\end{proof}

\subsection{Proof of Theorem~\ref{thm_direct}\eqref{thm_direct_a}}
Proposition \ref{cor_prop_mu} implies that $r\in\mathcal C^{N_1}(\R\setminus \{E_1,E_2\})$. 

Let us consider the behavior of $r$ at the branch points. The definition \eqref{def_r} of $r$ in conjunction with \eqref{defphi120} and Proposition \ref{cor_prop_mu} implies that
\begin{align*}
	&r(k) = 
	\frac{\mu_{1,11}(0,k)  \mu_{2,21}(0,k)  - \mu_{1,21}(0,k)  \mu_{2,11}(0,k)}
	{\mu_{1,11}(0,k)  \mu_{2,22}(0,k)   -  \mu_{1,21}(0,k)   \mu_{2,12}(0,k)},
	\qquad \phantom{-}  k\in \R\setminus\{E_1,E_2\}, 
\end{align*}
where we have used \eqref{phi_identity_(E_1,E_2)} for $k \in (E_1, E_2)$. 
Using \eqref{Lambda} and employing the definition \eqref{def_s^b} of $s^b$, we deduce that, for all $k\in\R\setminus\{E_1,E_2\}$,
\begin{equation}\label{r_identity}
	r(k) = 
	\frac{(\Lambda_{11} (\Delta_+^2+1) - i (\Delta_+^2-1) \Lambda_{12} )\mu_{2,21} 
		- (\Lambda_{21} (\Delta_+^2+1) - i (\Delta_+^2-1) \Lambda_{22} ) \mu_{2,11}}
	{(\Lambda_{11} (\Delta_+^2+1) - i  (\Delta_+^2-1)\Lambda_{12}) \mu_{2,22} 
		- (\Lambda_{21} (\Delta_+^2+1) - i (\Delta_+^2-1) \Lambda_{22})\mu_{2,12}}\bigg|_{x=0}.
\end{equation}
Recall from Proposition \ref{prop_mu}\eqref{prop_mu_ii} that $\mu_2 \in\mathcal C^{N_1}(\R,\C^{2\times2})$. Therefore, it follows from \eqref{Deltadef}, \eqref{Lambda_at_E_j}, and \eqref{r_identity} that
$r$ has expansions of the form
\begin{equation*}
r(k) =
\begin{cases}
			\sum^{N_1-1}_{l=-1} q_{2,l} (k-E_2)^{l/2} + o\big((k-E_2)^{\frac{N_1-1}{2}}\big)   &\text{as} \quad k\searrow E_2, \\
			\sum^{N_1-1}_{l=-1}  p_{2,l} (E_2-k)^{l/2} + o\big((E_2 -k)^{\frac{N_1-1}{2}}\big)   &\text{as}\quad k\nearrow E_2, \\
			\sum^{N_1-1}_{l=-1} i^l q_{1,l} (k-E_1)^{l/2} + o\big((k-E_1)^{\frac{N_1-1}{2}}\big)  &\text{as}\quad k\searrow E_1, \\
			\sum^{N_1-1}_{l=-1} i^l p_{1,l} (E_1-k)^{l/2} + o\big((E_1-k)^{\frac{N_1-1}{2}}\big)   &\text{as}\quad k\nearrow E_1, \\
\end{cases}
\end{equation*}
for some coefficients $q_{j,l}, p_{j,l} \in \C$. Since $\Lambda$ and $\Delta_+$ have analytic continuations to $\C_+$ and $\mu_2$ is $C^{N_1}$ at $k \in E_j$, we conclude that the coefficients obey the relation $p_{j,l} = i^l q_{j,l}$. From the definition of $r$ it is clear that $|r|=1$ on the interval $(E_1,E_2)$. Consequently, the coefficients of the singular terms $|k-E_j|^{-1/2}$, $j = 1,2$, must vanish. 
In particular, $r \in \mathcal C(\R)$. 
Using again that $|r|=1$ on $(E_1,E_2)$, we find that $q_{1,0} = r(E_1)$ and $q_{2,0} = r(E_2)$ have absolute value $1$. More generally, considering the coefficient of $|k-E_j|^{n/2}$ in the expansion of $|r|^2 = 1$ as $k \in (E_1, E_2)$ approaches $E_j$, we find that $\sum_{l=0}^n  i^{n-l} (- i)^l q_{j,n-l} \overbar{q_{j,l}} =0$ for $j=1,2$ and $n = 0,1, \dots, N_1-1$. 

It remains to prove that $q_{j,1} \neq 0$, $j=1,2$. In the following we demonstrate that $q_{2,1} \neq 0$; the case $j=1$ is very similar. The unit determinant relation 
$|a|^2 - |b|^2 = 1$ for $k  \in \R \setminus [E_1, E_2]$ yields that
\begin{equation*}
	|a|^2 = \frac{1}{1- |r|^2}, \qquad k  \in \R \setminus [E_1, E_2].
\end{equation*}
Thus, \eqref{a_at_E_j} implies that there exists a small $\epsilon > 0$ and a constant $c_1 > 0$ such that
\begin{equation*}
\frac{1}{1- |r|^2} \leq \frac{c_1}{\sqrt{k-E_2}}, \qquad k \in (E_2, E_2 + \epsilon),
\end{equation*}
which implies that
\begin{equation}\label{pr_q_{2,1}_1}
	|r|^2 \leq 1- \frac{1}{c_1} \sqrt{k-E_2}, \qquad k \in (E_2, E_2 + \epsilon).
\end{equation}
Since $N_1 \geq 2$, we have $r(k) = q_{2,0} + q_{2,1} (k-E_2)^{1/2} + o((k-E_2)^{1/2})$ as $k\searrow E_2$ with $|q_{2,0}|=1$, which is consistent with \eqref{pr_q_{2,1}_1} only if $q_{2,1} \neq 0$. 
\proofend

\subsection{Proof of Theorem~\ref{thm_direct}\eqref{thm_direct_b}}
Suppose $u_0$ satisfies \eqref{u0assumptions} and \eqref{u0assumptions2} for some integers $N_1 \geq 2$ and $0 \leq N_2 \leq 4$.
Define
\begin{align}\label{mu_1p}
	\mu_{1,N_2}(x,k) &\coloneqq I + \sum^{N_2}_{n=1}\frac{\mu^{(n)}_{1}(x)}{k^n}, \qquad \mu_{2,N_2}(x,k) \coloneqq I + \sum^{N_2}_{n=1}\frac{\mu^{(n)}_{2}(x)}{k^n},
\end{align} 
where the first few  coefficients $\mu^{(n)}_j$ are given by  
\begin{align*}
	\begin{aligned}
	\mu^{(1)}_{1} &= \frac{1}{2} 
	\begin{pmatrix}
		 i\Gamma_1 & - i u_0\\  i \bar{u}_0 &- i\Gamma_1	
	\end{pmatrix}, 
	\quad 	\mu^{(1)}_{2} = \frac{1}{2}
	\begin{pmatrix}
		 i\Lambda_1&- i u_0 \\ i\bar{u}_0 &- i\Lambda_1
	\end{pmatrix},  
		\\
	\mu^{(2)}_{1} &= \frac{1}{4} 
	\begin{pmatrix}
		u_0^b \bar{u}_0 - \frac{\alpha^2}{2} + \Gamma_2& u_{0x} - u_0\Gamma_1 \\ 
		\bar{u}_{0x} - \bar{u}_0 \Gamma_1 & \overbar{u_0^b} u_0 - \frac{\alpha^2}{2} + \overbar{\Gamma_2}
	\end{pmatrix},
 	\quad	\mu^{(2)}_{2} = \frac{1}{4}
 	\begin{pmatrix}
 		\Lambda_2&u_{0x} - u_0\Lambda_1\\ \bar{u}_{0x} - \bar{u}_0 \Lambda_1& \overbar{\Lambda_2}
 	\end{pmatrix}, 
		\\ 
(\mu^{(3)}_{1})_{11} &= \frac{1}{8} \Big(\frac{\alpha^2}{2}(4\beta- i\Gamma_1) -  i u_0^b(\bar{u}_{0x} - 2 i\beta\bar{u}_0-\bar{u}_0 \Gamma_1) +  i\Gamma_3\Big),
	\quad (\mu^{(3)}_{1})_{22} = \overbar{(\mu^{(3)}_{1})_{11}}, 
	\\ 
	(\mu^{(3)}_{1})_{12} &= -\frac{ i}{8} \Big(u_{0x} \Gamma_1 - u_{0x x} + \frac{u_0}{2} (2u_0 (\bar{u}_0 +\overbar{u_0^b})-\alpha^2+2\overbar{\Gamma_2})\Big), 
	\quad	(\mu^{(3)}_{1})_{21} = \overbar{(\mu^{(3)}_{1})_{12}}, 
	\\ 
		\mu^{(3)}_{2} &=  \frac{1}{8}
	\begin{pmatrix}
		 i\Lambda_3 &  i\big(u_{0x x} - u_{0x} \Lambda_1 - u_0(|u_0|^2 + \overbar{\Lambda_2})\big) \\ - i(\bar{u}_{0xx} - \bar{u}_{0x} \Lambda_1 - \bar{u}_0(|u_0|^2 + \Lambda_2)) & - i\overbar{\Lambda_3}		
	\end{pmatrix},
	\end{aligned}
\end{align*}
with
\begin{align*}
	\Gamma_1(x) &= \int^x_{-\infty} (|u_0|^2-\alpha^2) \,\dx x', \quad \Lambda_1(x) = -\int^\infty_x |u_0|^2 \,\dx x',
		\\
\Gamma_2(x) & = \int^x_{-\infty} \Big(  2 i\beta(\alpha^2 - u_0^b\bar{u}_0)  + (\alpha^2 - |u_0|^2)\Gamma_1  +(u_0 - u_0^b)\bar{u}_{0x}  \Big) \,\dx x', 
		\\
\Lambda_2(x) & = -\int^\infty_x u_0 (\bar{u}_{0x} -\bar{u}_0 \Lambda_1) \,\dx x',
	\\
\Gamma_3(x) &= \int^x_{-\infty} \Big(|u_0|^4-\alpha^4 + (u_0^b \bar{u}_0 - \alpha^2)(4\beta^2-2 i\beta\Gamma_1) 
	\\  
& \quad +(u_0-u_0^b) (\bar{u}_{0x} \Gamma_1 -\bar{u}_{0x x})   + (|u_0|^2-\alpha^2) \Gamma_2  \Big) \,\dx x',
	\\
\Lambda_3(x) &= -\int^\infty_x u_0 \Big(  \bar{u}_{0x} \Lambda_1 + \bar{u}_0(|u_0|^2 + \Lambda_2) -\bar{u}_{0x x}\Big) \,\dx x'.
\end{align*}

\begin{proposition} \label{prop_mu_j_large_k}
There exist bounded positive functions $f_\pm \in \mathcal C(\R)$ whose restrictions $f_\pm|_{\R_\pm}$ lie in $L^1(\R_\pm)$ and a constant $K>0$ such that, for $j = 0, \dots, N_1$,
\begin{subequations}\label{mu1mu2atinfinity}
\begin{align} 
	&\big|\partial^j_k\big(\mu_1(x,k) - \mu_{1,N_2}(x,k)\big)\big|\leq \frac{f_-(x)}{|k|^{N_2+1}}, 
	\qquad x\in\R,\; k\in(\overbar{\C_+},\overbar{\C_-}),\; |k|\geq K, 
		\\
	&\big|\partial^j_k\big(\mu_2(x,k) - \mu_{2,N_2}(x,k)\big)\big|\leq \frac{f_+(x)}{|k|^{N_2+1}}, 
	\qquad x\in\R, \; k\in(\overbar{\C_-},\overbar{\C_+}),\; |k|\geq K. 
\end{align}
\end{subequations}
\end{proposition}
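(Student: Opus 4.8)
The plan is to treat $\mu_1$ and $\mu_2$ in parallel, working from the Volterra equations \eqref{defmu10}, \eqref{def_mu_2} and the $x$-parts of the Lax pair they encode. First I would set up the formal large-$k$ expansion: inserting $\mu_j(x,k)\sim I+\sum_{n\ge 1}\mu_j^{(n)}(x)k^{-n}$ into the $x$-equation and splitting into diagonal and off-diagonal parts yields a recursion in which each off-diagonal coefficient is fixed algebraically (gaining one factor $k^{-1}$) by the previous coefficient, while each diagonal coefficient is obtained by integrating from the natural endpoint — $-\infty$ for $\mu_1$, $+\infty$ for $\mu_2$ — with the integration constant pinned down by the normalizations in Proposition \ref{prop_mu}\eqref{prop_mu_iii}; for $\mu_1$ one also uses $X(k)-\beta=k+\bigO(k^{-1})$ to expand the term $-i(X(k)-\beta)\mu_1\sigma_3$. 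A direct computation then identifies the coefficients produced by this recursion with the matrices $\mu_j^{(n)}$, $n=1,2,3$, of \eqref{mujns}; I would verify the first one by hand and indicate the induction.

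Next, fixing $N_2\in\{0,1,2,3\}$, I would set $R_j(x,k):=\mu_j(x,k)-\mu_{j,N_2}(x,k)$ and substitute into the Volterra equations. Since $\mathsf{U}_0\mu_{j,N_2}$ differentiates $u_0$ at most $N_2$ times, $R_j$ solves a Volterra equation of exactly the type satisfied by $\mu_j$ — for $j=2$, $R_2=F_2-\int_x^\infty \e^{ik(x'-x)\hat\sigma_3}\mathsf{U}_0(x')R_2(x',k)\,\dx x'$, and analogously for $R_1$ with the kernel $E(x,x',k)(\mathsf{U}_0-\mathsf{U}_0^b)(x')$ — with a forcing term $F_j$ built solely from $\mu_{j,N_2}$. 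The recursion defining the $\mu_j^{(n)}$ is exactly what makes $F_j$ small: integrating by parts repeatedly in the oscillatory Volterra integral applied to $\mu_{j,N_2}$ — each step moving a derivative onto $u_0$, with the boundary terms and the non-oscillatory (diagonal) contributions of $\mu_{j,N_2}$ arranging themselves so that the orders $k^{-1},\dots,k^{-N_2}$ cancel identically — leaves $|F_j(x,k)|\le g_\pm(x)/|k|^{N_2+1}$ for $|k|\ge K$, where $g_+\in\mathcal C(\R)$ is bounded with $g_+|_{\R_+}\in L^1(\R_+)$ and $g_-\in\mathcal C(\R)$ is bounded with $g_-|_{\R_-}\in L^1(\R_-)$; the $L^1$ control of the coefficient comes from \eqref{u0assumptions} (for $R_1$, the decay of $u_0-u_0^b$) and the boundedness from \eqref{u0assumptions2}. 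For $R_1$ one additionally needs the large-$k$ expansion of the kernel $E(x,x',k)$, obtained from $X(k)=k+\beta+\bigO(k^{-1})$, the explicit form of $s^b$, and the bound \eqref{E_estimate}.

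The case $j=0$ then follows from the standard resolvent estimate for Volterra operators: the operator $g\mapsto-\int_x^\infty \e^{ik(x'-x)\hat\sigma_3}\mathsf{U}_0(x')g(x')\,\dx x'$ has operator norm controlled, uniformly for $k$ in the closed upper or lower half-plane, by $\|u_0\|_{L^1(\R_+)}$, and being a Volterra operator it is quasi-nilpotent, so $I$ minus it is invertible with inverse bounded uniformly in $k$; the same holds for the $R_1$-operator, with $\|u_0\|_{L^1(\R_+)}$ replaced by a weighted $L^1(\R_-)$ norm of $u_0-u_0^b$ and using \eqref{E_estimate}. Applying this inverse to $F_j$ and noting that the Neumann series preserves the $L^1(\R_\pm)$-in-$x$ structure of the coefficient — each application of the integral operator integrates against $\mathsf{U}_0$, resp. $\mathsf{U}_0-\mathsf{U}_0^b$, keeping one inside $L^1(\R_\pm)$ — yields $|R_j(x,k)|\le f_\pm(x)/|k|^{N_2+1}$ with $f_\pm$ as in the statement.

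For the $k$-derivatives $1\le j\le N_1$, the plan is to differentiate the Volterra equation for $R_j$ $j$ times in $k$ and induct on $j$: each $\partial_k$ falling on $\e^{ik(x'-x)\hat\sigma_3}$ produces a factor $x'-x$, so by Leibniz $\partial_k^jR_j$ satisfies a Volterra equation of the same type, forced by $\partial_k^jF_j$ plus integrals involving $(x'-x)^p\,\e^{ik(x'-x)\hat\sigma_3}$ and $\partial_k^iR_j$ with $i<j$ and $p\le j$. The moment conditions $x^n(u_0-u_0^b)|_{\R_-}\in L^1(\R_-)$ and $x^nu_0|_{\R_+}\in L^1(\R_+)$ for $n\le N_1$ in \eqref{u0assumptions} guarantee that these integrals converge and that, by the same integration-by-parts argument (now differentiating in $k$ as well and paying with the extra moments), $\partial_k^jF_j$ still obeys $|\partial_k^jF_j(x,k)|\le g_\pm(x)/|k|^{N_2+1}$; combining this with the resolvent bound of the previous step closes the induction. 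The hard part will be the forcing estimate: one must follow precisely, through the repeated integration by parts, which terms genuinely require the bounded derivatives of \eqref{u0assumptions2} and which require the decay of \eqref{u0assumptions}, while recovering the full power $k^{-N_2-1}$ together with the $L^1(\R_\pm)$ nature of the coefficient, and doing all of this uniformly over all $N_1$ derivatives in $k$.
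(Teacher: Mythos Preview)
Your proposal is correct and follows the standard rigorous approach that the paper itself does not carry out: the paper's proof only sketches the formal derivation of the coefficients $\mu_j^{(n)}$ and explicitly omits the ``tedious yet well-known'' derivation of the estimates \eqref{mu1mu2atinfinity}, referring instead to \cite{Lenells16} for a rigorous Neumann series implementation. Your outline --- substitute the ansatz into the $x$-equation to get the diagonal/off-diagonal recursion, define the remainder $R_j$, show its forcing is $\bigO(k^{-N_2-1})$ via repeated integration by parts, invert the Volterra operator uniformly in $k$, and induct on $k$-derivatives using the moment conditions --- is precisely that approach.

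One point of comparison worth noting: for $\mu_1$ the paper does not work directly with the equation $\mu_{1x}+ik\sigma_3\mu_1-i(X(k)-\beta)\mu_1\sigma_3=\mathsf{U}_0\mu_1$ as you propose. Instead it first passes to $\nu\coloneqq(\mu_1^b)^{-1}\mu_1$, which satisfies a Volterra equation normalized at $I$ as $x\to-\infty$ with kernel $(\mu_1^b)^{-1}(\mathsf U_0-\mathsf U_0^b)\mu_1^b$. This has the advantage that the background $s^b(k)$ is absorbed and the equation for $\nu$ has exactly the same structure as that for $\mu_2$; one then expands $X$, $\mu_1^b$, and $(\mu_1^b)^{-1}$ in $k$ and reads off the $\nu^{(n)}$ recursively, finally recovering $\mu_{1,N_2}=\mu_1^b\nu_{N_2}+\bigO(k^{-N_2-1})$. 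Your direct approach works too, but be aware that the reference to \eqref{E_estimate} is slightly off: that bound was established for $k$ near the branch points, whereas for large $|k|$ the kernel is controlled by the sign of $\im X(k)$ in the relevant half-plane, which is what makes the Volterra iterates uniformly bounded there.
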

\begin{proof}
We briefly discuss the formal derivation of the coefficients $\mu^{(n)}_j$ and omit the corresponding tedious yet well-known derivation of the estimates in \eqref{mu1mu2atinfinity}). 
A rigorous Neumann series approach giving analytical meaning to these formal computations can be found e.g.~in \cite{Lenells16} for the large $k$ behavior of  mKdV eigenfunctions (of both Lax pair equations) to all orders; we refer to~\cite{LQ20} for the leading order asymptotics of $t$-periodic NLS eigenfunctions.	

To determine the coefficients $\mu^{(n)}_2$ we make the formal ansatz
\begin{equation}\label{mu2_k_infty}
	\mu_2(x,k) \sim I + \sum^\infty_{n=1}\frac{\mu^{(n)}_2(x)}{k^n} \qquad x,k\in\R,\;k\to\infty.
\end{equation}
Recalling that $\mu_2$ satisfies \eqref{Lax_mu2xpart}, this ansatz yields the following recursion for the coefficients $\mu^{(n)}_2$:
\begin{align*}
	\mu^{(0)}_2 &\coloneqq I \\
	\big(\mu^{(n+1)}_2\big)^\op  &\;= \frac{ i}{2} \sigma_3 \Big( \partial_x \big(\mu^{(n)}_2\big)^\op -\mathsf{U}_0 \big(\mu^{(n)}_2\big)^\dx \Big), \\
	\partial_x \big(\mu^{(n+1)}_2\big)^\dx &\;= \mathsf{U}_0 \big(\mu^{(n+1)}_2\big)^\op,
\end{align*}
where $A^\dx$ and $A^\op$ denote the diagonal and off-diagonal part of $A\in\C^{2\times2}$. Solving this system recursively via integration from $x$ to $+\infty$ determines the coefficients $\mu^{(n)}_2$; according to the assumptions on $u_0$ they are well-defined up to order $n=N_2+1$.

To determine the coefficients $\mu^{(n)}_1$, some preliminary considerations are necessary. 
We first define $\mu_1^b$ by 
\begin{align*}
\mu_1^b(x,k) = \e^{ i \beta x \hat \sigma_3} s^b(k),
\end{align*}
so that
\begin{align*}
\phi_1^b(x,k) = \mu_1^b(x,k) \e^{- i(X(k)-\beta)x \sigma_3}.
\end{align*}
Setting $\nu \coloneqq (\mu_1^b)^{-1}\mu_1$ and recalling the Volterra integral equation \eqref{defmu10} for $\mu_1$, we obtain that 
\begin{align*}
	\nu =  I
	+\int_{-\infty}^x
	\e^{- i(X(k)-\beta)(x-x') \sigma_3}(\mu_1^b)^{-1}(x',k)(\mathsf{U}-\mathsf{U}^b)(x')
	\mu_1^b(x',k) \nu(x',k) \e^{ i (X(k)-\beta) (x-x')\sigma_3} \,\dx x'.
\end{align*}
The corresponding differential equation for $\nu$ reads
\begin{align}\label{nu_eq}
	\nu_x +  i(X(k)-\beta) [\sigma_3, \nu]  = (\mu_1^b)^{-1} (\mathsf{U}-\mathsf{U}^b) \mu_1^b \nu.
\end{align}
By plugging the ansatz
\begin{equation*}
	\nu(x,k) \sim I + \sum^\infty_{n=1}\frac{\nu^{(n)}(x)}{k^n}, \qquad x \in\R,\; k\to\infty
\end{equation*}
into \eqref{nu_eq} and expanding the functions $X$, $\mu^b_1$, and $(\mu^b_1)^{-1}$ as $k \to \infty$, the coefficients $\nu^{(n)}$ can be recursively determined via integration from $-\infty$ to $x$.
Calculating the coefficients $\nu^{(n)}$, $1\leq n\leq N_2$, in the described manner and setting  
\begin{equation*}
	\nu_{N_2}(x,k) = I + \sum^{N_2}_{n=1}\frac{\nu^{(n)}(x)}{k^n}
\end{equation*}
confirms that 
\begin{equation*}
	\mu_{1,N_2}(x,k)=\mu^b_1(x,k) \nu_{N_2}(x,k)+\bigO(k^{-N_2-1}), \qquad x \in\R,\;k\to\infty,
\end{equation*}
with $\mu_{1,N_2}$ given by \eqref{mu_1p}. 
\end{proof}

\begin{remark}
Proposition \ref{prop_mu_j_large_k} can be extended to the case of arbitrary finite $N_2 \geq 0$. 
\end{remark}

Theorem \ref{thm_direct}\eqref{thm_direct_b} is a direct consequence of the following corollary.
\begin{corollary} \label{cor_s_large_k}
	The entries $a$  and $b$ of the scattering matrix $s$ satisfy
	\begin{equation} \label{cor_s_large_k_0}
		\begin{cases}
		\partial^j_k a(k) = \partial^j_k  \Big(1 + \sum^{N_2}_{n=1} a_n k^{-n} \Big) +  \bigO(k^{-N_2-1}), &\overbar{\C_+}\ni k\to\infty, \\
		\partial^j_k b(k) = \bigO(k^{-N_2-1}), &\R\ni k\to\infty,
	\end{cases}
		\qquad 0\leq j\leq N_1, 
	\end{equation}
	where $\{a_n\}_1^{N_2}$ are complex coefficients. The first three of the coefficients $a_n$ are given by
	\begin{align}
		\begin{aligned}\label{cor_s_large_k_1}
		a_1&=\frac{ i(\Gamma_1-\Lambda_1)}{2}, \\
		a_2&=\frac{\alpha\big(\frac{\alpha}{2}-u\big)-(\Gamma^2_1+\overbar{\Gamma_2}-\Gamma_1\Lambda_1-\overbar{\Lambda_2})}{4}, \\
		a_3&=\frac{ i}{8}\bigg( \alpha \Big(\frac{\alpha}{2}(\Gamma_1-\Lambda_1)+2 i\alpha\beta-u(2 i\beta+\Gamma_1-\Lambda_1)-u_x\Big)
			\\ 
		&\qquad\quad-\Gamma^3_1-2\Gamma_1\overbar{\Gamma_2}+\overbar{\Gamma_3}+\Gamma^2_1\Lambda_1+\overbar{\Gamma_2}\Lambda_1+\Gamma_1\overbar{\Lambda_2}-\overbar{\Lambda_3}\bigg).		
		\end{aligned}
	\end{align}
\end{corollary}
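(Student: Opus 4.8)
The approach is to read off the large-$k$ expansions of $a$ and $b$ directly from the bilinear expressions for the scattering coefficients in Proposition \ref{cor_prop_mu}, evaluated at $x=0$, after inserting the asymptotic approximations of Proposition \ref{prop_mu_j_large_k}. Evaluating the identities of Proposition \ref{cor_prop_mu} at $x=0$ and using \eqref{defphi120}, which gives $\phi_j(0,k)=\mu_j(0,k)$, one has $a(k)=\det\big([\mu_1(0,k)]_1,[\mu_2(0,k)]_2\big)$ for $k\in\overbar{\C_+}\setminus\{E_1,E_2\}$ (here the relevant columns are precisely those for which Proposition \ref{prop_mu_j_large_k} provides estimates on $\overbar{\C_+}$) and $b(k)=\mu_{1,22}(0,k)\mu_{2,12}(0,k)-\mu_{1,12}(0,k)\mu_{2,22}(0,k)$ for $k\in\R\setminus\{E_1,E_2\}$.

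Next, write $\mu_j(0,k)=\mu_{j,N_2}(0,k)+R_j(k)$, where by Proposition \ref{prop_mu_j_large_k} the remainder satisfies $\partial_k^\ell R_j(k)=\bigO(k^{-N_2-1})$ for $0\le\ell\le N_1$ and $|k|$ large, and note that $\mu_{j,N_2}(0,k)$, $\mu_j(0,k)$, and hence $R_j(k)$, together with all their $k$-derivatives up to order $N_1$, are bounded for $|k|$ large. Substituting these decompositions into the expressions for $a$ and $b$, expanding by multilinearity of the determinant, and using the Leibniz rule together with the above boundedness to discard every term containing a factor $R_j$, one obtains, for $0\le j\le N_1$, that $\partial_k^j\big(a(k)-A_{N_2}(k)\big)=\bigO(k^{-N_2-1})$ and $\partial_k^j\big(b(k)-B_{N_2}(k)\big)=\bigO(k^{-N_2-1})$, where $A_{N_2}$ and $B_{N_2}$ are obtained from the formulas for $a$ and $b$ above by replacing each $\mu_j(0,\cdot)$ by $\mu_{j,N_2}(0,\cdot)$. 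Since $\mu_{j,N_2}(0,\cdot)$ is a polynomial in $k^{-1}$ by \eqref{mu_1p}, both $A_{N_2}$ and $B_{N_2}$ are polynomials in $k^{-1}$, with $A_{N_2}(k)=1+\bigO(k^{-1})$ and $B_{N_2}(k)=\bigO(k^{-1})$. Hence $a(k)=1+\sum_{n=1}^{N_2}a_nk^{-n}+\bigO(k^{-N_2-1})$ and $b(k)=\sum_{n=1}^{N_2}b_nk^{-n}+\bigO(k^{-N_2-1})$, with $a_n$ and $b_n$ the coefficients of $k^{-n}$ in $A_{N_2}$ and $B_{N_2}$, and the same estimates persist after differentiating in $k$ up to order $N_1$.

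It then remains to carry out two finite explicit computations with the coefficient matrices $\mu^{(n)}_j$ from \eqref{mujns} (recall $N_2\le 3$): compute $a_1,a_2,a_3$ and check agreement with \eqref{cor_s_large_k_1}, and show $b_n=0$ for $n=1,\dots,N_2$. The second point carries all the content. It holds at order $k^{-1}$ because $\mu^{(1)}_1(0)$ and $\mu^{(1)}_2(0)$ have the \emph{same} off-diagonal part by \eqref{mujns}, so that $\mu_{1,12}(0,k)-\mu_{2,12}(0,k)=\bigO(k^{-2})$ and, since also $\mu_{j,22}(0,k)=1+\bigO(k^{-1})$, the order-$k^{-1}$ term of $B_{N_2}$ cancels; the cancellations at orders $k^{-2}$ and $k^{-3}$ follow in the same way by multiplying out $B_{N_2}$ with the explicit entries in \eqref{mujns} (the pattern continues to all orders, but for $N_2\le 3$ a direct verification suffices). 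I expect this verification, namely that the coefficients $b_1,\dots,b_{N_2}$ vanish, to be the only nonroutine step; everything else is bookkeeping with the Leibniz rule and the uniform bounds of Proposition \ref{prop_mu_j_large_k}.

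Finally, Theorem \ref{thm_direct}\eqref{thm_direct_b} follows from the Corollary: on $\R\setminus[E_1,E_2]$ one has $r=\overbar{b}/a$ with $a=1+\bigO(k^{-1})$ bounded away from $0$ (indeed $|a|\ge 1$ there) and with all $k$-derivatives up to order $N_1$ bounded, so the quotient rule and the Leibniz rule give $\partial_k^n r(k)=\bigO(k^{-N_2-1})$ as $k\to\pm\infty$ for $0\le n\le N_1$.
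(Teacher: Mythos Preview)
Your proposal is correct and follows essentially the same approach as the paper: both arguments evaluate the scattering matrix at $x=0$, substitute the finite expansions $\mu_{j,N_2}(0,k)$ from Proposition \ref{prop_mu_j_large_k}, and read off the coefficients, with the differentiability in $k$ coming directly from the uniform remainder estimates in \eqref{mu1mu2atinfinity}. The only cosmetic difference is that the paper packages the computation as the matrix product $s(k)=\mu_2(0,k)^{-1}\mu_1(0,k)$ (using $\det\mu_2=1$), whereas you work with the bilinear determinant formulas from Proposition \ref{cor_prop_mu}; these are of course the same computation, and your remark that the vanishing of the $b_n$ is the only substantive cancellation to check is exactly what the paper's ``we find \eqref{cor_s_large_k_0}'' is hiding.
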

\begin{proof}
Due to the decay assumption \eqref{u0assumptions} on $u_0$, we infer that $s\in\mathcal C^{N_1}(\R\setminus\{E_1,E_2\},\C^{2\times 2})$.
According to \eqref{def_s}, \eqref{defphi120}, and Proposition \ref{prop_mu_j_large_k} it holds that 
\begin{align*}
s(k) 
= \mu_1(0,k)^{-1}\mu_2(0,k)
= \big((\mu_{1,N_2})^{-1} \mu_{2,N_2}\big)(0,k) + \bigO(k^{-N_2-1}),\qquad k\to\infty,
\end{align*}
where the $(2,2)$-entry is valid as $k\to\infty$ with $k \in \overbar{\C_+}$ and the $(1,2)$-entry is valid as $k \to\infty$ with $k \in \R$.
Substituting in the explicit expressions for $\mu_{1,N_2}$ and $\mu_{2,N_2}$ defined in \eqref{mu_1p}, we find 
\eqref{cor_s_large_k_0} for $j=0$ with $a_n$, $n=1,2,3$, given by \eqref{cor_s_large_k_1}. 
As a consequence of Proposition \ref{prop_mu_j_large_k}, the expansions in \eqref{cor_s_large_k_0} can be differentiated up to $N_1$ times termwise without worsening the error terms.
\end{proof}

\subsection{Proof of Theorem \ref{thm_direct}\eqref{thm_direct_c}}
Suppose $u_0$ satisfies \eqref{u0assumptions} and \eqref{u0assumptions2} for some integers $N_1 \geq 2$ and $N_2 \geq 0$.
We will show that $m_0(x,\cdot)$, $x\in\R$, defined by \eqref{def_m_0}, satisfies the stated RH problem. 

It follows from the Propositions \ref{prop_mu} and \ref{cor_prop_mu} that $m_0(x,k)$ is analytic for $k \in \C\setminus \R$. 
Moreover, the Propositions \ref{prop_mu_j_large_k} and \ref{cor_s_large_k} imply that $m_0(x,k)=I+ \bigO(k^{-1})$ as $k\to \infty$. On the other hand, again by the Propositions \ref{prop_mu} and \ref{cor_prop_mu}, $m_0(x,\cdot)$ has continuous boundary values along $\R\setminus\{E_1,E_2\}$ from $\C_\pm$ for each $x\in\R$. 
From Corollary \ref{cor_lem_Lambda}, using in particular the fact that $\hat{a}_j \neq 0$, it follows that $m_0(x,k)= \hat{m}_{0,j}(x) + \bigO\big((k-E_j)^{1/2}\big)$ as $k\to E_j$, $j = 1,2$, for some matrices $\hat{m}_{0,j}(x)$. In particular, the boundary values of $m_0$ are continuous also at $E_1$ and $E_2$.

In the following, we show that $m_0$ satisfies the jump relation in \eqref{RHm0}---first for $k\in \R\setminus [E_1,E_2]$, and then for $k\in (E_1,E_2)$. By the continuity of $m_{0\pm}$ just established and the continuity of $r$ established in Theorem \ref{thm_direct}\eqref{thm_direct_a}, it follows that this jump relation is in fact satisfied for all $k\in\R$.

\subsubsection{Jump across $\R\setminus[E_1,E_2]$} \label{subsec_jump_1}

For $k\in\R\setminus[E_1,E_2]$, the columns $\frac{[\phi_1(\cdot,k)]_1}{a(k)}$ and  $[\phi_2(\cdot,k)]_2$ are two linearly independent solutions of the $x$-part in \eqref{lax}, since $a(k)\neq 0$ by Proposition \ref{cor_prop_mu}; the same holds true for $[\phi_2(\cdot,k)]_1$ and $\frac{[\phi_1(\cdot,k)]_2}{\overbar{a(k)}}$. 
Hence there exists a matrix
\begin{equation*}
	\tilde v(k) = \begin{pmatrix} \tilde v_{11}(k) &\tilde v_{12}(k) \\\tilde v_{21}(k) &\tilde v_{22}(k)\end{pmatrix} \in GL(2,\C)
\end{equation*}
such that
\begin{align*}
\begin{pmatrix} \frac{[\phi_1(x,k)]_1}{a(k)} & [\phi_2(x,k)]_2 \end{pmatrix}
=\begin{pmatrix}  [\phi_2(x,k)]_1 & \frac{[\phi_1(x,k)]_2}{\overbar{a(k)}} \end{pmatrix} \tilde v (k), \qquad  k\in \R\setminus[E_1,E_2],\; x\in\R,
\end{align*}
that is,
\begin{align}\label{v0columns}
\begin{cases}
 \frac{[\phi_1]_1}{a} =  [\phi_2]_1 \tilde v_{11} + \frac{[\phi_1]_2}{\overbar{a}} \tilde v_{21}, \\
[\phi_2]_2  =  [\phi_2]_1 \tilde v_{12} +\frac{[\phi_1]_2}{\overbar{a}} \tilde v_{22},
\end{cases}
\qquad  k\in \R\setminus[E_1,E_2],\; x\in\R.
\end{align}

Applying $\det(\cdot, [\phi_1]_2)$ to the first equation in \eqref{v0columns} gives
\begin{equation*}
	\det\!\bigg(\frac{[\phi_1(x,k)]_1}{a(k)}, [\phi_{1}(x,k)]_2\bigg)
	= \det\!\big([\phi_2(x,k)]_1, [\phi_{1}(x,k)]_2\big)  v_{11}, \qquad   k\in \R\setminus[E_1,E_2],\;x\in\R.
\end{equation*}
Since $[\phi_2]_1=\bar a [\phi_1]_1+\bar b [\phi_1]_2$ and $\det \phi_j=1$, we obtain
\begin{align*}
\frac1{a(k)}=\overbar{a(k)} \tilde v_{11}(k),  \qquad k\in \R\setminus[E_1,E_2].
\end{align*}
Using that $a \bar a-b\bar b=1$ on $\R\setminus[E_1,E_2]$, we infer that
\begin{equation*}
\tilde v_{11}(k)=\frac{1}{|a(k)|^2}=1-|r(k)|^2, \qquad k\in \R\setminus[E_1,E_2].
\end{equation*}

Similarly, applying $\det(\cdot,[\phi_{2}]_1)$ to the first equation in \eqref{v0columns} gives
\begin{equation*}
	\det\!\bigg(\frac{[\phi_1(x,k)]_1}{a(k)}, [\phi_{2}(x,k)]_1\bigg)
	= \det\!\bigg(\frac{[\phi_1(x,k)]_2}{\overbar{a(k)}}, [\phi_{2}(x,k)]_1\bigg) \tilde v_{21} 
\end{equation*}
for $k\in \R\setminus[E_1,E_2]$, and an application of $[\phi_2]_1=\bar a [\phi_1]_1+\bar b [\phi_1]_2$ yields
\begin{equation*}
	\det\!\bigg(\frac{[\phi_1(x,k)]_1}{a(k)}, \bar b(k) [\phi_1(x,k)]_2\bigg)
	= \det\!\bigg(\frac{[\phi_1(x,k)]_2}{\overbar{a(k)}}, \overbar{a(k)} [\phi_1(x,k)]_1\bigg) \tilde  v_{21},
\end{equation*}
for $k\in \R\setminus[E_1,E_2]$, which simplifies to
\begin{equation*}
\tilde	v_{21}(k)=-\frac{\overbar{b(k)}}{a(k)}=-r(k), \qquad k\in \R\setminus[E_1,E_2].
\end{equation*}
Analogously it follows from the second equation in \eqref{v0columns} that
\begin{equation*}
	\tilde v_{12}(k) = \overbar{r(k)}, \quad \tilde v_{22}(k) = 1, \qquad k\in \R\setminus[E_1,E_2].
\end{equation*}
We conclude that $\big(m_0(x,k)\big)_+=\big(m_0(x,k)\big)_- v_0(x,k)$ for all $x\in\R$ and $k\in \R\setminus[E_1,E_2]$ with $v_0(x,k)$ given by \eqref{def_v_0}.

\subsubsection{Jump across $(E_1,E_2)$}
By Proposition \ref{cor_prop_mu} and Lemma \ref{lem_symm_phi_1_(E_1,E_2)},
\begin{equation} \label{identity_a_det}
	\begin{cases} 
	a  =\det\!\big( [\phi_{1}]_1,[\phi_2]_2\big)\neq0, &k\in \overbar{\C_+}\setminus\{E_1,E_2\}, \\
	\bar a =\det\!\big([\phi_2]_1,  [\phi_{1}]_1\big)\neq0, &k\in (E_1,E_2),
	\end{cases}	
	\qquad x\in\R. 
\end{equation}
Using similar arguments as in Section \ref{subsec_jump_1}, we find a matrix 
\begin{equation*}
	\hat v(k) = \begin{pmatrix} \hat v_{11}(k) & \hat v_{12}(k) \\ \hat v_{21}(k) & \hat v_{22}(k)\end{pmatrix}, \qquad k\in (E_1,E_2),
\end{equation*}
such that
\begin{align*}
\begin{pmatrix} \frac{[\phi_{1}(x,k)]_{1}}{a(k)} & [\phi_2(x,k)]_2 \end{pmatrix}=\begin{pmatrix}  [\phi_2(x,k)]_1 & \frac{[\phi_{1}(x,t,k)]_2}{\overbar{a(k)}} \end{pmatrix} \hat v (k),  \qquad   k\in (E_1,E_2),\; x\in\R.
\end{align*}
This implies, using \eqref{phi_identity_(E_1,E_2)}, that
\begin{align}\label{v0Jcolumns2}
\begin{cases}
 \frac{[\phi_{1}]_1}{a} =  [\phi_2]_1 \hat v_{11} + \frac{[\phi_{1}]_1}{\overbar{a}} \hat v_{21}, \\
[\phi_2]_2  =  [\phi_2]_1 \hat v_{12} +\frac{[\phi_{1}]_1}{\overbar{a}} \hat v_{22},
\end{cases}
\qquad  k\in (E_1,E_2),\; x\in\R.
\end{align}
In view of \eqref{identity_a_det}, the identities in \eqref{v0Jcolumns2} imply that 
\begin{equation*}
	\hat v(k) = 
	\begin{pmatrix}
		0& -\frac{a(k)}{\overbar{a(k)}}\\
		\frac{\overbar{a(k)}}{a(k)}& 1
	\end{pmatrix}
	=
	\begin{pmatrix}
		0&\overbar{r(k)}\\
		-r(k)& 1
	\end{pmatrix},
	\qquad   k\in (E_1,E_2),
\end{equation*}
which one can see by successively applying $\det(\cdot, [\phi_{1}]_1)$ and $\det(\cdot, [\phi_2]_1)$ to the first equation, and $\det(\cdot,  [\phi_{1}]_1)$ and $\det([\phi_2]_1,\cdot)$ to the second equation in \eqref{v0Jcolumns2}.
Recalling that $|r|\equiv1$ on $(E_1,E_2)$ we conclude that $m_{0+}(x,k) = m_{0-}(x,k) v_0(x,k)$ for all $k\in (E_1,E_2)$ and $x\in\R$ with $v_0(x,k)$ given by \eqref{def_v_0}.
\proofend

\section{Proof of Theorem \ref{thm_inverse}}\label{inversesec}
\noindent
Suppose $u_0\colon \R \to \C$ satisfies \eqref{u0assumptions} for $N_1 = 2$ and \eqref{u0assumptions2} for $N_2 = 3$. Define $r(k)$ by \eqref{def_r}. Theorem \ref{thm_direct} implies that $r \in \mathcal C^{2}(\R\setminus \{E_1,E_2\})\cap \mathcal C(\R)$ and that
\begin{equation*}
	\partial^m_k r(k) = \bigO(k^{-4}) \qquad\text{as} \quad k\to\pm\infty, \quad m = 0, 1,2.
\end{equation*} 
In fact, $v$ is H\"older continuous on  $\R$ with exponent $1/2$ because, by \eqref{ratbranchpoints} and \eqref{coeffqjl}, $r'(k)$ has a square root singularity as $k \to E_j$, $j =1,2$.

Since the jump matrix $v$ has unit determinant, the RH problem \eqref{RHm} has at most one solution. Moreover, the existence of a solution of the RH problem \eqref{RHm} is a consequence of the H\"older continuity of $v$ and the following vanishing lemma, see \cite{Z1989} and \cite[Appendix A]{KMM2003}. 

\begin{proposition}[Vanishing Lemma]\label{vanishinglemma}
	Let $x\in\R$ and $t\geq 0$. Suppose that $m(x,t,k)$ is a solution of the RH problem of Theorem \ref{thm_inverse} but with the normalization condition $m(x,t,k)=I+ \bigO(k^{-1})$ replaced by the homogeneous condition $m(x,t,k) = \bigO(k^{-1})$ as $k\to \infty$. Then $m$ vanishes everywhere.
\end{proposition}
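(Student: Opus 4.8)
The plan is to run the standard vanishing-lemma argument of Zhou, the point being that the \emph{defocusing} sign condition makes a suitably symmetrized jump matrix positive semidefinite. Fix $x\in\R$ and $t\geq 0$, write $\theta\coloneqq kx+2k^2t$ and $v\coloneqq v(x,t,\cdot)$, and let $m$ be a solution of the RH problem of Theorem~\ref{thm_inverse} with the homogeneous normalization $m=\bigO(k^{-1})$ at infinity; the fixed arguments $(x,t)$ will be suppressed. As already noted, $v$ is H\"older continuous of exponent $1/2$ on $\R$, hence bounded and continuous there (including at $E_1,E_2$), and by hypothesis the boundary values $m_\pm$ are continuous on all of $\R$. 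Introduce
\begin{equation*}
Q(k)\coloneqq m(k)\,m(\bar k)^{\dagger},
\end{equation*}
where $\dagger$ denotes conjugate transpose. Because $m$ is analytic in $\C_+$ and in $\C_-$, the function $k\mapsto m(\bar k)^{\dagger}$ is analytic in $\C_-$ and in $\C_+$, so $Q$ is analytic in $\C\setminus\R$ with $Q(k)=\bigO(k^{-2})$ as $k\to\infty$; its boundary values on $\R$ are $Q_+=m_+m_-^{\dagger}$ and $Q_-=m_-m_+^{\dagger}=Q_+^{\dagger}$.

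Next I would integrate over $\R$. Closing the contour in $\C_+$ and using $Q=\bigO(k^{-2})$ to kill the large semicircle, Cauchy's theorem gives $\int_\R Q_+(k)\,\dx k=0$; taking the Hermitian conjugate of this identity shows $\int_\R Q_+(k)^{\dagger}\,\dx k=0$ as well, hence $\int_\R\big(Q_+(k)+Q_+(k)^{\dagger}\big)\,\dx k=0$. Using the jump relation $m_+=m_-v$ one has $Q_++Q_+^{\dagger}=m_-(v+v^{\dagger})m_-^{\dagger}$, and a direct computation, recalling $|\e^{\pm 2i\theta}|=1$ for $k\in\R$, yields
\begin{equation*}
v(x,t,k)+v(x,t,k)^{\dagger}=\begin{pmatrix}2(1-|r(k)|^2)&0\\0&2\end{pmatrix},\qquad k\in\R.
\end{equation*}
This is the crucial structural fact: by Theorem~\ref{thm_direct}\eqref{thm_direct_a} we have $|r(k)|\leq 1$ on $\R$, with equality exactly on $[E_1,E_2]$, so $v+v^{\dagger}$ is Hermitian positive semidefinite for every $k\in\R$ and positive definite on $\R\setminus[E_1,E_2]$. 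Consequently $m_-(v+v^{\dagger})m_-^{\dagger}=BB^{\dagger}$ with $B\coloneqq m_-(v+v^{\dagger})^{1/2}$ is positive semidefinite and continuous in $k$; since its integral over $\R$ vanishes, so does its trace, and a nonnegative continuous function with vanishing integral is identically zero. Therefore $m_-(k)(v+v^{\dagger})(k)m_-(k)^{\dagger}=0$ for all $k\in\R$.

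From here the conclusion follows by elementary complex analysis. Writing the last identity out gives $2(1-|r(k)|^2)[m_-]_1[m_-]_1^{\dagger}+2[m_-]_2[m_-]_2^{\dagger}=0$, and since both summands are positive semidefinite each must vanish, so $[m_-(k)]_2=0$ for all $k\in\R$ and $[m_-(k)]_1=0$ for $k\in\R\setminus[E_1,E_2]$. The second column $[m(\cdot)]_2$ is then analytic in $\C_-$, is $\bigO(k^{-1})$ at infinity, and has continuous boundary value $0$ on $\R$; extending it by $0$ into $\overline{\C_+}$ and applying Morera's and Liouville's theorems shows $[m(\cdot)]_2\equiv 0$ in $\C_-$. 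For the first column, $[m(\cdot)]_1$ is analytic in $\C_-$ with continuous boundary value vanishing on the ray $(E_2,+\infty)$, so by Schwarz reflection across this ray it extends analytically to $\C\setminus(-\infty,E_2]$, where it vanishes on a set with a limit point and hence vanishes identically; thus $[m_-(\cdot)]_1\equiv 0$ on all of $\R$. Consequently $m_-\equiv 0$ on $\R$, hence $m_+=m_-v\equiv 0$ on $\R$, and since $m$ is analytic off $\R$, continuous up to $\R$ with matching boundary values $0$, and $\bigO(k^{-1})$ at infinity, Morera and Liouville give $m\equiv 0$.

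I do not expect a serious obstacle here, since once $Q$ is in hand the argument is routine; the two points that need care are structural. The first is recognizing the correct symmetrized object $Q=m(k)m(\bar k)^{\dagger}$ and checking that $v+v^{\dagger}$ is positive semidefinite --- this is precisely where the defocusing bound $|r|\leq 1$ of Theorem~\ref{thm_direct} enters, and it is the step that fails in the focusing case. The second is upgrading the vanishing of $[m_-]_1$ off $[E_1,E_2]$ to its vanishing on all of $\R$; I used Schwarz reflection across an unbounded component of $\R\setminus[E_1,E_2]$, but a Hardy-space boundary-uniqueness argument (using that $[E_1,E_2]$ has positive Lebesgue measure) would serve equally well. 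One should also note at the outset that a solution of the RH problem is understood to have continuous boundary values on all of $\R$, including at $E_1$ and $E_2$, which is what legitimizes the contour deformations and the continuity of the integrand despite the square-root behaviour of $r$ at the branch points.
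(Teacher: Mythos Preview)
Your proof is correct and follows essentially the same approach as the paper: both form $Q(k)=m(k)\,\overline{m(\bar k)}^{\intercal}$, observe that the symmetrized jump $v+v^{\dagger}=\mathrm{diag}\big(2(1-|r|^2),\,2\big)$ is positive semidefinite on $\R$ (and positive definite off $[E_1,E_2]$) thanks to Theorem~\ref{thm_direct}\eqref{thm_direct_a}, and conclude that the vanishing integral of a nonnegative integrand forces $m_-=0$ on $\R\setminus[E_1,E_2]$. The paper then simply appeals to ``standard arguments'' for the final step, while you spell out the Morera/Liouville extension-by-zero argument; this is a welcome elaboration but not a different method.
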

\begin{proof}
We write $m(k)=m(x,t,k)$ and define	$G(k)\coloneqq m(k)\, \overbar{m(\bar{k})}^\intercal$.
	Since $m(k) = \bigO(k^{-1})$ as $k\to \infty$, we have
	\begin{align*}
		0&=\int_\R G_+(k) \,\dx k =\int_\R m_-(k)v(k) \overbar{m_-(k)}^\intercal \,\dx k,
		\\
		0&=\int_\R G_-(k) \,\dx k =\int_\R m_-(k)\overbar{v(k)}^\intercal \overbar{m_-(k)}^\intercal \,\dx k,
	\end{align*}
and hence
	\begin{align}\label{mXm}
		\int_\R m_- Y\overbar{m_-}^\intercal \, \dx k=0,
	\end{align}
	where
	\begin{equation*}
		Y(k)= 
	\frac12v(k)+
	\frac12\overbar{v(k)}^\intercal=\begin{pmatrix}1 - |r(k)|^2 & 0 \\
		0 & 1 \end{pmatrix},\qquad k\in\R.	
	\end{equation*}
By Theorem \ref{thm_direct},  $|r|<1$ on $\R\setminus[E_1,E_2]$. It follows that $Y$ is positive semidefinite on $\R$ and positive definite on $\R\setminus[E_1,E_2]$.
	Hence we conclude that the diagonal entries of $m_-Y\overbar{m_-}^\intercal$ are nonnegative.
	The identity \eqref{mXm} thus implies that the diagonal elements of $m_-Y\overbar{m_-}^\intercal$ vanish for a.e. $k\in\R$. Since $Y$ is positive definite on  $\R\setminus[E_1,E_2]$, we conclude that $m_-=0$ on $\R\setminus[E_1,E_2]$, and hence also $m_+=m_-v=0$ on $\R\setminus[E_1,E_2]$.
	Standard arguments show that $m$ is zero everywhere.
\end{proof}

It is well-known how to prove that the limit in \eqref{def_u} exists for all $(x,t)\in\R\times[0,\infty)$, and that $u$ is a classical solution of \eqref{dNLS}; see e.g. \cite[Theorem 7]{Lenells16} for a rigorous implementation of these steps in the case of the mKdV equation.
The central part in the proof that $u$ is a classical solution is to show that the solution $m$ of the RH problem \eqref{RHm} solves the Lax pair equations
\begin{align*}
	\begin{cases}
		m_x+ i k[\sigma_3,m]=\mathsf{U}m,
		\\
		m_t+2 i k^2[\sigma_3,m]=\mathsf{V}m
	\end{cases}
\end{align*}
for all $(x,t)\in\R\times[0,\infty)$ and $k\in\C\setminus\R$. For this purpose, it is sufficient that $r\in (L^2\cap L^\infty)(\R)$ and $r(k)= \bigO(k^{-4})$ as $k\to\pm\infty$ (i.e., one order of decay less than in the mKdV setting, since NLS only involves second-order derivatives with respect to $x$). Both requirements are fulfilled in view of Theorem \ref{thm_direct}. 

The fact that $u(x,t)$ satisfies the boundary conditions \eqref{uboundaryconditions} as $x \to \pm \infty$ follows from a steepest descent analysis of the RH problem for $m$, see Theorems \ref{thm_sec_L} and \ref{thm_sec_R}. 

Let us verify that $u$ satisfies the initial condition $u(x,0) = u_0(x)$. The function $m_0(x,k)$ defined in \eqref{def_m_0} satisfies the RH problem \eqref{RHm} evaluated at $t = 0$ as a consequence of Theorem \ref{thm_direct}\eqref{thm_direct_c}. By uniqueness, we have $m(x,0,k) = m_0(x,k)$. Moreover, by \eqref{def_m_0}, \eqref{defphi120}, \eqref{mu_1p} and \eqref{mu1mu2atinfinity},
\begin{equation*}
	2 i \overset{\angle}{\lim_{k \to \infty}} k [m_0(x,k)]_{12} = u_0(x),
\end{equation*}
where $\angle$ indicates that the limit is taken nontangentially with respect to $\R$, i.e., with $\arg k \in (\epsilon, \pi-\epsilon) \cup (\pi+ \epsilon, 2\pi - \epsilon)$ for any $\epsilon > 0$. 
Hence,
\begin{align*}
	u(x,0)= 2 i \overset{\angle}{\lim_{k \to \infty}} (k\,m(x,0,k))_{12}
	= 2 i \overset{\angle}{\lim_{k \to \infty}} k [m_0(x,k)]_{12}=u_0(x).
\end{align*}
We conclude that $u$ is a global solution of the Cauchy problem \eqref{dNLS_IVP} with initial data $u_0$.
\proofend

\section{Proof of Theorem \ref{thm_sec_M}}\label{middlesec} 
\noindent
Suppose $u_0$ satisfies the assumptions of Theorem \ref{thm_direct} with $N_1 = 8$ and $N_2=3$, and let $m$ be the unique solution of the RH problem in Theorem \ref{thm_inverse} corresponding to $u_0$. We use Deift-Zhou steepest descent arguments \cite{DZ1993} to establish the large $t$ behavior of $m$ in the sector $\mathcal M$. We then use \eqref{def_u} to find the large $t$ behavior of $u(x,t)$. Suitable transformations of the RH problem are implemented in Sections \ref{sec_1st_trans}--\ref{sec_4th_trans}, parametrices are introduced in Sections \ref{sec_global_parametrix}--\ref{sec_local_parametrix}, and the asymptotics of $u$ is finally established in Section \ref{sec_solution_asymp}. Other works using Deift-Zhou steepest descent arguments to study asymptotics for the NLS equation include \cite{BB2019, BTVZ2007, BT2013, DZ2003, TVZ2004, TVZ2006, BM2017, BKS2009, BLS2021, BKS2011, BV2007, DIZ1993, Fromm19, J2015, K1996}.

\subsection{First transformation} \label{sec_1st_trans}
Recall that $E_1\coloneqq -\beta-\alpha$, $k_0(\xi) \coloneqq -\frac{E_1 + \xi}{3}$ for $\xi\in I_{\mathcal M}$, and
\begin{equation*}
	\mathcal{X}\colon I_{\mathcal M}\times \C\setminus [E_1,k_0] \to \C, \quad
	\mathcal{X}(\xi,k)\coloneqq \sqrt{(k-E_1)(k-k_0)},
\end{equation*}
where the branch is such that $\mathcal{X}(\xi,k)=k+\bigO(1)$ as $k\to\infty$.

\subsubsection{The $g$-function} \label{sec_g}
Let  
\begin{equation} \label{def_g}
	g\colon I_{\mathcal M}\times \C\setminus [E_1,k_0] \to \C, \quad
	g(\xi,k) \coloneqq 2(k-k_0) \mathcal{X}(\xi,k).
\end{equation} 
Then
\begin{equation} \label{g_ktoinfty}
	g(\xi,k) = 2k^2 +\xi k + g_\infty (\xi) + \bigO(k^{-1}), \qquad k\to \infty, 
\end{equation}
with $g_\infty (\xi)$ given in \eqref{def_g_infty_sec_M}.
For brevity, we will often write $\mathcal{X}(k)$ and $g(k)$ for $\mathcal{X}(\xi,k)$ and $g(\xi,k)$ etc. 
The $k$-derivative of $g$ given by
\begin{equation*} 
	g'(k) = 2 \mathcal{X}(k) + \frac{(k-k_0)(2k-E_1+k_0)}{\mathcal{X}(k)}
\end{equation*} 
has two zeros at the point $\mu_\pm(\xi)$ given by
\begin{equation*} 
	\mu_+ =k_0  \quad \text{and} \quad \mu_-  = \frac{8E_1 - \xi}{12} \in  (E_1,k_0).
\end{equation*}  
It is clear that $\mu_-(\xi)$ ranges within a compact subset of $(E_1,E_2)$ for $\xi\in I_{\mathcal M}$. 
The $g$-function satisfies the jump condition
\begin{equation} \label{g_jump}
	g_+ + g_- =0 \quad \text{on} \quad (E_1,k_0).
\end{equation} 
with $g_+(k)  \in - i \R_+$ and $g_-(k) \in  i\R_+$ for $k \in (E_1,k_0)$. Moreover, $g$ obeys the symmetry
\begin{equation*}
	g(k) = \overbar{g(\bar{k})}, \quad k\in\C\setminus (E_1,E_2).
\end{equation*}
The series expansion of $g$ at $k_0$ is given by
\begin{align*}
	g(\xi,k)  = &\; 2\sqrt{k_0-E_1}(k-k_0)^{3/2}\bigg(1 
	+ \frac{k-k_0}{2(k_0-E_1)}  
	+ \bigO\big((k-k_0)^{2}\big)\bigg), \quad	  \C \setminus (-\infty, k_0] \ni  k \to k_0,
\end{align*}
where the branch cuts for $(k-k_0)^{3/2}$ and $(k-k_0)^{5/2}$ run along $(-\infty, k_0]$, and the error term is uniform with respect to $\xi \in I_{\mathcal M}$.
A signature table for $\im g$ is shown in Figure~\ref{fig:g_function}. 
\begin{figure}[h!] 
	\begin{center}
		\begin{overpic}[width=.60\textwidth]{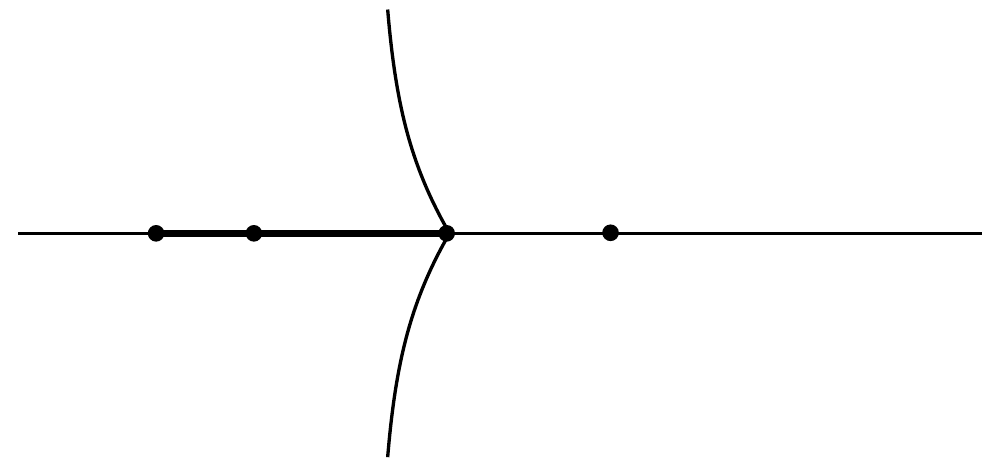}
			\put(14,19){{\footnotesize $E_1$}}
			\put(59.5,19){{\footnotesize $E_2$}}
			\put(46,25){{\footnotesize $k_0$}}
			\put(23.8,25.2){{\footnotesize $\mu_-$}}
			\put(56,35){{\footnotesize $ \im g > 0$}}
			\put(56,9){{\footnotesize $ \im g < 0$}}
			\put(18,35){{\footnotesize $ \im g < 0$}}
			\put(18,9){{\footnotesize $ \im g > 0$}}
		\end{overpic}
		\caption{The zero set, branch cut (thick line between $E_1$ and $k_0$) and signature table of $ \im g$; the vertical component of the zero set asymptotes to the line $\re k = -\xi/4$.}
		\label{fig:g_function}
	\end{center}
\end{figure}

\subsubsection{Definition of $m^{(1)}$}
For $(x,t,k)\in\mathcal M\times \C\setminus\R$ we define $m^{(1)}$ by
\begin{equation*} 
	m^{(1)} (x,t,k) \coloneqq \e^{- i t g_\infty(\xi)\sigma_3} \, m(x,t,k) \, \e^{ i t[g(\xi,k)-k\xi - 2k^2]\sigma_3}.
\end{equation*} 
Then for each $(x,t)\in \mathcal M$, $m^{(1)}$ satisfies the following RH problem:
\begin{itemize}
\item $m^{(1)}(x,t,\cdot)$ is analytic in $\C\setminus \R$;

\item $m^{(1)}(x,t,\cdot)$ has continuous boundary values on $\R$ satisfying the jump relation 
\begin{align*}
m^{(1)}_+(x,t,k) = m^{(1)}_-(x,t,k) v^{(1)}(x,t,k) \quad \text{for} \; k \in \R,
\end{align*}
where
\begin{equation*} 
	v^{(1)}(x,t,k) = \e^{ i t[k\xi + 2k^2 -g_-(\xi,k)]\sigma_3} \, v(x,t,k) \, \e^{ i t[g_+(\xi,k)-k\xi - 2k^2]\sigma_3}, \qquad k \in \R;
\end{equation*} 

\item $m^{(1)}(x,t,k)=I+ \bigO(k^{-1})$ as $k\to \infty$.

\end{itemize}

In view of \eqref{g_jump} and the fact that $|r|\equiv 1$ on $[E_1,E_2]$, we find
\begin{align*}  
		v^{(1)} = \begin{cases}
		\begin{pmatrix}
			1-|r|^2 & \bar r \e^{-2 i t g} \\
			- r \e^{2 i t g} & 1
		\end{pmatrix}, & k \in \R \setminus (E_1,k_0), \\
		\begin{pmatrix}
			0 & \bar r  \\
			- r  & \e^{-2 i t g_+}
		\end{pmatrix}, & k \in [E_1,k_0].\end{cases}
\end{align*}

\subsection{Second transformation}
The second transformation serves as preparation for the subsequent one. The introduction of an auxiliary function $\mathcal{D}$ will allow us to open up the ray $(-\infty,\mu_-)$ into two rays going from $\mu_-$ to $\infty$ in the upper and lower half-planes, respectively. Furthermore, the local approximation of $\mathcal{D}$ at $k_0$, will be crucial in order to define the local parametrix at $k_0$ in Section \ref{sec_local_parametrix}.   

\subsubsection{The function $\mathcal{D}$}
Define $\mathcal{D}\colon I_{\mathcal M} \times \C\setminus (-\infty,k_0)\to \C$ by 
\begin{equation} \label{D}
	\mathcal{D}(\xi,k) \coloneqq \exp\Bigg[    
	\frac{\mathcal{X}(\xi,k)}{2 \pi  i} \Bigg( \int^{E_1}_{-\infty} \frac{\log(1-|r(s)|^2)}{\mathcal{X}(\xi,s)(s-k)} \, \dx s + 
	\int^{k_0}_{E_1} \frac{\log{r(s)}}{\mathcal{X}_+(\xi,s)(s-k)} \, \dx s  \Bigg)\Bigg],
\end{equation} 
where the branch of the function $\log{r(s)}$ is such that it is continuous for $s \in [E_1, k_0]$ and the principal branch is used for $\log{r(E_1)}$.

\begin{lemma} \label{lem_D}
	For each $\xi\in I_{\mathcal M}$ the function $\mathcal{D}(\xi,\cdot)$ is analytic in $\C\setminus (-\infty,k_0]$ with continuous boundary values $\mathcal{D}_\pm(\xi,\cdot)$ on $(-\infty,k_0)\setminus \{E_1\}$ satisfying the jump relations
	\begin{equation} \label{D_jump}
		\begin{cases}
			\mathcal{D}_+ = \mathcal{D}_- (1-|r|^2) &\text{on} \quad (-\infty,E_1), \\
			\mathcal{D}_+  \mathcal{D}_- = r &\text{on} \quad (E_1,k_0).
		\end{cases}
	\end{equation} 
	Furthermore, $\mathcal{D}(\xi,\cdot)$ satisfies the following properties for any $\xi\in I_{\mathcal M}$.
	\begin{enumerate}[\upshape (i)]
		\item \label{lem_D_i}
		$\mathcal{D}(\xi,\cdot)$ obeys the symmetry 
		\begin{equation} \label{D_symm}
			\mathcal{D}(k) \overbar{\mathcal{D}(\bar{k})} = 1 \quad \text{for} \quad k\in\C \setminus (-\infty,k_0];
		\end{equation} 
		in particular, $\mathcal{D}_+(k) \overbar{\mathcal{D}_-(k)} = 1$ for $k\in (-\infty,k_0)\setminus \{E_1\}$. 
		\item \label{lem_D_ii}
		For $k\in\C\setminus (-\infty,k_0)$,  
		\begin{equation*} 
			\mathcal{D}^{\pm 1}(\xi,k) = \mathcal{D}^{\pm 1}_\infty(\xi) + \bigO(k^{-1}) \quad \text{as} \quad k\to\infty,
		\end{equation*} 
		uniformly with respect to $\xi\in I_{\mathcal M}$, where
		\begin{equation*} 
			\mathcal{D}_\infty(\xi) = \exp
			\Bigg[-\frac{1}{2 \pi  i} \Bigg( \int^{E_1}_{-\infty} \frac{\log(1-|r(s)|^2)}{\mathcal{X}(\xi,s)} \, \dx s + 
			\int^{k_0}_{E_1} \frac{\log{r(s)}}{\mathcal{X}_+(\xi,s)} \, \dx s  \Bigg)\Bigg],
		\end{equation*} 
		which satisfies $|\mathcal{D}_\infty(\xi)|=1$ for all $\xi\in I_{\mathcal M}$.

\item \label{lem_D_iii}
		As $\overbar{\C_+}\ni k\to E_1$,  
		\begin{equation}\label{calDatE1} 
			\mathcal{D}(\xi,k) = (k-E_1)^{1/4} \e^{d_0} \, \big(1+\bigO((k-E_1)^{1/2} )\big)
		\end{equation} 
		uniformly for $\xi\in I_{\mathcal M}$,
		where 
		\begin{equation*}
			d_0 \coloneqq \frac{1}{2}\log(\overbar{q_{1,0}} q_{1,1}+ q_{1,0} \overbar{q_{1,1}}) + \frac{1}{2} \log q_{1,0} \in \C
		\end{equation*}
		and the principal branch is used for the fourth root and the logarithms.
		By \eqref{D_symm}, a similar formula holds for $\overbar{\C_-}\ni k\to E_1$. 
		\item \label{lem_D_iv}
		For every fixed $\epsilon>0$, the functions $\mathcal{D}^{\pm 1}(\xi,\cdot)\colon \C\setminus D_\epsilon(E_1)\to \C$ are uniformly bounded for $\xi\in I_{\mathcal M}$, where $D_\epsilon(E_1)$ is the open disk of radius $\epsilon$ centered at $E_1$. 
		
	\item \label{lem_D_k0}
		As $k\in\C\setminus (-\infty,k_0)$ approaches $k_0$,
		\begin{align} 
			\mathcal{D}(k) &= \sqrt{r(k_0)} \Bigg[1 - C_{k_0}  \sqrt{k - k_0} + \frac{C_{k_0}^2 +  \frac{r'(k_0)}{r(k_0)}}{2} (k-k_0) + \bigO(|k-k_0|^{3/2})\Bigg], \label{D_at_k0} \\
			\mathcal{D}^{-1}(k) &= \frac{1}{\sqrt{r(k_0)}} \Bigg[1 + C_{k_0}  \sqrt{k - k_0} + \frac{C_{k_0}^2 - \frac{r'(k_0)}{r(k_0)}}{2} (k-k_0) + \bigO(|k-k_0|^{3/2}) \Bigg], \label{invD_at_k0} 
		\end{align}
		uniformly for $\xi\in I_{\mathcal M}$, with $C_{k_0}$ given by \eqref{C_k0}. 
	\end{enumerate}
\end{lemma}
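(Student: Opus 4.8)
The plan is to analyze the scalar function $\mathcal{D}$ through its logarithm. Write $\mathcal{D}(\xi,k) = \exp\big(\mathcal{X}(\xi,k)\,h(\xi,k)\big)$, where $h(\xi,\cdot)$ denotes the Cauchy-type integral over $(-\infty,k_0]$ appearing inside the exponential in \eqref{D}. The first step is to record that both integrals defining $h$ converge absolutely and uniformly for $\xi\in I_{\mathcal M}$: near $s=E_1$ the weight $1/\mathcal{X}(\xi,s)$ has an integrable square-root singularity while $\log(1-|r(s)|^2)$ is only logarithmically singular --- by Theorem~\ref{thm_direct}\eqref{thm_direct_a}, $1-|r(s)|^2 = (\overbar{q_{1,0}}q_{1,1}+q_{1,0}\overbar{q_{1,1}})\sqrt{E_1-s}+o(\sqrt{E_1-s})$ with $q_{1,1}\neq0$ --- and as $s\to-\infty$ the decay $r(s)=\bigO(s^{-4})$ forces $\log(1-|r(s)|^2)=\bigO(s^{-8})$; near $k_0$ the weight $1/\mathcal{X}_+(\xi,s)$ is again integrable. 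Since $h(\xi,\cdot)$ is then analytic on $\C\setminus(-\infty,k_0]$ and $\mathcal{X}(\xi,\cdot)$ is analytic on $\C\setminus[E_1,k_0]$, the product $\mathcal{X}h$ and hence $\mathcal{D}$ is analytic on $\C\setminus(-\infty,k_0]$, with boundary values continuous on $(-\infty,k_0)\setminus\{E_1\}$ because $r\in\mathcal C^{N_1}$ there. The jump relations \eqref{D_jump} follow from the Plemelj--Sokhotski formula: on $(-\infty,E_1)$ only the first Cauchy integral jumps and $\mathcal{X}$ is continuous, so $\mathcal{D}_+/\mathcal{D}_- = \exp(\log(1-|r|^2)) = 1-|r|^2$; on $(E_1,k_0)$ one has $\mathcal{X}_+=-\mathcal{X}_-$ together with $h_+-h_- = \log r/\mathcal{X}_+$, whence $\mathcal{X}_+h_++\mathcal{X}_-h_- = \mathcal{X}_+(h_+-h_-) = \log r$ and therefore $\mathcal{D}_+\mathcal{D}_- = r$.

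For the symmetry \eqref{D_symm} one uses that the chosen branch of $\mathcal{X}$ obeys $\overbar{\mathcal{X}(\xi,\bar k)} = \mathcal{X}(\xi,k)$, that $|r|^2$ is real and $\mathcal{X}(\xi,s)$ real for $s<E_1$, and that $\log r(s)\in i\R$ with $\overbar{\mathcal{X}_+(\xi,s)} = -\mathcal{X}_+(\xi,s)$ for $s\in(E_1,k_0)$ (since $|r|\equiv1$ and $\mathcal{X}^2<0$ there); conjugating the integral representation and replacing $k$ by $\bar k$ gives $\overbar{\log\mathcal{D}(\xi,\bar k)} = -\log\mathcal{D}(\xi,k)$, which is \eqref{D_symm}. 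The large-$k$ asymptotics follow since $\mathcal{X}(\xi,k)/(s-k) = -1+\bigO(k^{-1})$ as $k\to\infty$, uniformly in $s$ over the integration ranges and in $\xi\in I_{\mathcal M}$, so $\log\mathcal{D}(\xi,k)\to\log\mathcal{D}_\infty(\xi)$ with the stated formula; the identity $|\mathcal{D}_\infty|=1$ is the limit $k\to\infty$ of \eqref{D_symm}. The boundedness property is then a consequence of continuity: on the complement of a disk around $E_1$, $\mathcal{D}^{\pm1}$ is continuous, tends to $\mathcal{D}^{\pm1}_\infty$ at infinity, and has a finite limit at $k_0$ (established next); uniformity in $\xi$ holds because $k_0(\xi)$ stays in a compact subset of $(E_1,E_2)$ for $\xi\in I_{\mathcal M}$.

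The expansions \eqref{D_at_k0}--\eqref{invD_at_k0} are obtained by expanding $\log\mathcal{D}=\mathcal{X}h$ in powers of $w\coloneqq\sqrt{k-k_0}$. The factor $\mathcal{X}(\xi,k) = \sqrt{k_0-E_1}\,w\big(1+\tfrac{w^2}{2(k_0-E_1)}+\bigO(w^4)\big)$ is explicit. Since $r$ is $\mathcal C^{N_1}$ at the interior point $k_0$ and the integrand of the second Cauchy integral behaves near $s=k_0$ like $(\log r(s)/\mathcal{X}_+(s))(s-k)^{-1}$ with $\mathcal{X}_+(s)\sim\pm i\sqrt{k_0-E_1}\,\sqrt{k_0-s}$, a general lemma on endpoint asymptotics of Cauchy-type integrals (proved in the appendix) yields an expansion $h = w^{-1}(\dots)+(\dots)+\bigO(w)$ in which the coefficient produced for $\mathcal{X}h$ at order $w$ is exactly $-C_{k_0}$, with $C_{k_0}$ the regularized integral \eqref{C_k0}. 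The remaining, even-power, coefficients of $\log\mathcal{D}$ are pinned down without evaluating any integral: on $(E_1,k_0)$ the jump relation forces $\log\mathcal{D}_++\log\mathcal{D}_- = \log r$, and since approaching this interval from $\overbar{\C_+}$ and $\overbar{\C_-}$ corresponds to the two determinations $w=\pm i\sqrt{k_0-s}$, matching against the Taylor expansion of $\log r$ at $k_0$ gives the $w^0$ coefficient $\tfrac12\log r(k_0)$ (so $\mathcal{D}(k_0)=\sqrt{r(k_0)}$) and the $w^2$ coefficient $\tfrac12\,r'(k_0)/r(k_0)$. Exponentiating $\log\mathcal{D} = \tfrac12\log r(k_0)-C_{k_0}w+\tfrac12\tfrac{r'(k_0)}{r(k_0)}w^2+\bigO(w^3)$ and its negative produces \eqref{D_at_k0} and \eqref{invD_at_k0}, with uniformity in $\xi$ inherited from the appendix lemma.

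The expansion \eqref{calDatE1} at $E_1$ is the main obstacle. Here $\mathcal{X}(\xi,k)\sim c\sqrt{k-E_1}$ with $|c|=\sqrt{k_0-E_1}$, while $h$ now develops an endpoint singularity of the form $(k-E_1)^{-1/2}\big(A\log(k-E_1)+B\big)$: in the first Cauchy integral the square-root weight $1/\mathcal{X}(\xi,s)$ combines with the logarithmic growth $\log(1-|r(s)|^2)=\tfrac12\log|s-E_1|+\mathrm{const}+o(1)$ as $s\to E_1^-$ to produce the $(k-E_1)^{-1/2}\log(k-E_1)$ term, and the second integral contributes a further $\bigO((k-E_1)^{-1/2})$ term governed by $\log r(E_1)=\log q_{1,0}$ and by $q_{1,1}$. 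Multiplying by $\mathcal{X}$ cancels the $(k-E_1)^{-1/2}$ and leaves $\log\mathcal{D}(\xi,k)=\tfrac14\log(k-E_1)+d_0+\bigO((k-E_1)^{1/2})$; carefully tracking the constants, using the branch-point expansions \eqref{ratbranchpoints}--\eqref{coeffqjl} of $r$ at $E_1$ (in particular $r(E_1)=q_{1,0}$, the reality of $q_{1,1}\overbar{q_{1,0}}$, and the leading coefficient $\overbar{q_{1,0}}q_{1,1}+q_{1,0}\overbar{q_{1,1}}$ of $1-|r|^2$), identifies $d_0=\tfrac12\log(\overbar{q_{1,0}}q_{1,1}+q_{1,0}\overbar{q_{1,1}})+\tfrac12\log q_{1,0}$. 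As a consistency check, \eqref{calDatE1} together with \eqref{D_symm} --- which exchanges the $(k-E_1)^{1/4}$ and $(k-E_1)^{-1/4}$ determinations in $\overbar{\C_+}$ and $\overbar{\C_-}$ --- reproduces the jumps $\mathcal{D}_+/\mathcal{D}_- = 1-|r|^2$ on $(-\infty,E_1)$ and $\mathcal{D}_+\mathcal{D}_- = r$ on $(E_1,k_0)$; the statement for $\overbar{\C_-}\ni k\to E_1$ then follows from \eqref{D_symm}. Throughout, the crux is the \emph{uniform} (in $\xi\in I_{\mathcal M}$) control of the endpoint expansions at $E_1$ and $k_0$, for which the appendix's Cauchy-integral expansion lemma and the uniform behavior of $r$ near the branch points established in Theorem~\ref{thm_direct} are the decisive inputs.
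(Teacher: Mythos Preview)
Your proof is essentially correct and follows the paper's approach: both arguments analyze $\log\mathcal{D}=\mathcal{X}h$ via Plemelj--Sokhotski for the jumps and endpoint expansions of the Cauchy integrals (invoking the appendix lemma) for the local behavior at $k_0$ and $E_1$.

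One genuine difference worth noting is your treatment of part~\eqref{lem_D_k0}. The paper splits the second integral at an intermediate point $\kappa\in(E_1,k_0)$ into an analytic piece $B_1$ and a singular piece $B_2$, applies Lemma~\ref{lem_sing_int_near_boundary} to $B_2$, and assembles all coefficients directly. You instead use the jump relation $\log\mathcal{D}_++\log\mathcal{D}_-=\log r$ on $(E_1,k_0)$ to read off the even-power coefficients $a_0=\tfrac12\log r(k_0)$ and $a_2=\tfrac12\,r'(k_0)/r(k_0)$ from the Taylor expansion of $\log r$, computing only the odd-power coefficient $-C_{k_0}$ from the appendix lemma. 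This is a neat shortcut that avoids some bookkeeping.

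For part~\eqref{lem_D_iii}, however, your sketch glosses over a point the paper handles explicitly. When expanding near $E_1$, \emph{each} of the two Cauchy integrals separately produces a term of order $\sqrt{k-E_1}\,\log(k-E_1)$ after multiplication by $\mathcal{X}$: the first integral contributes $-\tfrac{c_2}{2\pi}\sqrt{k-E_1}\log(k-E_1)$ (from the $c_2\sqrt{E_1-s}$ term in the expansion of $\log(1-|r|^2)$), and the second contributes $-\tfrac{1}{2\pi}\tfrac{q_{1,1}}{q_{1,0}}\sqrt{k-E_1}\log(k-E_1)$ (from the $\tfrac{iq_{1,1}}{q_{1,0}}\sqrt{s-E_1}$ term in $\log r$). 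That the sum is $\bigO(\sqrt{k-E_1})$ rather than $\bigO(\sqrt{k-E_1}\log(k-E_1))$ requires the identity $c_2+q_{1,1}/q_{1,0}=0$, which is a consequence of \eqref{coeffqjl} for $n=1,2$. Your phrase ``carefully tracking the constants'' covers this, but the cancellation is the crux of why the error term in \eqref{calDatE1} is as stated, and it should be made explicit.
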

\begin{proof}
	Basic properties of the Cauchy transform imply that $\mathcal{D}(\xi,\cdot)$ is analytic in $\C\setminus (-\infty,k_0]$ and that it obeys the jump condition \eqref{D_jump}.
	The properties \eqref{lem_D_i}, \eqref{lem_D_ii}, and \eqref{lem_D_iv} follow easily from the definition of $\mathcal{D}$ and the properties of $r$, cf.~Theorem \ref{thm_direct}; 

To prove \eqref{lem_D_iii}, note first that
\begin{align}\label{expansion1}
\log(1-|r(s)|^2) = \log c_1 + \frac{1}{2}\log(E_1-s) 
+ c_2\sqrt{E_1-s}
+ \bigO(E_1-s), \qquad s \nearrow E_1,
\end{align}
where $c_1 \coloneqq \overbar{q_{1,0}} q_{1,1}+ q_{1,0} \overbar{q_{1,1}}$ and $c_2 \coloneqq -c_1^{-1}(2\re(q_{1,2}\overbar{q_{1,0}}) + |q_{1,1}|^2)$. An application of Lemma \ref{lem_sing_int_near_boundary} shows that
\begin{align}\label{cauchya}
\frac{\mathcal{X}(\xi,k)}{2 \pi  i} \int^{E_1}_{-\infty} \frac{\log{c_1}}{\mathcal{X}_+(\xi,s)(s-k)} \,\dx s = \frac{1}{2}\log{c_1} + \bigO(\sqrt{k-E_1}), \qquad \overbar{\C_+}\ni k \to E_1.
\end{align}
On the other hand,
\begin{equation*}
	\frac{1}{2\pi i} \int_{-\infty}^{E_1} \frac{\log(E_1 - s)}{\mathcal{X}(\xi,s)(s-k)} \,\dx s
	= f(k) + \bigO(1),
\end{equation*}
where the function
\begin{equation*}
	f(k) \coloneqq \frac{1}{2\pi i} \int_{-\infty}^{E_1} \frac{\log(E_1 - s)}{-\sqrt{|s-E_1||E_1-k_0|}(s-k)} \,\dx s
\end{equation*}
is analytic for $k \in \C \setminus (-\infty, E_1]$, obeys the bound $f(k) = o((k-E_1)^{-1})$ as $k \to E_1$, and satisfies 
\begin{equation*}
f_+(k) - f_-(k) =  \frac{\log|E_1 - k|}{-\sqrt{|k-E_1||E_1-k_0|}}, \qquad k \in (-\infty, E_1].	
\end{equation*}
It follows that
\begin{equation*}
	f(k) = \frac{1}{2i}\frac{\log(k-E_1)}{\sqrt{k-E_1}\sqrt{|E_1-k_0|}}, \qquad k \in \C \setminus (-\infty, E_1],
\end{equation*}
where principal branches are used. We conclude that, as $\overbar{\C_+}\ni k \to E_1$,
\begin{align}\label{cauchy1b}
\frac{\mathcal{X}(\xi,k)}{2 \pi  i} \int^{E_1}_{-\infty} \frac{\frac{1}{2}\log(E_1 - s)}{\mathcal{X}(\xi,s)(s-k)} \,\dx s = \frac{1}{4}\log(k-E_1) + \bigO(\sqrt{k-E_1}).
\end{align}
Furthermore, as $\overbar{\C_+}\ni k \to E_1$,
\begin{align}\label{cauchy1c}
\frac{\mathcal{X}(\xi,k)}{2 \pi  i} \int^{E_1}_{-\infty} \frac{c_2\sqrt{E_1-s}}{\mathcal{X}(\xi,s)(s-k)} \,\dx s 
& = - \frac{c_2}{2 \pi}  \sqrt{k-E_1} \log(k-E_1) + \bigO(\sqrt{k-E_1}).
\end{align}

Next note that
\begin{align}\label{expansion2}
\log r(s) = \log{q_{1,0}} +  \frac{iq_{1,1}}{q_{1,0}} \sqrt{s-E_1} + \bigO(s-E_2), \qquad s \searrow E_1.
\end{align}
A version of Lemma \ref{lem_sing_int_near_boundary} adapted to the left endpoint implies that
\begin{align}\label{cauchy2a}
\frac{\mathcal{X}(\xi,k)}{2 \pi  i} \int^{k_0}_{E_1} \frac{\log{q_{1,0}}}{\mathcal{X}_+(\xi,s)(s-k)} \,\dx s
= \frac{\log{q_{1,0}}}{2} + \bigO(\sqrt{k-E_1}), \qquad \overbar{\C_+}\ni k \to E_1.
\end{align}
Furthermore, as $\overbar{\C_+}\ni k \to E_1$,
\begin{align}\label{cauchy2b}
\frac{\mathcal{X}(\xi,k)}{2 \pi  i} \int^{k_0}_{E_1} \frac{\frac{iq_{1,1}}{q_{1,0}} \sqrt{s-E_1}}{\mathcal{X}_+(\xi,k)(s-k)} \,\dx s
= -\frac{1}{2 \pi}\frac{q_{1,1}}{q_{1,0}}\sqrt{k-E_1} \log(k-E_1) +  \bigO(\sqrt{k-E_1}).
\end{align}
Since $c_2 + \frac{q_{1,1}}{q_{1,0}} = 0$ by \eqref{coeffqjl}, the sum of the right-hand sides of \eqref{cauchy1c} and \eqref{cauchy2b} is $\bigO(\sqrt{k-E_1})$. Property \eqref{lem_D_iii} now follows from \eqref{expansion1}--\eqref{cauchy2b}.
	
	To prove \eqref{lem_D_k0}, we first note that
	\begin{equation} \label{X_k_0}
		\mathcal{X}(\xi,k) = \sqrt{k_0-E_1} \sqrt{k-k_0} + \frac{(k-k_0)^{3/2}}{2\sqrt{k_0-E_1}} + \bigO(|k-k_0|^{5/2}) \quad \text{as} \quad k\to k_0.
	\end{equation} 
	Next we expand the integrals appearing in \eqref{D}. For this purpose let $\kappa \in (E_1,k_0)$ and set
	\begin{align*}
		A(\xi,k) &\coloneqq \int^{E_1}_{-\infty} \frac{\log(1-|r(s)|^2)}{\mathcal{X}(\xi,s)(s-k)} \, \dx s, \\
		B_1 (\kappa,\xi,k) &\coloneqq \int^{\kappa}_{E_1} \frac{\log(r(s))}{\mathcal{X}_+(\xi,s)(s-k)} \, \dx s, \qquad
		B_2 (\kappa,\xi,k) \coloneqq \int^{k_0}_{\kappa} \frac{\log(r(s))}{\mathcal{X}_+(\xi,s)(s-k)} \, \dx s.
	\end{align*}
	From  \eqref{X_k_0}, the properties of $r$, and the fact that $k_0$ can be separated from the integration contours $(-\infty,E_1]$ and $[E_1,\kappa]$ by some small neighborhood, it follows that the Cauchy integrals $A$ and $B_1$ are analytic locally around $k_0$; hence there exists a small $\epsilon$-disc $D_\epsilon(k_0)$ centered at $k_0$ such that the power series
	\begin{align}\label{A1B1_near_k0}
		A(\xi,k) &= \sum^\infty_{n=0}\frac{A^{(n)}(\xi,k_0)}{n!}(k-k_0)^n,  \qquad
		B_1(\kappa,\xi,k) = \sum^\infty_{n=0}\frac{B_1^{(n)}(\kappa,\xi,k_0)}{n!}(k-k_0)^n, 
	\end{align}
	converge absolutely and uniformly for $(\xi,k)\in I_{\mathcal M}\times D_\epsilon(k_0)$.
	Let 
	\begin{equation*} 
		g_0(\kappa,\xi,\cdot) \colon [\kappa, k_0] \to \C, \quad k \mapsto g_0(\kappa,\xi,k) \coloneqq -\frac{ i \log(r(k))}{\sqrt{k-E_1}}.
	\end{equation*} 
	An application of Lemma \ref{lem_sing_int_near_boundary} yields that
	\begin{equation} \label{B2_near_k0}
		B_2(\kappa,\xi,k) = \frac{c_{-1}}{\sqrt{k-k_0}} + c_0 + c_1 \sqrt{k-k_0} + \bigO (|k-k_0|)
	\end{equation} 
	as $k\to k_0$ uniformly for $\xi\in I_{\mathcal M}$ with
	\begin{align*}
		c_{-1} &= - \pi \, g_0(k_0), \\
		c_0 &= \frac{2 \, g_0(k_0)}{\sqrt{k_0-\kappa}} + \int^{k_0}_\kappa \frac{g_0(s)- g_0(k_0)}{s-k_0}  \frac{\dx s}{\sqrt{k_0-s}}, \\
		c_1 &= -\pi \, g_0'(k_0),
	\end{align*}
	Combining \eqref{X_k_0}, \eqref{A1B1_near_k0}, \eqref{B2_near_k0}, and letting $\kappa \searrow E_1$ (which is justified by Lebesgue's dominated convergence theorem), we conclude that
	\eqref{D_at_k0} holds uniformly for all $\xi\in I_{\mathcal M}$ with $C_{k_0}$ given by \eqref{C_k0}; \eqref{invD_at_k0} is obtained by inverting \eqref{D_at_k0}.
\end{proof}

\subsubsection{Definition of $m^{(2)}$}
For $(x,t,k)\in\mathcal M\times \C\setminus\R$ we define $m^{(2)}$ by 
\begin{equation*} 
	m^{(2)} (x,t,k) \coloneqq \mathcal{D}_\infty(\xi)^{\sigma_3} \, m^{(1)}(x,t,k) \, \mathcal{D}(\xi,k)^{-\sigma_3}.
\end{equation*} 
As a consequence of Lemma \ref{lem_D}, $m^{(2)}$ satisfies the following RH problem for all $(x,t)\in \mathcal M$:
\begin{itemize}
\item $m^{(2)}(x,t,\cdot)$ is analytic in $\C\setminus \R$;

\item $m^{(2)}(x,t,\cdot)$ has continuous boundary values on $\R \setminus \{E_1\}$ satisfying the jump relation 
\begin{align*}
m^{(2)}_+(x,t,k) = m^{(2)}_-(x,t,k) v^{(2)}(x,t,k) \quad \text{for} \; k \in \R \setminus \{E_1\},
\end{align*}
where $v^{(2)} = \mathcal{D}^{\sigma_3}_- v^{(1)} \mathcal{D}^{-\sigma_3}_+$; 

\item $m^{(2)}(x,t,k)=I+ \bigO(k^{-1})$ as $k\to \infty$;

\item $m^{(2)}(x,t,k)= \bigO((k-E_1)^{-1/4})$ as $k\to E_1$.

\end{itemize}

Write $\R = \Gamma^{(2)}_1 \cup \Gamma^{(2)}_2 \cup \Gamma^{(2)}_3$, where 
\begin{equation*}
	\Gamma^{(2)}_1 = (k_0,+\infty), \qquad \Gamma^{(2)}_2 = (-\infty,E_1), \qquad \Gamma^{(2)}_3 = [E_1,k_0],
\end{equation*}
and let $v_j^{(2)}$ denote the restriction of $v^{(2)}$ to $\Gamma^{(2)}_j$.
Setting 
\begin{equation*} 
	r_2(k) \coloneqq 
	\frac{\overbar{r(k)}}{1- |r(k)|^2}, \qquad k\in \R \setminus [E_1,E_2],
\end{equation*}
and employing \eqref{D_jump}--\eqref{D_symm}, we infer that
\begin{flalign*}
	\begin{aligned}
		&v_1^{(2)} =
		\begin{pmatrix}
			1-|r|^2 & \mathcal{D}^2 \bar r \e^{-2 i t g} \\
			-\mathcal{D}^{-2} r \e^{2 i t g} & 1
		\end{pmatrix}
	=
	\begin{pmatrix}
		1 & \mathcal{D}^2 \bar r \e^{-2 i t g} \\
		0 & 1
	\end{pmatrix}
	\begin{pmatrix}
		1 & 0 \\
		-\mathcal{D}^{-2}  r \e^{2 i t g} & 1
	\end{pmatrix},  
		 \\
		&v_2^{(2)} =
		\begin{pmatrix}
			1 & \mathcal{D}_+ \mathcal{D}_- \bar r \e^{-2 i t g} \\
			- \mathcal{D}^{-1}_+ \mathcal{D}^{-1}_- r \e^{2 i t g} & 1 -|r|^2
		\end{pmatrix}
		=
	\begin{pmatrix}
		1 & 0 \\
		- \mathcal{D}^{-2}_- \bar r_2 \e^{2 i t g} & 1
	\end{pmatrix}
	\begin{pmatrix}
		1 & \mathcal{D}^2_+ r_2 \e^{-2 i t g} \\
		0 & 1
	\end{pmatrix},  
		 \\
		&v_3^{(2)} =
		\begin{pmatrix}
			0 &  1  \\
			- 1  & \mathcal{D}_+ \mathcal{D}^{-1}_- \e^{-2 i t g_+}
		\end{pmatrix}.
	\end{aligned}
\end{flalign*}


\subsection{Third transformation} \label{sec_4th_trans}
The third transformation requires analytic approximations of the spectral functions $r$ and $r_2$, which we summarize in the following two lemmas.
Let the open regions $U^{(3)}_j$, $j=1,2,3,4$, be as in Figure~\ref{fig:Gamma3}.

\begin{figure}[h] \centering
	\begin{overpic}[width=.46\textwidth]{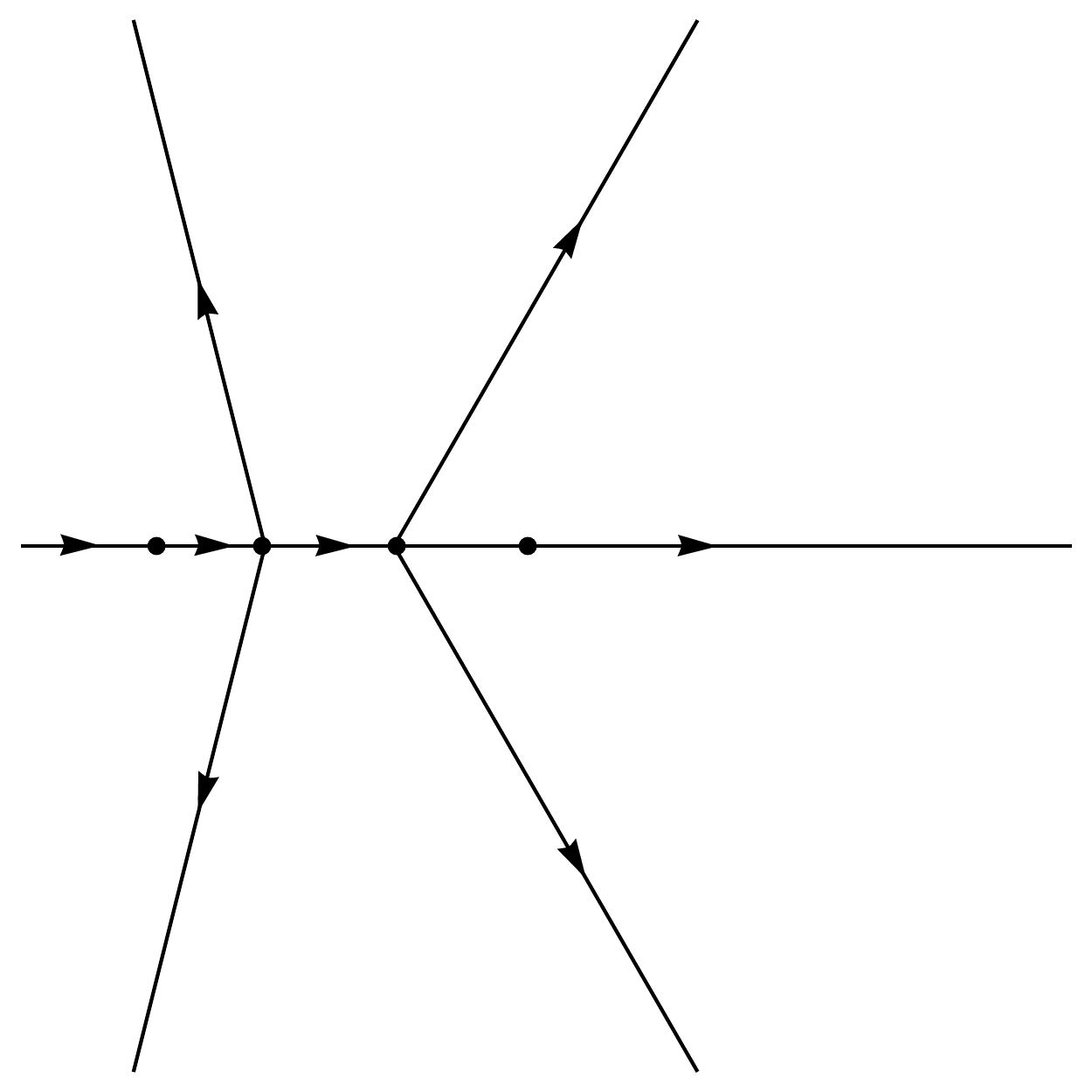}
		\put(11.5,45){\footnotesize{$E_1$}} 
		\put(33,45.7){\footnotesize{$k_0$}}   
		\put(24.6,46.2){\footnotesize{$\mu_-$}}    
		\put(46,45){\footnotesize{$E_2$}} 
		\put(60,66){\footnotesize{$U^{(3)}_1$}} 
		\put(60,30){\footnotesize{$U^{(3)}_4$}} 
		\put(2,66){\footnotesize{$U^{(3)}_2$}} 
		\put(2,30){\footnotesize{$U^{(3)}_3$}}
		\put(6,52){\footnotesize{$5$}}
		\put(18,52){\footnotesize{$6$}}
		\put(29,52){\footnotesize{$2$}}
		\put(63,52){\footnotesize{$4$}}
		\put(47.5,77){\footnotesize{$1$}}
		\put(47.5,20){\footnotesize{$3$}}
		\put(21,71){\footnotesize{$7$}}
		\put(21,25){\footnotesize{$8$}}
	\end{overpic}	
	\caption{The contour $\Gamma^{(3)}$ and the regions $U^{(3)}_j$, $j = 1, \dots, 4$.} 
	\label{fig:Gamma3}
\end{figure}

\subsubsection{Analytic approximations}
The next two lemmas provide analytic approximations of $r$ and $r_2$; we omit the proof for $r$, but give all details in the more difficult case of $r_2$.

\begin{lemma}[Analytic approximation of $r$]\label{lem_an_appr_r1}
	There exist continuous functions 
	\begin{equation*}
		r_{a}\colon  I_{\mathcal M} \times  (0,\infty) \times \overbar{U^{(3)}_1} \to \C \quad \text{and} \quad r_{r}\colon  I_{\mathcal M} \times  (0,\infty) \times (k_0,\infty)  \to \C,
	\end{equation*}
	which satisfy the following properties.
	\begin{enumerate}[\upshape (i)]
		\item \label{lem_an_appr_r1_i}
		$r(k) = r_{a}(\xi,t,k) +  r_{r}(\xi,t,k)$ for all $(\xi,t,k) \in  I_{\mathcal M} \times (0,\infty) \times (k_0,\infty)$.
		
		\item \label{lem_an_appr_r1_ii}
		For all $(\xi,t)\in  I_{\mathcal M} \times (0,\infty)$, the function $r_{a}(\xi,t,\cdot)\colon U^{(3)}_1\to \C$ is analytic. Moreover, there exists a constant $C>0$ such that for all $(\xi,t,k) \in  I_{\mathcal M} \times (0,\infty) \times \overbar{U^{(3)}_1}$, 
		\begin{equation*}
			\big|r_{a}(\xi,t,k) - r(k_0) - r'(k_0) (k-k_0) \big| \leq  C \, |k-k_0|^2 \, \e^{\frac{t}{4} | \im g(\xi,k)|},
		\end{equation*}
		and 
		\begin{equation*}
			\big|r_{a}(\xi,t,k) \big| \leq C \,  \frac{\e^{\frac{t}{4} | \im g(\xi,k)|}}{1+|k|^2}. 
		\end{equation*}
		\item \label{lem_an_appr_r1_iii}
		For all $(\xi,t)\in   I_{\mathcal M} \times  (0,\infty)$, the function $r_{r}(\xi,t,\cdot)$ lies in $L^p(k_0,\infty)$, $p=1,2,\infty$, and 
		\begin{equation*}
			\|r_{r}(\xi,t,\cdot)  \|_{L^p(k_0,\infty)} \in \bigO(t^{-2}) \quad \text{as} \quad t\to\infty
		\end{equation*}
		uniformly for $\xi\in I_{\mathcal M}$.	
	\end{enumerate}
\end{lemma}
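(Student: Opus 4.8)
The plan is to construct $r_a$ by analytically extending a Taylor-type approximation of $r$ at $k_0$ off the real axis into $U^{(3)}_1$, while controlling the error by the sub-dominant exponential $\e^{\frac{t}{4}|\im g|}$; the remainder $r_r = r|_{(k_0,\infty)} - r_a$ is then supported on the real line and decays rapidly in $t$ because of the smoothness and decay of $r$ guaranteed by Theorem \ref{thm_direct}. Concretely, I would first pick a smooth cutoff $\chi$ equal to $1$ near $k_0$ and vanishing for $k$ large, and write $r(k) = \sum_{n=0}^{N-1} \frac{r^{(n)}(k_0)}{n!}(k-k_0)^n + R_N(k)$ on $(k_0,\infty)$ for a suitable $N$ (here $N=8$ is available since $N_1 = 8$), where $R_N$ vanishes to order $N$ at $k_0$. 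The polynomial part extends to an entire function of $k$, so the only issue is to extend $\chi R_N$.

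\medskip

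\noindent
The key idea for the extension is the standard $\overline\partial$/Fourier trick used in Deift--Zhou-type arguments (and, as the authors note, generalizing \cite{Fromm19}): write $\chi(k) R_N(k)$ via its Fourier transform in the variable $k - k_0$ and keep only the frequencies that decay in the relevant half-plane, i.e. define
\begin{align*}
r_a(\xi,t,k) \coloneqq \sum_{n=0}^{N-1}\frac{r^{(n)}(k_0)}{n!}(k-k_0)^n + \frac{1}{2\pi}\int_{-\infty}^{0} \widehat{(\chi R_N)}(s)\, \e^{i s (k-k_0)}\,\dx s \cdot \mathbf{1}_{\{\text{appropriate branch}\}},
\end{align*}
possibly after inserting a $t$-dependent translation of the frequency cutoff to take advantage of the smallness of $\im g$; then $r_a(\xi,t,\cdot)$ is analytic in $U^{(3)}_1$ because $\e^{is(k-k_0)}$ decays there for $s<0$, and the bound on $r_a - r(k_0) - r'(k_0)(k-k_0)$ follows by estimating the oscillatory integral using the $N$ derivatives of $\chi R_N$ (which are integrable by \eqref{u0assumptions}) together with the elementary inequality $|\e^{is(k-k_0)}| \le \e^{|s|\,|\im(k-k_0)|} \le C\e^{\frac{t}{4}|\im g(\xi,k)|}$ valid on $\overbar{U^{(3)}_1}$ after choosing the frequency cutoff in terms of $t$; the large-$k$ decay $\bigO(1/|k|^2)$ comes from the decay of $r$ and its derivatives at $+\infty$ in Theorem \ref{thm_direct}\eqref{thm_direct_b}. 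Finally, $r_r(\xi,t,k) = r(k) - r_a(\xi,t,k)$ on $(k_0,\infty)$ equals $-\frac{1}{2\pi}\int_{t\text{-cutoff}}^{\infty}\widehat{(\chi R_N)}(s)\e^{is(k-k_0)}\dx s + (1-\chi(k))\text{-terms}$, and integrating by parts $N$ times against the derivatives of $\chi R_N$ gives $\|r_r(\xi,t,\cdot)\|_{L^p} = \bigO(t^{-N+\text{const}})$; with $N$ large enough (which $N_1 = 8$ affords) this is $\bigO(t^{-2})$ uniformly in $\xi \in I_{\mathcal M}$ since all the constants depend only on finitely many $L^1$-norms of derivatives of $r$, which are uniformly controlled over the compact $\xi$-range.

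\medskip

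\noindent
The main obstacle I anticipate is the uniformity in $\xi$ together with the interplay between the branch point structure of $\mathcal X$ (hence of $g$) and the region $U^{(3)}_1$: one must make sure the cutoff $\chi$, the disc $D_\epsilon(k_0)$ on which the power series converges, and the frequency truncation can all be chosen independently of $\xi \in I_{\mathcal M}$, and that the exponential factor $\e^{\frac{t}{4}|\im g(\xi,k)|}$ genuinely dominates the growth of $\e^{is(k-k_0)}$ throughout $\overbar{U^{(3)}_1}$ — this requires knowing that $\im g$ grows at least linearly in $\im k$ off the real axis, which follows from the signature table in Figure~\ref{fig:g_function} and the local expansion of $g$ at $k_0$, but needs to be made quantitative and $\xi$-uniform. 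A secondary technical point is handling the non-smoothness of $r$ at the branch point $E_2$ (and the behavior as $k \to E_1$ from the right), but since $U^{(3)}_1$ and $(k_0,\infty)$ stay away from $[E_1,E_2]$ except at the endpoint $k_0$, and $r$ is $\mathcal C^{N_1}$ on $\R\setminus\{E_1,E_2\}$, this does not actually enter the construction in region $U^{(3)}_1$; the truly delicate branch-point analysis is reserved for $r_2$ near $E_1$, treated in the next lemma.
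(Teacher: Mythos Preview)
Your outline has the right skeleton---subtract an explicit piece, Fourier-transform, split at a $t$-dependent frequency---but two concrete points prevent it from closing. First, you Fourier-transform $\chi R_N$ in the variable $k-k_0$, whereas the Deift--Zhou construction (spelled out in the proof of Lemma~\ref{lem_an_appr_r2}) transforms in the \emph{phase variable} built from $g$: after subtracting an analytic $h$ one sets $F(\phi)=(\text{decay prefactor})\cdot(r-h)(k(\phi))$ and defines the analytic part as $\int^{t/4}_{-\infty}\hat F(s)\,\e^{isg(\xi,k)}\,\dx s$. The distinction is essential precisely where you flag a concern. Near $k_0$ one has $g(\xi,k)=2\sqrt{k_0-E_1}\,(k-k_0)^{3/2}(1+\bigO(k-k_0))$, so along any ray in $\overbar{U^{(3)}_1}$ emanating from $k_0$, $|\im g|\sim|k-k_0|^{3/2}$ while $|\im(k-k_0)|\sim|k-k_0|$. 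Hence for \emph{any} $t$-dependent frequency window your inequality $\e^{|s|\,|\im k|}\le C\e^{\frac{t}{4}|\im g(\xi,k)|}$ fails as $k\to k_0$, and the first estimate in (ii) is unreachable this way; your hoped-for ``$\im g$ grows at least linearly in $\im k$'' is exactly what breaks down at $k_0$. In the phase variable the bound $|\e^{isg}|\le\e^{\frac{t}{4}|\im g|}$ for $s\le t/4$ is a tautology, and the $|k-k_0|^2$ factor is manufactured by putting a zero of order two into the prefactor, compensated by choosing $h$ to match $r$ to sufficient order at $k_0$ (just as $h$ matches $r_2$ at $\mu_-$ in the next lemma).

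Second, the claim that $(k_0,\infty)$ avoids $[E_1,E_2]$ except at $k_0$ is false: for every $\xi\in I_{\mathcal M}$ one has $k_0(\xi)=\tfrac{\beta+\alpha-\xi}{3}<-\beta+\alpha=E_2$, so $E_2\in(k_0,\infty)$ and, by \eqref{ratbranchpoints}, $r'$ blows up like $|k-E_2|^{-1/2}$ there. Consequently $\chi R_N$ lies only in $H^{1-\epsilon}_{\mathrm{loc}}$ near $E_2$, its Fourier transform does not decay fast enough, and your integration-by-parts argument for $\|r_r\|_{L^p}=\bigO(t^{-2})$ breaks down. The remedy, again parallel to the proof of Lemma~\ref{lem_an_appr_r2}, is to let the subtracted function $h$ contain a piece of the form $p(\xi,k)/X(k)$ absorbing the odd half-powers of $r$ at $E_2$, so that $r-h$ acquires the Sobolev regularity needed for the $H^3$-type estimate on $\hat F$.
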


\begin{lemma}[Analytic approximation of $r_2$]\label{lem_an_appr_r2}
	There exist continuous functions 
	\begin{equation*}
		r_{2,a}\colon  I_{\mathcal M} \times  (0,\infty) \times \overbar{U^{(3)}_2} \setminus\{E_1\} \to \C 
		\quad \text{and} \quad 
		r_{2,r}\colon  I_{\mathcal M} \times  (0,\infty) \times (-\infty,E_1)  \to \C,
	\end{equation*}
	which satisfy the following properties.
	\begin{enumerate}[\upshape (i)]
		\item \label{lem_an_appr_r2_i}
			$r_2(k) = r_{2,a}(\xi,t,k) +  r_{2,r}(\xi,t,k)$ for all $(\xi,t,k) \in  I_{\mathcal M} \times (0,\infty) \times (-\infty,E_1)$.
			
		\item  \label{lem_an_appr_r2_ii}
		For all $(\xi,t)\in  I_{\mathcal M} \times (0,\infty)$, the function $r_{2,a}(\xi,t,\cdot)\colon U^{(3)}_2\to \C$ is analytic, and $r_{2,a}(\xi,t,k) = \bigO(|k-E_1|^{-1/2})$ uniformly for $(\xi,t)\in  I_{\mathcal M} \times (0,\infty)$ as $k\to E_1$.
		Moreover, for every $\epsilon>0$ there exists a constant $C_\epsilon>0$ such that 
		\begin{equation} \label{lem_an_appr_r2_ii_1} 
			\big|r_{2,a}(\xi,t,k) \big| \leq C_\epsilon \,  \frac{\e^{\frac{t}{4} | \im g_+(\xi,k)|}}{1+|k|^2}\quad \text{for } (\xi,t,k) \in  I_{\mathcal M} \times (0,\infty) \times \big( \overbar{U^{(3)}_2}\setminus D_\epsilon(E_1)\big).
		\end{equation}
		
		\item \label{lem_an_appr_r2_iii}
		For all $(\xi,t)\in   I_{\mathcal M} \times  (0,\infty)$ and $p=1,2,\infty$, the function $r_{2,r}(\xi,t,\cdot)$ lies in $L^p(-\infty,E_1)$ and 
		\begin{equation*}
			\|r_{2,r}(\xi,t,\cdot)  \|_{L^p(-\infty,E_1)} \in \bigO(t^{-2}) \quad \text{as} \quad t\to\infty
		\end{equation*}
		uniformly for $\xi\in I_{\mathcal M}$.	
		
		\item \label{lem_an_appr_r2_iv}
		For all $(\xi,t)\in   I_{\mathcal M} \times  (0,\infty)$ and $p=1,2,\infty$, the function $\big(r r_{2,a} + \overbar{r r_{2,a}} + 1\big)(\xi,t,\cdot) \e^{-2 i t g_+(\xi,\cdot)}$ lies in $L^p(E_1,\mu_-)$, and 
		\begin{equation}\label{rr2aestimate}
			\big\|\big(r r_{2,a} + \overbar{r r_{2,a}} + 1\big)(\xi,t,\cdot) \e^{-2 i t g_+(\xi,\cdot)} \big\|_{L^p(E_1,\mu_-)} 
			= \bigO(t^{-2}) \quad \text{as} \quad t\to\infty
		\end{equation}
		uniformly for $\xi\in I_{\mathcal M}$.
	\end{enumerate}
\end{lemma}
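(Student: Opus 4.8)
The plan is to follow the standard Deift--Zhou recipe for producing a decomposition $r_2 = r_{2,a} + r_{2,r}$ into an analytic part and a small remainder, but with the extra care needed because $r_2$ has a square-root singularity at $E_1$. First I would recall from Theorem~\ref{thm_direct}\eqref{thm_direct_b} that $r \in \mathcal C^{N_1}(\R\setminus\{E_1,E_2\})$ with $\partial_k^n r(k) = \bigO(k^{-N_2-1})$ as $k \to \pm\infty$, and that near $E_1$ the expansion \eqref{ratbranchpoints} holds with $|q_{1,0}| = 1$; since $|r(k)| = 1$ on $[E_1,E_2]$, the function $1 - |r(k)|^2$ vanishes like $\sqrt{E_1-k}$ as $k \nearrow E_1$, so $r_2 = \overbar r/(1-|r|^2)$ behaves like $\mathrm{const}\cdot (E_1-k)^{-1/2}$ there. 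The key device (this is the ``generalization of a construction in \cite{Fromm19}'' alluded to in the introduction) is to first peel off the singular behavior: write $r_2(k) = \rho(k) + s(k)(E_1-k)^{-1/2}$ on $(-\infty,E_1)$, where $s$ is a smooth function near $E_1$ chosen so that the factor $(E_1-k)^{-1/2}$ carries all the non-smoothness and $\rho \in \mathcal C^{N_1 - 1}$ (say) on a neighborhood of $E_1$ within $(-\infty,E_1]$, with the correct decay at $-\infty$.

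Next I would produce the analytic approximation of the smooth part $\rho$ in the usual way: extend $\rho$ off the real axis by a truncated Taylor-type expansion at $k_0$ is not needed here since the relevant point is $E_1$; instead, one writes $\rho(k) = \sum_{n=0}^{N} \frac{\rho^{(n)}(E_1)}{n!}(k-E_1)^n + R_N(k)$, splits the polynomial part (which is entire, hence analytic, and whose growth is controlled on $\overbar{U_2^{(3)}}$), and for the Taylor remainder $R_N$ uses the integral formula $R_N(k) = \frac{1}{N!}\int_{E_1}^{k}(k-s)^N \rho^{(N+1)}(s)\,\dx s$ together with the exponential weight coming from $g_+$; concretely one introduces $\widehat R_N(k) := R_N(k) e^{\frac{t}{4} i g_+(\xi,k)}$ or its analytic continuation, multiplies by a suitable analytic cutoff that is $1$ near the contour and decays away from it, and estimates the $L^p$ norms on $(-\infty,E_1)$ using that $\im g_+ < 0$ there (from the signature table, Figure~\ref{fig:g_function}) so that $e^{\frac t4 i g_+}$ is exponentially small off $k_0$ while $\rho^{(N+1)}$ is integrable. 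Choosing $N$ large enough (possible because $N_1 = 8$), the $\bigO(t^{-2})$ bound in \eqref{lem_an_appr_r2_iii} follows. The singular prefactor $s(k)(E_1-k)^{-1/2}$ is continued analytically across $E_1$ into $U_2^{(3)}$ using the branch of $(E_1 - k)^{-1/2}$ analytic on $\C \setminus [E_1,\infty)$; since $s$ is smooth, the same Taylor-remainder-plus-cutoff argument applies to $s$, and the $(E_1-k)^{-1/2}$ factor accounts precisely for the stated $\bigO(|k-E_1|^{-1/2})$ blow-up and is harmless for the $L^p(-\infty,E_1)$ estimate because $(E_1-k)^{-1/2}$ is locally $L^1$ and $L^2$ but not $L^\infty$ — so one must be slightly careful and absorb a neighborhood of $E_1$ of shrinking radius into the analytic part, which is allowed since item \eqref{lem_an_appr_r2_ii} only claims the bound \eqref{lem_an_appr_r2_ii_1} away from $D_\epsilon(E_1)$.

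For part \eqref{lem_an_appr_r2_iv}, I would exploit the identity $1 - |r|^2 = (1-|r|^2)$ together with $r_2 = \overbar r/(1-|r|^2)$, which gives $r\, r_2 + \overbar{r\, r_2} + 1 = \frac{|r|^2}{1-|r|^2} + \frac{|r|^2}{1-|r|^2} + 1 = \frac{2|r|^2 + (1-|r|^2)}{1-|r|^2} = \frac{1+|r|^2}{1-|r|^2}$ on $\R\setminus[E_1,E_2]$; but on $(E_1,\mu_-) \subset [E_1,E_2]$ one has $|r| = 1$, so the \emph{exact} quantity $r r_2 + \overbar{r r_2} + 1$ is not literally defined pointwise there from $r_2$ on the cut — rather $r_{2,a}$ is the analytic continuation from $(-\infty,E_1)$, and the content of \eqref{rr2aestimate} is that this continuation, when combined as indicated and weighted by $e^{-2itg_+}$ (which is $e^{-2it g_+}$ with $g_+ \in -i\R_+$ on $(E_1,k_0)$, hence exponentially decaying in $t$ for $k$ away from $E_1$ and $k_0$), is $\bigO(t^{-2})$ in $L^p(E_1,\mu_-)$. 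I would prove this by writing $r r_{2,a} + \overbar{r r_{2,a}} + 1$ as an explicit analytic function near $(E_1,\mu_-)$ using the expansions \eqref{ratbranchpoints}--\eqref{coeffqjl} (the relations $\sum_l i^{n-l}(-i)^l q_{j,n-l}\overbar{q_{j,l}} = 0$ are exactly what forces the requisite cancellations so that this combination vanishes to high order, or is at worst bounded, at $E_1$), and then noting that the exponential factor $e^{-2itg_+}$ on the compact set $[E_1+\delta', \mu_-]$ decays faster than any power of $t$, while on the shrinking neighborhood of $E_1$ one uses the mild $(k-E_1)^{-1/2}$-type blow-up against the vanishing of $g_+$ like $(k-E_1)^{1/2}$ to get integrability and a power of $t$ gain by rescaling $k - E_1 \sim t^{-1}$. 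The main obstacle, and the part requiring the most care, is precisely this interplay at the branch point $E_1$: one must simultaneously (a) extract the correct singular factor so the remainder is smooth enough to admit a high-order analytic approximation, (b) keep the analytic continuation $r_{2,a}$ under control in a full neighborhood of $E_1$ in $U_2^{(3)}$ with only the $\bigO(|k-E_1|^{-1/2})$ loss, and (c) verify that the algebraic combination in \eqref{lem_an_appr_r2_iv} genuinely inherits enough vanishing at $E_1$ from \eqref{coeffqjl} to survive multiplication by the singular factor and still yield $\bigO(t^{-2})$ after integrating against the oscillatory/decaying exponential — this last bookkeeping with the coefficients $q_{1,l}$ is the technical heart of the lemma.
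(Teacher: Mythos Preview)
Your high-level strategy is right — peel off the square-root singularity at $E_1$, approximate the smooth remainder analytically, and use the coefficient relations \eqref{coeffqjl} to force cancellation in part \eqref{lem_an_appr_r2_iv} — but two of your concrete steps do not work as written, and the paper's construction differs in an essential way at each.

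First, your mechanism for the analytic approximation of the smooth part is not viable. You write the Taylor polynomial at $E_1$ plus a remainder $R_N$, then speak of ``a suitable analytic cutoff that is $1$ near the contour and decays away from it.'' No such object exists: a bounded analytic function that is identically $1$ on an open set is identically $1$ everywhere. Moreover, a Taylor polynomial grows at infinity, which is incompatible with the bound \eqref{lem_an_appr_r2_ii_1} on the unbounded region $\overbar{U^{(3)}_2}$. The paper instead follows the genuine Deift--Zhou recipe: it subtracts from $r_2$ an explicit function $h(\xi,k) = p_1(\xi,k)/X_+(\xi,k) + p_2(\xi,k)$ built from \emph{rational} functions in $(k+i)^{-1}$ (so with built-in decay at infinity), with $X_+$ supplying the square-root behavior at $E_1$; then the difference $f = r_2 - h$ is smooth enough that, after a monotone change of variable $k \mapsto \phi(\xi,k)$ turning $g$ into the coordinate, one can Fourier-transform and split $\hat F$ at frequency $t/4$. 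The low-frequency piece is $f_a$ (analytic because $e^{is g_+}$ is analytic and the integral in $s$ is over a half-line), the high-frequency piece is $f_r$ and is $\bigO(t^{-2})$ by $H^3$-regularity of $F$.

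Second, your treatment of \eqref{lem_an_appr_r2_iv} is missing a key step that the paper makes explicit. The combination $r r_{2,a} + \overline{r r_{2,a}} + 1$ is evaluated on $(E_1,\mu_-)$, where $r_2$ itself is not defined (since $|r|=1$). The paper handles this by first \emph{extending} $r_2$ to $(E_1,\mu_-)$ by an explicit formula $\sum_{l=-1}^5 i^l Q_{1,l}(k-E_1)^{l/2}$ consistent with the two-sided expansion at $E_1$, and then chooses the coefficients in $p_1,p_2$ so that $h$ matches this extension to sufficient order on both sides of $E_1$. It is this matching, combined with \eqref{coeffqjl}, that yields the crucial estimate $|r h + \overline{rh} + 1| = \bigO(|k-E_1|)$ on $(E_1,\mu_-)$; the $f_a$ piece contributes only $\bigO(|k-E_1|)e^{\frac{t}{4}|\im g_+|}$ there, and since $|\im g_+| \sim |k-E_1|^{1/2}$, the product with $e^{-2itg_+}$ integrates to $\bigO(t^{-2})$. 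Your proposal of continuing $s(k)(E_1-k)^{-1/2}$ with a branch cut along $[E_1,\infty)$ does give \emph{some} boundary values on $(E_1,\mu_-)$, but you have not argued why those values satisfy the cancellation — and without the explicit two-sided matching via $X_+$, they need not.
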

\begin{remark}
Of particular interest in the statement of Lemma \ref{lem_an_appr_r2} is equation \eqref{rr2aestimate} which provides an estimate of a certain $L^p$-norm on part of the branch cut originating from the nonzero boundary conditions. This estimate ensures that the last entry of the jump matrix $v^{(3)}$, defined below in Section \ref{sec_m^3}, has the desired decay on $(E_1, \mu_-)$ as $t\to\infty$. 
\end{remark}
\begin{proof}[Proof of Lemma \ref{lem_an_appr_r2}]
By the assumptions of Theorem \ref{thm_sec_M} we infer from Theorem \ref{thm_direct} that $r_2$ lies in $\mathcal C^8\big((-\infty,E_1)\big)$ and satisfies
\begin{align}\label{expansionr2}
	r_2(k)=
	\begin{cases} 
		\sum_{l=-1}^4(-1)^l{Q_{1,l}}(E_1-k)^{l/2}+ \bigO\big((E_1-k)^{\frac{5}{2}}\big),&k\nearrow E_1, \\
		\bigO(k^{-4}),&k\to-\infty.
	\end{cases}
\end{align}
In view of \eqref{coeffqjl},
the coefficients $Q_{1,l}$ are rational functions of $q_{1,l}$ with nonzero denominators of the form 
$(q_{1,0} \overbar{q_{1,1}} + \overbar{q_{1,0}} q_{1,1})^{l+2}\neq 0$, $-1\leq l\leq 5$.
In particular, we note that
\begin{align}\label{leadingorderr2}
	Q_{1,-1}= -\frac{\overbar{q_{1,0}}}{q_{1,0} \overbar{q_{1,1}} + \overbar{q_{1,0}} q_{1,1}}.
\end{align}
Next, we extend the function $r_2$ to the interval $(E_1,\mu_-)$ by setting
\begin{equation*}
	r_2(k) \coloneqq \sum_{l=-1}^5 i^l{Q_{1,l}}(k-E_1)^{l/2}, \qquad k\in (E_1,\mu_-),
\end{equation*}
and define $ h  \colon I_{\mathcal M}\times \overbar{U^{(3)}_2}\setminus\{E_1\} \to \C$ by
\begin{align}\label{hdef}
	h (\xi,k)\coloneqq\frac{p_1(\xi,k)}{X_+(\xi,k)}+p_2(\xi,k),
\end{align}
with
\begin{equation*}
	p_1(\xi,k)\coloneqq \sum_{j=3}^{11}\frac{a_j(\xi)}{(k+i)^j}, \qquad p_2(\xi,k)\coloneqq\sum_{j=4}^{12}\frac{b_j(\xi)}{(k+i)^j}.
\end{equation*}
We choose $a_j(\xi)$ and $b_j(\xi)$ such that $r_2(k)- h (\xi,k)=\bigO\big((k-\mu_-)^6\big)$ as $k\nearrow \mu_-$,  
\begin{align}\label{p1xikXplus}
	\frac{p_1(\xi,k)}{X_+(\xi,k)}=
	\begin{cases} 
		\sum_{l\,\mathrm{odd}=-1}^4 i^l{Q_{1,l}}(k-E_1)^{l/2}+\bigO((k-E_1)^{5/2}),&k \searrow E_1,
		\\ 
		\sum_{l\,\mathrm{odd}=-1}^4 (-1)^l{Q_{1,l}}(E_1-k)^{l/2}+\bigO((E_1-k)^{5/2}),&k \nearrow E_1,
		\\
		\bigO(k^{-4}),&k\to-\infty,
	\end{cases}
\end{align}
and
\begin{align}\label{p2xik}
	p_2(\xi,k)=
	\begin{cases}
		\sum_{l\,\mathrm{even}=-1}^4 i^l{Q_{1,l}}(k-E_1)^{l/2}+\bigO((k-E_1)^{5/2}),&k \searrow E_1,
		\\ 
		\sum_{l\,\mathrm{even}=-1}^4 (-1)^l{Q_{1,l}}(E_1-k)^{l/2}+\bigO((E_1-k)^{5/2}),&k \nearrow E_1,
		\\
		\bigO(k^{-4}),&k\to-\infty,
	\end{cases}
\end{align}
where the sums in \eqref{p1xikXplus} run over the odd values $l = -1,1,3$, while the sums in  \eqref{p2xik} run over the even values $l = 0,2,4$.  
The coefficients $a_j$ and $b_j$ are uniquely determined by the above requirements and it is clear that they are uniformly bounded for $\xi\in I_{\mathcal M}$.
By construction, $h$ obeys the following properties: 
\begin{enumerate}[(i)]
	\item \label{tilde_f_i}
	uniformly for $\xi \in  I_{\mathcal M}$, $ h (\xi,k)=\bigO(|k-E_1|^{-1/2})$ as $k\to E_1$;
	\item \label{tilde_f_ii}
	for every $\epsilon>0$ there exists a constant $C_\epsilon >0$ such that
	\begin{equation}  \label{pr_lem_an_appr_r2_0}
		| h (\xi,k)| \leq \frac{C_\epsilon}{1+|k|^2} \quad \text{for all} \; k\in \overbar{\C^+}\setminus D_\epsilon(E_1),\, \xi\in I_{\mathcal M};
	\end{equation}
	\item \label{tilde_f_iii}
	the function $f\colon  I_{\mathcal M}\times  (-\infty,\mu_-) \to \C$ defined by $f(\xi,k)\coloneqq r_2(k) -  h (\xi,k)$ satisfies
	\begin{equation*}
		f^{(n)}(\xi,k) =
		\begin{cases}
			\bigO(|k-\mu_-|^{6-n}), & k\to \mu_-,\\
			\bigO(k^{-4}), & k\to -\infty, \\
			\bigO(|k-E_1|^{5/2-n}), & k\to E_1,
		\end{cases}
	\end{equation*}
	for $n=0,1,2,3$, uniformly for $\xi\in I_{\mathcal M}$; 
	\item \label{tilde_f_iv}
	uniformly for $\xi\in I_{\mathcal M}$,
	\begin{equation}\label{tilde_f_at_E1}
		|r(k)  h (\xi,k)+\overbar{r(k)  h (\xi,k)}+1|\in \bigO(|k-E_1|),\qquad (E_1,\mu_-)\ni k\searrow E_1.
	\end{equation}
\end{enumerate}
The estimate in \eqref{tilde_f_at_E1} can be verified by employing the properties of the coefficients $q_{1,l}$ in \eqref{coeffqjl}.

Next we define the function $\phi$ by
\begin{align*}
	\phi \colon I_{\mathcal M}\times   \R\to\R, \qquad   \phi(k) \coloneqq
	\begin{cases}
		g(k) & k \leq E_1 \\
		\im g_+(k) & E_1 <k \leq \mu_- \\
		-\im g_+(k) + 2 \im g_+(\mu_-) & \mu_- < k \leq k_0 \\
		- g(k) + 2 \im g_+(\mu_-) &k_0<k;
	\end{cases}
\end{align*}
its graph is depicted in Figure~\ref{fig:phi}.
\begin{figure}[h] \centering	
	\begin{overpic}[width=.45\textwidth]{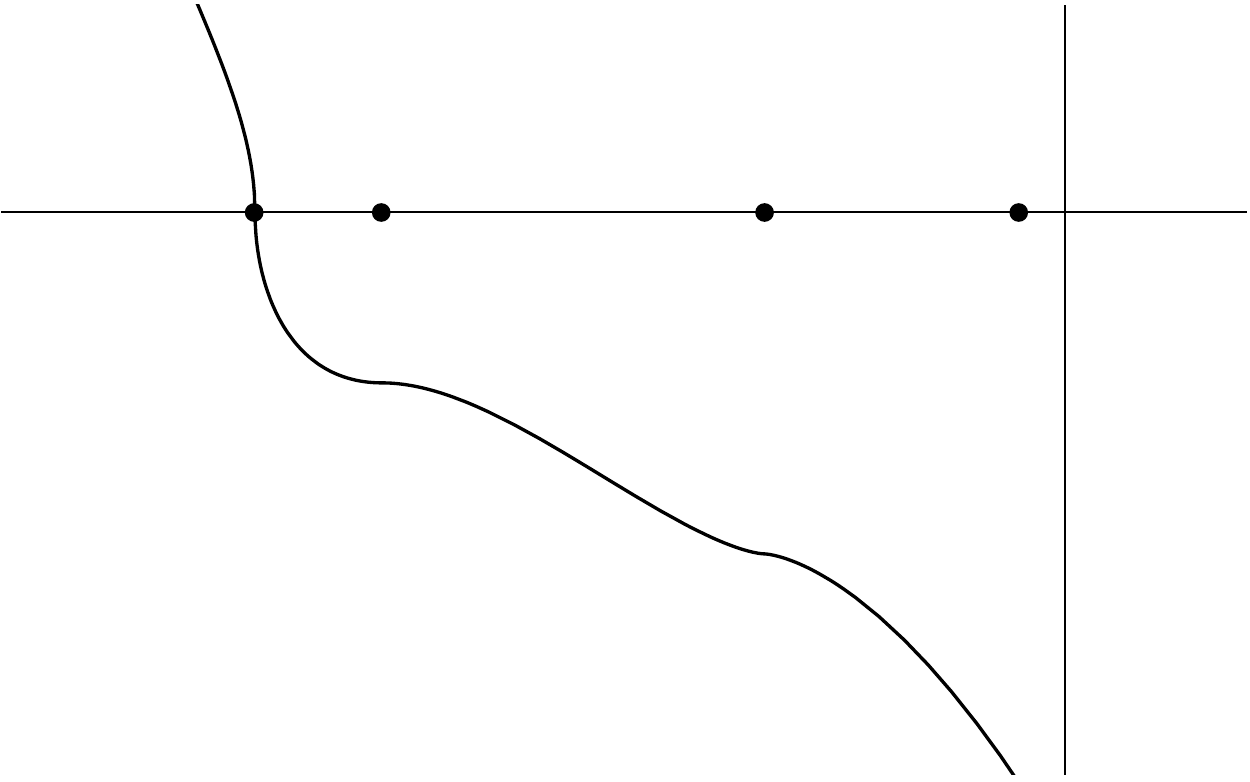}
		\put(13,47.5){\footnotesize{$E_1$}} 
		\put(30,47.5){\footnotesize{$\mu_-$}}  
		\put(62,47.5){\footnotesize{$k_0$}}     
		\put(77,47.5){\footnotesize{$E_2$}} 
		\put(96,47.5){\footnotesize{$0$}} 
	\end{overpic}
	\caption{The graph of $\phi$ for a particular choice of values $\alpha$, $\beta$, and $\xi$.} 
	\label{fig:phi}
\end{figure}
Clearly, $\phi(\xi,\cdot)$ is bijective and it is smooth on $\R\setminus\{E_1,\mu_-,k_0\}$. Furthermore,
\begin{align}
	\begin{aligned} \label{phi_asymp}
		\phi(k) &= 
		\begin{cases}
			\bigO ( |k-E_1|^{1/2}), & k \to E_1, \\
			\bigO (1), & k \to \mu_-, \\
			\bigO (k^{2}), & k\to\pm\infty,
		\end{cases}
		\qquad \,\,
		\phi'(k) = 
		\begin{cases}
			\bigO ( |k-E_1|^{-1/2}), & k \to E_1, \\
			\bigO (|k-\mu_-|), & k \to \mu_-, \\
			\bigO (k^{1}), & k\to\pm\infty,
		\end{cases}
		\\
		\phi''(k) &= 
		\begin{cases}
			\bigO ( |k-E_1|^{-3/2}), & k \to E_1, \\
			\bigO (1), & k \to \mu_-, \\
			\bigO (1), & k\to\pm\infty,
		\end{cases}
		\quad 
		\phi'''(k) = 
		\begin{cases}
			\bigO ( |k-E_1|^{-5/2}), & k \to E_1, \\
			\bigO (1), & k \to \mu_-, \\
			\bigO (k^{-4}), & k\to\pm\infty,
		\end{cases}
	\end{aligned}
\end{align}
uniformly for $\xi\in I_{\mathcal M}$.
Let
\begin{equation} \label{pr_lem_an_appr_r2_1}
	F\colon I_{\mathcal M} \times \R\to\R, \quad F(\xi,\phi)\coloneqq 
	\begin{cases}
		\frac{(k+i)^3}{k-E_1} f(k) &\text{for}\; \phi> \phi(\mu_-),  \\
		0 &\text{for}\; \phi \leq \phi(\mu_-).
	\end{cases}
\end{equation}
Then by the properties of $f$ (cf.~item \eqref{tilde_f_iii} above) and the asymptotics of $\phi$ in \eqref{phi_asymp} it holds that
\begin{equation*} 
	F^{(n)}(\phi) = \frac{\dx^n}{\dx  \phi^n} F(\phi) =
	\begin{cases}
		\bigO (|k-\mu_-|^{6- 2n}), & k\to \mu_-, \\
		\bigO (k^{-2}), & k\to-\infty, \\
		\bigO ( |k-E_1|^{\frac{3-n}{2}}), & k \to E_1,
	\end{cases} 
	\qquad n=0,1,2,3,
\end{equation*}
uniformly for $\xi\in I_{\mathcal M}$.
In particular, $F(\xi,\cdot)$ lies in the Sobolev space $H^3(\R)$ with a uniform bound for $\xi\in I_{\mathcal M}$.
Writing
\begin{equation} \label{pr_lem_an_appr_r2_2}
	F(\phi) = \int_\R \hat F(s) \e^{i \phi s} \, \dx s,
\end{equation}
where $\hat F$ denotes the Fourier transform of $F$, i.e.
\begin{equation*}
	\hat F(s) = \frac{1}{2\pi}  \int_\R  F(\phi) \e^{-i \phi s} \, \dx \phi,
\end{equation*}
we deduce that $\|  s^3 \hat F(s) \|_{L^2(\R)} \leq C < \infty$ for some $C>0$ uniformly for all $\xi\in I_{\mathcal M}$. 
By \eqref{pr_lem_an_appr_r2_1} and \eqref{pr_lem_an_appr_r2_2} it follows that 
\begin{equation*}
	f(\xi,k) = \frac{k-E_1}{(k+i)^3} \int_\R \hat F(\xi,s) \e^{i s  \phi(\xi,k)} \, \dx s, \quad \text{for} \quad k\in (-\infty,\mu_-).
\end{equation*}	

Let 
\begin{equation*}
	f_{a}\colon  I_{\mathcal M} \times  (0,\infty) \times \overbar{U^{(3)}_2} \to \C \quad \text{and} \quad f_{r}\colon  I_{\mathcal M} \times  (0,\infty) \times (-\infty,\mu_-)  \to \C
\end{equation*}
be defined by 
\begin{align*}
	f_{a}(\xi,t,k) &\coloneqq \frac{k-E_1}{(k+i)^3} \int^{t/4}_{-\infty} \hat F(\xi,s) \e^{i s g_+(\xi,k)} \, \dx s,
	\\
	f_{r}(\xi,t,k) &\coloneqq \frac{k-E_1}{(k+i)^3} \int^{\infty}_{t/4} \hat F(\xi,s) \e^{i s g_+(\xi,k)} \, \dx s.
\end{align*}
Then both $f_a$ and $f_r$ are continuous,  $f_{a}(\xi,t,\cdot)\colon U^{(3)}_2\to \C$ is analytic for all $(\xi,t)\in  I_{\mathcal M} \times (0,\infty)$, and  by the definition of $\phi$ it follows that
\begin{equation*}
	f(\xi,k) = f_{a}(\xi,t,k) + f_{r}(\xi,t,k)
\end{equation*}
for all $(\xi,t,k)\in  I_{\mathcal M} \times (0,\infty) \times  (-\infty,\mu_-)$. 
Furthermore, for all $(\xi,t,k) \in  I_{\mathcal M} \times (0,\infty) \times \overbar{U^{(3)}_2}$ it holds that
\begin{equation} \label{pr_lem_an_appr_r2_3}
		|f_a(\xi,t,k)| \leq \frac{|k-E_1|}{|k+i|^3} \, \|\hat F \|_{L^1(\R)} \, \sup_{s\geq -t/4} | \e^{i s  g_+(\xi,k)}| 
		\leq \frac{C}{1+|k|^2} \, \e^{\frac{t}{4} |\im g_+(\xi,k)|},
\end{equation}
and for all $(\xi,t,k) \in  I_{\mathcal M} \times (0,\infty) \times  (-\infty,\mu_-)$ the Cauchy-Schwartz inequality yields that
\begin{equation*} 
	|f_r(\xi,t,k)| \leq  \frac{|k-E_1|}{|k+i|^3} \, \|s^3 \hat F(\xi,s)\|_{L^2(\R)} \, \sqrt{ \int^{-t/4}_{-\infty} s^{-6} \, \dx s}
	\leq \frac{C}{1+|k|^2} \, t^{-2}.
\end{equation*}
Thus, for all $(\xi,t)\in  I_{\mathcal M} \times (0,\infty)$ the function $f_r(\xi,t,\cdot)\colon  (-\infty,\mu_-)\to\C$ lies in $L^p (-\infty,\mu_-)$, $p=1,2,\infty$, and the corresponding $L^p$-norms are of order $\bigO (t^{-2})$ uniformly with respect to $\xi \in I_{\mathcal M}$.

Defining 
\begin{equation*}
	r_{2,a}\colon  I_{\mathcal M} \times  (0,\infty) \times \overbar{U^{(3)}_2} \to \C \quad \text{and} \quad r_{2,r}\colon  I_{\mathcal M} \times  (0,\infty) \times  (-\infty,\mu_-)  \to \C,
\end{equation*}
by
\begin{equation*}
	r_{2,a}(\xi,t,k)\coloneqq  h (\xi,k) + f_a(\xi,t,k) \quad \text{and} \quad r_{2,r}(\xi,t,k) \coloneqq f_r(\xi,t,k)
\end{equation*}
yields the desired decomposition of $r_2$, which clearly satisfies the assertions \eqref{lem_an_appr_r2_i} and \eqref{lem_an_appr_r2_iii} of the lemma.
Furthermore, property \eqref{lem_an_appr_r2_ii} follows from \eqref{pr_lem_an_appr_r2_0} in conjunction with \eqref{pr_lem_an_appr_r2_3}:
for all $\epsilon>0$ there exists a constant $C_\epsilon>0$ such that
\begin{equation*}
	\big|r_{2,a}(\xi,t,k) \big| 
	\leq \big|  h (\xi,k) \big| + \big| f_a(\xi,t,k) \big|
	\leq \frac{C_\epsilon}{1+|k|^2} \big(1+ \e^{\frac{t}{4} |\im g(\xi,k)|}\big)
\end{equation*}
holds uniformly for all $(\xi,t,k) \in  I_{\mathcal M} \times (0,\infty) \times \overbar{U^{(3)}_2}\setminus D_\epsilon(E_1)$.
To show assertion \eqref{lem_an_appr_r2_iv}, we recall that 
\begin{align}\label{f_a_near_E1}
	|f_a(\xi,k)| 
	\leq C |k-E_1| \e^{\frac t4|\im g_+(\xi,k)|}, \qquad  k\in (E_1,\mu_-),
\end{align}
uniformly for $\xi\in I_{\mathcal M}$, cf.~\eqref{pr_lem_an_appr_r2_3}.
Applying the property \eqref{tilde_f_iv} of $ h $ and \eqref{f_a_near_E1} to the identity
\begin{align*}
	rr_{2,a}+\overbar{rr_{2,a}}+1=r  h +\overbar{r  h }+1+2\re(r f_a)
\end{align*}
and employing that $|r|=1$ on $[E_1,\mu_-]$ implies that
\begin{align*}
	\big|(r(k)r_{2,a}(x,t,k)+\overbar{r(k)r_{2,a}(x,t,k)}+1) \e^{-2i tg_+(\xi,k)}\big|
	\leq C |k-E_1|\e^{-\frac{7t}4|\im g_+(\xi,k)|} 
\end{align*}
for $k\in(E_1,\mu_-)$ and uniformly with respect to $\xi\in I_{\mathcal M}$.
Since $|\im g_+(\xi,k)|\in\bigO(|k-E_1|^{1/2})$ as $k \searrow E_1$ uniformly in $\xi\in I_{\mathcal M}$, assertion \eqref{lem_an_appr_r2_iv} follows.
\end{proof}

\begin{remark}\label{rem_r2a}
	The proof of Lemma \ref{lem_an_appr_r2} does not rely on an explicit expression of $r_2$ on $(E_1,E_2)$ in order to construct a function $r_{2,a}$ with the desired properties. This generalizes a related construction in \cite{Fromm19}, where $r_2$ was defined on $(E_1,E_2)$ in terms of $r$ by $r_2(k)\coloneqq -r(k)/(1+r(k^2))$.
	For consistency, additional assumptions were made in \cite{Fromm19}, namely that $r\neq\pm i$ on $(E_1,E_2)$ and $r(E_j)\in\{\pm i\}$, $j=1,2$. However, numerical experiments have shown that these assumptions are not satisfied in general. The above proof has the advantage that no additional assumptions are required. 
\end{remark}

\subsubsection{Definition of $m^{(3)}$}\label{sec_m^3}
For $(x,t,k)\in\mathcal M\times \C\setminus \Gamma^{(3)}$ we define $m^{(3)}$ by 
\begin{equation*} 
	m^{(3)} \coloneqq m^{(2)} \, \mathcal{D}^{\sigma_3} G^{(3)} \mathcal{D}^{-\sigma_3}
	= \mathcal{D}_\infty^{\sigma_3}  m^{(1)} G^{(3)} \mathcal{D}^{-\sigma_3},
\end{equation*} 
where
\begin{equation*}
	G^{(3)} \coloneqq
	\begin{cases}
		\begin{pmatrix}
			1 & 0 \\
			r_{a} \e^{2 i t g} & 1
		\end{pmatrix} &
		\text{in} \quad U^{(3)}_1, \\  
		\begin{pmatrix}
			1 &   - r_{2,a} \e^{-2 i t g} \\
			0 & 1
		\end{pmatrix} &
		\text{in} \quad U^{(3)}_2, \\  
		\begin{pmatrix}
			1 & 0 \\
			-    r^*_{2,a} \e^{2 i t g} & 1 
		\end{pmatrix} &
		\text{in} \quad U^{(3)}_3,  \\ 
		\begin{pmatrix}
			1 &   r^*_{a} \e^{-2 i t g} \\
			0 & 1
		\end{pmatrix} &
		\text{in} \quad U^{(3)}_4, 
		\\
		I, & \text{elsewhere},
	\end{cases}
\end{equation*}
and we write $f^*$ for the Schwartz conjugate of a function $f$, i.e., $f^*(k) = \overbar{f(\bar{k})}$.
The regions $U^{(3)}_j$, $j=1,2,3,4$, and the jump contour $\Gamma^{(3)}$ for $m^{(3)}$  are depicted in Figure~\ref{fig:Gamma3}.
As a consequence of Lemmas \ref{lem_D}, \ref{lem_an_appr_r1}, and \ref{lem_an_appr_r2}, 
the function $m^{(3)}$ satisfies the following RH problem for $(x,t)\in\mathcal M$:
\begin{itemize}
\item $m^{(3)}(x,t,\cdot)$ is analytic in $\C \setminus \Gamma^{(3)}$;

\item $m^{(3)}(x,t,\cdot)$ has continuous boundary values on $ \Gamma^{(3)} \setminus \{E_1, \mu_-, k_0\}$ satisfying the jump relation
\begin{align*}
m^{(3)}_+(x,t,k) = m^{(3)}_-(x,t,k) v^{(3)}(x,t,k) \quad \text{for} \; k \in \Gamma^{(3)}\setminus \{E_1,  \mu_-, k_0\},
\end{align*}
where $v^{(3)} =   \mathcal{D}^{\sigma_3}_- \big(G^{(3)}_- \big)^{-1} \mathcal{D}^{-\sigma_3}_- \, v^{(2)} \, \mathcal{D}^{\sigma_3}_+ G^{(3)}_+ \mathcal{D}^{-\sigma_3}_+$;

\item $m^{(3)}(x,t,k)=I+ \bigO(k^{-1})$ as $k\to \infty$;

\item $m^{(3)}(x,t,k)= \bigO((k-E_1)^{-1/4})$ as $k\to E_1$;

\item $m^{(3)}(x,t,k)=  \bigO(1)$ as $k\to k_0$;

\item $m^{(3)}(x,t,k)=  \bigO(1)$ as $k\to \mu_-$.
\end{itemize}
Using \eqref{calDatE1}, we can describe the behavior of $m^{(3)}$ at $E_1$ more precisely as follows:
\begin{align}\label{m3atE1}
m^{(3)}= \mathcal{D}_\infty^{\sigma_3}  m^{(1)} G^{(3)} \times \begin{cases} 
(k-E_1)^{-\sigma_3/4} \e^{-d_0\sigma_3}\big(I+\bigO(\sqrt{k-E_1})\big), & k\to E_1, ~ k \in \C_+,
	\\
(k-E_1)^{\sigma_3/4} \e^{\overbar{d_0}\sigma_3}\big(I+\bigO(\sqrt{k-E_1})\big), & k\to E_1, ~ k \in \C_-.
\end{cases}
\end{align}

Let us write $\Gamma^{(3)} = \cup_{j=1}^8 \Gamma^{(3)}_j$, where $\Gamma^{(3)}_j$ denotes the subcontour labeled by $j$ in Figure \ref{fig:Gamma3}.
Letting $v^{(3)}_j$ denote the restriction of $v^{(3)}$ to $\Gamma^{(3)}_j$, we find
\begin{flalign*}
	\begin{aligned}
		v^{(3)}_1 &= 
		\begin{pmatrix}
			1 & 0 \\
			- \mathcal{D}^{-2} r_{a} \e^{2 i t g} & 1
		\end{pmatrix}  
		&&v^{(3)}_2 = 
		\begin{pmatrix}
			0 &  1  \\
			- 1  & \mathcal{D}_+ \mathcal{D}^{-1}_- \e^{-2 i t g_+}
		\end{pmatrix}\\
		v^{(3)}_3 &= 
		\begin{pmatrix}
			1 & \mathcal{D}^2  r^*_{a} \e^{-2 i t g} \\
			0 & 1
		\end{pmatrix}   
		&&v^{(3)}_4 =   
		\begin{pmatrix}
			1 -|r_{r}|^2 & \mathcal{D}^2 r^*_{r} \e^{-2 i t g} \\
			-\mathcal{D}^{-2} r_{r} \e^{2 i t g} & 1
		\end{pmatrix} \\
		v^{(3)}_{5} &=
		\begin{pmatrix}
			1 & \mathcal{D}^2_+ r_{2,r} \e^{-2 i t g} \\
			-\mathcal{D}^{-2}_- r^*_{2,r} \e^{2 i t g} & 1 - |r_{2,r}|^2 (1-|r|^2)^2
		\end{pmatrix} 
		&&v^{(3)}_{6} =
		\begin{pmatrix}
			0 & 1 \\
			-1 & \mathcal{D}_+ \mathcal{D}^{-1}_- (r r_{2,a} + r^* r^*_{2,a} + 1) \e^{-2 i t g_+}
		\end{pmatrix} \\
		v^{(3)}_{7} &= 
		\begin{pmatrix}
			1 &   - \mathcal{D}^2 r_{2,a} \e^{-2 i t g} \\
			0 & 1
		\end{pmatrix} 
		&&v^{(3)}_{8} =
		\begin{pmatrix}
			1 & 0 \\
			\mathcal{D}^{-2} r^*_{2,a} \e^{2 i t g} & 1 
		\end{pmatrix}
	\end{aligned}
\end{flalign*} 
where the simplification of the $(2,2)$-entry in $v^{(3)}_{6} $ uses that 
$g_+=-g_-$, $\mathcal{D}_+ \mathcal{D}_- =r$, $\mathcal{D}^*_+ \mathcal{D}_- =1$, and $|r|\equiv 1$ on $\Gamma^{(3)}_{6}$.

To leading order, $m^{(3)}$ is approximated by a global parametrix (denoted by $m^{[E_1,k_0]}$) with a constant off-diagonal jump across the branch cut $[E_1,k_0]$. The sub-leading contribution stems from the local long-time behavior near the critical point $k_0$. It turns out that the local parametrix (denoted by $m^{k_0}$) can be constructed in terms of the solution of the well-known Airy RH problem.

\subsection{Global parametrix} \label{sec_global_parametrix}
On the branch cut $(E_1,k_0)$, the jump matrix $v^{(3)}$ approaches 
\begin{equation*} 
	v^{[E_1,k_0]} \coloneqq
	\begin{pmatrix}
		0 & 1 \\
		-1 & 0
	\end{pmatrix},
	\quad k\in (E_1,k_0),
\end{equation*} 
as $t\to\infty$, whereas $v^{(3)} \to I$ for each fixed $k\in \C \setminus [E_1,k_0]$. 
Thus for each $(x,t)\in \mathcal M$ we consider the following RH problem:
\begin{itemize}
\item $m^{[E_1,k_0]}(x,t,\cdot)$ is analytic in $\C\setminus [E_1,k_0]$;

\item $m^{[E_1,k_0]}(x,t,\cdot)$ has continuous boundary values on $(E_1, k_0)$ satisfying the jump relation
\begin{align}\label{RHP_minfty}
m^{[E_1,k_0]}_+(x,t,k) = m^{[E_1,k_0]}_-(x,t,k) v^{[E_1,k_0]}(x,t,k) \quad \text{for} \; k \in (E_1, k_0);
\end{align}

\item $m^{[E_1,k_0]}(x,t,k)=I+ \bigO(k^{-1})$ as $k\to \infty$;

\item $m^{[E_1,k_0]}(x,t,k)= \bigO((k-E_1)^{-1/4})$ as $k\to E_1$;

\item $m^{[E_1,k_0]}(x,t,k)= \bigO((k-k_0)^{-1/4})$ as $k\to k_0$.
\end{itemize}

Let $\Delta_0 \colon  I_{\mathcal M} \times \C \setminus [E_1,k_0]\to\C$ be given by
\begin{equation} \label{def_Delta}
	\Delta_0(\xi,k) = \bigg( \frac{k-k_0}{k-E_1} \bigg)^{\frac{1}{4}},
\end{equation}
where the branch is such that $\Delta_0(k) = 1 + \bigO(k^{-1})$ as $k\to\infty$.
The following result is standard.

\begin{lemma} \label{lem_minfty}
	The unique solution of the RH problem \eqref{RHP_minfty} is given by
	\begin{equation*}
		m^{[E_1,k_0]} \coloneqq
		\frac{1}{2}
		\begin{pmatrix}
			\Delta_0 + \Delta_0^{-1} & - i (\Delta_0 - \Delta_0^{-1}) \\
			 i(\Delta_0 - \Delta_0^{-1}) & \Delta_0 + \Delta_0^{-1}
		\end{pmatrix}
	\end{equation*}
Moreover, the following expansions hold uniformly for $\xi \in  I_{\mathcal M}$:
	\begin{enumerate}[\upshape (i)]
		\item \label{lem_minfty_ii} As $k\to\infty$,
		$m^{[E_1,k_0]} = I + \frac{ i (k_0-E_1)}{4k}
			\begin{pmatrix}
				0 & 1\\
				-1  & 0
			\end{pmatrix}
			+ \bigO (k^{-2})$.

		\item \label{lem_minfty_iii}
		As $k \to k_0$,
		\begin{align}\nonumber
				m^{[E_1,k_0]}(\xi,k) &= \frac{(k_0-E_1)^{1/4}}{2(k-k_0)^{1/4}}\Bigg\{ \begin{pmatrix} 1 &  i \\ - i & 1 \end{pmatrix} +
				\frac{(k-k_0)^{1/2}}{(k_0-E_1)^{1/2}} \begin{pmatrix} 1 & - i \\  i & 1 \end{pmatrix} 
				+ \frac{k-k_0}{4(k_0-E_1)}
				\begin{pmatrix}
					1 &  i \\
					- i & 1 \\
				\end{pmatrix}
				\\ \label{minftyatk0}
				&  \quad
				+\frac{(k-k_0)^{3/2}}{4(k_0-E_1)^{3/2}}
				\begin{pmatrix}
					-1 &  i \\
					- i & -1 \\
				\end{pmatrix}
				+ \bigO(|k-k_0|^2)
				\Bigg\}.
		\end{align} 
	\end{enumerate}
\end{lemma}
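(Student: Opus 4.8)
The plan is to prove uniqueness by the standard determinant/Liouville argument, then to verify directly that the displayed matrix solves the RH problem \eqref{RHP_minfty}, and finally to obtain the two expansions from elementary scalar Taylor and Laurent expansions of the function $\Delta_0$ in \eqref{def_Delta}. For uniqueness, suppose $m_1$ and $m_2$ are two solutions. Since $v^{[E_1,k_0]}$ has unit determinant, $\det m_i$ has no jump across $(E_1,k_0)$; together with the bound $\det m_i=\bigO(|k-E_j|^{-1/2})$ coming from the endpoint conditions, this makes $\det m_i$ an entire function whose singularities at $E_1$ and $k_0$ are removable and which equals $1$ at infinity, so $\det m_i\equiv 1$ by Liouville's theorem. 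Then $m_2^{-1}$ is well defined, $m_1 m_2^{-1}$ has trivial jump across $(E_1,k_0)$, is $\bigO(|k-E_j|^{-1/2})$ near $E_1$ and $k_0$ — again a removable singularity — and tends to $I$ at infinity, whence $m_1\equiv m_2$.

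To verify that the displayed matrix is a solution, I would first record that $k\mapsto(k-k_0)/(k-E_1)$ is a M\"obius transformation carrying $\C\setminus[E_1,k_0]$ onto $\C\setminus(-\infty,0]$, so that with the principal branch of the fourth root $\Delta_0$ is single-valued and analytic on $\C\setminus[E_1,k_0]$, satisfies $\Delta_0(k)=1+\bigO(k^{-1})$ as $k\to\infty$, and behaves like a nonzero constant times $(k-E_1)^{-1/4}$ near $E_1$ and like $(k-k_0)^{1/4}$ near $k_0$. Computing boundary values on $(E_1,k_0)$ from the two sides gives the relation $\Delta_{0+}=i\,\Delta_{0-}$ there; substituting this into the proposed formula and using the identity $\begin{pmatrix}a&b\\c&d\end{pmatrix}\begin{pmatrix}0&1\\-1&0\end{pmatrix}=\begin{pmatrix}-b&a\\-d&c\end{pmatrix}$ then yields $m^{[E_1,k_0]}_+=m^{[E_1,k_0]}_-v^{[E_1,k_0]}$. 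The normalization $m^{[E_1,k_0]}=I+\bigO(k^{-1})$ at infinity and the endpoint bounds $\bigO((k-E_1)^{-1/4})$ and $\bigO((k-k_0)^{-1/4})$ are then immediate from the properties of $\Delta_0$ listed above.

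For the expansions, \eqref{lem_minfty_ii} will come from $\Delta_0=\bigl(1-\tfrac{k_0-E_1}{k}+\bigO(k^{-2})\bigr)^{1/4}=1-\tfrac{k_0-E_1}{4k}+\bigO(k^{-2})$, noting that the $\bigO(k^{-1})$ terms cancel in $\Delta_0+\Delta_0^{-1}$ while $\Delta_0-\Delta_0^{-1}=-\tfrac{k_0-E_1}{2k}+\bigO(k^{-2})$; inserting this into the matrix formula produces the stated expression. For \eqref{minftyatk0} I would write $\Delta_0^{\pm1}=(k-k_0)^{\pm1/4}(k-E_1)^{\mp1/4}$, Taylor-expand $(k-E_1)^{\mp1/4}=(k_0-E_1)^{\mp1/4}\bigl(1+\tfrac{k-k_0}{k_0-E_1}\bigr)^{\mp1/4}$ about $k_0$, and collect powers of $w\coloneqq\bigl((k-k_0)/(k_0-E_1)\bigr)^{1/2}$ in $\tfrac{1}{2}w^{-1/2}$ times the resulting matrix, retaining the terms through order $w^{3}$; uniformity for $\xi\in I_{\mathcal M}$ holds because $k_0(\xi)-E_1$ ranges over a compact subset of $(0,\infty)$ as $\xi$ varies in $I_{\mathcal M}$. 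The only mildly delicate points in the whole argument are the branch bookkeeping — confirming that $(k-k_0)/(k-E_1)$ never meets $(-\infty,0]$ off the segment, so that the principal fourth root is unambiguous and single-valued — and keeping track of the half-integer powers in the expansion at $k_0$; neither presents a genuine obstacle, which is why the lemma is ``standard'' as stated.
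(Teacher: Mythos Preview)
Your proof is correct and is precisely the standard argument the paper has in mind; the paper itself omits the proof entirely, simply stating ``The following result is standard.'' Your Liouville-based uniqueness argument, the verification of the jump via $\Delta_{0+}=i\Delta_{0-}$ on $(E_1,k_0)$, and the elementary scalar expansions of $\Delta_0$ are exactly what one would supply to fill in the omitted details.
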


\subsection{Local parametrix near the critical point} \label{sec_local_parametrix}
We will define the local parametrix with the help of the explicit solution to the classical Airy model RH problem.

\subsubsection{Airy model problem} \label{sec_Airy_mp}
Consider the contour $\Gamma^{\mathrm{Ai}}$ consisting of the four components 
\begin{align*}
	&\Gamma^{\mathrm{Ai}}_1 = \{z\in \C\colon \arg z = 2\pi /3\},  &&\Gamma^{\mathrm{Ai}}_2 = \{z\in \C\colon \arg z = \pi\}, \\
	&\Gamma^{\mathrm{Ai}}_3 = \{z\in \C\colon \arg z = 4\pi /3\},  &&\Gamma^{\mathrm{Ai}}_4 = \{z\in \C\colon \arg z =0\},
\end{align*}
oriented as in Figure~\ref{fig:Airy}, which separate the sectors
\begin{align*}
	&S_1 = \{z\in \C\colon \arg z  \in (0,2\pi /3) \},  &&S_2 = \{z\in \C\colon \arg z  \in (2\pi /3, \pi) \}, \\
	&S_3 = \{z\in \C\colon \arg z \in (\pi, 4\pi /3,) \}, &&S_4 = \{z\in \C\colon \arg z  \in (4\pi /3, 2\pi) \}.
\end{align*}

\begin{lemma} \label{lem_Airy}
Let $\mathrm{Ai}\colon \C\to\C$ denote the complex Airy function and let $\zeta = \e^{\frac{2\pi i}{3}}$.
	The function $m^{\mathrm{Ai}} \colon \C\setminus \Gamma^{\mathrm{Ai}}\to\C^{2\times2}$ defined by 
	\begin{align*} 
		m^{\mathrm{Ai}}(z)& \coloneqq
		\begin{cases}
			\mathcal A(z) \, \e^{\frac{2}{3}z^{3/2}\sigma_3}, & z\in S_1, \\
			\mathcal A(z) 
			\begin{pmatrix}
				1&0\\-1&1
			\end{pmatrix}
			\e^{\frac{2}{3}z^{3/2}\sigma_3}, \hspace{-.1cm}  & z\in S_2, \\
			\mathcal A(z)  
			\begin{pmatrix}
				1&0\\1&1
			\end{pmatrix} 
			\e^{\frac{2}{3}z^{3/2}\sigma_3}, & z\in S_3, 
			\\
			\mathcal A(z) \, \e^{\frac{2}{3}z^{3/2}\sigma_3}, & z\in S_4,
		\end{cases}
		\quad \mathcal A(z) \coloneqq
		\begin{cases}
			\begin{pmatrix}
				\mathrm{Ai}(z) & \mathrm{Ai}(\zeta^2 z) \\
				\mathrm{Ai}'(z) & \zeta^2 \mathrm{Ai}'(\zeta^2 z)
			\end{pmatrix} 
			\e^{-\frac{\pi i}{6}\sigma_3}, \hspace{-.1cm} & z\in\C_+, \\
			\begin{pmatrix}
				\mathrm{Ai}(z) & -\zeta^2 \mathrm{Ai}(\zeta z) \\
				\mathrm{Ai}'(z) &  -\mathrm{Ai}'(\zeta z)
			\end{pmatrix} 
			\e^{-\frac{\pi i}{6}\sigma_3}, \hspace{-.1cm} & z\in\C_-,
		\end{cases}
	\end{align*}  
	is analytic in $\C\setminus \Gamma^{\mathrm{Ai}}$ and $m^{\mathrm{Ai}}_+(z) = m^{\mathrm{Ai}}_-(z) v^{\mathrm{Ai}}(z)$ for all $z\in \Gamma^{\mathrm{Ai}}$, where $v^{\mathrm{Ai}}$ is given by
	\begin{equation*} 
		v^{\mathrm{Ai}}(z) = 
		\begin{cases}
			\begin{pmatrix}
				1 & 0 \\
				-\e^{\frac{4}{3} z^{3/2}}  & 1
			\end{pmatrix}
			& \text{on} \quad  \Gamma^{\mathrm{Ai}}_1 \cup \Gamma^{\mathrm{Ai}}_3,  \\
			\begin{pmatrix}
				0 & -1 \\
				1  & 0
			\end{pmatrix}
			& \text{on} \quad  \Gamma^{\mathrm{Ai}}_2, \\
			\begin{pmatrix}
				1 &  -\e^{-\frac{4}{3} z^{3/2}}  \\
				0 & 1 
			\end{pmatrix}
			& \text{on} \quad  \Gamma^{\mathrm{Ai}}_4.
		\end{cases}
	\end{equation*} 
	The asymptotic behavior of $m^{\mathrm{Ai}}$ as $z \to \infty$ is given by
	\begin{align}\label{large z asymptotics Airy}
		m^{\mathrm{Ai}}(z) \sim z^{-\frac{\sigma_3}{4}} N \bigg(I + \sum_{j=1}^\infty \frac{m^{\mathrm{Ai}}_j}{z^{3j/2}}\bigg), \qquad z \to \infty,
	\end{align}
	where 
	\begin{equation*} 
		N = \frac{1}{\sqrt{2}}
		\begin{pmatrix} 
			1 &  i \\  i & 1
		\end{pmatrix}, 
		\quad
		m^{\mathrm{Ai}}_j = 
		\frac{\e^{\frac{\pi i}{4}}}{\sqrt{2}}
		N^{-1} 
		\begin{pmatrix} 
			1 & 0 \\ 
			0 & - i 
		\end{pmatrix}
		\bigg(\frac{3}{2}\bigg)^j
		\begin{pmatrix}
			(-1)^j u_j & u_j \\
			-(-1)^j \nu_j & \nu_j
		\end{pmatrix} \e^{-\frac{\pi i}{4}\sigma_3},
	\end{equation*} 
	and the real coefficients 
	$u_j,\nu_j$ are defined by 
	\begin{equation*} 
		u_j=\frac{(2j+1)(2j+3)\cdots(6j-1)}{(216)^jj!}\,,\quad\nu_j=\frac{6j+1}{1-6j}u_j,\quad j \in\N_{\geq 1}. 
	\end{equation*} 	
	That is,
	\begin{equation} \label{def_m^Ai_j}
		m^{\mathrm{Ai}}_j = -\frac{6^{-2 j} (j+\frac{1}{2})_{2 j}}{(6 j-1) j!}
		\begin{pmatrix}
			(-1)^{j} & -6  i j \\
			(-1)^{j}  6  i  j & 1\end{pmatrix}, \qquad j\in \N_{\geq 1},
	\end{equation}
	where the Pochhammer symbol $(a)_j$ is defined by 
	\begin{equation*} 
		(a)_j = \frac{\Gamma(a+j)}{\Gamma(a)} = a(a+1)(a+2) \cdots (a+j-1).
	\end{equation*} 
\end{lemma}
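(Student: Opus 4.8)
The plan is to verify by direct computation that the explicit matrix $m^{\mathrm{Ai}}$ has the three advertised features: sectional analyticity off $\Gamma^{\mathrm{Ai}}$, the prescribed jump $v^{\mathrm{Ai}}$, and the asymptotic expansion \eqref{large z asymptotics Airy} together with the closed form \eqref{def_m^Ai_j} of its coefficients. The only inputs are classical: $\mathrm{Ai}$ is entire and solves $w''=zw$; the rotated solutions satisfy the connection relation $\mathrm{Ai}(z)+\zeta\,\mathrm{Ai}(\zeta z)+\zeta^{2}\mathrm{Ai}(\zeta^{2}z)=0$ and hence also $\mathrm{Ai}'(z)+\zeta^{2}\mathrm{Ai}'(\zeta z)+\zeta\,\mathrm{Ai}'(\zeta^{2}z)=0$; and $\mathrm{Ai},\mathrm{Ai}'$ admit the Poincar\'e expansions $\mathrm{Ai}(z)\sim\frac{\e^{-\frac23 z^{3/2}}}{2\sqrt{\pi}\,z^{1/4}}\sum_{j\geq 0}(-1)^{j}u_{j}(\tfrac23 z^{3/2})^{-j}$ and $\mathrm{Ai}'(z)\sim-\frac{z^{1/4}\e^{-\frac23 z^{3/2}}}{2\sqrt{\pi}}\sum_{j\geq 0}(-1)^{j}\nu_{j}(\tfrac23 z^{3/2})^{-j}$ as $z\to\infty$ with $|\arg z|<\pi$, where $u_{0}=\nu_{0}=1$ and $u_{j},\nu_{j}$ are as stated.

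\emph{Analyticity.} Fix the branch of $z^{1/4}$ and $z^{3/2}$ with cut along the negative real axis $\Gamma^{\mathrm{Ai}}_{2}$. Since $\mathrm{Ai}$ is entire, $\mathcal A$ is analytic separately in $\C_{+}$ and $\C_{-}$, and neither $\e^{\frac23 z^{3/2}\sigma_{3}}$ nor the constant unipotent factors appearing on $S_{2}$ and $S_{3}$ affect analyticity inside the open sectors. Hence $m^{\mathrm{Ai}}$ is analytic in $\C\setminus\Gamma^{\mathrm{Ai}}$.

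\emph{Jumps.} These are checked ray by ray. Across $\Gamma^{\mathrm{Ai}}_{1}$ and $\Gamma^{\mathrm{Ai}}_{3}$ both $\mathcal A$ and $z^{3/2}$ are continuous, so the jump is produced entirely by the change of the triangular factor between adjacent sectors; conjugating $\begin{pmatrix}1&0\\ \mp 1&1\end{pmatrix}$ by $\e^{\frac23 z^{3/2}\sigma_{3}}$ gives exactly $\begin{pmatrix}1&0\\ -\e^{\frac43 z^{3/2}}&1\end{pmatrix}$. Across $\Gamma^{\mathrm{Ai}}_{2}$ and $\Gamma^{\mathrm{Ai}}_{4}$ the formula for $\mathcal A$ switches from its $\C_{+}$ to its $\C_{-}$ version and $z^{3/2}$ picks up a sign; the two connection relations above are precisely what reduces $\mathcal A_{+}^{-1}\mathcal A_{-}$ to a constant matrix, and combining this with the jump of $\e^{\frac23 z^{3/2}\sigma_{3}}$ and the triangular factors yields $\begin{pmatrix}0&-1\\ 1&0\end{pmatrix}$ on $\Gamma^{\mathrm{Ai}}_{2}$ and $\begin{pmatrix}1&-\e^{-\frac43 z^{3/2}}\\ 0&1\end{pmatrix}$ on $\Gamma^{\mathrm{Ai}}_{4}$. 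The bookkeeping of branch choices and contour orientations is the one place where care is genuinely required; everything else is a short matrix multiplication.

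\emph{Asymptotics.} In the sectors $S_{1}$ and $S_{4}$ one has $m^{\mathrm{Ai}}(z)=\mathcal A(z)\,\e^{\frac23 z^{3/2}\sigma_{3}}$; inserting the expansions of $\mathrm{Ai}(z),\mathrm{Ai}'(z),\mathrm{Ai}(\zeta^{2}z),\mathrm{Ai}'(\zeta^{2}z)$ (whose arguments all stay in $|\arg(\cdot)|<\pi$ there) the exponentials cancel and one is left with $z^{-\sigma_{3}/4}N\big(I+O(z^{-3/2})\big)$. Collecting the two scalar series into a single matrix series produces \eqref{large z asymptotics Airy} with $m^{\mathrm{Ai}}_{j}$ equal to the displayed product of $N^{-1}$, $\operatorname{diag}(1,-i)$, $(\tfrac32)^{j}$, the $2\times2$ matrix built from $u_{j},\nu_{j}$, and $\e^{-\frac{\pi i}{4}\sigma_{3}}$. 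A priori this holds only in $S_{1}\cup S_{4}$, but since the asymptotic series does not depend on the sector, while $v^{\mathrm{Ai}}-I$ is exponentially small on $\Gamma^{\mathrm{Ai}}_{1},\Gamma^{\mathrm{Ai}}_{3}$ and $v^{\mathrm{Ai}}$ is constant on $\Gamma^{\mathrm{Ai}}_{2},\Gamma^{\mathrm{Ai}}_{4}$, a standard continuation argument propagates \eqref{large z asymptotics Airy} to every direction $z\to\infty$. Finally, \eqref{def_m^Ai_j} is obtained by carrying out the matrix product explicitly: using $\nu_{j}=\frac{6j+1}{1-6j}u_{j}$ to combine the two rows (so that $u_{j}+\nu_{j}=\frac{-2u_{j}}{6j-1}$ and $u_{j}-\nu_{j}=\frac{12j\,u_{j}}{6j-1}$), together with the elementary identities $(2j+1)(2j+3)\cdots(6j-1)=2^{2j}(j+\tfrac12)_{2j}$ and $(\tfrac32)^{j}\,54^{-j}=6^{-2j}$, one checks that every entry collapses to $-\frac{6^{-2j}(j+\frac12)_{2j}}{(6j-1)j!}$ times the stated sign pattern. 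This last step is routine algebra, and the only mildly delicate point in the whole proof is the tracking of branch cuts and orientations in the jump verification.
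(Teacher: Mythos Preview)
The paper states this lemma without proof, treating the Airy model problem as classical; your direct verification via the connection formula and the Poincar\'e expansions of $\mathrm{Ai}$ and $\mathrm{Ai}'$ is exactly the standard route one takes to establish it, and the outline is correct.

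One small slip: in your continuation argument you write that $v^{\mathrm{Ai}}$ is constant on $\Gamma^{\mathrm{Ai}}_{2}$ and $\Gamma^{\mathrm{Ai}}_{4}$, but on $\Gamma^{\mathrm{Ai}}_{4}$ the jump carries the factor $\e^{-\frac{4}{3}z^{3/2}}$, which is not constant. The correct grouping is that $v^{\mathrm{Ai}}-I$ is exponentially small on $\Gamma^{\mathrm{Ai}}_{1}\cup\Gamma^{\mathrm{Ai}}_{3}\cup\Gamma^{\mathrm{Ai}}_{4}$ (since $\re z^{3/2}\leq 0$ on the first two and $\re z^{3/2}>0$ on the last), while only $\Gamma^{\mathrm{Ai}}_{2}$ has a genuinely constant jump. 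On $\Gamma^{\mathrm{Ai}}_{2}$ the propagation works because the leading factor $z^{-\sigma_{3}/4}N$ itself jumps by $\begin{pmatrix}0&-1\\1&0\end{pmatrix}$ across the branch cut of $z^{1/4}$, which matches $v^{\mathrm{Ai}}$ there. With this correction your argument goes through.
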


\subsubsection{Local transformation} \label{sec_local_transf}
Our aim is to show that the Airy solution $m^{\mathrm{Ai}}$ yields a good approximation of $m^{(3)}$ locally around $k_0$ when considered in suitable coordinates.  

Recalling the definition \eqref{def_g} of $g$,  
we define the fractional power $g(\xi, k)^{2/3}$ for $k \in \C \setminus (-\infty, k_0]$ with the branch fixed by the requirement that 
$(g(k))^{2/3} > 0$ for $k > k_0$. 
Let 
\begin{equation*} 
	f(\xi,k) \coloneqq  -\bigg(\frac{3}{2} \, g(\xi,k) \bigg)^{\frac{2}{3}}, \quad   k \in \C \setminus (-\infty, k_0].
\end{equation*} 
Then, uniformly for $\xi\in I_{\mathcal M}$,
\begin{align} \label{fk0neark0}
	f(\xi,k) =  - 3^{\frac{2}{3}} (k_0-E_1)^{1/3}(k-k_0)
	\bigg(1 + \frac{k-k_0}{3(k_0-E_1)}
	+ \bigO\big((k-k_0)^{2}\big)\bigg)
\end{align}
as $k \to k_0$, $k\in \C \setminus (-\infty, k_0]$, where  $3^{\frac{2}{3}} (k_0-E_1)^{1/3} > 0$ is bounded from below by a positive constant uniformly for $\xi\in I_{\mathcal M}$.
We introduce the new variable 
\begin{equation} \label{def_z}
	z(\xi,t,k) \coloneqq t^{2/3} f(\xi,k).
\end{equation}
Then
\begin{equation*} 
	\frac{4}{3} z^{3/2}
	= \begin{cases}
		2 i t g(k),  & \im k > 0,	\\
		-2  i t g(k), & \im k < 0,	\\
		2 i t g_+(k) = -2 i t g_-(k),  & k < k_0,	
	\end{cases}
\end{equation*} 
where the cut for $(\cdot)^{3/2}$ runs along $\R_-$ as in the Airy model problem. 

\subsubsection{Definition and properties of $m^{k_0}$} \label{sec_m^k0}
In the following we define the local parametrix $m^{k_0}$.
Let $D_\varepsilon(k_0)$ denote the open disc of radius $\varepsilon$ around $k_0$ and let $\varepsilon>0$ be so small that 
\begin{equation} \label{Gamma_eps}
	\bigcup^4_{j=1} \Big( \overbar{\Gamma^{(3)}_j} \cap D_\varepsilon(k_0) \Big) = \Gamma^{(3)}\cap D_\varepsilon(k_0)
\end{equation} 
is satisfied for all $\xi\in  I_{\mathcal M}$. The existence of such a uniform $\varepsilon$ is guaranteed by the definition of $ I_{\mathcal M}$.  We denote the contour in \eqref{Gamma_eps} by $\Gamma^{k_0,\varepsilon}$, see Figure~\ref{fig:Gamma_eps}.

Let $\mathfrak r\colon  I_{\mathcal M} \times \C \to \C$ denote the $(N-1)$-th degree Taylor polynomial of $r$ at $k_0$:
\begin{equation} \label{frak_r}
	\mathfrak r(\xi,k)\coloneqq \sum^{N-1}_{n=0} \frac{r^{(n)}(k_0)}{n!}(k-k_0)^n.
\end{equation} 
According to the assumptions on the initial datum $u_0$ it holds that $r(k)=\mathfrak r(\xi,k) + \bigO(|k-k_0|^N)$ as $k\to k_0$ for $1\leq N\leq7$. 
In order to obtain a good enough local approximation for $m^{(3)}$ near $k_0$ we need $N\geq 2$, thus we set $N\coloneqq 2$. 
Furthermore, we define 
$\mathfrak{D} \colon  I_{\mathcal M} \times \overbar{D_\varepsilon(k_0)}\setminus [k_0-\varepsilon,k_0+\varepsilon] \to \C$ by
\begin{align*}
	\mathfrak{D}(\xi,k) \coloneqq  
	\e^{\frac{\mathcal{X}(\xi,k)}{2 \pi  i} \Big[
		\int^{k_0}_{E_1} \frac{\frac{1}{2}\log(\mathfrak r(\xi,s)/\overbar{\mathfrak r(\xi,s)})}{\mathcal{X}_+(\xi,s)(s-k)} \, \dx s 
		+   \int^{E_2}_{k_0} \frac{\log|\mathfrak r(\xi,s)|}{s-k} \, \dx s 
		- \sum^{N-1}_{n=0} c_n(\xi) (k-k_0)^n \Big]}
\end{align*}
where the real coefficients $\{c_n(\xi)\}^{N-1}_{n=0}$ are uniquely determined by the requirement that
\begin{equation} \label{frakD_asympt}
	\mathcal{D}(\xi,k) = \mathfrak{D}(\xi,k) + \bigO(|k-k_0|^N) \quad \text{as} \quad k\to k_0
\end{equation} 
uniformly for $\xi\in I_{\mathcal M}$, cf.~Lemma \ref{lem_E}.

\begin{figure}[h] \centering
	\begin{subfigure}{.43\textwidth}  \centering
		\begin{overpic}[width=.95\textwidth]{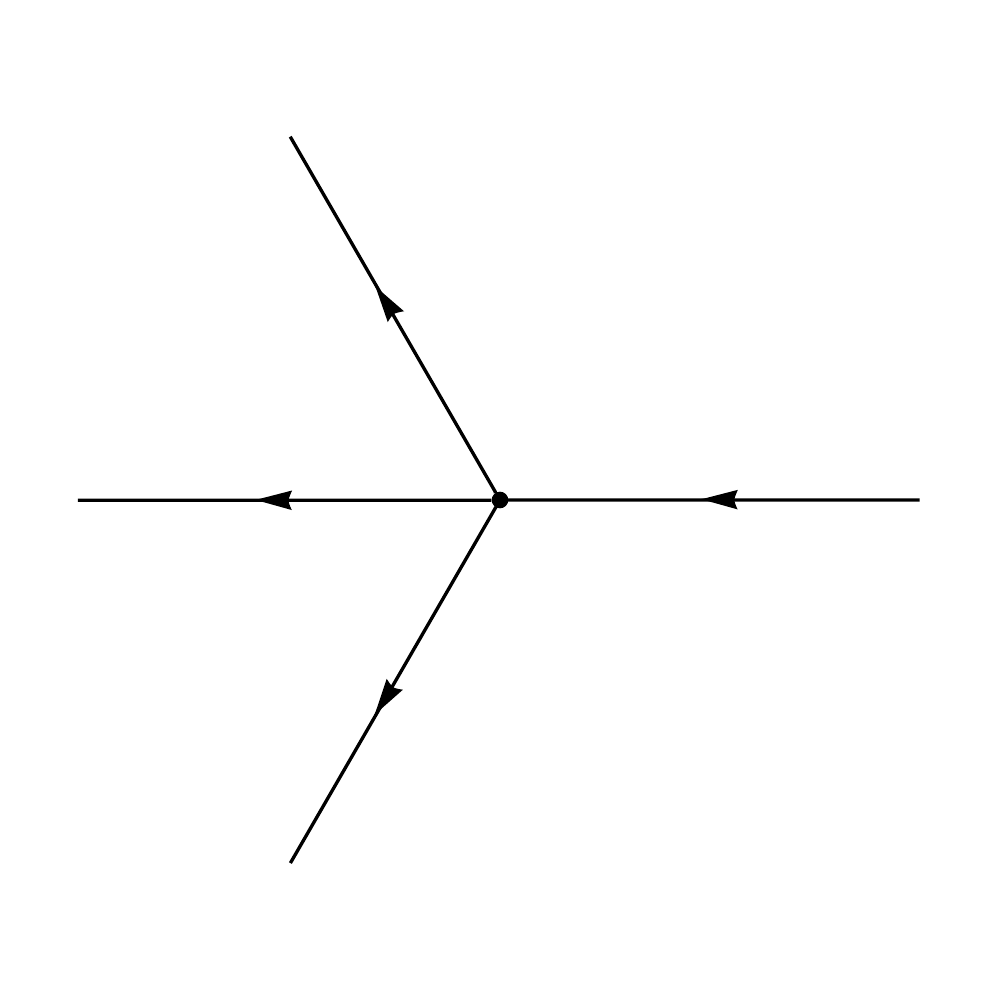}
			\put(51,52){\footnotesize{$0$}} 
			\put(26,53){\footnotesize{$\Gamma^{\mathrm{Ai}}_2$}} 
			\put(69,53){\footnotesize{$\Gamma^{\mathrm{Ai}}_4$}} 
			\put(42,68){\footnotesize{$\Gamma^{\mathrm{Ai}}_1$}}		
			\put(42,29){\footnotesize{$\Gamma^{\mathrm{Ai}}_3$}}	
		\end{overpic}
		\vspace{-.3cm}
		\caption{}
		\label{fig:Airy}
	\end{subfigure}%
	\quad
	\begin{subfigure}{.43\textwidth}  \centering
		\begin{overpic}[width=.95\textwidth]{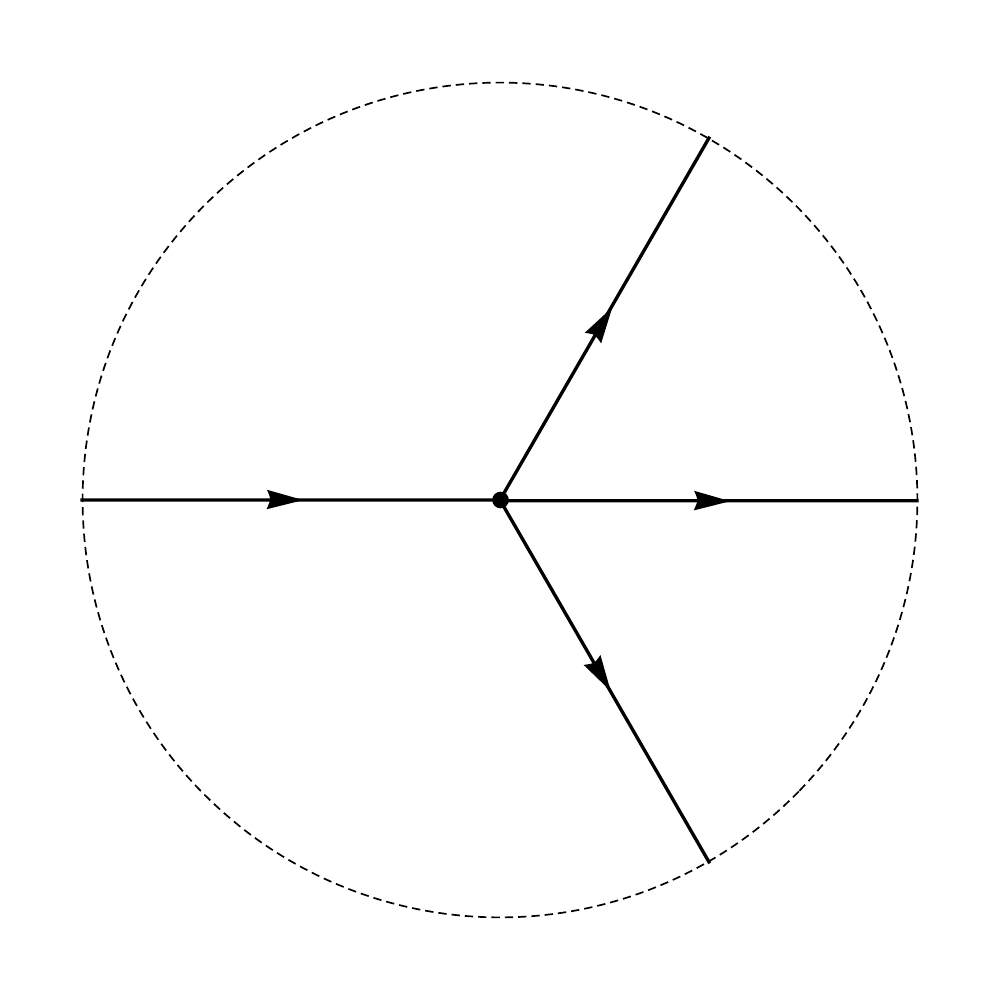}
			\put(46,52){\footnotesize{$k_0$}} 
			\put(26,53){\footnotesize{$\Gamma^{k_0,\varepsilon}_2$}} 
			\put(69,53){\footnotesize{$\Gamma^{k_0,\varepsilon}_4$}} 
			\put(63,65){\footnotesize{$\Gamma^{k_0,\varepsilon}_1$}}		
			\put(63,33){\footnotesize{$\Gamma^{k_0,\varepsilon}_3$}}	
			\put(32,72){\footnotesize{$D_\eps(k_0)$}}
		\end{overpic}
		\vspace{-.3cm}
		\caption{}
		\label{fig:Gamma_eps}
	\end{subfigure}%
	\caption{The jump contour $\Gamma^{\mathrm{Ai}}$ of the classical Airy RH problem  (left) and the jump contour $\Gamma^{k_0,\varepsilon}\subseteq D_\eps(k_0)$ of the local parametrix $m^{k_0}$ (right).} 
	\label{fig:Gamma_eps_tilde}
\end{figure}

We define the local parametrix $m^{k_0}\colon  I_{\mathcal M} \times (0,\infty) \times \overbar{{D_\varepsilon(k_0)}} \setminus \overbar{\Gamma^{k_0,\eps}} \to \C^{2\times2}$ by
\begin{equation} \label{def_m^{k_0}}
	m^{k_0}(\xi,t,k) \coloneqq E(\xi,t,k) \, m^{\mathrm{Ai}}\big(z(\xi,t,k)\big) \,   G(\xi,k)
\end{equation}
where
\begin{align*}
	G(\xi,k) &\coloneqq
	\begin{cases}
		G_1(\xi,k) \coloneqq	\e^{\frac{ i \pi \sigma_3}{2}} \mathfrak r(\xi,k)^{\frac{\sigma_3}{2}} \mathfrak{D}(\xi,k)^{-\sigma_3} \sigma_3, & k\in \C_+\cap \overbar{D_\varepsilon(k_0)}, \\
		G_2(\xi,k) \coloneqq	 i \mathfrak r^*(\xi,k)^{\frac{\sigma_3}{2}} \mathfrak{D}(\xi,k)^{\sigma_3}\sigma_1 \sigma_3, &  k\in \C_-\cap \overbar{D_\varepsilon(k_0)}, 
	\end{cases}
\end{align*}
and
\begin{align*}
	E(\xi,t,k) &\coloneqq  m^{[E_1,k_0]}(\xi,k) \, G^{-1}(\xi,k) \, N^{-1} \, z(\xi,t,k)^{\frac{\sigma_3}{4}}.
\end{align*}
The matching factor $E$ is chosen such that $m^{k_0} \big(m^{[E_1,k_0]}\big)^{-1} \to I$ uniformly on $ I_{\mathcal M} \times \partial D_\varepsilon(k_0)$ as $t\to\infty$, cf.~Lemma \ref{lem_m^k_0}\eqref{lem_m^k_0_ii}.

\begin{lemma} \label{lem_E}
	The following assertions hold true:	
	\begin{enumerate}[\upshape (i)]
		\item \label{lem_E_i}
		there exist unique $\{c_n(\xi)\colon  I_{\mathcal M} \to \R\}^{N-1}_{n=0}$ such that \eqref{frakD_asympt} holds with a uniform error term for all $\xi$;
		\item \label{lem_E_ii}
		for each $\xi\in I_{\mathcal M}$ and $t>0$ the function $E(\xi,t,\cdot)\colon \overbar{D_\varepsilon(k_0)} \to \C$ is analytic in $D_\varepsilon(k_0)$.
		Moreover, $E(\xi,t,k)$ is invertible for all $(\xi,t,k)\in  I_{\mathcal M} \times (0,\infty) \times \overbar{D_\varepsilon(k_0)}$.
	\end{enumerate}
\end{lemma}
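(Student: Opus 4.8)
The plan is to treat the two assertions in turn, since \eqref{lem_E_ii} depends on having \eqref{lem_E_i} in hand. For \eqref{lem_E_i}, the idea is that the exponent defining $\mathfrak{D}$ is, up to the polynomial correction $\sum_{n=0}^{N-1} c_n(\xi)(k-k_0)^n$, a sum of two Cauchy-type integrals over $[E_1,k_0]$ and $[k_0,E_2]$ whose integrands are built from the Taylor polynomial $\mathfrak r$ rather than the true reflection coefficient $r$. First I would observe that, by the same Cauchy-transform analysis used in the proof of Lemma \ref{lem_D}\eqref{lem_D_k0} (in particular the local expansion \eqref{X_k_0} of $\mathcal X$ at $k_0$ and the endpoint analysis via Lemma \ref{lem_sing_int_near_boundary}), each of these Cauchy integrals has a full asymptotic expansion at $k_0$ in powers of $(k-k_0)^{1/2}$. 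Since $\log(\mathfrak r/\overbar{\mathfrak r})$ and $\log|\mathfrak r|$ are smooth near $k_0$ (because $\mathfrak r(k_0)=r(k_0)\neq 0$), the contribution of $\mathfrak r$ is the analogue of the expansion \eqref{D_at_k0}–\eqref{invD_at_k0}, and I would record that $\mathcal D(\xi,\cdot)$ itself, by Lemma \ref{lem_D}\eqref{lem_D_k0}, has such an expansion $\mathcal D(\xi,k)=\sqrt{r(k_0)}\sum_{j\ge 0} d_j(\xi)(k-k_0)^{j/2}+o$. Matching the first $N$ coefficients of the expansion of $\mathfrak{D}$ to those of $\mathcal D$ determines $c_0(\xi),\dots,c_{N-1}(\xi)$ recursively and uniquely: the map from $(c_0,\dots,c_{N-1})$ to the first $N$ Taylor coefficients of $\log\mathfrak{D}-\log\mathcal D$ at $k_0$ is affine with a lower-triangular, invertible linear part (the coefficient of $c_n$ in the $(k-k_0)^n$ term is $-1$), so the $c_n(\xi)$ are obtained by back-substitution. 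Reality of the $c_n(\xi)$ follows from the symmetry: $\log(\mathfrak r/\overbar{\mathfrak r})$ is purely imaginary on $(E_1,k_0)$ and $\log|\mathfrak r|$ is real on $(k_0,E_2)$, so the two integrals combine to give, after multiplication by $\mathcal X/(2\pi i)$, a real-analytic function of $k$ near $k_0$ up to the $\sqrt{k-k_0}$ terms, and comparison with the analogous reality in $\mathcal D$ (which satisfies the Schwartz symmetry \eqref{D_symm}) forces $c_n(\xi)\in\R$; uniformity in $\xi\in I_{\mathcal M}$ is inherited from the uniformity already established in Lemma \ref{lem_D}.

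For \eqref{lem_E_ii}, analyticity of $E(\xi,t,\cdot)$ in $D_\varepsilon(k_0)$ is the crux and I would argue it by showing that $E$ has no jump across $\Gamma^{k_0,\varepsilon}$ and no singularity at $k_0$. Away from $k_0$ and the real segment, each factor $m^{[E_1,k_0]}$, $G^{-1}$, $N^{-1}$, $z^{\sigma_3/4}$ is analytic, so only the contours $[k_0-\varepsilon,k_0]$, $[k_0,k_0+\varepsilon]$ and the rays $\Gamma^{k_0,\varepsilon}_j$ need checking. Along the four rays $G$ is analytic (it is built from $\mathfrak r$, $\mathfrak r^*$, and $\mathfrak D$, all analytic off the real axis) while $m^{[E_1,k_0]}$ and $z^{\sigma_3/4}$ are too, so $E$ extends across them. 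Across $(k_0,k_0+\varepsilon)$: here $m^{[E_1,k_0]}$ is continuous (its cut is $[E_1,k_0]$), and one computes the jumps of $G$, of $z^{\sigma_3/4}$, and of $N$ (trivial) and checks they cancel — this is the standard compatibility computation between the Airy jumps $v^{\mathrm{Ai}}$ on $\Gamma^{\mathrm{Ai}}_4$ and the definition of $G_{1,2}$. Across $(k_0-\varepsilon,k_0)$: $m^{[E_1,k_0]}$ has the constant off-diagonal jump $v^{[E_1,k_0]}=\bigl(\begin{smallmatrix}0&1\\-1&0\end{smallmatrix}\bigr)$, $z^{\sigma_3/4}$ jumps because $z<0$ there, the $\mathfrak D$-factors jump according to the relations $\mathfrak D_+\mathfrak D_-=\mathfrak r$ and $\mathfrak D^*_+\mathfrak D_-=1$ built into its definition, and $m^{\mathrm{Ai}}$ contributes the jump $\bigl(\begin{smallmatrix}0&-1\\1&0\end{smallmatrix}\bigr)$ on $\Gamma^{\mathrm{Ai}}_2$; the point is that the definition of $G$ was rigged precisely so that $v^{(3)}$ conjugated by $G$ on the branch cut matches the Airy jump, hence $E_+=E_-$ there as well. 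Finally, near $k_0$ itself, I would use \eqref{frakD_asympt} together with the local behavior $z\sim t^{2/3}f(k)$ from \eqref{fk0neark0} and the known singular structure $m^{\mathrm{Ai}}(z)\sim z^{-\sigma_3/4}N(I+\cdots)$ from \eqref{large z asymptotics Airy}: writing $E=m^{[E_1,k_0]} G^{-1} N^{-1} z^{\sigma_3/4}$ and substituting $G^{-1}\sim(\text{analytic})\cdot\mathfrak D^{\mp\sigma_3}\sim(\text{analytic})\cdot\mathcal D^{\mp\sigma_3}$ up to $O(|k-k_0|^N)$, the $(k-k_0)^{-1/4}$ singularity of $m^{[E_1,k_0]}$ at $k_0$ (Lemma \ref{lem_minfty}\eqref{lem_minfty_iii}) is exactly cancelled by the matching singular structure of $\mathcal D^{\mp\sigma_3}$ and of $z^{\sigma_3/4}\sim t^{\sigma_3/6}(\text{analytic})^{\sigma_3/4}$, leaving $E$ bounded; a bounded function analytic off a removable contour is analytic, so $E(\xi,t,\cdot)$ is analytic on $D_\varepsilon(k_0)$.

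For invertibility, I would note $\det m^{\mathrm{Ai}}\equiv 1$ (standard for the Airy parametrix, since all jump matrices and the leading factor $z^{-\sigma_3/4}N$ have unit determinant), $\det m^{[E_1,k_0]}\equiv 1$ by Lemma \ref{lem_minfty}, $\det G=\pm 1$ by inspection of $G_1,G_2$, $\det N=1$, and $\det z^{\sigma_3/4}=1$; hence $\det E$ is a nonzero constant, so $E(\xi,t,k)\in GL(2,\C)$ for all $(\xi,t,k)\in I_{\mathcal M}\times(0,\infty)\times\overbar{D_\varepsilon(k_0)}$. The main obstacle I anticipate is bookkeeping in the removable-singularity argument at $k_0$: one must verify that the three sources of fractional-power behavior — $m^{[E_1,k_0]}$, the $\mathfrak D$-factors inside $G$, and $z^{\sigma_3/4}$ — combine to an \emph{analytic} (not merely bounded) matrix, which requires that the mismatch between $\mathcal D$ and $\mathfrak D$ be of order $O(|k-k_0|^N)$ with $N\ge 2$, i.e. it is exactly the defining property \eqref{frakD_asympt} that is doing the work, and it is this that dictated the choice $N=2$. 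Everything else is the routine contour-compatibility algebra familiar from the Airy parametrix construction.
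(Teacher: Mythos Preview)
Your overall strategy matches the paper's: establish \eqref{lem_E_i} by matching asymptotic expansions at $k_0$ (via Lemma~\ref{lem_sing_int_near_boundary} and Lemma~\ref{lem_D}\eqref{lem_D_k0}), and establish \eqref{lem_E_ii} by showing that $E$ has no jump across the two real segments in $D_\varepsilon(k_0)$ and that the isolated point $k_0$ is removable. There are, however, several bookkeeping slips in your treatment of \eqref{lem_E_ii}. First, $m^{\mathrm{Ai}}$ is \emph{not} a factor of $E$ --- only of $m^{k_0}$ --- so it plays no role in the jump analysis for $E$; the only possibly discontinuous factors in $E = m^{[E_1,k_0]}\,G^{-1}\,N^{-1}\,z^{\sigma_3/4}$ are $m^{[E_1,k_0]}$, $G$ (because $G_1\neq G_2$ and $\mathfrak D$ has nontrivial boundary values), and $z^{\sigma_3/4}$, and the paper verifies $E_+=E_-$ directly from these. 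Second, the relation $\mathfrak D_+\mathfrak D_-=\mathfrak r$ you invoke holds for $\mathcal D$, not for $\mathfrak D$: the exponent of $\mathfrak D$ is built from $\tfrac12\log(\mathfrak r/\overbar{\mathfrak r})$ and $\log|\mathfrak r|$ separately, and the relations actually used are $\tfrac{\mathfrak D_-}{\mathfrak D_+}|\mathfrak r|=1$ on $(k_0,k_0+\varepsilon)$ and $\mathfrak D_+\mathfrak D_-\sqrt{\mathfrak r^*/\mathfrak r}=1$ on $(k_0-\varepsilon,k_0)$. Third, since $f(k)<0$ for $k>k_0$, the segment $(k_0,k_0+\varepsilon)$ maps to the \emph{negative} $z$-axis, so the branch-cut jump of $z^{\sigma_3/4}$ sits there; your assignment of Airy rays is reversed. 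Fourth, $\mathcal D$ and $\mathfrak D$ are bounded and nonzero at $k_0$ (see \eqref{D_at_k0}), so they do not participate in cancelling fractional powers; the $(k-k_0)^{\pm 1/4}$ contributions come solely from $m^{[E_1,k_0]}$ and $z^{\sigma_3/4}$, and the paper confirms boundedness by computing the leading term \eqref{Ek0_order0} explicitly. Finally, analyticity of $E$ at $k_0$ holds for any $N\ge 1$; the requirement $N\ge 2$ enters only later, in the estimates \eqref{lem_m^k_0_iii_1}--\eqref{lem_m^k_0_iii_3}.

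On the positive side, your determinant argument for invertibility is a clean simplification of the paper's route. The paper checks invertibility by computing $E(\xi,t,k_0)$ explicitly via \eqref{Ek0_order0}; you instead observe $\det m^{[E_1,k_0]}=\det N=\det z^{\sigma_3/4}=1$ and $\det G_1=\det G_2=-1$, whence $\det E\equiv -1$ on all of $\overbar{D_\varepsilon(k_0)}$, giving invertibility at once without any local expansion.
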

\begin{proof}
\eqref{lem_E_i}
	We note that additionally to $\mathfrak r(\xi,k)=r(k) + \bigO(|k-k_0|^N)$ as $k\to k_0$ uniformly in $\xi\in I_{\mathcal M}$, we have that $|\mathfrak r(\xi,k)|=|r(k)| + \bigO(|k-k_0|^N)$  as $\R\ni k\to k_0$ uniformly in $\xi\in I_{\mathcal M}$ since $|r|\equiv 1$ on $[E_1,E_2]$. Consequently,
	\begin{equation*}
		\log |\mathfrak r(\xi,k)| = \bigO(|k-k_0|^N), \quad \R\ni k\to k_0 
	\end{equation*}
	uniformly in $\xi\in I_{\mathcal M}$.
	In view of Lemma \ref{lem_sing_int_near_boundary} and by noting that 
	\begin{equation*}
		\frac{1}{2}\log(\mathfrak r(\xi,s)/\overbar{\mathfrak r(\xi,s)})
		=  i \arg \mathfrak r(\xi,k)
		= \log  \mathfrak r(\xi,k),
	\end{equation*}
	this means that there exist unique coefficients $\{c_n(\xi)\colon  I_{\mathcal M} \to \R\}^{N-1}_{n=0}$ such that
	\begin{align*}
		\int^{k_0}_{E_1} \frac{\frac{1}{2}\log(\mathfrak r(\xi,s)/\overbar{\mathfrak r(\xi,s)})}{\mathcal{X}_+(\xi,s)(s-k)} \, \dx s 
		+   \int^{E_2}_{k_0} \frac{\log|\mathfrak r(\xi,s)|}{s-k} \, \dx s 
		- \sum^{N-1}_{n=0} c_n(\xi) (k-k_0)^n \\
		=  \int^{E_1}_{-\infty} \frac{\log(1-|r(s)|^2)}{\mathcal{X}(\xi,s)(s-k)} \, \dx s + 
		\int^{k_0}_{E_1} \frac{\log(r(s))}{\mathcal{X}_+(\xi,s)(s-k)} \, \dx s +  \bigO(|k-k_0|^N),
	\end{align*}
	which implies \eqref{frakD_asympt}.
	
	\eqref{lem_E_ii}
	Let $(\xi,t)\in  I_{\mathcal M} \times (0,\infty)$ be arbitrary.	
	It is clear from its definition that $E(\xi,t,\cdot)$ is analytic in $D_\varepsilon(k_0)\setminus (k_0-\varepsilon,k_0+\varepsilon)$, and that the one-sided limits $E_\pm(\xi,t,\cdot)$ are well-defined along $(k_0-\varepsilon,k_0+\varepsilon)\setminus \{0\}$.	
	Furthermore, $E(\xi,t,\cdot)$ has no jump across $(k_0, k_0+\varepsilon)$ because $\frac{\mathfrak{D}_-}{\mathfrak{D}_+}|\mathfrak r|=1$ for all $(\xi,k)\in I_{\mathcal M}\times (k_0,k_0+\varepsilon)$, so that
	\begin{align*}
		E_+(k) &=  m_+^{[E_1,k_0]}(k) \, G^{-1}_{1,+}(k) \, N^{-1} (z(k))_-^{\frac{\sigma_3}{4}} \\
		&=  m^{[E_1,k_0]}_{-}(k) \, G^{-1}_{1,+}(k) \, N^{-1} \,  i^{-\sigma_3} \, (z(k))_+^{\frac{\sigma_3}{4}} \\
		&=   m_-^{[E_1,k_0]}(k) \, G^{-1}_{2,-}(k) \, N^{-1} \, (z(k))_+^{\frac{\sigma_3}{4}} = E_-(k)
	\end{align*}
	for all $(\xi,t,k)\in I_{\mathcal M}\times (0,\infty) \times (k_0,k_0+\varepsilon)$.
	Similarly, $E(\xi,t,\cdot)$ has no jump across $(k_0-\varepsilon,k_0)$ because $\mathfrak{D}_+ \mathfrak{D}_- \frac{\sqrt{\mathfrak r^*}}{\sqrt{\mathfrak r}}=1$ for all $(\xi,k)\in I_{\mathcal M}\times (k_0 - \varepsilon,k_0)$, so that
	\begin{align*}
		E_+(k) &=  m_+^{[E_1,k_0]}(k) \, G^{-1}_{1,+}(k) \, N^{-1} \, z(k)^{\frac{\sigma_3}{4}} \\
		&=  m^{[E_1,k_0]}_{-}(k)\begin{pmatrix} 0 & 1 \\ -1 & 0 \end{pmatrix}
		G^{-1}_{1,+}(k) \, N^{-1} z(k)^{\frac{\sigma_3}{4}} \\
		&=   m^{[E_1,k_0]}_{-}(k) \, G^{-1}_{2,-}(k) \, N^{-1} \, z(k)^{\frac{\sigma_3}{4}}  = E_-(k)
	\end{align*}
	for all $(\xi,t,k)\in I_{\mathcal M}\times (0,\infty) \times (k_0 - \varepsilon,k_0)$.
	
	Next, we determine the asymptotic behavior of $E$ at $k_0$. 
	From assertion  \eqref{lem_E_i}, Lemma \ref{lem_D}\eqref{lem_D_k0}  and the fact that
	\begin{equation*}
		\begin{cases}
			\sqrt{r(k)} = \sqrt{r(k_0)} + \frac{r'(k_0)}{2\sqrt{r(k_0)}} (k-k_0) + \bigO(|k-k_0|^{3/2})\\
			\frac{1}{\sqrt{r(k)}} =\frac{1}{\sqrt{r(k_0)}} - \frac{r'(k_0)}{2 r(k_0)^{3/2}} (k-k_0) + \bigO(|k-k_0|^{3/2})
		\end{cases}
		\quad \text{as} \quad k\to k_0
	\end{equation*}
	uniformly for $\xi\in I_{\mathcal M}$,
	it follows that
	\begin{equation*}
		\begin{cases}
			\mathfrak r(\xi,k)^\frac{1}{2} \mathfrak{D}^{-1}(\xi,k) = 1 + C_{k_0} \sqrt{k - k_0}  + \frac{C_{k_0}^2}{2} (k - k_0) + \bigO(|k-k_0|^{3/2}), \\
			\mathfrak r(\xi,k)^{-\frac{1}{2}} \mathfrak{D}(\xi,k)    = 1 - C_{k_0} \sqrt{k - k_0}  + \frac{C_{k_0}^2}{2} (k - k_0) + \bigO(|k-k_0|^{3/2}). 
		\end{cases}
		\quad \text{as} \quad k\to k_0,
	\end{equation*}
	uniformly for $\xi\in I_{\mathcal M}$, where $C_{k_0}$ is given by \eqref{C_k0}.
	Using the expansions of $m^{[E_1,k_0]}$ and $f$ at $k_0$ given in \eqref{minftyatk0} and \eqref{fk0neark0}, respectively,
	we conclude that, uniformly for $\xi\in I_{\mathcal M}$,
	\begin{equation} \label{Ek0_order0}
		E(\xi,t,k) = 
		\begin{pmatrix}
			\frac{- i-1}{2} (k_0-E_1)^{\frac{1}{3}} (3t)^{\frac{1}{6}} & 
			\frac{\frac{ i+1}{2} (-1+C_{k_0} \sqrt{k_0-E_1})}{(3t)^{1/6} (k_0-E_1)^{1/3}} \\
			\frac{ i-1}{2} (k_0-E_1)^{\frac{1}{3}} (3t)^{\frac{1}{6}}  &
			\frac{\frac{1- i}{2} (1+C_{k_0} \sqrt{k_0-E_1})}{(3t)^{1/6} (k_0-E_1)^{1/3}}
		\end{pmatrix}
		+ \bigO(|k-k_0|) \quad \text{as} \quad k \to k_0.
	\end{equation}
	Hence $E(\xi,t,\cdot)$ does not develop a singularity at $k_0$ and we conclude that $E(\xi,t,\cdot)$ is analytic in $D_\varepsilon(k_0)$. 
	
	Finally, $E(\xi,t,\cdot)$ is by its definition invertible on $\overbar{D_\varepsilon(k_0)}\setminus \{k_0\}$, and  since $E(\xi,t,k_0)$ is invertible by \eqref{Ek0_order0} the second statement of the lemma follows. 
\end{proof}

\begin{lemma} \label{lem_m^k_0}
	For every $(\xi,t) \in  I_{\mathcal M} \times (0,\infty)$, the function $m^{k_0}(\xi,t,\cdot)\colon \overbar{D_\varepsilon(k_0)}\setminus \overbar{\Gamma^{k_0,\eps}}\to\C^{2\times2}$ defined by \eqref{def_m^{k_0}} is analytic in $D_\varepsilon(k_0)\setminus\Gamma^{k_0,\varepsilon}$, $m^{k_0}(\xi,t,k)$ is invertible for all $(\xi,t,k)\in  I_{\mathcal M} \times (0,\infty) \times \overbar{D_\varepsilon(k_0)}\setminus \overbar{\Gamma^{k_0,\eps}}$, and the following hold true.
	\begin{enumerate}[\upshape (i)]
		\item  \label{lem_m^k_0_i}
		There exists a uniform bound $K>0$ such that
		\begin{equation} \label{lem_m^k_0_i_1}
				|(m^{k_0}(\xi,t,k))^{\pm 1}|  \leq K		\quad \text{for all} \quad (\xi,t,k)\in  I_{\mathcal M} \times (0,\infty) \times \overbar{D_\varepsilon(k_0)}\setminus \overbar{\Gamma^{k_0,\eps}}.
		\end{equation}
		\item \label{lem_m^k_0_ii}
		As $t\to\infty$, uniformly for $\xi\in I_{\mathcal M}$,
		\begin{equation} \label{lem_m^k_0_ii_1}
			\|m^{k_0}(\xi,t,k) \, \big(m^{[E_1,k_0]}(\xi,k)\big)^{-1} -I\|_{L^\infty(\partial D_\varepsilon(k_0))} = \bigO(t^{-1}) 
		\end{equation}
		and
		\begin{align}
			\begin{aligned} \label{lem_m^k_0_ii_2}
				&-\frac{1}{2\pi  i} \int_{\partial D_{\varepsilon}(k_0)}   \Big(m^{k_0}(\xi,t,k) \big(m^{[E_1,k_0]}(\xi,k)\big)^{-1} - I\Big)   \, \dx k \\
				&\quad =   \frac{t^{-1}}{288}
				\begin{pmatrix}
					12 i C_{k_0}^2 - \frac{7 i}{k_0-E_1}  &   
					12 C_{k_0}^2  - \frac{24}{\sqrt{k_0-E_1}} C_{k_0} +\frac{7}{k_0-E_1}   \\
					12 C_{k_0}^2  + \frac{24}{\sqrt{k_0-E_1}} C_{k_0} +\frac{7}{k_0-E_1}  & - 12 i C_{k_0}^2 + \frac{7 i}{k_0-E_1}
				\end{pmatrix} 
				+ \bigO(t^{-2}),
			\end{aligned}
		\end{align}
		where the circle $\partial D_\varepsilon(k_0)$ is oriented clockwise and $C_{k_0}$ is given by \eqref{C_k0}.
		\item \label{lem_m^k_0_iii}
		Across $\Gamma^{k_0,\varepsilon}$, $m^{k_0}$ obeys the jump condition $m^{k_0}_+ = m^{k_0}_- v^{k_0}$, where the jump matrix $v^{k_0}$ satisfies
		\begin{align}
			&\| v^{(3)} - v^{k_0}  \|_{L^1(\Gamma^{k_0,\varepsilon})} = \bigO(t^{-2}), \label{lem_m^k_0_iii_1}\\
			&\| v^{(3)} - v^{k_0}  \|_{L^2(\Gamma^{k_0,\varepsilon})} = \bigO(t^{-5/3}),  \label{lem_m^k_0_iii_2} \\
			&\| v^{(3)} - v^{k_0}  \|_{L^\infty(\Gamma^{k_0,\varepsilon})} = \bigO(t^{-4/3}),   \label{lem_m^k_0_iii_3}
		\end{align}
		uniformly for $\xi\in I_{\mathcal M}$.
	\end{enumerate}
\end{lemma}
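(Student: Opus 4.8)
The plan is to read off every assertion directly from the defining formula \eqref{def_m^{k_0}}, $m^{k_0}=E\,m^{\mathrm{Ai}}(z)\,G$, using three inputs already in place: the explicit Airy solution $m^{\mathrm{Ai}}$ with its large-argument asymptotics (Lemma \ref{lem_Airy}), the analyticity and invertibility of the matching factor $E$ on $D_\eps(k_0)$ (Lemma \ref{lem_E}\eqref{lem_E_ii}), and the local expansions of $m^{[E_1,k_0]}$, $f$, and $\mathfrak D$ at $k_0$ (equations \eqref{minftyatk0}, \eqref{fk0neark0}, and Lemma \ref{lem_E}\eqref{lem_E_i}). First I would record analyticity: $E(\xi,t,\cdot)$ is analytic in $D_\eps(k_0)$, the conformal variable $z=z(\xi,t,k)$ of \eqref{def_z} maps $\Gamma^{(3)}\cap D_\eps(k_0)$ onto the Airy contour $\Gamma^{\mathrm{Ai}}$ by the branch choices of Section \ref{sec_local_transf}, so $k\mapsto m^{\mathrm{Ai}}(z(\xi,t,k))$ is analytic off $\Gamma^{k_0,\eps}$; and $G$ is analytic off $(k_0-\eps,k_0+\eps)$, where the computation in the proof of Lemma \ref{lem_E}\eqref{lem_E_ii} (which yields $E_+=E_-$ there) forces $m^{k_0}$ to have no jump. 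Invertibility of $m^{k_0}$ follows from $\det E\neq0$, $\det G\neq0$ (since $\mathfrak r(k_0)=r(k_0)\neq0$ and $\mathfrak D$ does not vanish near $k_0$), and the constancy of $\det m^{\mathrm{Ai}}$ (Wronskian of Airy functions).

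For the uniform bound \eqref{lem_m^k_0_i_1} I would substitute $E=m^{[E_1,k_0]}G^{-1}N^{-1}z^{\sigma_3/4}$, giving $m^{k_0}=m^{[E_1,k_0]}G^{-1}\big(N^{-1}z^{\sigma_3/4}m^{\mathrm{Ai}}(z)\big)G$. On the part of $\overbar{D_\eps(k_0)}$ with $|z|\geq M$ (a fixed large constant) the Airy asymptotics give $N^{-1}z^{\sigma_3/4}m^{\mathrm{Ai}}(z)=I+\bigO(z^{-3/2})$, so $m^{k_0}$ is controlled there by the bounded matrices $m^{[E_1,k_0]}G^{-1}$ and $G$; on the complementary disc of radius $\bigO(t^{-2/3})$ about $k_0$, $m^{\mathrm{Ai}}(z)$ and $(m^{\mathrm{Ai}})^{-1}(z)$ are bounded, and the $t$-dependent entries of $E$ visible in \eqref{Ek0_order0} are exactly compensated by the matching $z^{\pm\sigma_3/4}$-scaling inside $m^{\mathrm{Ai}}(z)$, so $E\,m^{\mathrm{Ai}}(z)$ remains uniformly bounded; the same argument for $(m^{k_0})^{-1}$ gives the claimed bound with exponent $-1$.

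For part \eqref{lem_m^k_0_ii}, the identity above and $N^{-1}z^{\sigma_3/4}m^{\mathrm{Ai}}(z)=I+m^{\mathrm{Ai}}_1 z^{-3/2}+\bigO(z^{-3})$ give on $\partial D_\eps(k_0)$
\begin{equation*}
m^{k_0}\big(m^{[E_1,k_0]}\big)^{-1}-I=m^{[E_1,k_0]}G^{-1}\big(m^{\mathrm{Ai}}_1 z^{-3/2}+\bigO(z^{-3})\big)G\big(m^{[E_1,k_0]}\big)^{-1}.
\end{equation*}
Since $z=t^{2/3}f(\xi,k)$ and $|f(\xi,k)|$ is bounded above and below on $\partial D_\eps(k_0)$ uniformly in $\xi$, this is $\bigO(t^{-1})$, proving \eqref{lem_m^k_0_ii_1}; inserting $z^{-3/2}=t^{-1}f^{-3/2}$ yields
\begin{equation*}
-\frac{1}{2\pi i}\int_{\partial D_\eps(k_0)}\big(m^{k_0}(m^{[E_1,k_0]})^{-1}-I\big)\,\dx k=-\frac{t^{-1}}{2\pi i}\int_{\partial D_\eps(k_0)}\Phi(\xi,k)\,f(\xi,k)^{-3/2}\,\dx k+\bigO(t^{-2}),
\end{equation*}
with $\Phi\coloneqq m^{[E_1,k_0]}G^{-1}m^{\mathrm{Ai}}_1 G(m^{[E_1,k_0]})^{-1}$. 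One checks, using \eqref{minftyatk0}, \eqref{fk0neark0}, \eqref{frakD_asympt}, Lemma \ref{lem_D}\eqref{lem_D_k0}, and the compatibility of the monodromy of $m^{[E_1,k_0]}$ about $k_0$ with those of $G$ and $f^{-3/2}$, that $\Phi f^{-3/2}$ is single-valued and meromorphic in $D_\eps(k_0)$ with only a pole at $k_0$; collecting the $(k-k_0)^{-1}$ coefficient (with $m^{\mathrm{Ai}}_1$ as in \eqref{def_m^Ai_j}) and simplifying reproduces the explicit matrix on the right-hand side of \eqref{lem_m^k_0_ii_2}, with $C_{k_0}$ as in \eqref{C_k0}. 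Finally, for part \eqref{lem_m^k_0_iii}: since $E$ is analytic across $(k_0-\eps,k_0+\eps)$, the jump of $m^{k_0}$ on $\Gamma^{k_0,\eps}$ is $v^{k_0}=G_-^{-1}v^{\mathrm{Ai}}(z)G_+$, and a direct calculation on each of the four subcontours $\Gamma^{(3)}_j\cap D_\eps(k_0)$, using the explicit $v^{\mathrm{Ai}}$ of Lemma \ref{lem_Airy} and the definition of $G$, shows $v^{k_0}$ equals the same expression as $v^{(3)}_j$ but with $r$, $r_{a}$, $r_{2,a}$ replaced by the Taylor polynomial $\mathfrak r$ of \eqref{frak_r} and $\mathcal D$ replaced by $\mathfrak D$. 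Hence each entry of $v^{(3)}-v^{k_0}$ is of the form $(\text{bounded})\times(r_{a}-\mathfrak r)\,\e^{2itg}$, $(\text{bounded})\times(\mathcal D^{\pm2}-\mathfrak D^{\pm2})\,\e^{\pm2itg}$, the analogous expressions with $r_{2,a}$ and $g_+$ on the branch-cut subcontour, or a term controlled by $\|r_{r}\|$ on the residual subcontour; with $N=2$ one has $r-\mathfrak r=\bigO(|k-k_0|^2)$ and $\mathcal D-\mathfrak D=\bigO(|k-k_0|^2)$ by \eqref{frakD_asympt}, so combining the estimates of Lemmas \ref{lem_an_appr_r1} and \ref{lem_an_appr_r2} with $|\e^{2itg}|=\e^{-2t|\im g|}$ and $|\im g(\xi,k)|$ comparable to $|k-k_0|^{3/2}$ on the subcontours, each entry is $\bigO\big(|k-k_0|^2\,\e^{-\frac74 t|\im g(\xi,k)|}\big)$; the substitution $s=|k-k_0|$, $u=ts^{3/2}$ then gives $L^1$, $L^2$, $L^\infty$ norms of orders $t^{-2}$, $t^{-5/3}$, $t^{-4/3}$, i.e.\ \eqref{lem_m^k_0_iii_1}--\eqref{lem_m^k_0_iii_3}.

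I expect the main difficulty to be twofold. The conceptual point is the uniform-in-$t$ boundedness near $k_0$ in part \eqref{lem_m^k_0_i}: $E$ and $m^{\mathrm{Ai}}(z)$ individually carry $t$-dependent growth through the $z^{\pm\sigma_3/4}$ scaling, and one must verify that these cancel exactly — which is precisely what dictates the definition of $E$. The computational bottleneck is the residue evaluation behind \eqref{lem_m^k_0_ii_2}: it requires carrying the expansions \eqref{minftyatk0}, \eqref{fk0neark0}, \eqref{frakD_asympt}, and $m^{\mathrm{Ai}}_1$ to sufficiently high order and keeping the half-integer powers of $k-k_0$ aligned so that only the genuine simple-pole contribution survives in the contour integral.
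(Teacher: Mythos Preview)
Your treatment of parts \eqref{lem_m^k_0_ii} and \eqref{lem_m^k_0_iii} is essentially the paper's: the expansion of $m^{k_0}(m^{[E_1,k_0]})^{-1}-I$ via the large-$z$ asymptotics \eqref{large z asymptotics Airy}, the identification of the $t^{-1}$ term with a residue at $k_0$ (the paper names the coefficients $A_j\coloneqq f^{-3j/2}\,m^{[E_1,k_0]}G^{-1}m^{\mathrm{Ai}}_jG(m^{[E_1,k_0]})^{-1}$ and computes $\res_{k_0}A_1$ exactly as you outline), and the subcontour-by-subcontour comparison of $v^{(3)}$ with $v^{k_0}$ followed by the $|k-k_0|^2\e^{-ct|k-k_0|^{3/2}}$ estimates all match.

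There is, however, a gap in your argument for part \eqref{lem_m^k_0_i}. On the region $|z|\geq M$ you assert that $m^{k_0}$ is controlled by ``the bounded matrices $m^{[E_1,k_0]}G^{-1}$ and $G$'', but $m^{[E_1,k_0]}$ is \emph{not} bounded near $k_0$: by \eqref{minftyatk0} it carries a $(k-k_0)^{-1/4}$ singularity. Since $|z|\geq M$ is the $t$-dependent annulus $\{|k-k_0|\gtrsim Mt^{-2/3}\}$, this only gives $|m^{[E_1,k_0]}|\lesssim t^{1/6}$, not $\bigO(1)$. On the complementary disc $|z|\leq M$ the claimed ``$z^{\pm\sigma_3/4}$-scaling inside $m^{\mathrm{Ai}}(z)$'' is not available either: that scaling is a feature of the \emph{large}-$z$ asymptotics \eqref{large z asymptotics Airy}, whereas for bounded $z$ the matrix $m^{\mathrm{Ai}}(z)$ is a generic bounded invertible matrix with no special column structure, so $E\cdot m^{\mathrm{Ai}}(z)\cdot G$ inherits the $t^{1/6}$ growth of the first column of $E$ visible in \eqref{Ek0_order0}. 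You correctly flag this as ``the conceptual point'' at the end, but the compensation you describe does not occur in the way you state. The paper, for its part, offers no argument here at all --- it simply asserts that \eqref{lem_m^k_0_i_1} ``follows immediately'' from Lemmas \ref{lem_Airy} and \ref{lem_E} --- so you are not omitting a step that the paper supplies; but the justification you sketch does not establish the uniform-in-$t$ bound as written.
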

\begin{proof}
	With the help of Lemmas \ref{lem_Airy} and \ref{lem_E} it follows immediately that $m^{k_0}(\xi,t,\cdot)$ is analytic in $D_\varepsilon(k_0)\setminus \Gamma^{k_0,\varepsilon}$ for all $(\xi,t)\in I_{\mathcal M} \times (0,\infty)$, $m^{k_0}(\xi,t,k)$ is invertible for all $(\xi,t,k)\in  I_{\mathcal M} \times (0,\infty) \times \overbar{D_\varepsilon(k_0)}\setminus \overbar{\Gamma^{k_0,\eps}}$, and that \eqref{lem_m^k_0_i} is satisfied. 
	
	\eqref{lem_m^k_0_ii}
	By the definition of $z$, see \eqref{def_z}, and since our construction is uniform with respect to $\xi\in I_{\mathcal M}$ (particularly the radius $\varepsilon$ of the disc $D_{\varepsilon}(k_0)$ is uniformly chosen for all $\xi\in I_{\mathcal M}$) we find a uniform constant $c>0$ such that $|z(\xi,t,k)| \geq c t^{2/3}$ for all $(\xi,t,k)\in I_{\mathcal M}\times (0,\infty) \times \partial D_{\varepsilon}(k_0)$.
	Consequently, \eqref{large z asymptotics Airy} implies that uniformly for $(\xi,k)\in  I_{\mathcal M} \times \partial D_{\varepsilon}(k_0)$,
	\begin{align}
		\begin{aligned} \label{pr_lem_m^k_0_ii_1}
			m^{k_0}(\xi,t,k) \,  &\big(m^{[E_1,k_0]}(\xi,k)\big)^{-1} \\
			&=m^{[E_1,k_0]}(\xi,k) \,
			G^{-1}(\xi,k) \,
			\bigg(I + \sum_{j=1}^\infty \frac{m^{\mathrm{Ai}}_j}{(t^{2/3} f(\xi,k))^{3j/2}}\bigg)  \, 
			G(\xi,k) \, \big(m^{[E_1,k_0]}(\xi,k)\big)^{-1} 
		\end{aligned}
	\end{align}
	as $t\to\infty$, where the complex coefficients $m^{\mathrm{Ai}}_j$ are given by \eqref{def_m^Ai_j}.
	We define the family of functions
	\begin{align*}
		A_j \colon  I_{\mathcal M} \times  \overbar{D_{\varepsilon}(k_0)} \to\C,
		\quad
		A_j \coloneqq 	f^{-3j/2} \,	m^{[E_1,k_0]} \,	G^{-1} \, m^{\mathrm{Ai}}_j  \,	G \, \big(m^{[E_1,k_0]}\big)^{-1}, \quad j\in \N_{\geq 1},
	\end{align*}
	where each $A_j(\xi,\cdot)\colon \overbar{D_{\varepsilon}(k_0)} \to\C$ is a rational function with precisely one pole at $k_0$; it holds that
	\begin{equation*}
		\sup\{|A_j(\xi,k)|\colon (\xi,k,j)\in  I_{\mathcal M} \times \partial D_{\varepsilon}(k_0) \times \N_{\geq 1} \} < \infty.
	\end{equation*}
	Thus $\sum_{j=1}^\infty t^{-j} A_j(\xi,k)$ converges absolutely for all large enough $t$, say $t>2$, and uniformly for all $(\xi,k)\in  I_{\mathcal M} \times \partial D_{\varepsilon}(k_0)$.
	From \eqref{pr_lem_m^k_0_ii_1} we infer that  
	\begin{align*}
		m^{k_0}(\xi,t,k) \, \big(m^{[E_1,k_0]}(\xi,k)\big)^{-1} -I =   \sum_{j=1}^\infty \frac{A_j(\xi,k)}{t^j} \quad \text{as} \quad t\to\infty,
	\end{align*}
	uniformly for all $(\xi,k)\in  I_{\mathcal M} \times \partial D_{\varepsilon}(k_0)$, which proves \eqref{lem_m^k_0_ii_1}, and furthermore we infer that 
	\begin{align} \label{res_A_1_lim}
		-\frac{1}{2\pi  i}& \int_{\partial D_{\varepsilon}(k_0)}   \Big(m^{k_0}(\xi,t,k) \big(m^{[E_1,k_0]}(\xi,t,k)\big)^{-1} - I\Big)   \, \dx k 
		= t^{-1} \res_{k_0} A_1(\xi,k) + \bigO(t^{-2}) 
	\end{align}
	as $t\to\infty$, uniformly for $\xi\in I_{\mathcal M}$.

	It remains to determine the residue $\res_{k_0} A_1(\xi,k)$.
	Let 
	\begin{equation*}
		\tilde E(\xi,k) \coloneqq  E(1,\xi,k) = m^{[E_1,k_0]}(\xi,k) \,\tilde G^{-1}(\xi,k) \, N^{-1} \, f(\xi,k)^{\frac{\sigma_3}{4}}.
	\end{equation*}
	We already know that $\tilde E$ is analytic near $k_0$. It is also invertible, thus we can write 
	\begin{equation} \label{A_1}
		A_1 = \tilde E f^{-\frac{\sigma_3}{4}} N \frac{m_1^{\mathrm{Ai}}}{f^{3/2}} N^{-1}  f^{\frac{\sigma_3}{4}} \tilde E^{-1}.
	\end{equation}  
	In the following, we provide the expansions of the involved roots for $k\to k_0$ approaching from the upper half-plane; the corresponding calculations for the case that $k\to k_0$ approaches from the lower half-plane are similar and lead to the same result.  
	It follows from \eqref{fk0neark0} that 
	\begin{equation*}
		\begin{cases}
			f(\xi,k)^{1/4} =  \e^{-\frac{ i\pi}{4}} 3^{\frac{1}{6}} (k_0-E_1)^{\frac{1}{12}} (k-k_0)^{1/4}
			\Big(1 + \frac{k-k_0}{12(k_0-E_1)} + \bigO(|k-k_0|^2)\Big) \\
			f(\xi,k)^{3/2} =  \e^{\frac{ i\pi}{2}} 3 (k_0-E_1)^{\frac{1}{2}} (k-k_0)^{3/2}
			\Big(1 + \frac{k-k_0}{2(k_0-E_1)} + \bigO(|k-k_0|^2)\Big)
		\end{cases}
		 \text{as $\C_+\ni k \to k_0$,}
	\end{equation*}
	and thus the middle factor $f^{-\frac{\sigma_3}{4}} N \frac{m_1^{\mathrm{Ai}}}{f^{3/2}} N^{-1}  f^{\frac{\sigma_3}{4}}$ in \eqref{A_1} satisfies
	\begin{equation} \label{Xf}
		\begin{pmatrix}
			0 & \frac{5  i}{144 \cdot 3^{1/3} (k_0-E_1)^{2/3} (k-k_0)^2} - \frac{5 i}{216 \cdot 3^{1/3} (k_0-E_1)^{5/3} (k-k_0)} \\
			-\frac{7  i}{48  \cdot 3^{2/3} (k_0-E_1)^{1/3} (k-k_0)} & 0
		\end{pmatrix}
		+ \bigO(1)
	\end{equation} 
	as $\C_+\ni k \to k_0$.
	To calculate $\res_{k_0} A_1(k)$ it remains to determine $\tilde E(k_0)$ and the $k$-derivative $\tilde E'(k_0)$. 
	The matrix $\tilde E(k_0)$ equals the leading order term in \eqref{Ek0_order0} with $t=1$. We refrain from providing the bulky expression for $\tilde E'(k_0)$, but note that it is not necessary to fully determine all the entries of $\tilde E'(k_0)$ in order to calculate $\res_{k_0}A_1(\xi,k)$; in particular the coefficient of $(k-k_0)^{3/2}$ in the series expansion of $\mathcal{D}$ is not relevant for the sake of determining $\res_{k_0}A_1(\xi,k)$. By plugging \eqref{Xf} and the matrices $\tilde E(k_0)$ and $\tilde E'(k_0)$ into \eqref{A_1} we obtain that
	\begin{equation} \label{res_A_1}
		\underset{k=k_0}{\res}\, A_1(\xi,k) = \frac{1}{288}
		\begin{pmatrix}
			12 i C_{k_0}^2 - \frac{7 i}{k_0-E_1}  &   
			12 C_{k_0}^2  - \frac{24}{\sqrt{k_0-E_1}} C_{k_0} +\frac{7}{k_0-E_1}   \\
			12 C_{k_0}^2  + \frac{24}{\sqrt{k_0-E_1}} C_{k_0} +\frac{7}{k_0-E_1}  & - 12 i C_{k_0}^2 + \frac{7 i}{k_0-E_1}
		\end{pmatrix},
	\end{equation} 
	which verifies \eqref{lem_m^k_0_ii_2} and finishes the proof of assertion \eqref{lem_m^k_0_ii}.

	\eqref{lem_m^k_0_iii}	On $\Gamma^{k_0,\varepsilon}$, the jump matrix $v^{(3)}-v^{k_0}$ satisfies
	\begin{flalign*}
		\begin{aligned}
			&v^{(3)}_1-v^{k_0}_1 = 
			\begin{pmatrix}
				0 & 0 \\
				-\big[\mathcal{D}^{-2}r_{a} - \mathfrak{D}^{-2} \mathfrak r\big]  \e^{2 i t g} & 0
			\end{pmatrix}, 
			&&v^{(3)}_2 - v^{k_0}_2 =
			\begin{pmatrix}
				0&0\\0& \Big[ \frac{\mathcal{D}_+}{\mathcal{D}_-} - \frac{\mathfrak{D}_+}{\mathfrak{D}_-}\frac{1}{|\mathfrak r|} \Big] \e^{-2 i tg_+}
			\end{pmatrix},
			\\
			&v^{(3)}_3 - v^{k_0}_3 =
			\begin{pmatrix}
				0 & \big[\mathcal{D}^2 r^*_{a}  - \mathfrak{D}^2 \mathfrak r^* \big]\e^{-2 i t g} \\
				0 & 0
			\end{pmatrix},  
			&&v^{(3)}_4 - v^{k_0}_4 =
			\begin{pmatrix}
				-|r_{r}|^2 & \mathcal{D}^2 r^*_{r} \e^{-2 i t g} \\
				-\mathcal{D}^{-2} r_{r} \e^{2 i t g} & 0
			\end{pmatrix}.
		\end{aligned}
	\end{flalign*}
	We recall that $\mathcal{D}$ is uniformly bounded on $ I_{\mathcal M} \times D_\varepsilon(k_0)$ by Lemma \ref{lem_D}. Furthermore, $r_{a}(\xi,t,\cdot)$, $r_{r}(\xi,t,\cdot)$ and $\mathfrak r(\xi,\cdot)$  are bounded on their respective domains intersected with $D_\varepsilon(k_0)$ for all $(\xi,t)\in  I_{\mathcal M} \times (0,\infty)$, hence $v^{(3)} - v^{k_0}$ belongs to $L^p(\Gamma^{k_0,\varepsilon})$, $p=1,2,\infty$, for all $(\xi,t)\in  I_{\mathcal M} \times (0,\infty)$.
	Since $|\e^{\pm 2  i t g}|=1$ for all $(\xi,t,k)\in  I_{\mathcal M} \times (0,\infty) \times \Gamma^{k_0,\varepsilon}_4$, it follows directly from Lemma \ref{lem_an_appr_r1}\eqref{lem_an_appr_r1_iii} that 
	\begin{equation*} 
		\|v^{(3)} - v^{k_0}\|_{L^p(\Gamma^{k_0,\varepsilon}_4)} = \bigO(t^{-2}) \quad \text{as} \quad t\to\infty, \quad p=1,2,\infty,
	\end{equation*}
	uniformly for $\xi\in I_{\mathcal M}$.
	
	Next we show that $\|v^{(3)} - v^{k_0}\|_{L^p(\Gamma^{k_0,\varepsilon}_1)}$ satisfies the asserted asymptotics; 
	symmetry implies same behavior for $\|v^{(3)} - v^{k_0}\|_{L^p(\Gamma^{k_0,\varepsilon}_3)}$.
	Uniformly for $\xi\in I_{\mathcal M}$, it holds that
	\begin{equation*}
		\begin{cases}
			\mathfrak r(\xi,k) =   r(k_0) - r'(k_0) (k-k_0) + \bigO(|k-k_0|^2) \\
			\mathcal{D}(\xi,k) - \mathfrak{D}(\xi,k) = \bigO(|k-k_0|^2)
		\end{cases}
		\quad \text{as} \quad \Gamma^{k_0,\varepsilon}_1 \ni k \to k_0,
	\end{equation*}
	cf.~\eqref{frak_r} and \eqref{frakD_asympt}. 
	Therefore we deduce from Lemma \ref{lem_D}\eqref{lem_D_k0} and Lemma \ref{lem_an_appr_r1}\eqref{lem_an_appr_r1_ii}  that
	\begin{align*}
		\big|\mathcal{D}^{-2} r_{a}(\xi,t,k)  - \mathfrak{D}^{-2} \mathfrak r(\xi,k)\big| 
		\leq  C |k-k_0|^2 \, \big( \e^{\frac{t}{4}| \im g(\xi,k)|} + 1\big)
	\end{align*}
	uniformly for all $(\xi,t,k)\in I_{\mathcal M} \times (0,\infty) \times \Gamma^{k_0,\eps}_1$.
	Consequently,   
	\begin{align*}
		|v^{(3)}_1(\xi,t,k) -v^{k_0}_1(\xi,t,k)| &\leq C |k-k_0|^2 \, \e^{-t| \im g(\xi,k)|} \leq  C |k-k_0|^2 \, \e^{- K t |k-k_0|^{3/2}},
	\end{align*}
	uniformly for all $(\xi,t,k)\in I_{\mathcal M} \times (0,\infty) \times \Gamma^{k_0,\eps}_1$.
	The constant $K>0$ can be chosen uniformly for all $\xi\in I_{\mathcal M}$, e.g.,
	\begin{equation*} 
		K\coloneqq \inf_{\xi\in I_{\mathcal M}}\sqrt{k_0-E_1}>0,
	\end{equation*} 
	by recalling that 
	$g(\xi,k) = 2\sqrt{k_0-E_1}(k-k_0)^{3/2} + \bigO\big(|k-k_0|^{5/2}\big)$ as $k\to k_0$.
	We infer that uniformly for all $\xi\in  I_{\mathcal M}$,
	\begin{align*}
		\big\|v^{(3)}_1 -v^{k_0}_1\big\|_{L^1(\Gamma^{k_0,\eps}_1)} &\leq C \int^\eps_0 u^2 \e^{-K t u^{3/2}} \, \dx u = \bigO(t^{-2}), 
		\\
		\big\|v^{(3)}_1 -v^{k_0}_1\big\|^2_{L^2(\Gamma^{k_0,\eps}_1)} &\leq C \int^\eps_0 \Big( u^2 \e^{-K t u^{3/2}} \Big)^2 \, \dx u = \bigO(t^{-10/3}).
	\end{align*}
	Furthermore, substituting $u=v t^{-2/3}$ yields that
	\begin{align*}
		\sup_{0\leq u \leq \eps}u^2 \e^{-K t u^{3/2}} \leq  \sup_{u\geq 0}u^2 \e^{-K t u^{3/2}}  = t^{-\frac{4}{3}} \sup_{v\geq 0}v^2 \e^{-K v^{3/2}}  = \bigO(t^{-\frac{4}{3}}).
	\end{align*}
	
	The estimation of $\|v^{(3)} - v^{k_0}\|_{L^p(\Gamma^{k_0,\varepsilon}_2)}$ works similarly by noting that uniformly for $\xi\in I_{\mathcal M}$,
	\begin{equation*}
		\frac{\mathcal{D}_+(\xi,k)}{\mathcal{D}_-(\xi,k)} - \frac{\mathfrak{D}_+(\xi,k)}{\mathfrak{D}_-(\xi,k)}\frac{1}{|\mathfrak r(\xi,k)|} = \bigO(|k-k_0|^2)
		\quad \text{as} \quad \Gamma^{k_0,\varepsilon}_2 \ni k \to k_0,
	\end{equation*}
	and $|\e^{-2 i tg_+(\xi,k)}| \leq C  \e^{- K t |k-k_0|^{3/2}}$ on $ I_{\mathcal M} \times (0,\infty) \times \Gamma^{k_0,\varepsilon}_2$ for uniform constants $C,K>0$.
	In summary, we have verified \eqref{lem_m^k_0_iii_1}--\eqref{lem_m^k_0_iii_3}, which finishes the proof of statement \eqref{lem_m^k_0_iii}.
\end{proof}

\subsection{Long-time asymptotics} \label{sec_solution_asymp}
Define $m^{\mathrm{app}}\colon  I_{\mathcal M} \times (0,\infty) \times\C\setminus\big(\Gamma^{k_0,\eps}\cup(E_1,k_0)\big) \to \C^{2\times2}$ by 
\begin{equation*} 
	m^{\mathrm{app}}(\xi,t,k) \coloneqq
	\begin{cases}
		m^{k_0}(\xi,t,k), & k\in D_{\varepsilon}(k_0)\setminus \Gamma^{k_0,\eps},\\
		m^{[E_1,k_0]}(\xi,t,k), & \text{elsewhere in}~\C\setminus (E_1,k_0).
	\end{cases}
\end{equation*} 
Let $\hat \Gamma \coloneqq \Gamma^{(3)} \cup \partial D_{\varepsilon}(k_0)$, where the circle $\partial D_{\varepsilon}(k_0)$ is oriented clockwise, see Figure~\ref{fig:Gamma_hat}. 
Let $\hat{\Gamma}_\star$ denote the contour $\hat{\Gamma}$ with the six points of self-intersection and the point $E_1$ removed. 
The function $\hat{m}$ defined by
\begin{equation} \label{m_hat}
	\hat m \colon  I_{\mathcal M} \times (0,\infty) \times \C \setminus \hat{\Gamma} \to \C^{2\times2}, \quad
	\hat m(\xi,t,k) \coloneqq m^{(3)}  (m^{\mathrm{app}})^{-1},
\end{equation} 
satisfies the following RH problem for $(\xi,t)\in I_{\mathcal M} \times (0,\infty)$:
\begin{itemize}
\item $\hat{m}(\xi,t,\cdot)$ is analytic in $\C \setminus \hat{\Gamma}$;

\item $\hat{m}(\xi,t,\cdot)$ has continuous boundary values on $\hat{\Gamma}_\star$ satisfying the jump relation
\begin{align}\label{RHP_m_hat}
\hat{m}_+(\xi,t,k) = \hat{m}_-(\xi,t,k) \hat{v}(\xi,t,k) \quad \text{for} \; k \in \hat{\Gamma}_\star,
\end{align}
where
\begin{equation*} 
	\hat v  \coloneqq
	\begin{cases}
		m^{[E_1,k_0]}_- v^{(3)} \big(m^{[E_1,k_0]}_+\big)^{-1}, & k\in \hat \Gamma \setminus \overbar{D_{\varepsilon}(k_0)}, \\
		m^{k_0} \big(m^{[E_1,k_0]}\big)^{-1}, & k \in \partial D_{\varepsilon}(k_0), \\
		m^{k_0}_- v^{(3)} \big( m^{k_0}_+ \big)^{-1}, & k\in \hat\Gamma\cap D_{\varepsilon}(k_0);
	\end{cases}
\end{equation*} 

\item $\hat{m}(\xi,t,k)=I+ \bigO(k^{-1})$ as $k\to \infty$;

\item $\hat{m}(\xi,t,k)= \bigO(1)$ as $k\to E_1$;

\item $\hat{m}(\xi,t,k) = \bigO(1)$ as $k$ approaches any of the six self-intersection points of $\hat{\Gamma}$.
\end{itemize}
All the above properties of $\hat{m}$ follow easily from the properties of $m^{(3)}$, $m^{[E_1, k_0]}$, and $m^{k_0}$, except for the boundedness of $\hat{m}$ as $k \to E_1$ which requires a calculation. Let us verify that $\hat{m}$ stays bounded as $k \in \C_+$ approaches $E_1$; the proof for $k \in \C_-$ is similar. From the behavior \eqref{m3atE1} of $m^{(3)}$ at $E_1$ and the definition of $\hat{m}$, we have
\begin{equation*}
	\hat{m}(k) = \mathcal{D}_\infty^{\sigma_3}  m^{(1)}(k) G^{(3)}(k)
	(k-E_1)^{-\sigma_3/4} \e^{-d_0\sigma_3}\big(I+\bigO(\sqrt{k-E_1})\big)(m^{[E_1,k_0]})^{-1}(k)
\end{equation*}
as $\C_+ \ni k\to E_1$. The function $m^{(1)}(k)$ has continuous boundary values at $E_1$. Moreover, as $k \in \C_+$ tends to $E_1$, we have
\begin{equation*}
	G^{(3)}(k) = \begin{pmatrix}
		1 &   - r_{2,a} \\
		0 & 1
	\end{pmatrix} \big(I + \bigO(\sqrt{k-E_1})\big),
\end{equation*}
where the singular behavior of $r_{2,a}=h+f_a$ stems from the function $h$ defined in \eqref{hdef}. In view of \eqref{expansionr2} and \eqref{leadingorderr2}, it follows that
\begin{align}\label{asymptoticsr2a}
r_{2,a}(k)= h(k)+\bigO(1)= \frac{\overbar{q_{1,0}}}{q_{1,0} \overbar{q_{1,1}} + \overbar{q_{1,0}} q_{1,1}}\frac{1}{(-i)\sqrt{k - E_1}}+\bigO(1), \qquad k\to E_1, ~ k \in \C_+,
\end{align}
where the principal branch is used for the root.
Using the fact that $|q_{1,0}|=1$ and the expansion
\begin{equation*}
(m^{[E_1,k_0]})^{-1}(k) = \frac{e^{\frac{\pi i}{4}} (k_0 - E_1)^{1/4}}{2(k-E_1)^{1/4}} \begin{pmatrix} 1 & i \\ -i & 1 \end{pmatrix}\big(I + \bigO(\sqrt{k-E_1})\big), \qquad k\to E_1, ~ k \in \C_+,	
\end{equation*}
a calculation gives
\begin{equation*}
	G^{(3)}(k) (k-E_1)^{-\sigma_3/4} \e^{-d_0\sigma_3}(m^{[E_1,k_0]})^{-1}(k)
	= \bigO(1), \qquad k\to E_1, ~ k \in \C_+.
\end{equation*}
It follows that $\hat{m} = \bigO(1)$ as $\C_+ \ni k \to E_1$.

\begin{figure}[h!] \centering
	\begin{overpic}[width=.43\textwidth]{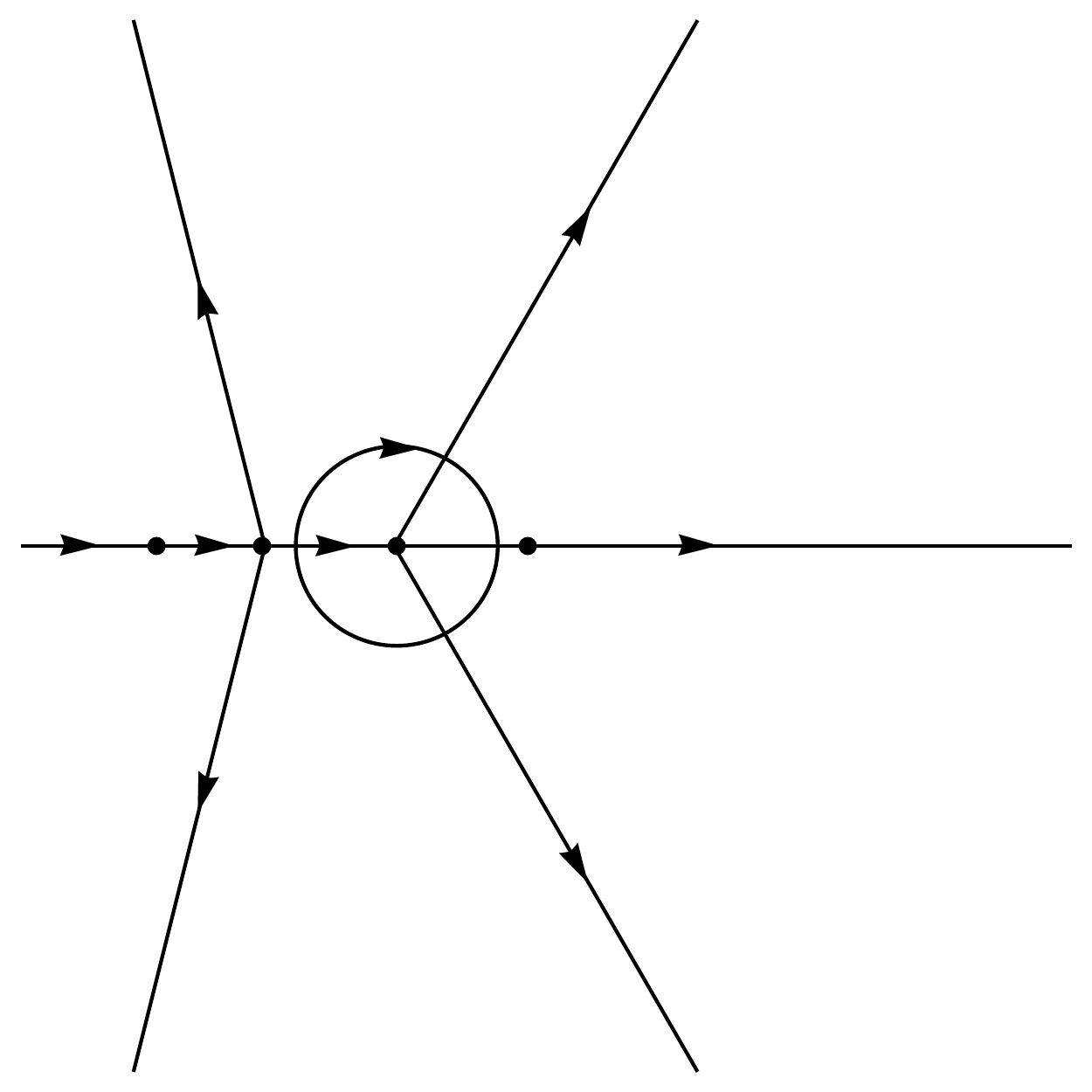}
		\put(11,45){\footnotesize{$E_1$}} 
		\put(32.5,45.5){\footnotesize{$k_0$}}   
		\put(17,53){\footnotesize{$\mu_-$}}    
		\put(48,45){\footnotesize{$E_2$}} 
	\end{overpic}
	\caption{The contour $\hat \Gamma = \Gamma^{(3)}\cup \partial D_{\varepsilon}(k_0)$.} 
	\label{fig:Gamma_hat}
\end{figure}

\subsubsection{Asymptotics of $\hat m$}
Our results from the previous sections suggest that 
\begin{equation*} 
	\hat w\colon  I_{\mathcal M} \times (0,\infty) \times \hat\Gamma \to \C^{2\times2} ,\quad 	\hat w \coloneqq \hat v - I
\end{equation*} 
is small for sufficiently large $t>0$; the following lemma makes this precise.

\begin{lemma}  \label{lem_w_hat}
	For $p\in\{1,2,\infty\}$ and $(\xi,t)\in I_{\mathcal M} \times (0,\infty)$, the function $\hat w(\xi,t,\cdot)\colon \hat \Gamma\to\C^{2\times2}$ lies in $L^p(\hat\Gamma)$, and	
	uniformly for $\xi\in I_{\mathcal M}$ it holds that
	\begin{subequations}
	\begin{align}
		\phantom{XX}
		&\| \hat w  \|_{L^p(\hat\Gamma\setminus (\overbar{D_\varepsilon(k_0)}\cup\R) )} = \bigO(\e^{- c t}),   \qquad c>0, \label{lem_w_hat_1} \\
		&\| \hat w  \|_{L^p(\R\setminus D_\varepsilon(k_0))} = \bigO(t^{-2}),    \label{lem_w_hat_2} \\ 
		&\| \hat w  \|_{L^p(\partial D_\varepsilon(k_0) )} = \bigO(t^{-1}),     \label{lem_w_hat_3} \\
		&\| \hat w  \|_{L^1(\Gamma^{k_0,\varepsilon})} = \bigO(t^{-2}),  \label{lem_w_hat_4} \\
		&\| \hat w  \|_{L^2(\Gamma^{k_0,\varepsilon})} = \bigO(t^{-5/3}),  \label{lem_w_hat_5} \\
		&\| \hat w  \|_{L^\infty(\Gamma^{k_0,\varepsilon})} = \bigO(t^{-4/3}). \label{lem_w_hat_6}
	\end{align}
	\end{subequations}
	In particular, uniformly for $\xi\in I_{\mathcal M}$ it holds that
	\begin{equation} \label{lem_w_hat_7}
		\| \hat w  \|_{L^p(\hat\Gamma)} = \bigO(t^{-1}).
	\end{equation}
\end{lemma}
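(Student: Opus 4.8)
The plan is to prove the six estimates \eqref{lem_w_hat_1}--\eqref{lem_w_hat_6} by partitioning the contour $\hat\Gamma$ into the pieces on which the jump matrix $\hat v$ has a different origin, and then to combine them via the elementary fact that $\|\cdot\|_{L^2} \le \|\cdot\|_{L^1}^{1/2}\|\cdot\|_{L^\infty}^{1/2}$ and the decomposition $\hat\Gamma = (\hat\Gamma\setminus(D_\varepsilon(k_0)\cup\R)) \cup (\R\setminus D_\varepsilon(k_0)) \cup \partial D_\varepsilon(k_0) \cup (\Gamma^{k_0,\varepsilon})$ to deduce \eqref{lem_w_hat_7}. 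On each piece, $\hat w = \hat v - I$ is by definition a conjugation of $v^{(3)} - I$ (or of $m^{k_0}(m^{[E_1,k_0]})^{-1} - I$ on $\partial D_\varepsilon(k_0)$) by $m^{[E_1,k_0]}$ or $m^{k_0}$, both of which are uniformly bounded with uniformly bounded inverses away from their singular points (Lemma \ref{lem_minfty} and Lemma \ref{lem_m^k_0}\eqref{lem_m^k_0_i}); hence it suffices to estimate the corresponding $L^p$-norms of the un-conjugated quantities, which have already been prepared in Lemmas \ref{lem_D}, \ref{lem_an_appr_r1}, \ref{lem_an_appr_r2}, and \ref{lem_m^k_0}.

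Carrying this out piece by piece: (1) On $\hat\Gamma \setminus (D_\varepsilon(k_0)\cup\R)$ — i.e. the non-real lens-boundary arms $\Gamma^{(3)}_1, \Gamma^{(3)}_3, \Gamma^{(3)}_7, \Gamma^{(3)}_8$ outside the disc — the jump matrix $v^{(3)}$ contains only the analytic pieces $r_a, r_a^*, r_{2,a}, r_{2,a}^*$ multiplied by $\e^{\pm 2itg}$, and the signature table for $\im g$ (Figure \ref{fig:g_function}) together with the bounds on $r_a, r_{2,a}$ in Lemmas \ref{lem_an_appr_r1}\eqref{lem_an_appr_r1_ii} and \ref{lem_an_appr_r2}\eqref{lem_an_appr_r2_ii} shows $|\hat w| \le C\e^{-ct}$ for some $c>0$, uniformly in $\xi$; since the relevant arms near $E_1$ contribute the $|k-E_1|^{-1/2}$ growth of $r_{2,a}$, one notes this is an integrable singularity and that $\im g_+$ is bounded below by a positive multiple of $t$ away from both $E_1$ and $k_0$, so the exponential still wins, giving \eqref{lem_w_hat_1}. (2) On $\R \setminus D_\varepsilon(k_0)$, which is $\Gamma^{(3)}_2 \cup \Gamma^{(3)}_4 \cup \Gamma^{(3)}_5 \cup \Gamma^{(3)}_6$ minus the disc, the jump matrices $v^{(3)}_2, v^{(3)}_4, v^{(3)}_5, v^{(3)}_6$ involve only the remainders $r_r, r_r^*, r_{2,r}, r_{2,r}^*$ (and the combination $rr_{2,a}+r^*r^*_{2,a}+1$ on $\Gamma^{(3)}_6$), whose $L^p$-norms are $\bigO(t^{-2})$ by Lemmas \ref{lem_an_appr_r1}\eqref{lem_an_appr_r1_iii}, \ref{lem_an_appr_r2}\eqref{lem_an_appr_r2_iii}, and \ref{lem_an_appr_r2}\eqref{lem_an_appr_r2_iv}; conjugating by the bounded $m^{[E_1,k_0]}$ preserves this, yielding \eqref{lem_w_hat_2}. (3) On $\partial D_\varepsilon(k_0)$, $\hat v = m^{k_0}(m^{[E_1,k_0]})^{-1}$ and $\hat w = \hat v - I = \bigO(t^{-1})$ in $L^\infty$ by Lemma \ref{lem_m^k_0}\eqref{lem_m^k_0_ii}, equation \eqref{lem_m^k_0_ii_1}; since $\partial D_\varepsilon(k_0)$ has finite length this gives the $L^1$ and $L^2$ bounds as well, hence \eqref{lem_w_hat_3}. (4) On $\Gamma^{k_0,\varepsilon}$, $\hat w = m^{k_0}_-(v^{(3)}-v^{k_0})(m^{k_0}_+)^{-1}$, so boundedness of $(m^{k_0})^{\pm 1}$ from Lemma \ref{lem_m^k_0}\eqref{lem_m^k_0_i} reduces the three estimates \eqref{lem_w_hat_4}--\eqref{lem_w_hat_6} directly to \eqref{lem_m^k_0_iii_1}--\eqref{lem_m^k_0_iii_3}.

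Finally \eqref{lem_w_hat_7} follows: for $L^1$ and $L^\infty$ take the maximum of the contributions from the four pieces — the slowest are $\bigO(t^{-1})$ (from $\partial D_\varepsilon(k_0)$ in both cases, and also from $\Gamma^{k_0,\varepsilon}$ for $L^\infty$ via \eqref{lem_w_hat_6} which is $o(t^{-1})$); for $L^2$ use that each piece is at worst $\bigO(t^{-5/3})$ or better, hence a fortiori $\bigO(t^{-1})$, and sum. The main obstacle — though it has been isolated and dealt with in the preparatory lemmas — is the behavior near the branch point $E_1$: there $r_{2,a}$ blows up like $|k-E_1|^{-1/2}$ (see \eqref{asymptoticsr2a}), so the $L^\infty$ bound \eqref{lem_w_hat_1} is not literally uniform up to $E_1$ on the arms $\Gamma^{(3)}_7, \Gamma^{(3)}_8$; the resolution is that these arms are bounded away from $E_1$ by construction of the contour (the analytic continuation regions $U^{(3)}_j$ and the choice of $\Gamma^{(3)}$), while the segment $(E_1,\mu_-) = \Gamma^{(3)}_6$ where the singularity is genuinely present is real and is handled not through $r_{2,a}$ alone but through the cancellation encoded in \eqref{rr2aestimate}, i.e. Lemma \ref{lem_an_appr_r2}\eqref{lem_an_appr_r2_iv}. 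One should therefore state carefully, when treating \eqref{lem_w_hat_1}, that the arms in question terminate at $\mu_-$ (not at $E_1$), so that $r_{2,a}$ is bounded there and the exponential decay of $\e^{\pm 2itg}$ is uniform.
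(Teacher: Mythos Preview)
Your overall strategy is the same as the paper's, and most of the pieces are handled correctly. However, there is one genuine gap and one minor error.

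The gap is your repeated claim that the conjugation on the real-axis portions is by the ``bounded $m^{[E_1,k_0]}$''. This is false: $m^{[E_1,k_0]}$ and its inverse are both $\bigO((k-E_1)^{-1/4})$ at $E_1$ (Lemma~\ref{lem_minfty}), so naive conjugation could cost a factor $(k-E_1)^{-1/2}$ and destroy the $L^\infty$ (and potentially $L^1$) bounds near $E_1$ on $\Gamma^{(3)}_5=(-\infty,E_1)$ and $\Gamma^{(3)}_6=(E_1,\mu_-)$. You correctly identify the blow-up of $r_{2,a}$ as an obstacle, but that is irrelevant here since $r_{2,a}$ does not appear in $v^{(3)}_5$ or $v^{(3)}_6$ except through the harmless combination in \eqref{rr2aestimate}; the real obstacle is the singularity of the \emph{conjugating} matrices. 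The paper closes this gap by an explicit computation: on $(E_1,\mu_-)$ one uses $\Delta_{0+}=i\Delta_{0-}$ to write out $m^{[E_1,k_0]}_-\,(v^{(3)}_6-v^{[E_1,k_0]})\,(m^{[E_1,k_0]}_+)^{-1}$ as $(rr_{2,a}+r^*r_{2,a}^*+1)\e^{-2itg_+}$ times a matrix whose entries involve $\mathcal D_+\mathcal D_-^{-1}(\Delta_{0-}^{\pm 2}\pm \Delta_{0-}^{\mp 2})$, and then Lemma~\ref{lem_D}\eqref{lem_D_iii} shows that the fourth-root singularities of $\mathcal D$ and $\Delta_0$ cancel, leaving $\hat w$ bounded as $k\searrow E_1$. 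A similar (easier) check is needed as $k\nearrow E_1$. Without this, the assertion that $\hat w\in L^\infty(\R\setminus D_\varepsilon(k_0))$ uniformly is unjustified.

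The minor error: $\Gamma^{(3)}_2=(\mu_-,k_0)$ does not involve remainders at all. The relevant quantity there is $v^{(3)}_2-v^{[E_1,k_0]}$, whose only nonzero entry is $\mathcal D_+\mathcal D_-^{-1}\e^{-2itg_+}$; on $(\mu_-,k_0-\varepsilon)$ one has $\im g_+>0$ bounded away from zero, giving exponential decay rather than $\bigO(t^{-2})$. This should be grouped with the exponentially decaying contributions, not with the remainder terms.
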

\begin{proof}
	We have  
	\begin{equation*}
		\hat w = \hat v -I = 
		\begin{cases}
			m^{[E_1,k_0]} ( v^{(3)} - I)\big(m^{[E_1,k_0]}\big)^{-1} & k\in \hat \Gamma \setminus (\overbar{D_{\varepsilon}(k_0)} \cup [E_1,k_0]), \\
			m^{[E_1,k_0]}_- (v^{(3)} - v^{[E_1,k_0]}) \big(m^{[E_1,k_0]}_+\big)^{-1} & k\in  [E_1,k_0] \setminus \overbar{D_{\varepsilon}(k_0)},   \\
			m^{k_0} \big(m^{[E_1,k_0]}\big)^{-1} - I& k \in \partial D_{\varepsilon}(k_0), \\
			m^{k_0}_- (v^{(3)} - v^{k_0}) \big( m^{k_0}_+ \big)^{-1} & k\in \hat\Gamma\cap D_{\varepsilon}(k_0).
		\end{cases}
	\end{equation*}
	With the help of Lemmas \ref{lem_D}, \ref{lem_an_appr_r1}, \ref{lem_an_appr_r2}, \ref{lem_minfty}, \ref{lem_m^k_0}, and by recalling that $1-|r|^2 = \bigO(|k-E_1|^{1/2})$ as $\R\ni k\nearrow E_1$, we deduce that  
	$\hat w (\xi,t,\cdot)$ lies in $L^p(\hat \Gamma)$, $p=1,2,\infty$, with uniform bounds for all $(\xi,t)\in  I_{\mathcal M} \times (0,\infty)$. 
	In particular $\hat w(\xi,t,k)= \bigO(1)$ as $\R\ni k\to E_1$ uniformly for $(\xi,t)\in  I_{\mathcal M} \times (0,\infty)$; for the less obvious limit $\R\ni k\searrow E_1$ we 
	use that $\Delta_{0+}=  i \Delta_{0-}$ on $(E_1,k_0)$ and compute
	\begin{align*}
		\hat w
		&=\frac{(rr_{2,a}+r^*r_{2,a}^*+1) \e^{-2 i tg_+}}4
		\begin{pmatrix}
			- i \mathcal{D}_+\mathcal{D}_-^{-1}(\Delta_{0-}^2-\Delta_{0-}^{-2})&-\mathcal{D}_+\mathcal{D}_-^{-1}(\Delta_{0-}^2+\Delta_{0-}^{-2}-2) \\
			\mathcal{D}_+\mathcal{D}_-^{-1}(\Delta_{0-}^2+\Delta_{0-}^{-2}+2)& i \mathcal{D}_+\mathcal{D}_-^{-1}(\Delta_{0-}^2-\Delta_{0-}^{-2})
		\end{pmatrix}.
	\end{align*}
	Hence the behavior of $\mathcal{D}$, cf.~Lemma \ref{lem_D}\eqref{lem_D_iii}, and $\Delta_0$ implies boundedness as $k\searrow E_1$ uniformly with respect to $(\xi,t)\in  I_{\mathcal M} \times (0,\infty)$. Therefore, Lemma \ref{lem_an_appr_r2}\eqref{lem_an_appr_r2_iii}\&\eqref{lem_an_appr_r2_iv} imply that both $\| \hat w  \|_{L^p(-\infty,E_1)}$ and $\| \hat w  \|_{L^p(E_1,\mu_-)}$ are $\bigO(t^{-2})$ uniformly for $\xi\in I_{\mathcal M}$, $p=1,2,\infty$. 
	From Lemma \ref{lem_an_appr_r1}\eqref{lem_an_appr_r1_iii} we infer that $\| \hat w  \|_{L^p(k_0,\infty)} = \bigO(t^{-2})$ uniformly for $\xi\in I_{\mathcal M}$, $p=1,2,\infty$, while $\| \hat w  \|_{L^p(\mu_-,k_0-\varepsilon)}$ has exponential decay as $t\to\infty$ uniformly with respect to $\xi\in I_{\mathcal M}$.
	Furthermore, due to the (strict) signature of $g$ outside $D_\varepsilon(k_0)\cup\R$ we infer from  Lemma \ref{lem_an_appr_r1}\eqref{lem_an_appr_r1_ii} and Lemma \ref{lem_an_appr_r2}\eqref{lem_an_appr_r2_ii} that $\| \hat w  \|_{L^p(\hat\Gamma\setminus (D_\varepsilon(k_0)\cup\R) )}$ has exponential decay as $t\to\infty$, uniformly with respect to $\xi\in I_{\mathcal M}$. 
	Moreover, \eqref{lem_m^k_0_ii_1} yields \eqref{lem_w_hat_3}, and \eqref{lem_m^k_0_iii_1}--\eqref{lem_m^k_0_iii_3} in conjunction with \eqref{lem_m^k_0_i_1} yield \eqref{lem_w_hat_4}--\eqref{lem_w_hat_6}.
\end{proof}

Let $\mathcal C\colon L^2(\hat\Gamma) \to L^2(\C\setminus\hat\Gamma)$ denote the Cauchy transform given by
\begin{equation*} 
	(\mathcal C f)(k) = \frac{1}{2\pi  i}\int_{\hat\Gamma} \frac{f(z)}{z-k}\, \dx z.
\end{equation*} 
Furthermore, we consider the two operators $\mathcal C_\pm \colon L^2(\hat\Gamma) \to L^2(\hat\Gamma)$ being induced from $\mathcal C$ by taking the one-sided nontangential limits towards $\hat\Gamma$. These mappings lie in the space of bounded linear operators on $L^2(\hat\Gamma)$, denoted by $\mathcal B (L^2(\hat\Gamma))$, and satisfy the operator identity $\mathcal C_+ - \mathcal C_- = I$.
For every fixed $(\xi,t)\in I_{\mathcal M} \times (0,\infty)$ and $f\in L^2(\hat\Gamma) \cup L^\infty(\hat\Gamma)$ let
\begin{equation*} 
	\mathcal C_{\hat w} f \coloneqq \mathcal C_-(\hat w  f) \in  L^2(\hat\Gamma).
\end{equation*} 
In view of Lemma \ref{lem_w_hat}, $\mathcal C_{\hat w}\in \mathcal B (L^2(\hat\Gamma))$ for all $(\xi,t)\in I_{\mathcal M} \times (0,\infty)$, and, by \eqref{lem_w_hat_7},
\begin{equation} \label{C_w_asymptotics}
	\|\mathcal C_{\hat w(\xi,t,\cdot)}\|_{\mathcal B (L^2(\hat\Gamma))} = \bigO(t^{-1})
\end{equation}
uniformly for all $\xi\in I_{\mathcal M}$.
In particular there exists a time $T>0$ such that 
\begin{equation} \label{est_C_hat_w}
	\|\mathcal C_{\hat w(\xi,t,\cdot)}\|_{\mathcal B (L^2(\hat\Gamma))} \leq \frac{1}{2} \quad 
	\text{for all} \quad t\geq T 
\end{equation}
uniformly with respect to $\xi\in I_{\mathcal M}$, which yields that $I - C_{\hat w}$ is invertible:
\begin{equation*} 
	(I - \mathcal C_{\hat w(\xi,t,\cdot)})^{-1} \in \mathcal B (L^2(\hat\Gamma)) \quad 
	\text{for all} \quad t\geq T, \; \xi \in  I_{\mathcal M}.
\end{equation*} 
Consequently, for all $t\geq T$ and $\xi\in I_{\mathcal M}$, 
\begin{equation} \label{mu_sing_int_eq}
	\hat\mu (\xi,t,\cdot) \coloneqq I + (I - \mathcal C_{\hat w(\xi,t,\cdot)})^{-1} \, \mathcal C_{\hat w(\xi,t,\cdot)} I \; \in \; I + L^2(\hat\Gamma).
\end{equation} 
In particular, $\hat \mu - I \in L^2(\hat\Gamma)$  for all $(\xi,t)\in  I_{\mathcal M}\times [T,\infty)$, and a Neumann series estimate together with \eqref{lem_w_hat_7} and \eqref{C_w_asymptotics} implies that, uniformly for $\xi\in I_{\mathcal M}$,
\begin{equation} \label{mu-id}
	\| \hat \mu(\xi,t,\cdot) - I  \|_{L^2(\hat\Gamma)} 
	\leq \frac{C \|  \hat w(\xi,t,\cdot) \|_{L^2(\hat\Gamma)}}{1- \|  \mathcal C_{\hat w(\xi,t,\cdot)} \|_{\mathcal B (L^2(\hat\Gamma))}}
	= \bigO(t^{-1}) \quad \text{as} \quad t \to\infty.
\end{equation}

\begin{proposition} \label{prop_mhat}
	Let $T>0$ be such that \eqref{est_C_hat_w} is satisfied for all $(\xi,t)\in  I_{\mathcal M} \times [T,\infty)$. Then 
	\begin{equation} \label{m_hat_identity}
		\hat m(\xi,t,k) \coloneqq 
		I + \frac{1}{2\pi  i}\int_{\hat\Gamma} \hat\mu (\xi,t,k) \hat w (\xi,t,k) \, \frac{\dx z}{z-k}, \qquad (\xi,t)\in   I_{\mathcal M} \times [T,\infty).
	\end{equation}	
	Furthermore, as $t\to\infty$, 
	\begin{equation*}
	\lim^\angle_{k\to\infty} k(\hat{m}(\xi,t,k) - I) = \frac{t^{-1}}{288}
		\begin{pmatrix}
			12 i C_{k_0}^2 - \frac{7 i}{k_0-E_1}  &   
			12 C_{k_0}^2  - \frac{24 C_{k_0}}{\sqrt{k_0-E_1}}  +\frac{7}{k_0-E_1}   \\
			12 C_{k_0}^2  + \frac{24 C_{k_0}}{\sqrt{k_0-E_1}}  +\frac{7}{k_0-E_1}  & - 12 i C_{k_0}^2 + \frac{7 i}{k_0-E_1}
		\end{pmatrix} 
		+ \bigO(t^{-2})
	\end{equation*}
	uniformly with respect to $\xi\in I_{\mathcal M}$, where $C_{k_0}$ is given by \eqref{C_k0}. 
\end{proposition}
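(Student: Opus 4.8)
The plan is to follow the standard small-norm Riemann--Hilbert argument. First I would establish the integral representation \eqref{m_hat_identity}: since $\hat w(\xi,t,\cdot) \in L^1(\hat\Gamma) \cap L^2(\hat\Gamma)$ for $t \geq T$ by Lemma \ref{lem_w_hat}, and $I - \mathcal C_{\hat w}$ is invertible on $L^2(\hat\Gamma)$ by \eqref{est_C_hat_w}, the function $\hat\mu$ defined in \eqref{mu_sing_int_eq} is well-defined in $I + L^2(\hat\Gamma)$, and the Cauchy integral $I + \mathcal C(\hat\mu\hat w)$ solves the RH problem \eqref{RHP_m_hat} for $\hat m$. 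Uniqueness of the solution (which follows because $\det \hat v \equiv 1$ and the jump contour is, up to finitely many points where boundedness is already known, a nice contour) then forces $\hat m$ to equal this Cauchy integral, which is \eqref{m_hat_identity}.

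Next I would extract the large-$k$ behavior. From \eqref{m_hat_identity},
\begin{equation*}
	\lim^\angle_{k\to\infty} k(\hat m(\xi,t,k) - I) = -\frac{1}{2\pi i}\int_{\hat\Gamma} \hat\mu(\xi,t,z)\hat w(\xi,t,z)\,\dx z,
\end{equation*}
and I would split this integral as
\begin{equation*}
	-\frac{1}{2\pi i}\int_{\hat\Gamma} \hat w\,\dx z - \frac{1}{2\pi i}\int_{\hat\Gamma} (\hat\mu - I)\hat w\,\dx z.
\end{equation*}
The second term is $\bigO(t^{-2})$: by Cauchy--Schwarz it is bounded by $\|\hat\mu - I\|_{L^2(\hat\Gamma)}\|\hat w\|_{L^2(\hat\Gamma)} = \bigO(t^{-1})\cdot\bigO(t^{-1})$ using \eqref{mu-id} and \eqref{lem_w_hat_7}. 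For the first term I would decompose $\hat\Gamma = \partial D_\varepsilon(k_0) \cup \Gamma^{k_0,\varepsilon} \cup (\hat\Gamma\setminus(\overbar{D_\varepsilon(k_0)}\cup\R)) \cup (\R\setminus D_\varepsilon(k_0))$. On $\Gamma^{k_0,\varepsilon}$ the contribution is $\bigO(t^{-2})$ by \eqref{lem_w_hat_4}; on $\hat\Gamma\setminus(\overbar{D_\varepsilon(k_0)}\cup\R)$ it is exponentially small by \eqref{lem_w_hat_1}; on $\R\setminus D_\varepsilon(k_0)$ it is $\bigO(t^{-2})$ by \eqref{lem_w_hat_2}. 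The only surviving contribution at order $t^{-1}$ comes from $\partial D_\varepsilon(k_0)$, where by definition $\hat w = m^{k_0}(m^{[E_1,k_0]})^{-1} - I$, and this integral is exactly the quantity computed in Lemma \ref{lem_m^k_0}\eqref{lem_m^k_0_ii}, equation \eqref{lem_m^k_0_ii_2}. Plugging in that expression yields the stated formula with the matrix of $C_{k_0}$-expressions, with error $\bigO(t^{-2})$ from \eqref{lem_m^k_0_ii_2} itself.

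The main obstacle is really bookkeeping rather than conceptual: one must make sure all the error estimates are uniform in $\xi \in I_{\mathcal M}$ (which Lemma \ref{lem_w_hat} and Lemma \ref{lem_m^k_0} already guarantee) and that the decomposition of $-\frac{1}{2\pi i}\int_{\hat\Gamma}\hat w\,\dx z$ correctly isolates the $\partial D_\varepsilon(k_0)$ piece as the sole $\bigO(t^{-1})$ term while everything else is $\bigO(t^{-2})$ or smaller. A minor subtlety worth checking is the interplay between the $\bigO(t^{-5/3})$ and $\bigO(t^{-4/3})$ bounds on $\Gamma^{k_0,\varepsilon}$ (needed for the $L^2 \to L^2$ operator bound and the Neumann series, i.e., for \eqref{mu-id}) versus the $\bigO(t^{-2})$ bound in $L^1$ (needed to control the linear term in the integral representation); both are supplied by Lemma \ref{lem_w_hat}, so no new work is required, but one should cite them in the right places. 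Once these are in hand, the proposition follows by combining \eqref{m_hat_identity}, \eqref{mu-id}, \eqref{lem_w_hat_7}, and \eqref{lem_m^k_0_ii_2}.
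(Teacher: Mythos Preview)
Your proposal is correct and follows essentially the same approach as the paper's own proof: both obtain \eqref{m_hat_identity} from standard small-norm RH theory, write $\hat\mu\hat w = \hat w + (\hat\mu - I)\hat w$, bound the latter by Cauchy--Schwarz via \eqref{mu-id} and \eqref{lem_w_hat_7}, and isolate the $\partial D_\varepsilon(k_0)$ contribution of the former using Lemma \ref{lem_m^k_0}\eqref{lem_m^k_0_ii} while the remaining pieces are $\bigO(t^{-2})$ by Lemma \ref{lem_w_hat}. The only cosmetic difference is that the paper first splits the contour into $\Gamma' = \hat\Gamma\setminus\partial D_\varepsilon(k_0)$ and $\partial D_\varepsilon(k_0)$ and then applies the $\hat\mu = I + (\hat\mu - I)$ decomposition on each piece, whereas you perform that decomposition globally first and then split the linear term by contour; the estimates invoked are identical.
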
 
\begin{proof}
Standard arguments imply that the unique solution of the RH problem \eqref{RHP_m_hat} is given by \eqref{m_hat_identity} and that
	\begin{equation} \label{m_hat_integral}
		\lim^\angle_{k\to\infty} k(\hat m(\xi,t,k) - I) =  -\frac{1}{2\pi  i}\int_{\hat\Gamma}  \hat\mu (\xi,t,k) \hat w (\xi,t,k) \, \dx k. 
	\end{equation}
	It remains to determine the asymptotics of the integral in \eqref{m_hat_integral}.
	Let us first consider the contribution from the subcontour $\Gamma' \coloneqq \hat \Gamma\setminus \partial D_\epsilon(k_0)$.
	From the Cauchy-Schwartz inequality in conjunction with \eqref{lem_w_hat_1}, \eqref{lem_w_hat_2}, \eqref{lem_w_hat_4}, \eqref{lem_w_hat_7}, and \eqref{mu-id} it follows that 
	\begin{align*}
		\int_{\Gamma'}  \hat\mu (\xi,t,k) \hat w (\xi,t,k) \, \dx k &= 
		\int_{\Gamma'}  \hat w (\xi,t,k) \, \dx k  + \int_{\Gamma'}  (\hat\mu (\xi,t,k)-I) \hat w (\xi,t,k) \, \dx k \\
		&= \bigO(\|  \hat w \|_{L^1(\Gamma')}) + \bigO(\|  \hat \mu - I \|_{L^2(\Gamma')}  \|  \hat w \|_{L^2(\Gamma')}) \\
		&= \bigO(t^{-2}) \quad \text{as} \quad t\to\infty
	\end{align*}
	uniformly for $\xi\in  I_{\mathcal M}$. 
	Finally, the contribution from $\partial D_{\varepsilon}(k_0)$ with clockwise orientation is 
	\begin{align*}
		-&\frac{1}{2\pi  i}\int_{\partial D_{\varepsilon}(k_0)}  \hat w (\xi,t,k) \, \dx k   -\frac{1}{2\pi  i}\int_{\partial D_{\varepsilon}(k_0)}  (\hat\mu (\xi,t,k)-I) \hat w (\xi,t,k) \, \dx k \\
		&= -\frac{1}{2\pi  i}\int_{\partial D_{\varepsilon}(k_0)}  \Big(m^{k_0}(\xi,t,k) \big(m^{[E_1,k_0]}(\xi,t,k)\big)^{-1} - I\Big)   \, \dx k \\
		&\quad + \bigO\big( \| \hat\mu (\xi,t,\cdot)-I \|_{L^2(\partial D_{\varepsilon}(k_0))} \, \| \hat w (\xi,t,\cdot) \|_{L^2(\partial D_{\varepsilon}(k_0))}    \big),
	\end{align*}
	where the error term is $\bigO(t^{-2})$ as $t\to\infty$ uniformly with respect to $\xi\in I_{\mathcal M}$ by \eqref{lem_w_hat_7} and \eqref{mu-id}. 
	Applying  \eqref{lem_m^k_0_ii_2} finishes the proof.
\end{proof}


\subsubsection{Proof of Theorem \ref{thm_sec_M}} 
Recall that $G^{(3)}\to I$ exponentially fast as $k\to\infty$ uniformly with respect to $(\xi,t)\in  I_{\mathcal M} \times [T,\infty)$. Therefore the asymptotics of $g$, $\mathcal{D}$, $m^{[E_1,k_0]}$, and $\hat m$ as $k\to\infty$ established in \eqref{g_ktoinfty}, Lemma \ref{lem_D}\eqref{lem_D_ii}, Lemma \ref{lem_minfty}\eqref{lem_minfty_ii}, and Proposition \ref{prop_mhat}, in conjunction with the notation in the proof of Lemma \ref{lem_m^k_0}\eqref{lem_m^k_0_ii} (see \eqref{res_A_1_lim} and \eqref{res_A_1}), respectively, yield that
	\begin{align*}
		u(x,t) 
		&= 2 i  \, \e^{2 i t g_\infty (\xi)} \, \mathcal{D}^{-2}_\infty(\xi) \, 	\lim^\angle_{k\to\infty} k \big(m^{[E_1,k_0]} + \hat m\big)_{1 2} \\
		&= - \e^{2 i t g_\infty (\xi)} \mathcal{D}^{-2}_\infty(\xi)  \bigg[\frac{4(\alpha + \beta)-\xi}{6} -  \frac{2 i  \res_{k = k_0} \big(A_1(\xi,k)\big)_{1 2}}{t} \bigg]
		+  \bigO (t^{-2})
	\end{align*}
	as $t\to\infty$, the error term being uniform with respect to $\xi\in I_{\mathcal M}$. This shows \eqref{lim_km_12} and completes the proof of Theorem \ref{thm_sec_M}.

\begin{remark} \label{rem_asymp_to_all_orders}
The above proof can easily be generalized to yield the long-time behavior of $u(x,t)$ in the sector $\mathcal{M}$ to \emph{any} finite order $N$ provided that the reflection coefficient $r$ has sufficient regularity and decay away from the branch points $E_1$ and $E_2$, which, by Theorem \ref{thm_direct}, is the case whenever the initial data has sufficient regularity and decay. In this case, the analytic approximation can be refined accordingly, and Lemma \ref{lem_sing_int_near_boundary} permits us to recursively determine all coefficients in the expansion of $\mathcal{D}$ at $k_0$ up to the desired order, which in turn yields the coefficients of the terms of order $t^{-2}, t^{-3},\dots,t^{-N}$ in the expansion \eqref{lim_km_12} of $u|_{\mathcal{M}}(x,t)$. 
\end{remark}

\section{Proof of Theorem \ref{thm_sec_L}}\label{leftsec}
\noindent
In the previous section, we presented the derivation of the asymptotics in the middle sector $\mathcal{M}$ in great detail. Our treatments of the left and right sectors will be much briefer. In fact, the proof of the large $t$ asymptotics \eqref{u_asymp_L} of $u$ in the left sector $\mathcal L$ is almost identical to the proof of \cite[Theorem 1]{Fromm19}, where a related initial-boundary value problem was considered and the same asymptotic formula was obtained. With the help of Remark \ref{rem_r2a}, the proof in \cite{Fromm19} can easily be adapted to our situation.  As for the proof of the asymptotic formula \eqref{u_asymp_Lx} valid as $x \to -\infty$, the proof proceeds along the same lines and the minor modifications that are necessary were explained in \cite{DZ1993} for the mKdV equation. 

It remains to prove the identity in \eqref{lim_D^{-2}_infty}. By the definition of $D_\infty(\xi)$ in \eqref{leftsectorquantities}, we have
\begin{align}\label{Dinftyxi}
D_\infty(\xi)  = \e^{-\frac{1}{2\pi  i}\big\{\big(\int_{-\infty}^{E_1} + \int_{E_2}^{k_0}\big) \frac{\log(1 - |r(s)|^2)}{X(s)}  \, \dx s + \int_{E_1}^{E_2} \frac{ i\arg(r(s))}{X_+(s)}  \, \dx s \big\}}.
\end{align}
Due to the decay of $r$, cf.~Theorem \ref{thm_direct}, it holds that  
\begin{equation*}
	\lim_{\xi\to -\infty} D_\infty(\xi) = \e^{-\frac{1}{2\pi  i}J}
\end{equation*}  
where the real constant $J$  is given by
\begin{equation}\label{Jdef}
	J \coloneqq \bigg(\int_{-\infty}^{E_1} + \int_{E_2}^{+\infty}\bigg) \frac{\log(1 - |r(k)|^2)}{X(k)} \,\dx k 
	+ \int_{E_1}^{E_2} \frac{ i \arg(r(k))}{X_+(k)} \,\dx k.
\end{equation}
The following lemma implies that $\lim_{\xi\to-\infty} D^{-2}_\infty(\xi) = -1$.

\begin{lemma} \label{lem_D_on_x-Axis}
	$J \in \pi^2(1 + 2\mathbb{Z})$.
\end{lemma}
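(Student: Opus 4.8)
The quantity $J$ in \eqref{Jdef} is a real number (the first two integrals are real because the integrand is real, and the third is real because $i\arg r(s)/X_+(s)$ is purely imaginary over the imaginary $X_+$), and the claim is that $J$ is an odd integer multiple of $\pi^2$. The natural approach is to recognize $J$ as the value at $k=\infty$ (up to the factor $-2\pi i$) of the Cauchy-type integral defining $D_\infty$ with $k_0$ replaced by $+\infty$, i.e. of a scalar RH problem with jumps $1-|r|^2$ on $\mathbb{R}\setminus[E_1,E_2]$ and the constant-modulus phase jump on $[E_1,E_2]$. The key structural fact I would exploit is the relation $|r(k)|=1$ on $[E_1,E_2]$ together with $r(E_1),r(E_2)$ having modulus one and the symmetry/continuity of $r$ at the branch points established in Theorem \ref{thm_direct}\eqref{thm_direct_a}, in particular $|q_{j,0}|=1$. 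These force the phase $\arg r(s)$ to have a prescribed net change as $s$ runs over $[E_1,E_2]$ and as one passes from one side of the cut to the other, and it is precisely this winding that produces the quantized value.

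First I would rewrite $J$ by pulling the factor $X(k)^{-1}$ inside and interpreting the combination as a contour integral of $\log r$-type data against $dX/X$ or against $\tfrac{dk}{X(k)}$. Concretely, using $\int_{E_1}^{E_2}\frac{dk}{X_+(k)}=-\pi i$ and $\bigl(\int_{-\infty}^{E_1}+\int_{E_2}^{\infty}\bigr)\frac{dk}{X(k)}$ being a (divergent) principal-value-type object that must be regularized — this suggests instead differentiating under the integral or integrating by parts in each piece to transfer derivatives onto $\log(1-|r|^2)$ and $\arg r$. The cleanest route: deform the three real integrals to a single closed contour encircling the cut $[E_1,E_2]$, on which $X$ is single-valued off the cut, and observe that $J$ equals $-2\pi i$ times the constant term at infinity of
\[
H(k):=\frac{X(k)}{2\pi i}\left[\Bigl(\int_{-\infty}^{E_1}+\int_{E_2}^{\infty}\Bigr)\frac{\log(1-|r(s)|^2)}{X(s)(s-k)}\,ds+\int_{E_1}^{E_2}\frac{\log r(s)}{X_+(s)(s-k)}\,ds\right],
\]
and then use that $\e^{-2H}$ solves a scalar RH problem whose jump has determinant/modulus one, so that the relevant index (winding number) of the jump data around the contour is an integer; tracking $2\pi\times(\text{this index})$ through the Plemelj formula gives $J\in\pi^2\mathbb{Z}$, and the \emph{odd} parity comes from the half-integer contribution of the single branch cut, i.e. from $\int_{E_1}^{E_2}\frac{ds}{X_+(s)}=-\pi i$ contributing a $\tfrac12$, exactly as in the third displayed identity in the ``choice of branch'' remark after Theorem \ref{thm_sec_R}.

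The main obstacle I anticipate is making the winding-number/parity bookkeeping rigorous: one must (i) justify the contour deformation despite the logarithmic growth of $\log(1-|r|^2)$ being absent but the $X(s)^{-1}(s-k)^{-1}$ decay being only $O(s^{-2})$ — fine for convergence — and more seriously (ii) control the branch of $\log r$ on $[E_1,E_2]$ at the endpoints $E_1,E_2$, where $r$ is merely continuous with a square-root expansion and $|r|=1$. The constraint \eqref{coeffqjl}, namely $\sum_l i^{n-l}(-i)^l q_{j,n-l}\overline{q_{j,l}}=0$, is exactly what guarantees $\arg r$ extends continuously across $E_j$ when viewed on the Riemann surface of $X$, so the total variation of $\arg r$ around the closed contour is $2\pi$ times an integer; I would cite Theorem \ref{thm_direct}\eqref{thm_direct_a} for this. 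Then $J = \pi \cdot (\text{total phase variation}) = \pi\cdot(2\pi\,\mathbb{Z} + \pi) = \pi^2(1+2\mathbb{Z})$, the extra $\pi$ being the unavoidable half-winding forced by the odd number (one) of branch points enclosed on each sheet. Once this index computation is pinned down, the conclusion $\lim_{\xi\to-\infty}D_\infty^{-2}(\xi)=\e^{-\frac{1}{\pi i}J}=\e^{-i\pi(1+2\mathbb{Z})}=-1$, hence \eqref{lim_D^{-2}_infty}, is immediate.
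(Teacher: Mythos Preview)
Your proposal sketches a winding-number/index argument, but it never actually carries out the computation and, more importantly, it misses the structural input that makes the odd parity fall out cleanly. The paper's proof is both different and considerably shorter: it does not treat $J$ as a topological invariant of $r$ at all, but instead rewrites everything in terms of the spectral function $a(k)$. The two identities
\[
\log(1-|r|^2)=-2\,\re\log a \quad (k\in\R\setminus[E_1,E_2]),
\qquad
i\arg r \in \overline{\log a}-\log a + \pi i + 2\pi i\Z \quad (k\in(E_1,E_2)),
\]
the second coming directly from the definition $r=-\bar a/a$ on the cut, reduce $J$ to $-2\,\re\int_\R \frac{\log a}{X_+}\,dk$ plus $\pi i(1+2\Z)\int_{E_1}^{E_2}\frac{dk}{X_+}$. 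The first integral vanishes by deforming into $\C_+$ (using that $a$ is analytic and nonvanishing there with $a\to 1$ at infinity), and the second equals $-\pi i$, giving $J\in\pi^2(1+2\Z)$.

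The gap in your approach is twofold. First, you propose to deform to a closed contour and read off a winding number of $r$, but $r$ is only defined on $\R$; it is $a$, not $r$ or $b$, that has the analytic extension to $\C_+$ needed for any contour deformation. Second, your explanation of the odd parity (``half-winding forced by the odd number of branch points'') is not a proof: the oddness comes from the explicit minus sign in $r=-\bar a/a$ on $(E_1,E_2)$, which contributes the $+\pi i$ above. Without invoking that algebraic relation you have no mechanism that distinguishes $J\in\pi^2(1+2\Z)$ from $J\in 2\pi^2\Z$. The constraints \eqref{coeffqjl} on the $q_{j,l}$ that you cite govern the local behavior of $r$ at $E_1,E_2$ but do not by themselves fix the global parity of $J$.
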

\begin{proof}
Recalling the definition of $r$ in \eqref{def_r} and the fact that $\det s(k) = |a(k)|^2 - |b(k)|^2 = 1$ for $k \in \R\setminus[E_1,E_2]$, we infer that
	\begin{equation}
		\begin{cases} \label{lem_D_on_x-Axis_1}
			\log(1 - |r(k)|^2) 
			= \log \frac{1}{|a(k)|^2} 	
			= -2\re(\log a(k)),  &\quad k\in\R\setminus[E_1,E_2],\\
			 i \arg(r(k)) 
			\in \overbar{\log a(k)} - \log a(k) + \pi i + 2\pi i\Z, &\quad k \in (E_1, E_2).
		\end{cases}
	\end{equation}
	By employing \eqref{lem_D_on_x-Axis_1} and the fact that $X(k) \in \R$ for $k \in \R \setminus (E_1, E_2)$ and 
	$X_+(k) \in  i\R$ for $k \in (E_1, E_2)$, a direct computation confirms that
	\begin{align*}
		J \in -2 \re  \int_\R \frac{\log a(k)}{X_+(k)} \, \dx k + \pi i(1 + 2\Z) \int_{E_1}^{E_2} \frac{\dx k}{X_+(k)}. 
	\end{align*}
The function $a(k)$ is analytic in $\C_+$ and satisfies $a(k) = 1 + \bigO(k^{-1})$ as $\overbar{\C_+}\ni k \to \infty$ by Proposition \ref{cor_prop_mu}. Thus, deforming the contour $\R$ to infinity in the upper half-plane, we deduce that
\begin{equation*}
	\int_\R \frac{\log a(k)}{X_+(k)} \,\dx k = 0.
\end{equation*}
A direct computation shows that
\begin{equation*}
 \int_{E_1}^{E_2} \frac{\dx k}{X_+(k)} = \int_{E_1}^{E_2} \frac{\dx k}{ i \sqrt{(k-E_1)(E_2-k)}} = -\pi  i.
\end{equation*}
Consequently, $J \in \pi  i (1 + 2 \Z) (-\pi i) = \pi^2(1 + 2\Z)$.
\end{proof}

\section{Proof of Theorem \ref{thm_sec_R}}\label{rightsec}
\noindent
Just like in the left and middle sectors, the asymptotics for $u$ in the right sector $\mathcal{R}$ follows from a steepest descent analysis of the RH problem \eqref{RHm} for $m$. However, the proof in the right sector is much easier than in the left and middle sectors. Indeed, if $\xi \in I_{\mathcal{R}}$ then there is a single critical point at $k_0 = -\xi/4$ which lies to the left of $E_1$. This means that the nonzero boundary conditions at $x = -\infty$ play no role in the analysis; in particular, there is no need for a $g$-function or a global parametrix. Just like in the analysis of solutions of the NLS with decay at $x = \pm \infty$ \cite{DIZ1993}, the leading term in the asymptotics is of $\bigO(t^{-1/2})$ and stems from the local parametrix at $k_0$, which is constructed in terms of parabolic cylinder functions \cite{I1982}. We refer to \cite[Sec.\@ 6]{LQ21} for further details of the proof, where an asymptotically equivalent scenario is treated.  
\proofend

\section{Proof of Theorem \ref{thm_ibvp}}\label{ibvpsec}
\noindent
Suppose $(\alpha, \omega, c) \in (0,\infty) \times \R \times \C$ belongs to the family \eqref{familyc} and let $\beta \coloneqq c/(2i\alpha) > 0$. Suppose $u_0$ satisfies the assumptions of Theorem \ref{thm_direct} corresponding to $\alpha$ and $\beta$ with $N_1=8$ and $N_2=4$, and let $u$ be the solution of the Cauchy problem \eqref{dNLS_IVP} with initial data $u_0$ constructed in Theorem \ref{thm_inverse}. 

It follows from \eqref{familyc} that $c = i \alpha \sqrt{-2\alpha^2-\omega}$ and hence $\omega = \frac{c^2}{\alpha^2} - 2\alpha^2$. Since $\beta = c/(2i\alpha)$, we infer that $\omega = - 4\beta^2 - 2 \alpha^2$ in consistency with the definition of $\omega$ in the rest of the paper. The condition $\omega < -3\alpha^2$ in \eqref{familyc} then implies that $4\beta^2  > \alpha^2$ and hence $4 \beta - 2\alpha > 0$, which shows that the $t$-axis is contained in the left sector $\mathcal{L}$ (for all sufficiently small $\delta >0$). Therefore the asymptotic formulas \eqref{u_asymp_L} and \eqref{ux_asymp_L} evaluated at $x = 0$ show that, as $t \to \infty$,
\begin{align}\label{uuxontaxis}
  u(0,t) = -D^{-2}_\infty(0)\, \alpha \e^{i\omega t}+ \bigO(t^{-1/2}), \qquad u_x(0,t) = -D^{-2}_\infty(0)\, 2i\beta \alpha \e^{i\omega t}+ \bigO(t^{-1/2}).
\end{align}
It follows from \eqref{Dinftyxi} that $|D^2_\infty(\xi)| = 1$ for all $\xi$. In particular, $|D^2_\infty(0)| = 1$, so the phase invariance of NLS implies that $\tilde{u}(x,t)\coloneqq -D^2_\infty(0) u(x,t)$ also satisfies equation \eqref{dNLS}. Since $c = 2i\alpha \beta$, equation \eqref{uuxontaxis} implies that
\begin{align}\label{}
  \tilde{u}(0,t) = \alpha \e^{i\omega t}+ \bigO(t^{-1/2}), \qquad \tilde{u}_x(0,t) = c \e^{i\omega t}+ \bigO(t^{-1/2}).
\end{align}
On the other hand, we infer from Theorem \ref{thm_sec_R} that $u(x,t) \to 0$ as $x \to +\infty$ for each fixed $t > 0$. For $t = 0$, we have $u(x,0) \to 0$ as $x \to +\infty$ by the decay assumptions on $u_0$. We conclude that $\tilde{u}(x,t) \to 0$ as $x \to +\infty$ for each $t\geq 0$. This completes the proof.
\proofend

\appendix
	
\section{Endpoint behavior of a Cauchy integral}\label{appB}
\noindent
\begin{lemma} \label{lem_sing_int_near_boundary}
Let $N \geq 0$ be an integer. Let $[a,b]\in \R$ be a closed finite interval and suppose $g_0 \in \mathcal C^{N} ([a,b],\C)$ is such that the $N$\!th derivative of $g_0$ is H\"older continuous on $[a,b]$. Then the function
\begin{align}\label{f0def}
  f_0(z)\coloneqq \int^b_a \frac{g_0(s)}{\sqrt{b-s}}  \frac{\dx s}{s-z}, \qquad z \in \C \setminus (-\infty, b],
\end{align} 
satisfies
\begin{align}\label{f0expansionN} 
f_0(z) = - \pi \sum_{n=0}^{N} \frac{g_0^{(n)}(b)}{n!} (z-b)^{n-\frac{1}{2}} + \sum_{n=0}^{N-1} f_{0,n} (z-b)^n + o((z-b)^{N-\frac{1}{2}}), \qquad z\to b,
\end{align} 
uniformly for $\arg (z-b) \in (-\pi,\pi)$, where the coefficients $f_{0,n}\in \C$ are given by 
\begin{align} \label{f0nexpression}
  f_{0,n} = \sum^n_{j=0} \frac{g_0^{(j)}(b)}{j!} h_{n-j} + C_n, \qquad n = 0, 1, \dots, N-1,
\end{align} 
with $C_n \in \C$ and $h_n \in \R$ defined by
	\begin{equation}\label{Cnhndef} 
		C_n = \int^b_a \frac{g_{n+1}(s)}{\sqrt{b-s}} \, \dx s, \quad 
		h_n = \frac{2(-1)^n(b-a)^{-n-\frac{1}{2}}}{2n+1}, \qquad n = 0, 1, \dots, N-1.
	\end{equation} 
	The functions $g_n\in \mathcal C^{N-n} ([a,b],\C)$ in \eqref{Cnhndef} are defined by
	\begin{equation} \label{lem_A_def_g_n}
		g_n(s) = \frac{g_{n-1}(s) - g_{n-1}(b)}{s-b} = \frac{g_0(s) - \sum^{n-1}_{j=0} \frac{g^{(j)}(b)}{j!}}{(s-b)^n}, \qquad n = 1, \dots, N.
	\end{equation} 
\end{lemma}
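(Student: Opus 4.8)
The plan is to prove the expansion \eqref{f0expansionN} by induction on $N$, peeling off one Taylor coefficient of $g_0$ at the endpoint $b$ at each step. The base case $N = 0$ is the heart of the matter: one writes $g_0(s) = g_0(b) + (g_0(s) - g_0(b))$ in \eqref{f0def}, so that
\begin{equation*}
f_0(z) = g_0(b) \int_a^b \frac{\dx s}{\sqrt{b-s}(s-z)} + \int_a^b \frac{g_1(s)}{\sqrt{b-s}} \, \dx s + (z-b) \int_a^b \frac{g_1(s)}{\sqrt{b-s}(s-z)}\,\dx s,
\end{equation*}
where $g_1$ is as in \eqref{lem_A_def_g_n}. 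The first integral is elementary: substituting $s = b - w^2$ turns it into a rational integral in $w$ which evaluates in closed form to $-\pi(z-b)^{-1/2} + h_0 + o(1)$ as $z \to b$, uniformly for $\arg(z-b)\in(-\pi,\pi)$ (the $-\pi (z-b)^{-1/2}$ comes from the arctangent/logarithm branch at the endpoint, and $h_0 = \tfrac{2}{(b-a)^{1/2}}\cdot(\pm)$ matches \eqref{Cnhndef}). The second term contributes the constant $C_0$. For the third term one notes that $g_1$ is merely H\"older continuous near $b$ (it loses one derivative), so its Cauchy-type integral against $(b-s)^{-1/2}$ is bounded — in fact $o((z-b)^{-1/2})$ — near $b$ by a standard Plemelj/H\"older estimate, and multiplication by $(z-b)$ makes the whole term $o((z-b)^{1/2})$, which is $o((z-b)^{N-1/2})$ for $N=0$. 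Collecting terms gives \eqref{f0expansionN} with $N=0$.

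For the inductive step, assuming the result for $N-1$, I would again split off the top-order term: write $g_0(s) = \sum_{j=0}^{N-1}\frac{g_0^{(j)}(b)}{j!}(s-b)^j + (s-b)^N g_N(s)$ with $g_N \in \mathcal C^0$ whose value is $\tfrac{g_0^{(N)}(b)}{N!}$ at $b$ and which inherits H\"older continuity of $g_0^{(N)}$. Each monomial term $(s-b)^j$ produces an integral $\int_a^b \frac{(s-b)^j}{\sqrt{b-s}(s-z)}\,\dx s$; writing $(s-b)^j = ((s-z)+(z-b))^j$ and expanding by the binomial theorem reduces this to a combination of the $N=0$ integral (already known) and integrals $\int_a^b (s-b)^{j-1-l}(b-s)^{-1/2}\,\dx s$ with $0\le l\le j-1$, which are explicit constants contributing to the polynomial part; bookkeeping these contributions is exactly what produces the $\sum_j \tfrac{g_0^{(j)}(b)}{j!}h_{n-j}$ structure in \eqref{f0nexpression}, with the singular terms assembling into $-\pi\sum_{n=0}^{N}\tfrac{g_0^{(n)}(b)}{n!}(z-b)^{n-1/2}$. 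The remainder integral $(z-b)^N \int_a^b \frac{g_N(s)}{\sqrt{b-s}(s-z)}\,\dx s$ is, by the same H\"older-Plemelj estimate used in the base case (applied to the continuous-but-not-smoother function $g_N$, after once more subtracting $g_N(b)$ to gain the needed regularity for the $o$-estimate), of the form $(z-b)^N\big(-\pi\, g_N(b)(z-b)^{-1/2} + O(1)\big)$, which slots in as the $n=N$ singular term plus an $o((z-b)^{N-1/2})$ error once one checks the error is genuinely $o$ and not merely $O$.

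The main obstacle I anticipate is controlling the error term uniformly in $\arg(z-b) \in (-\pi,\pi)$, i.e. all the way up to the branch cut along $(-\infty, b]$. The closed-form evaluation of the elementary integral must be carried out with explicit attention to which branch of the square root and arctangent/logarithm is used, and one must verify that the leading coefficient of the singular term is exactly $-\pi$ independent of the side of approach; the sign conventions in $h_n$ (the $(-1)^n$) are tied to this. The other delicate point is upgrading the Cauchy-integral bound on the remainder from $O$ to $o$ as $z\to b$: this requires that after subtracting the value $g_N(b)$, the resulting function $g_{N+1}$-type object has enough modulus-of-continuity decay, which is precisely where the hypothesis "$g_0^{(N)}$ is H\"older continuous" (rather than merely continuous) is used. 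I would invoke the standard estimates for Cauchy integrals with endpoint singularities (e.g. Muskhelishvili-type results, or the more modern $L^p$/H\"older mapping properties of the Cauchy transform already used elsewhere in the paper) rather than re-derive them, so the proof stays short. Everything else is routine binomial bookkeeping.
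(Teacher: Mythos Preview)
Your approach is essentially the same as the paper's: both compute the elementary integral $h(z)=\int_a^b\frac{\dx s}{\sqrt{b-s}(s-z)}$ in closed form (the paper via the antiderivative $(\pi-2\arctan(\sqrt{z-b}/\sqrt{b-s}))/\sqrt{z-b}$, you via the substitution $s=b-w^2$), both use the identity
\[
\frac{g_n(s)}{s-z}=\frac{g_n(b)}{s-z}+g_{n+1}(s)+(z-b)\frac{g_{n+1}(s)}{s-z},
\]
and both invoke a Muskhelishvili-type endpoint estimate for the remainder. The paper simply iterates this identity as the recursion $f_n(z)=(z-b)f_{n+1}(z)+g_n(b)h(z)+C_n$ to obtain
\[
f_0(z)=f_N(z)(z-b)^N+\sum_{n=0}^{N-1}\bigl(g_n(b)h(z)+C_n\bigr)(z-b)^n
\]
in one stroke, then cites \cite[Eq.~(29.5)]{M1992} for $f_N(z)=-\pi g_N(b)(z-b)^{-1/2}+o((z-b)^{-1/2})$. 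This is cleaner than your inductive organization with binomial expansion of $(s-b)^j=((s-z)+(z-b))^j$, which would work but duplicates effort.

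There is one genuine slip in your base case. For $N=0$ the hypothesis is only that $g_0$ itself is H\"older continuous; then $g_1(s)=(g_0(s)-g_0(b))/(s-b)$ is \emph{not} H\"older (or even bounded) near $b$ --- it behaves like $|s-b|^{\alpha-1}$ --- so your constant $C_0=\int_a^b g_1(s)(b-s)^{-1/2}\dx s$ need not exist, and the claim that $f_1$ is bounded fails. The paper avoids this by never splitting $f_N$: it applies the Muskhelishvili estimate directly to $f_N$ (which is $f_0$ when $N=0$), using only that $g_N$ is H\"older. Your argument is easily repaired the same way: for $N=0$ do not decompose at all, just cite the endpoint estimate; the decomposition you wrote is only valid (and only needed) once $N\ge 1$, where $g_1\in\mathcal C^{N-1}$ is indeed continuous.
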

\begin{proof}
Define $h(z)$ by
\begin{equation*}
	h(z) = \int_{a}^{b}\frac{1}{\sqrt{b - s}} \frac{\dx s}{s-z}, \qquad z \in \C \setminus (-\infty,b].
\end{equation*}
For $s \in (a, b)$, we have
\begin{equation*}
	\frac{\partial}{\partial s} \frac{\pi-2\arctan\left(\frac{\sqrt{z-b }}{\sqrt{b -s}}\right)}{\sqrt{z-b }}
	= \frac{1}{\sqrt{b -s}(s-z)}.
\end{equation*}
Hence
\begin{equation*}
	h(z) = \frac{2\arctan\left(\frac{\sqrt{z-b }}{\sqrt{b - a}}\right) - \pi}{\sqrt{z-b }}, \qquad z \in \C \setminus (-\infty,b].
\end{equation*}
Using that
\begin{equation*}
\arctan z = \sum_{k = 0}^\infty \frac{(-1)^k z^{2k+1}}{2k+1}, \qquad |z| < 1,	
\end{equation*}
we conclude that
\begin{align}\label{hexpansion}
h(z) \sim -\frac{\pi}{\sqrt{z-b}} + \sum_{n=0}^\infty h_n (z-b)^n, \qquad z \to b,
\end{align}
where $\{h_n\}_0^\infty$ are the coefficients defined in \eqref{Cnhndef}.

For $n = 0, 1, \dots, N$, let
\begin{align*}
  f_n(z) \coloneqq \int_{a}^{b}\frac{g_n(s)}{\sqrt{b -s}} \frac{\dx s}{s-z}
\end{align*}
with $\{g_n\}^N_1$ being defined in \eqref{lem_A_def_g_n}.
Since
\begin{equation*}
	\frac{g_n(s)}{s-z} = \frac{g_{n+1}(s)}{s-z}(z-b)
	+ \frac{g_n(b)}{s-z} + g_{n+1}(s), \qquad n = 0,1, \dots, N-1,
\end{equation*}
we obtain
\begin{align}\label{fnrecursive}
f_n(z) = f_{n+1}(z)(z-b) + g_n(b) h(z) + C_n, \qquad n = 0,1, \dots, N-1.
\end{align}
Recursive use of \eqref{fnrecursive} gives
\begin{align}\nonumber
f_0(z) & = f_1(z) (z-b) + g_0(b)h(z) + C_0
	\\\nonumber
& =  [f_2(z) (z-b) + g_1(b)h(z) + C_1] (z-b) + g_0(b)h(z) + C_0
	\\ \label{f0recursive}
 &= \cdots =  f_{N}(z) (z-b)^{N} + \sum_{n=0}^{N-1} (g_n(b)h(z) + C_n)(z-b)^n.
\end{align}
Since $g_{N}$ is H\"older continuous on $[a,b]$, \cite[Eq. (29.5)]{M1992} implies that
\begin{equation*}
	f_{N}(z) = -\frac{\pi}{\sqrt{z-b}}g_{N}(b) + o\big((z-b)^{-1/2}\big).
\end{equation*}
Hence, noting that
\begin{equation*}
g_n(b) = \frac{g_0^{(n)}(b)}{n!}, \qquad n = 0, 1, \dots, N,	
\end{equation*}
the expansion \eqref{f0expansionN} follows by expanding \eqref{f0recursive} as $z \to b$.
\end{proof}

\medskip
\noindent
\textbf{Acknowledgments.} 
Support is acknowledged from the \emph{European Research Council, Grant Agreement No.~682537}, the \emph{Ruth and Nils-Erik Stenb\"ack Foundation}, the \emph{Swedish Research Council, Grant No. 2015-05430}, and the \emph{Austrian Science Fund FWF, Erwin Schr\"odinger fellowship J~4339-N32}.


\end{document}